\tikzset{anchorbase/.style={baseline={([yshift=-0.5ex]current bounding box.center)}}}
\tikzstyle directed=[postaction={decorate,decoration={markings,
    mark=at position #1 with {\arrow{>}}}}]
\tikzstyle rdirected=[postaction={decorate,decoration={markings,
    mark=at position #1 with {\arrow{<}}}}]
\newcommand{\renc}{\renewcommand}
 \newlength{\baseunit}               
\newtheorem{theorem}{Theorem}[section]
\newtheorem{lemma}[theorem]{Lemma}
\newtheorem{prop}[theorem]{Proposition}
\newtheorem{corollary}[theorem]{Corollary}
\newtheorem{conjecture}[theorem]{Conjecture}
\theoremstyle{definition}
\newtheorem{definition}[theorem]{Definition}
\newtheorem{remark}[theorem]{Remark}
\newtheorem{notation}[theorem]{Notation}
\newtheorem{example}[theorem]{Example}
\newtheorem{ex}[theorem]{Example}
\newtheorem{exs}[theorem]{Examples}
\newtheorem*{remarks}{Remark}
\newcommand{\de}{\delta}
\newcommand{\Brd}{\op{Br}_d(\de)}
\newcommand{\Bd}{\op{B}_d(\de)}
\newcommand{\cB}{{\mathcal B}}
\newcommand{\cC}{{\mathcal C}}
\newcommand{\cE}{{\mathcal E}}
\newcommand{\cF}{{\mathcal F}}
\newcommand{\cL}{{\mathcal L}}
\newcommand{\cO}{{\mathcal O}}
\newcommand{\cP}{{\mathcal P}}
\newcommand{\cS}{{\mathcal S}}
\newcommand{\ov}{\overline}
\newcommand{\D}{\mathbb{D}_{\Lambda}}
\def\down{\vee}
\def\up{\wedge}
\newcommand{\fin}{\op{fin}}
\newcommand{\mg}{\mathfrak{g}}
\newcommand{\mh}{\mathfrak{h}}
\newcommand{\mC}{\mathbb{C}}
\newcommand{\mZ}{\mathbb{Z}}
\newcommand{\parswitch}{\boldsymbol{\pi}}
\newcommand{\la}{\lambda}
\newcommand{\pla}{\ulcorner{\!\lambda}}
\newcommand{\pga}{\ulcorner{\!\gamma}}
\newcommand{\pmu}{\ulcorner{\!\mu}}
\newcommand{\op}{\operatorname}
\newcommand{\surj}{\mbox{$\rightarrow\!\!\!\!\!\rightarrow$}}
\DeclareMathOperator{\Hom}{Hom}
\definecolor{myred}{rgb}{1,.1,.1}
\def\eps{{\varepsilon}}
\newcommand{\un}{\underline}
\newcommand{\OSPrn}{\op{OSp}(r|2n)}
\newcommand{\SOSP}{\op{SOSp}}
\newcommand{\SOSPrn}{\op{SOSp}(r|2n)}
\newcommand{\osprn}{\mathfrak{osp}(r|2n)}
\newcommand{\glmn}{\mathfrak{gl}(m|n)}
\newcommand{\OSP}{\op{OSp}}
\newcommand{\osp}{\mathfrak{osp}}
\newcommand{\Brgraded}{\mathrm{Br}_d^{\mathrm gr}(\de)}
\newcommand{\Bgraded}{\mathrm{B}_d^{\mathrm gr}(\de)}
\title[Finite-dimensional representations of $\OSPrn$]{On the category of finite-dimensional representations of $\OSPrn$: Part  I}
\author[Michael Ehrig, Catharina Stroppel]{Michael Ehrig, Catharina Stroppel\thanks{M.E. was financed by the DFG Priority program 1388. C.S. was partially supported by the Max-Planck institute in Bonn.}}
\begin{document}
\maketitle

\begin{abstract}
We study the combinatorics of the category $\cF$ of finite-dimensional 
modules for the orthosymplectic Lie supergroup $\OSP(r\, |\, 2n)$. 
In particular we present a positive counting formula for the dimension 
of the space of homomorphism between two projective modules. This 
refines earlier results of Gruson and Serganova. For each block $\cB$ we construct 
an algebra $A_\cB$ whose module category shares the combinatorics with 
$\cF$. It arises as a subquotient of a suitable limit of type 
${\rm D}$ Khovanov algebras. It turns out that $A_\cB$ is isomorphic to 
the endomorphism algebra of a minimal projective generator of $\cF$. This provides a direct link from $\cF$ to parabolic categories $\mathcal{O}$ of type ${\rm B}/{\rm D}$, with maximal parabolic of type ${\rm A}$, to the geometry of 
isotropic Grassmannians of types ${\rm B /} {\rm D}$ and to Springer fibres of type ${\rm C}/{\rm D}$. We also indicate why $\cF$ is not highest weight in general.
\end{abstract}

\tableofcontents

\renewcommand\thesection{\Roman{section}}
\section{Introduction} \label{sec:intro}

\renc{\thetheorem}{\Alph{theorem}}
Fix as ground field the complex numbers $\mathbb{C}$. This is the first part of a series of three papers, where we describe the category $\cC$ of {\it finite-dimensional representations} of the {orthosymplectic Lie supergroup} $G=\OSPrn$ equivalently the finite-dimensional {\it integrable representations} of the orthosymplectic Lie superalgebra $\mg=\osprn$. In particular we are  interested in the combinatorics and the structure of the locally finite endomorphism ring of a projective generator of this category. (To be more precise: a projective generator only exists as a pro-object, but we still call it a projective generator and refer to \cite[Theorem~2.4]{BD} for a detailed treatment of such a situation.)

Our main result is an {\it explicit description of the endomorphism ring of a minimal projective generator} for any block $\cB$ in $\cC$. We first describe in detail the underlying vector space in Theorem~\ref{A}, and then formulate the endomorphism theorem in Theorem~\ref{main}. As a consequence we deduce that the endomorphism algebra can be equipped with a $\mZ$-grading. The definitions and results are illustrated by several examples. Theorem~\ref{A} provides an elementary way to compute dimensions of homomorphism spaces between projective objects, and Theorem~\ref{main} allows a concrete description of the corresponding categories. In small examples, we provide a description of the category $\cC$ in terms of a quiver with relations.   

The proof of Theorem~\ref{main} will appear in Part~II, but we explain here the main ideas of the proof and the important and new phenomena which appear on the way. We believe that they are interesting on their own and also provide a conceptual explanation for the lack of desired properties of the category $\cC$ (in comparison to the type $A$ case). The arguments required for the complete proof of Theorem~\ref{main} will appear in Part~II, together with several applications to the representation theory.  We also defer to Part~II the proof of  Lemma~\ref{lemplusminus}, which is a rather easy observation as soon as the theory of Jucys-Murphys elements for Brauer algebras is available (which will be the case in Part~II).

Understanding the representation theory of algebraic supergroups and in particular their category $\cC$ of finite-dimensional representations  is an interesting and difficult task with several major developments in recent years. We refer to the articles \cite{SergICM}, \cite{BrundanICM}, \cite{MW} for a nice description and overview of the state of art. Despite these remarkable results, in particular for the general linear case, but also for the category $\cO$ for classical Lie superalgebras, there is still an amazingly poor understanding of the category $\cC$ outside of type $\rm A$. 

At least for the orthosymplectic case we can provide here some new insights into the structure of these categories by giving a construction of endomorphism algebras of projective objects in $\cC$.

Our results are in spirit analogous to \cite{BS4} and many of the applications deduced there for the general linear Lie superalgebra can be deduced here as well (investigated in detail in Parts II and III).  The orthosymplectic case however requires new arguments and a {\it totally new line of proof}. There are several subtle differences which make the case treated here substantially harder, the proofs more involved and conceptually different. The categories are much less well behaved than in type~$\rm A$.  To prove the main Theorem~\ref{main} we first need to develop the basic underlying combinatorics for the orthosymplectic case, make it accessible for explicit calculations and also for categorification methods, then use non-trivial results from the representation theory of Brauer algebras and the Schur-Weyl duality for orthosymplectic Lie supergroups, and finally connect both with the theory of Khovanov algebras of type $\rm D$. On the way we explain why (and to which extent) these categories are not highest weight, but we still manage to describe their combinatorics in terms of certain maximal parabolic Kazhdan-Lusztig polynomials of type $\rm D$ (or equivalently $\rm B$ by \cite[9.7]{ESperv}).

\subsection*{The main results and the idea of the proof} 
To explain our results in more detail, fix $r,n\in\mathbb{Z}_{\geq0}$ and consider a {\it vector superspace}, that is a $\mathbb{Z}_2$-graded vector space, $V=V_0\oplus V_1$ of superdimension $(r|2n)$ with its Lie superalgebra $\mathfrak{gl}(V)$ of endomorphisms, see Section~\ref{super} for a precise definition. Then $\mg=\osprn$ is the Lie super subalgebra of  $\mathfrak{gl}(V)$ which leaves invariant a fixed  non-degenerate super-symmetric bilinear form $\beta$ on $V$ (that is a form of degree zero, symmetric on $V_0$ and antisymmetric on $V_1$), and $G = \OSPrn$ is the corresponding supergroup of automorphisms preserving this form. In particular, the extremal cases $r=0$ respectively $n=0$ give the classical simple Lie algebras $\mathfrak{so}(r)$ respectively $\mathfrak{sp}(2n)$ with the corresponding orthogonal and symplectic groups. 

\begin{enumerate}
\renewcommand{\theenumi}{($\star$)}
\renewcommand{\labelenumi}{\theenumi}
\item \label{star} \begin{center} {\it For simplicity, we restrict ourselves in this introduction to the case where $r=2m+1$ is odd.} \end{center}
\end{enumerate}

Now consider the category $\cC'$ of finite-dimensional representations of the supergroup $G'=\SOSPrn$, or equivalently of finite-dimensional representations for its Lie algebra $\mg$ in the sense of \cite{Vera}, \cite{SergICM}. Like in the ordinary semisimple Lie algebra case, simple objects in $\cC'$  are, up to a parity shift $\parswitch$, the highest weight modules $L^\mg(\la)$ which arise as quotients of Verma modules whose highest weights $\la$ are integral and dominant. Hence for each such $\la$ we have two irreducible representations, $L^\mg(\la)$ and $\parswitch L^\mg(\la)$ in the category $\cC'$. 
More precisely $\cC'$ decomposes into a sum of two equivalent categories $\cC'=\cF'\oplus \parswitch(\cF')$, such that the simple objects in $\cF'$ are labelled by integral dominant weights.   In particular, it suffices to study the category $\cF'$. Similarly to \ref{star} we obtain the categories $\cC$ and $\cF$ if we work with $G=\OSPrn$. Under our assumption, an irreducible object in $\cF$ is just an irreducible object in $\cF'$ together with an action of the nontrivial element $\sigma\in G$, not contained in $G'$, by $\pm 1$ (see Proposition \ref{def:labelssimples1}). Let $X^+(G)$ be the labelling set of irreducible objects in $\cF$.

In contrast to the ordinary semsimple Lie algebra case, finite-dimensional representations of $\mg$ are in general {\it not completely reducible}. Already the tensor products $V^{\otimes d}$ of the natural representations $V$ need not be\footnote{They are in fact semsimple in case of the general linear Lie superalgebra by the Schur-Weyl duality theorem of Sergeev \cite{Ser} and Berele-Regev \cite{BR}, see \cite[Theorem~7.5]{BS5}, but in general not semisimple for $\osprn$, see \cite[(1.1), Remark~3.3]{ESSchurWeyl}.}. One goal of our series of papers is to understand possible extensions between simple modules and the decomposition of $V^{\otimes d}$.

The category $\cF$ is an interesting abelian tensor category with enough projective and injective modules (which in fact coincide, \cite[Proposition 2.2.2]{BKN}). We have therefore a non-semisimple $0$-Calabi-Yau category which has additionally a monoidal structure. 

The indecomposable projective modules are precisely the projective covers $P(\la)$ of the simple objects $L(\la)$ for $\la \in X^+(G)$.  Given a block $\cB$ of $\cC$ there is the notion of {\it atypicality} or {\it defect}, $\op{def}(\cB)$, which measures the non-semisimplicity of the block. In case the atypicality is zero, the block is semisimple. In general, our Theorem~\ref{main} implies that the Loewy length of any projective module in $\cB$ equals $2\op{def}(\cB)+1$. We expect that, up to equivalence, the block $\cB$ is determined by its atypicality, \cite[Theorem~2]{GS1}, see also Remark~\ref{remarkatyp}.

\begin{remarks}
Our assumption \ref{star} on $r$ simplifies the setup in this introduction,  since usually people (including also the above cited references)  would consider the category of finite-dimensional representations for the group $G'=\SOSPrn$ instead of the group $G=\OSPrn$. In case $r$ is odd, this makes no difference, since via the isomorphism of groups \eqref{easyodd} below, the representation theory does not change in the sense that any block for $G'$ gives rise to two equivalent blocks for $G$, each of which is equivalent to the original block for $G'$, see Section~\ref{sec:oddcase}. In the even case the interplay is more involved, see Section \ref{sec:evencase}. We prefer to work with $\OSPrn$ instead of $\SOSPrn$, for instance because it allows us to make a connection to Deligne categories \cite{Deligne2}, \cite{CH} and Brauer algebras \cite{Brauer}.
\end{remarks}
\subsection*{Dimension formula}
To access the dimension of $\Hom_{\cF}(P(\la),P(\mu))$, for $\la, \mu \in X^+(G)$, we encode the highest weights $\la$ and $\mu$ in terms of {\it diagrammatic weights} $\la$ and $\mu$ in the spirit of \cite{BS1}, see Definition~\ref{superweight}. Such a diagrammatic weight, see Definition~\ref{def:diagrammaticweight}, is a  certain \textit{infinite} sequence of symbols from $\{\times,\circ,\up,\down\}$, with the property that two weights $\la$ and $\mu$ are in the same block (abbreviating that $L(\la)$ and $L(\mu)$ are in the same block), if and only if the {\it core symbols} $\times$ and $\circ$ of the associated diagrammatic weights are at the same positions and the parities of $\up$'s agree, see also Proposition~\ref{blocksindiagramsodd} for a more precise statement. From Proposition~\ref{blocksindiagramsodd} it also follows that the set $\Lambda(\cB)$ of diagrammatic weights attached to a block $\cB$ is contained in {a \it diagrammatic block} ${\Lambda}$ in the sense of \cite[2.2]{ESperv}.

Following \cite{ESperv} we attach to  the diagrammatic weights $\la$ and $\mu$ via Definition~\ref{decoratedcups} a pair of  {\it cup diagrams} $\underline{\la}$, $\underline{\mu}$. If they have the same core symbols one can put the second on top of the first to obtain a {\it circle diagram}  $\underline{\la}\overline{\mu}$.  Our main combinatorial result (Theorem~\ref{mainprop}) is a {\it counting formula} for the dimensions: 

\begin{theorem}
\label{A}
The dimension of $\Hom_\cF(P(\la),P(\mu))$ equals the number of orientations $\underline{\la}\nu\overline{\mu}$ of $\underline{\la}\overline{\mu}$ if the circle diagram $\underline{\la}\overline{\mu}$ is defined and contains no non-propagating line, and the dimension is zero otherwise. 
\end{theorem}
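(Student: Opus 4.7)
The plan is to reduce the computation of $\Hom_\cF(P(\la),P(\mu))$ to a combinatorial count via orthosymplectic Schur--Weyl duality, without having to invoke the full endomorphism ring identification of Theorem~\ref{main}. First I would show that every indecomposable projective $P(\la)$, $\la\in X^+(G)$, occurs as a direct summand of some tensor power $V^{\otimes d}$ of the natural representation: in $\cF$ projectives coincide with injectives, $V^{\otimes d}$ is self-dual, and its indecomposable summands (up to $\parswitch$-shift and $\sigma$-twist) exhaust the indecomposable projectives. Schur--Weyl duality identifies $\End_\mg(V^{\otimes d})$ with a quotient of the Brauer algebra $\Bd$ at parameter $\de=r-2n$, so the dimensions of Hom-spaces between summands of $V^{\otimes d}$, and hence between arbitrary indecomposable projectives, can be read off from the representation theory of $\Bd(\de)$.

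The combinatorial core of the proof is then a dictionary. The cell modules of $\Bd(\de)$ are indexed by up-down tableaux, and these tableaux are naturally in bijection with the diagrammatic weights $\la$ together with their decorated cup diagrams $\underline{\la}$ from Definition~\ref{decoratedcups}. The Cartan invariant $\dim e_\la A e_\mu$ for the centralizer algebra $A$ can be expressed as a cell-theoretic pairing of up-down tableaux, and I would verify that this pairing translates exactly into the datum of a consistent orientation of the circle diagram $\underline{\la}\overline{\mu}$. Here the type $\rm D$ Khovanov combinatorics developed in \cite{ESperv} is essential: the orientation count coincides with the value at $1$ of the relevant maximal parabolic Kazhdan--Lusztig polynomial of type $\rm D$, which by the results on decomposition matrices for $\Bd(\de)$ also controls the Cartan matrix in question.

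The two vanishing regimes are handled separately. If $\underline{\la}\overline{\mu}$ is undefined, then the diagrammatic weights $\la$ and $\mu$ have distinct core symbols, so by Proposition~\ref{blocksindiagramsodd} they lie in distinct blocks of $\cF$ and the Hom-space vanishes. The non-propagating line condition is the subtler one: it reflects the extra $\mZ/2$-grading coming from the action of the element $\sigma\in \OSP(r|2n)\setminus\SOSP(r|2n)$ on irreducibles. Incompatible $\sigma$-eigenvalues on $L(\la)$ and $L(\mu)$ force the Hom-space to vanish, and this is exactly the content of Lemma~\ref{lemplusminus}: its input from the Jucys--Murphy theory of Brauer algebras pins down when two summands of $V^{\otimes d}$ carry opposite $\sigma$-signs, and that compatibility translates precisely into the propagating-line condition on $\underline{\la}\overline{\mu}$.

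The main obstacle will be the explicit matching in the second step. At the integer parameter $\de=r-2n$, the Brauer algebra $\Bd(\de)$ is non-semisimple and its cell modules acquire non-trivial radicals, so the combinatorics of induction/restriction of up-down tableaux must be matched carefully with the cup-diagram surgery governing orientations. One must ensure that the Brauer algebra really produces the orientation count on the nose (and not only its leading term or a generating function), which amounts to showing that the decomposition multiplicities $[\Delta(\nu):L(\la)]$ for $\Bd(\de)$ coincide with the corresponding type $\rm D$ parabolic Kazhdan--Lusztig numbers, and then assembling the Cartan matrix through the projection $\Bd(\de)\twoheadrightarrow\End_\mg(V^{\otimes d})$ in a way compatible with idempotent lifts between different values of $d$.
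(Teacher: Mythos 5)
Your proposed route is not the one the paper follows for Theorem~\ref{A}. The paper deliberately avoids the Schur--Weyl / Brauer-algebra passage for this dimension formula: instead it starts from Gruson--Serganova's \emph{alternating} formula $\dim\Hom_{\cF'}(P^\mg(\la),P^\mg(\mu))=\sum_\nu a(\la,\nu)a(\mu,\nu)$ involving Euler characteristics $\cE^\mg(\nu)$ (Proposition~\ref{prop:alternating}), translates the GS combinatorics into the $\{\times,\circ,\up,\down\}$ diagrammatics via the dictionary of Proposition~\ref{prop:GSES}, and then proves in Proposition~\ref{cancel} that the cancellations in the alternating sum occur precisely when the circle diagram is nuclear or has a component with an odd number of dots; otherwise all surviving terms equal $1$, giving the positive count. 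The passage from $\SOSP$ to $\OSP$ is handled by Harish-Chandra induction (Propositions~\ref{blocksO}, \ref{Homzero}, \ref{nosignsign}). The whole Brauer-algebra machinery you describe is indeed present in the paper, but it is aimed at the algebra isomorphism of Theorem~\ref{main} and its execution is deferred to Part~II.

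Beyond being a different route, your plan has a genuine gap. You write that the non-propagating-line vanishing condition ``reflects the extra $\mZ/2$-grading coming from the action of $\sigma$'' and is ``exactly the content of Lemma~\ref{lemplusminus}.'' That is not correct, and conflates two independent vanishing phenomena. The $\sigma$-sign incompatibility is indeed one source of vanishing, but the non-propagating-line condition is a distinct, strictly larger one: in the principal block of $\OSP(3|2)$ treated in Section~\ref{32}, the nuclear circle diagrams $f_2\circ f_0$ and $g_0\circ g_2$ connect $P(0)$ to $P(3)$, both of which lie in the \emph{same} block with the \emph{same} $\sigma$-sign, yet the corresponding Hom-space vanishes. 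What actually forces this vanishing is the nontrivial kernel of $\Phi_{d,\delta}:e\Brd e\twoheadrightarrow\op{End}_\cF(P')$ from Proposition~\ref{D} --- precisely the nuclear ideal $\mathbb I$. Your plan treats $\End_\mg(V^{\otimes d})$ as if its Cartan entries could be read off from those of $e\Brd e$ directly (``assembling the Cartan matrix through the projection''), but the Cartan matrix of a quotient is not determined by that of the ambient cellular algebra; the paper even stresses this (``Quotients of cellular algebras need not be cellular'', and ``the failure \ldots is encoded in the kernel of the maps $\Phi_{d,\delta}$''). Without an explicit description of this kernel --- which the paper obtains only in Part~II, via \cite[Lemma~8.18]{BS5}-type arguments --- the Brauer algebra gives you an overcount by exactly the nuclear orientations you were trying to exclude. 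So the step you treat as a routine translation (cell pairings $\leftrightarrow$ orientations) would compute the wrong answer: it yields $\dim e_\la\Bd e_\mu$, not $\dim\Hom_\cF(P(\la),P(\mu))$.
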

By an {\it orientation} we mean another diagrammatic weight $\nu$ from the same block which, when putting it into the middle of the circle diagram, makes it oriented in the sense of Definition~\ref{orientedcircle}.  In other words, we factorize the symmetric Cartan matrix $C$ (see Theorem~\ref{Cartanmatrix}) into a product $C=AA^T$ with {\it positive integral} entries. 

In \cite[6.1]{ESperv} it was explained how to introduce an algebra structure $\mathbb{D}_{{\Lambda}}$ on the vector space with basis all oriented circle diagrams $\underline{\la}\nu\overline{\mu}$, where $\la,\mu,\nu \in \Lambda$. This algebra is called the {\it Khovanov algebra of type\footnote{Some readers might prefer to see here Khovanov algebras of type $B$ appearing, but as shown in \cite[9.7]{ESperv}, this is just a matter of perspective: a Khovanov algebra of type $B_n$ is isomorphic to one of type ${\rm D}_{n+1}.$} $\rm D$}  attached to the (diagrammatic) block $\Lambda$. By  \cite[Theorem 6.2]{ESperv} it restricts to an algebra structure on the vector space $\mathbb{D}_{\Lambda(\cB)}$ spanned by all circle diagrams $\underline{\la}\nu\overline{\mu}$ with $\la,\mu\in\Lambda(\cB)$ via the obvious idempotent truncation. Let $\mathbbm{1}_{\cB}$ be the corresponding idempotent projecting onto this subalgebra and consider the idempotent truncation  $\mathbbm{1}_{\cB} \mathbb{D}_{{\Lambda}} \mathbbm{1}_{\cB}$. To make the connection with the combinatorics of Theorem~\ref{A}, we prove in Proposition~\ref{isideal} that its oriented circle diagrams which contain at least one non-propagating line, span an ideal $\mathbb{I}$ in $\mathbbm{1}_{\cB} \mathbb{D}_{{\Lambda}} \mathbbm{1}_{\cB}$. We call this the {\it  nuclear ideal} and its elements {\it nuclear morphisms}. \\

Now our main theorem is the following, where $P=\oplus_{\la\in\Lambda(\cB)} P(\la)$ is a minimal projective generator of the chosen block $\cB$.
\begin{theorem}
\label{main}
There is an isomorphism of algebras 
\begin{eqnarray*}
\mathbbm{1}_{\cB} \mathbb{D}_{{\Lambda}} \mathbbm{1}_{\cB}/\mathbb{I}&\cong&\op{End}^{\fin}_\cF(P). 
\end{eqnarray*}
\end{theorem}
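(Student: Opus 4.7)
The plan proceeds in three steps. First, by Theorem~\ref{A} together with Proposition~\ref{isideal}, a basis of $\mathbbm{1}_{\cB}\mathbb{D}_{\Lambda}\mathbbm{1}_{\cB}/\mathbb{I}$ is given by the oriented circle diagrams $\underline{\la}\nu\overline{\mu}$ with $\la,\mu\in\Lambda(\cB)$ whose underlying shape $\underline{\la}\overline{\mu}$ contains no non-propagating line, and the cardinality of this basis agrees with $\sum_{\la,\mu\in\Lambda(\cB)}\dim\Hom_\cF(P(\la),P(\mu))=\dim\op{End}^{\fin}_\cF(P)$. Hence the theorem reduces to exhibiting a surjective algebra homomorphism from the left-hand side to the right-hand side; its kernel must then automatically coincide with $\mathbb{I}$ by dimension count.

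Next, I would construct such a map via Schur--Weyl duality, realising both algebras as idempotent truncations of a common ``limit'' construction on the Brauer side. Every indecomposable projective $P(\la)\in\cF$ appears as a direct summand of some tensor power $V^{\otimes d}$ of the natural representation, and the $\mg$-endomorphism ring of $V^{\otimes d}$ is controlled by the action of the Brauer algebra $\op{Br}_d(r-2n)$, by the Schur--Weyl duality of~\cite{ESSchurWeyl}; assembling these into a suitable inverse limit in $d$ and idempotent-truncating by $\mathbbm{1}_\cB$ yields a model for $\op{End}^{\fin}_\cF(P)$. The type D Khovanov algebra $\mathbb{D}_\Lambda$ itself, by the construction in~\cite{ESperv}, is assembled from cup--cap diagrams in a parallel graded/limit fashion. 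One then sends each diagrammatic generator of $\mathbb{D}_\Lambda$ (cups, caps, vertical strands with their type D decorations, together with the surgery moves) to the corresponding Brauer-style tangle transported through Schur--Weyl duality, and verifies that the surgery relations of \cite{ESperv} translate into the Brauer/orthosymplectic relations. The signs carried by the decorations --- which distinguish the type D algebra from the type A analogue used in \cite{BS4} --- are pinned down by Lemma~\ref{lemplusminus} together with the Jucys--Murphy calculus for the Brauer algebra, to be developed in Part~II.

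Finally, to identify the kernel one observes that a non-propagating line in $\underline{\la}\overline{\mu}$ corresponds precisely to a composition of cup--cap morphisms factoring through a summand that ceases to exist in the limit (this is the conceptual content of Proposition~\ref{isideal}), so nuclear morphisms must be zero in $\op{End}^{\fin}_\cF(P)$; combined with the dimension count from Theorem~\ref{A} this forces the induced map $\mathbbm{1}_\cB\mathbb{D}_\Lambda\mathbbm{1}_\cB/\mathbb{I}\to\op{End}^{\fin}_\cF(P)$ to be an isomorphism as soon as it is surjective. The principal obstacle is matching the two diagrammatic calculi exactly: the type D surgery rules of \cite{ESperv} and the composition of Brauer tangles modulo the $\mg$-action differ by subtle signs, and, unlike in type A, the tensor space $V^{\otimes d}$ is not semisimple, so the block decomposition of $\op{Br}_d(r-2n)$ is not aligned with that of $\cF$. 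Reconciling the type D surgery signs with those produced by Jucys--Murphy eigenvalues on Brauer cells, and showing that after idempotent truncation by $\mathbbm{1}_\cB$ and passage to the quotient by $\mathbb{I}$ the remaining discrepancies vanish, is the delicate heart of the argument and the reason the full proof is deferred to Part~II.
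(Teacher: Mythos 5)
The paper does not actually prove Theorem~\ref{main} in this installment: it establishes only the underlying vector-space isomorphism (Theorem~\ref{mainprop}) and the fact that the nuclear diagrams span an ideal (Proposition~\ref{isideal}), while the algebra isomorphism is explicitly deferred to Part~II, with a sketch of the intended argument given in the introduction. Your proposal shares the broad strategy --- Schur--Weyl duality, Brauer-algebra idempotent truncations, a dimension count via Theorem~\ref{A}, and a correct flag that Lemma~\ref{lemplusminus} and the Jucys--Murphy calculus are the deferred technical heart.

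The concrete gap lies in your step two, where you describe the algebra map as ``sending each diagrammatic generator of $\mathbb{D}_\Lambda$ (cups, caps, vertical strands with their type D decorations, together with the surgery moves) to the corresponding Brauer-style tangle.'' But $\mathbb{D}_\Lambda$ is not a tangle category presented by cup/cap generators subject to surgery as relations; it is an algebra with a distinguished basis of oriented circle diagrams on which surgery is the \emph{multiplication rule}, not a defining relation, so no map is specified by such a generator assignment. The paper's outlined route goes the other way: Proposition~\ref{D} already supplies the surjection $\Phi_{d,\delta}\colon e\Brd e\surj \op{End}_\cF(P')$, and the remaining task is to identify the source $e\Brd e$ with an idempotent truncation of $\mathbb{D}_\Lambda$. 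The two bridges the paper marshals for this --- and which your proposal does not invoke --- are the theorem of \cite{ESKoszul} that the basic Brauer algebra $\Bd$ is isomorphic to the Khovanov-style algebra $\Bgraded$, which is precisely what realises Brauer multiplication inside the circle-diagram surgery calculus, and the combinatorial dictionary $\nu^\dagger=\la^\infty$ of Proposition~\ref{PropE}, which matches the cup diagrams indexing projective summands of $V^{\otimes d}$ (via \cite{CH}) with the super diagrammatic weights of Definition~\ref{superweight}. Without these ingredients the map you propose is not well defined, and the dimension count in your step three cannot get started.
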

Here, $\op{End}^{\fin}_\cF(P)=\oplus_{\la\in\Lambda(\cB)}\op{Hom}_\cF(P(\la),P)$ denotes the locally finite endomorphism ring of $P$. This locally finiteness adjustment is necessary, since the labelling set $\Lambda(\cB)$ of the indecomposable projective modules in $\cB$ is infinite, and so we have to work with infinite blocks of diagrammatic weights. But we like to stress that for any chosen finite sum $\oplus_{\la\in J\subset \Lambda(\cB)} P(\la)$, the corresponding (ordinary) endomorphism ring is automatically finite dimensional. In practise, the endomorphism ring can then be computed in a quotient of an appropriate Khovanov algebra (of type ${\rm D}$ or equivalently of type ${\rm B}$) attached to a finite diagrammatic block.

Since $\mathbb{D}_{{\Lambda}}$ is by construction a (non-negatively) $\mZ$-graded algebra, and $\mathbb{I}$ is a graded ideal, we deduce that 

\begin{corollary}
\label{grading}
$\mathbbm{1}_{\cB} \mathbb{D}_{{\Lambda}} \mathbbm{1}_{\cB}/\mathbb{I}\cong\op{End}^{\fin}_\cF(P)$ is a graded algebra.
\end{corollary}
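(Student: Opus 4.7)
The plan is to harvest the grading entirely on the diagrammatic side and then transport it through the isomorphism supplied by Theorem~\ref{main}. The starting point is that $\mathbb{D}_{\Lambda}$ carries, by the explicit construction in \cite[\S6.1]{ESperv}, a non-negative $\mathbb{Z}$-grading whose homogeneous basis consists of the oriented circle diagrams $\underline{\la}\nu\overline{\mu}$, with degree given by the number of clockwise cups and caps in the diagram. Since this basis is compatible with the decomposition indexed by $(\la,\mu)$, the idempotent $\mathbbm{1}_{\cB}=\sum_{\la\in\Lambda(\cB)}e_\la$ is a sum of homogeneous (in fact degree-zero) idempotents, so the idempotent truncation $\mathbbm{1}_{\cB}\mathbb{D}_{\Lambda}\mathbbm{1}_{\cB}$ inherits a $\mathbb{Z}$-grading with the truncated set of oriented circle diagrams as a homogeneous basis.

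Next, I would verify that the nuclear ideal $\mathbb{I}$ is a graded (i.e.\ homogeneous) ideal. By Proposition~\ref{isideal} the ideal $\mathbb{I}$ is, as a vector space, spanned by those basis vectors $\underline{\la}\nu\overline{\mu}$ whose underlying circle diagram $\underline{\la}\overline{\mu}$ contains at least one non-propagating line. Since this condition depends only on the unoriented circle diagram $\underline{\la}\overline{\mu}$ and not on the orientation $\nu$, the spanning set of $\mathbb{I}$ is a subset of the homogeneous basis of $\mathbbm{1}_{\cB}\mathbb{D}_{\Lambda}\mathbbm{1}_{\cB}$. Hence $\mathbb{I}$ is spanned by homogeneous elements and is therefore a graded subspace; being already known to be a two-sided ideal, it is a graded ideal.

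It then follows formally that the quotient $\mathbbm{1}_{\cB}\mathbb{D}_{\Lambda}\mathbbm{1}_{\cB}/\mathbb{I}$ carries a well-defined non-negative $\mathbb{Z}$-grading. Invoking Theorem~\ref{main} and transporting the grading along the algebra isomorphism produces a grading on $\op{End}^{\fin}_\cF(P)$, proving the corollary.

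The only step requiring any care is the second one, namely that $\mathbb{I}$ is homogeneous. This comes down to the observation that whether a circle diagram contains a non-propagating line is a combinatorial property of the cup--cap pattern alone, independent of the chosen orientation, so the spanning set of $\mathbb{I}$ picked out by Proposition~\ref{isideal} consists of honest basis vectors rather than some merely linear combinations of them. Once this is in place, everything else is formal. In particular, I would emphasise that no separate argument is needed to grade $\op{End}^{\fin}_\cF(P)$ intrinsically; the grading is by construction pulled back from the diagrammatic model.
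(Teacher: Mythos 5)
Your proposal is correct and is essentially the same argument the paper has in mind: the paper simply asserts that $\mathbb{D}_{\Lambda}$ is graded and that $\mathbb{I}$ is a graded ideal, and you fill in precisely the expected justification (homogeneous diagram basis for $\mathbbm{1}_{\cB}\mathbb{D}_{\Lambda}\mathbbm{1}_{\cB}$, the nuclear spanning set is a subset of that basis, hence $\mathbb{I}$ is homogeneous) before transporting the grading through Theorem~\ref{main}. One small remark: the clause that nuclearity ``depends only on the unoriented circle diagram and not on $\nu$'' is not actually what is needed — all that matters is that $\mathbb{I}$ is spanned by honest basis vectors, each of which is homogeneous, and this would hold even if the defining condition referenced $\nu$ — but this does not affect the correctness of the argument.
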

In analogy to the general linear supergroup case, \cite{BS4}, it is natural to expect that 
this grading is in fact a Koszul grading in the sense of \cite{MOS} which is a version of \cite{BGS} for locally finite algebras with infinitely many idempotents. This expectation is easy to verify for $\op{OSp}(3|2)$ using the description from Section~\ref{Koszulity}. 

\begin{conjecture} \label{conj:koszul}
The algebra $\op{End}^{\fin}_\cF(P)$ is Koszul.
\end{conjecture}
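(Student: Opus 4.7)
The plan is to prove Koszulity of $\op{End}^{\fin}_\cF(P)$ via the explicit description $\mathbbm{1}_{\cB} \mathbb{D}_{{\Lambda}} \mathbbm{1}_{\cB}/\mathbb{I}$ furnished by Theorem~\ref{main} and the $\mZ$-grading from Corollary~\ref{grading}. The strategy splits naturally into two stages: first establishing Koszulity of the ambient type $\mathrm{D}$ Khovanov algebra $\mathbb{D}_\Lambda$ (after idempotent truncation), and then analysing the passage to the nuclear quotient.

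For the first stage, I would exploit the connection---mentioned in the introduction and developed in \cite{ESperv}---between $\mathbb{D}_\Lambda$ and parabolic category $\cO$ of type $\mathrm{D}$ with maximal parabolic of type $\mathrm{A}$. Through this identification, Koszulity of (a locally finite version of) $\mathbb{D}_\Lambda$ should follow from the Koszul grading on these parabolic blocks in the spirit of \cite{BGS}, or its locally finite variant \cite{MOS}, applied via localisation to the geometry of isotropic Grassmannians of type $\mathrm{B}/\mathrm{D}$ and the associated maximal parabolic Kazhdan-Lusztig polynomials alluded to in the introduction. The idempotent truncation $\mathbbm{1}_{\cB}\mathbb{D}_\Lambda\mathbbm{1}_{\cB}$ inherits a Koszul grading because $\mathbbm{1}_{\cB}$ is a sum of degree-zero primitive idempotents indexed by $\Lambda(\cB)$.

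For the second stage, I would show that quotienting by the nuclear ideal $\mathbb{I}$ preserves Koszulity. Concretely, one first verifies that $\mathbb{I}$ is quadratic: its minimal-degree generators are oriented circle diagrams containing exactly one non-propagating line, and analysis of the surgery rules governing multiplication in $\mathbb{D}_\Lambda$ shows that these minimal generators sit in degree two and produce all further non-propagating configurations by multiplication on the left and right. Granted this, one constructs explicit linear projective resolutions of the simples of $\mathbbm{1}_{\cB}\mathbb{D}_\Lambda\mathbbm{1}_{\cB}/\mathbb{I}$ by modifying those over $\mathbbm{1}_{\cB}\mathbb{D}_\Lambda\mathbbm{1}_{\cB}$, truncating at each step the contributions that factor through nuclear morphisms. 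The explicit check for $\op{OSp}(3|2)$ via the description in Section~\ref{Koszulity} should serve as the combinatorial template.

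The hard part will be the second stage. Unlike the type $\mathrm{A}$ setting of \cite{BS4}, no nuclear ideal is present there and Koszulity transfers directly from category $\cO$; here $\cF$ fails to be highest weight and classical Koszul duality for quasi-hereditary algebras is not available. One must therefore prove by a direct diagrammatic-homological argument that quotienting by $\mathbb{I}$ neither breaks quadraticity nor destroys the purity of $\Ext$-groups between simples. The likely technical heart is a careful interplay between the grading filtration on $\mathbb{D}_\Lambda$, the combinatorics of non-propagating lines, and the Kazhdan-Lusztig description of $\op{End}^{\fin}_\cF(P)$; some of these ingredients will presumably only become fully available once the Brauer algebra and Schur-Weyl duality arguments promised for Part~II are in place.
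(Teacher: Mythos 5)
The statement you are attempting to prove is explicitly a \emph{conjecture} in the paper: it is not proved there. The only evidence the paper offers is the verification for $\OSP(3|2)$ in Section~\ref{Koszulity}, carried out by exhibiting explicit linear projective resolutions of the simples over the quiver algebra of Theorem~\ref{exs}. There is no general argument in the paper for you to compare against, so a ``proof'' of this statement would constitute a genuine new result.

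Your two-stage outline is a reasonable way to organise an attack, and Stage~1 (Koszulity of $\mathbb{D}_\Lambda$, hence of $\mathbbm{1}_{\cB}\mathbb{D}_\Lambda\mathbbm{1}_{\cB}$, via the identification with parabolic category $\cO$ of type $\mathrm{D}$ and the Koszul grading from \cite{BGS}/\cite{MOS}) is plausible and consistent with what the paper suggests. The gap is entirely in Stage~2, and it is a real one, not a matter of detail: quotients of Koszul algebras by homogeneous (even quadratic) ideals are \emph{not} in general Koszul, and the paper flags precisely this obstruction by noting that $\cF$ is not highest weight, that $\mathbbm{1}_{\cB}\mathbb{D}_\Lambda\mathbbm{1}_{\cB}/\mathbb{I}$ is not even cellular, and that the usual transfer mechanisms from the type~$\mathrm{A}$ setting of \cite{BS4} are unavailable. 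Your assertion that ``one constructs explicit linear projective resolutions \ldots{} by modifying those over $\mathbbm{1}_{\cB}\mathbb{D}_\Lambda\mathbbm{1}_{\cB}$, truncating at each step the contributions that factor through nuclear morphisms'' is exactly the unproved heart of the matter; truncation of a linear resolution need not stay exact, let alone linear, and nothing in the diagrammatic set-up currently guarantees the required purity of $\Ext$-groups after passing to the quotient. You acknowledge this yourself in your final paragraph, which means your text is a research plan, not a proof. It cannot be presented as a proof of the conjecture.

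Two smaller cautions. First, even the quadraticity of $\mathbb{I}$ that you take as the starting point of Stage~2 is only checked in the $\OSP(3|2)$ example (where the nuclear ideal is spanned by $f_2\circ f_0$ and $g_0\circ g_2$, both in degree~$2$); in higher defect the minimal degrees of nuclear oriented circle diagrams, and whether they generate $\mathbb{I}$ as a two-sided ideal, require a separate combinatorial argument. Second, the paper cites \cite{MOS} rather than \cite{BGS} precisely because the relevant algebras are locally finite with infinitely many idempotents, so any Koszulity argument must be carried out in that framework from the outset; this is a technical but non-trivial point your outline glosses over.
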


The Khovanov algebras $\mathbb{D}_{\Lambda}$ of type ${\rm D}$ for {\it finite} diagrammatic blocks arose originally from classical highest weight Lie theory, since they describe blocks of parabolic category $\cO$ of type ${\rm D}$ or equivalently of type ${\rm B}$ with maximal parabolic of type ${\rm A}$, see \cite[Theorem 9.1 and Theorem 9.22]{ESperv}, and hence describe the category of perverse sheaves on isotropic Grassmannians. They also have an interpretation in the context of the geometry of the Springer fibers of type ${\rm D}$ or ${\rm C}$ for nilpotent elements corresponding to two-row partitions, \cite{ES1}, \cite{Wilbert}. 

Our infinite diagrammatic weights $\Lambda$ can be interpreted as elements in an appropriate limit of a sequence of finite diagrammatic weights. As in \cite{BS1} the resulting algebras $\mathbb{D}_{\Lambda}$  could then also be viewed as a limit of algebras $\mathbb{D}_{\Lambda_n}$ for certain finite blocks $\Lambda_n$. Hence, up to the ideal $\mathbb{I}$,  our main theorem connects the category $\cF$ to classical (that means {\it non-super}) infinite-dimensional highest weight Lie theory and classical (i.e. non-super) geometry in an appropriate limit. This is similar to the result for the general linear supergroups, \cite[Theorem~1.2]{BS4}. It is also a shadow of the so-called super duality conjectures \cite{superduality}, but in a subtle variation, since we deal here with finite-dimensional representations instead of the highest weight category $\cO$. Moreover, taking this limit for type ${\rm D}$ Khovanov algebras is technically more difficult than in type ${\rm A}$, since the (naive) parallel construction mimicking the type ${\rm A}$ case would produce infinite weights with infinite defect. To circumvent this problem we apply a diagrammatic trick and introduce so-called {\it frozen vertices} which force our infinite cup diagramms to have a finite number of cups, see Definition \ref{superweight}, which means the defect stays finite. This procedure crucially depends on $r$ and $n$. We expect that this diagrammatic trick also provides the passage between the limit categories introduced in \cite{SergICM} and the category $\cF$.

\subsection*{Gruson-Serganova combinatorics}
The proof of Theorem~\ref{A} is heavily based on the main combinatorial results of Gruson and Serganova, \cite{GS1} and \cite{GS2}, who also introduced a version of cup diagram combinatorics for $\SOSPrn$ very similar to ours. An explicit translation between the two set-ups is given below in \eqref{T2}. There are however some small, but important differences in our approaches:
\begin{itemize}
\item Gruson and Serganova work with certain natural, but {\it virtual} modules in the Grothendieck group (the Euler characteristics $\mathcal{E}(\la)$), whereas our combinatorics relies on {\it actual filtrations} of the projective modules with the subquotients being shadows of cell modules for the Brauer algebra.
\item Gruson and Serganova's formulas are {\it alternating} summation formulas, while ours are {\it positive  counting formulas}.
\item Gruson and Serganova work with the {\it special} orthosymplectic group, whereas we work with the orthosymplectic group,  which is better adapted to the diagram combinatorics and connects directly to the representation theory of Brauer algebras via \cite[Theorem~3.4]{SergICM}, \cite[Theorem~5.6]{LZg}.
\item Gruson and Serganova's cup diagram combinatorics unfortunately does not give a direct connection to the theory of Hecke algebras and Kazhdan-Lusztig polynomials, whereas our Khovanov algebra of type ${\rm D}$ is built from the Kazhdan-Lusztig combinatorics of the hermitian symmetric pair $(\rm{D}_n,\rm{A}_{n-1})$,  see \cite{LS}, \cite{ESperv}.
\end{itemize}

Comparing Theorem~\ref{A} with \cite[(5.15)]{BS3} and \cite[Theorem 2.1]{BS4}, our formulas indicate that one could expect some highest weight structure or at least some cellularity of each block $\cB$ of $\cF$ explaining our positive counting formulas and appearance of Kazhdan-Lusztig polynomials. But blocks of $\cF$ are not highest weight and not even cellular in general, as the example from Section~\ref{32} illustrates, and there are no obvious candidates for cell modules. This is a huge difference to the 
case  of $\glmn$, where parabolic induction of a finite-dimensional representation of  the Levi subalgebra $\glmn_0=\mathfrak{gl}(m)\oplus\mathfrak{gl}(n)$ produces a finite-dimensional {\it Kac module}. These modules are the standard modules for the highest weight structure of the category of integrable finite-dimensional representations in that case, see \cite[Theorem 4.47]{Brundan} or \cite[Theorem 1.1]{BS4}. Such a parabolic subalgebra, and hence such a class of modules is however not available for $\mg=\osprn$ if $r\geq 2, n\geq 1$.  Nevertheless, we claim that our counting formula arises from some natural filtrations on projective objects, whose origin we like to explain now.

\subsection*{Tensor spaces and Brauer algebras}
The tensor spaces $V^{\otimes d}$ for $d\geq 0$ from above already contain in some sense the complete information about the category $\cF$. Namely, each indecomposable projective $P(\la)$ occurs in $V^{\otimes d}$ for some large enough $d$, see e.g. \cite[Lemma 7.5]{CH}. By weight considerations and the 
action of $\sigma$ one can easily check that 
$\Hom_G(V^{\otimes d}, V^{\otimes d'})=\{0\}$ if $d$ and $d'$ have different parity (see also Remark~\ref{rem:brauercat}). Hence to understand the spaces of morphisms between projective modules in a fixed block $\cB$ of $\cF$, it suffices to consider the tensor spaces for each parity of $d$ separately.  Moreover, since the trivial representation appears as a quotient of $V\otimes V$ (via the pairing given by $\beta$), we have a surjection $P\otimes V\otimes V\surj  P\otimes\mC=P$ which splits if $P$ is projective. 
Thus we obtain the following:
\begin{lemma}
\label{B}
Let $J\subset\Lambda(\cB)$ be a finite subset of weights such that all $P(\la)$ are in the same block $\cB$ of $\cF$. Then $P'=\oplus_{\la\in J} P(\la)$ appears as a direct summand of $V^{\otimes d}$ for some large enough $d$.
\end{lemma}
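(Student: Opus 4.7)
The strategy rests on three ingredients. First, \cite[Lemma~7.5]{CH} provides, for each single $\la\in J$, an integer $d_\la$ such that $P(\la)$ is a direct summand of $V^{\otimes d_\la}$. Second, a parity-matching argument forces all the $d_\la$ to share a common parity. Third, the splitting $P(\la)\otimes V\otimes V\twoheadrightarrow P(\la)$ exhibited in the paragraph just before the lemma lets us raise every $d_\la$ to a common value $d$. Once every $P(\la)$ with $\la\in J$ is realised as a summand of the same $V^{\otimes d}$, Krull--Schmidt (applied to the pairwise non-isomorphic indecomposables $P(\la)$) immediately implies that $P'=\bigoplus_{\la\in J}P(\la)$ is itself a summand of $V^{\otimes d}$, which is the required conclusion.

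For the parity matching, I would observe that the nontrivial element $\sigma\in G\setminus G'$ satisfies $\sigma^2=1$ and acts on each simple $L(\la)\in\cF$ by a scalar $\epsilon(\la)\in\{\pm 1\}$, as recalled in the introduction. A short argument shows this scalar is a block invariant: if $\epsilon(\la)\neq\epsilon(\mu)$, then any extension of $L(\la)$ by $L(\mu)$ splits via the $\sigma$-eigenspace decomposition, whence $\mathrm{Ext}^1_\cF(L(\la),L(\mu))=0$. Thus $\sigma$ acts by a common value $\epsilon(\cB)$ on every simple, and hence on every indecomposable projective, of the block $\cB$. On the other hand, the sign by which $\sigma$ acts on the simple constituents of $V^{\otimes d}$ depends only on the parity of $d$; this is exactly the source of the vanishing $\Hom_G(V^{\otimes d},V^{\otimes d'})=\{0\}$ for $d\not\equiv d'\pmod 2$ recorded just above the lemma. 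Consequently every $d_\la$ with $\la\in J\subset\Lambda(\cB)$ has parity $\epsilon(\cB)$.

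For the shifting step, I would use the non-degenerate form $\beta$, which provides a split surjection $V\otimes V\twoheadrightarrow\mathbb{C}$; tensoring with the projective module $P(\la)$ yields a surjection $P(\la)\otimes V\otimes V\twoheadrightarrow P(\la)$ that splits because its target is projective. Since $P(\la)$ is already a summand of $V^{\otimes d_\la}$, the module $P(\la)\otimes V\otimes V$ is a summand of $V^{\otimes(d_\la+2)}$, and combined with the splitting, $P(\la)$ itself is a summand of $V^{\otimes(d_\la+2)}$. Iterating, $P(\la)$ is a summand of $V^{\otimes d}$ for every $d\geq d_\la$ of parity $\epsilon(\cB)$. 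Choosing any such $d$ dominating all the $d_\la$ for $\la\in J$ completes the argument via the Krull--Schmidt step of the first paragraph.

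The only subtle point is the parity bookkeeping, which rests on the precise structure of $G/G'$. It is transparent in the odd case $r=2m+1$ of assumption~\ref{star}, where $G\cong G'\times\{\pm\mathrm{id}\}$ via \eqref{easyodd} and $\sigma=-\mathrm{id}$ acts on $V$ by $-1$, so on $V^{\otimes d}$ by $(-1)^d$; the even case (Section~\ref{sec:evencase}) is formally analogous but requires a more careful description of $G\setminus G'$. Apart from this piece of case-analysis, the entire argument is essentially formal given \cite[Lemma~7.5]{CH} and the splitting induced by $\beta$, which is why the paper states the conclusion as an immediate consequence of the preceding paragraph.
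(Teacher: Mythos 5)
Your proposal reconstructs the argument the paper has in mind: realize each $P(\la)$ individually inside some $V^{\otimes d_\la}$ via \cite[Lemma~7.5]{CH}, observe that all the $d_\la$ for $\la\in J\subset\Lambda(\cB)$ have a common parity, use the split surjection $P(\la)\otimes V\otimes V\twoheadrightarrow P(\la)$ furnished by $\beta$ to raise each $d_\la$ to a common $d$, and finish by Krull--Schmidt. The paper does not print a formal proof --- the lemma is stated as an immediate consequence of the preceding paragraph, which supplies exactly these three ingredients --- so your proposal is in substance the intended one.

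The one place I would tighten is the parity step. Your claim that $\sigma$ acts by a scalar $\epsilon(\la)$ on \emph{every} simple $L(\la)\in\cF$, and that the resulting $\sigma$-eigenspace decomposition kills extensions across eigenvalues, is correct only when $\sigma$ is central, i.e.\ under assumption~\ref{star} ($r$ odd). In the even case $\sigma$ is not central and does \emph{not} act by a scalar on $L(\la^G)$ when $a_m\neq 0$ (cf.\ Proposition~\ref{erste}), so that route breaks down. You flag this yourself, but the fix is not merely ``a more careful description of $G\setminus G'$'': the clean, case-free argument is to use the vanishing $\Hom_G(V^{\otimes d},V^{\otimes d'})=\{0\}$ for $d\not\equiv d'\bmod 2$ directly. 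Since each $P(\la)$ is projective and injective, it appears both as a quotient and as a submodule of $V^{\otimes d_\la}$, so any nonzero $\Hom_\cF(P(\la),P(\mu))$ yields a nonzero composite $V^{\otimes d_\la}\twoheadrightarrow P(\la)\to P(\mu)\hookrightarrow V^{\otimes d_\mu}$, forcing $d_\la\equiv d_\mu\bmod 2$; this already shows $d_\la$ is well-defined modulo $2$ (take $\la=\mu$) and, chaining through nonzero Homs, that the parity is constant on the block $\cB$. This replaces your $\sigma$-eigenspace lemma entirely and covers both the odd and even cases uniformly.
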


To achieve our goal (to determine the endomorphism ring of all such $P'$) we first consider endomorphisms of these tensor spaces $V^{\otimes d}$. For this we use a super analogue of a result from classical invariant theory of the semisimple orthogonal and symplectic Lie algebras studied by Brauer in \cite{Brauer}.  

For fixed  $d\in\mathbb{Z}_{\geq0}$ and $\de\in\mC$, the {\it Brauer algebra} $\Brd$  is an algebra structure on the vector space with basis all equivalence classes of Brauer diagrams for $d$. A {\it Brauer diagram for $d$} is a partitioning of the set $\{\pm1,\pm2,\ldots, \pm d\}$ into two element subsets. One can display this by  identifying $\pm  j$ with the point $(j,\pm 1)$ in the plane and connect two points in the same subset by an arc inside the rectangle $[1,d]\times [-1,1]$. Here is an example of a Brauer diagram for $d=11$:
 

\abovedisplayskip0.25em
\belowdisplayskip0.25em
 \begin{equation}
\label{diagram}
\begin{tikzpicture}[thick,>=angle 90]
\begin{scope}[xshift=8cm]
\draw (0,0) -- +(0,1);
\draw (.6,0) -- +(.6,1);
\draw (1.2,0) -- +(-.6,1);
\draw (1.8,0) to [out=90,in=-180] +(.9,.5) to [out=0,in=90] +(.9,-.5);
\draw (1.8,1) to [out=-90,in=-180] +(.6,-.4) to [out=0,in=-90] +(.6,.4);
\draw (2.4,0) -- +(0,1);
\draw (3,0) -- +(.6,1);
\draw (4.2,0) -- +(0,1);
\draw (4.8,0) to [out=90,in=-180] +(.3,.3) to [out=0,in=90] +(.3,-.3);
\draw (4.8,1) to [out=-90,in=-180] +(.3,-.3) to [out=0,in=-90] +(.3,.3);
\draw (6,0) -- +(0,1);
\end{scope}
\end{tikzpicture}
\end{equation}
 
Given two Brauer diagrams $D_1$ and $D_2$ we can stack $D_2$ on top of $D_1$. The result is again a Brauer diagram $D$ after we removed  possible internal loops and the process is independent of the chosen visualization. Setting $D_1D_2=\de^cD$, where $c$ is the number of internal loops removed, defines the associative algebra structure  $\op{Br}_d(\de)$ on the vector space with basis given by Brauer diagrams. Here is an example of the product of two basis vectors:

\abovedisplayskip0.25em
\belowdisplayskip0.25em
\begin{equation}
\label{multiplication}
\begin{tikzpicture}[thick,>=angle 90]
\begin{scope}
\draw (0,0) -- +(0,1);
\draw (.6,0) -- +(.6,1);
\draw (1.2,0) to [out=90,in=-180] +(.3,.3) to [out=0,in=90] +(.3,-.3);
\draw (.6,1) to [out=-90,in=-180] +(.6,-.4) to [out=0,in=-90] +(.6,.4);
\node at (2.4,.5) {$\bullet$};
\end{scope}
\begin{scope}[xshift=3cm]
\draw (0,1) to [out=-90,in=-180] +(.3,-.3) to [out=0,in=-90] +(.3,.3);
\draw (1.2,1) to [out=-90,in=-180] +(.3,-.3) to [out=0,in=-90] +(.3,.3);
\draw (0,0) to [out=90,in=-180] +(.6,.4) to [out=0,in=90] +(.6,-.4);
\draw (.6,0) to [out=90,in=-180] +(.6,.4) to [out=0,in=90] +(.6,-.4);
\node at (2.4,.5) {$=$};
\end{scope}
\begin{scope}[xshift=6.5cm]
\node at (-.5,.5) {$\de$};
\draw (0,1) to [out=-90,in=-180] +(.6,-.4) to [out=0,in=-90] +(.6,.4);
\draw (.6,1) to [out=-90,in=-180] +(.6,-.4) to [out=0,in=-90] +(.6,.4);
\draw (0,0) to [out=90,in=-180] +(.6,.4) to [out=0,in=90] +(.6,-.4);
\draw (.6,0) to [out=90,in=-180] +(.6,.4) to [out=0,in=90] +(.6,-.4);
\end{scope}
\end{tikzpicture}
\end{equation}

  We use the following important result.
\begin{prop}[{\cite[Theorem~3.4]{SergICM}, \cite[Theorem 5.6]{LZfirst}}] \label{prop:Brauersurj}
 Let $\de=r-2n$.  Then the canonical algebra homomorphism
 \begin{eqnarray}
\label{surj}
\op{Br}_d(\de)&\surj& \op{End}_{\OSP(r|2n)}(V^{\otimes d})
\end{eqnarray}
is surjective.
 \end{prop}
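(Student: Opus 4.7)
The plan is to produce the map as a super-analogue of Brauer's classical construction, and then to obtain surjectivity by dualising the endomorphism space and invoking the First Fundamental Theorem of invariant theory for $\OSPrn$. In outline: first construct a well-defined algebra homomorphism $\op{Br}_d(\delta)\to\op{End}(V^{\otimes d})$ and verify it lands in the $\OSPrn$-invariants, then show by a dimension/spanning argument that every invariant endomorphism is hit.

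For the construction, it suffices to define the action on generators. The simple crossings $s_i\in\op{Br}_d(\delta)$ act on $V^{\otimes d}$ as the graded flip $v\otimes w\mapsto (-1)^{|v||w|}w\otimes v$ in positions $i,i+1$, and the cap-cup generators $e_i$ act as $\op{id}^{\otimes (i-1)}\otimes(\beta^{\vee}\circ\beta)\otimes\op{id}^{\otimes(d-i-1)}$, where $\beta:V\otimes V\to\mathbb{C}$ is the chosen super-symmetric form and $\beta^{\vee}:\mathbb{C}\to V\otimes V$ is the dual coevaluation. The Brauer relations are then straightforward graded computations: the braid and commutation relations are those of the graded symmetric group, and the key relation $e_i^2=\delta e_i$ holds precisely because $\beta\circ\beta^{\vee}=\op{sdim}(V)=r-2n=\delta$. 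That the image commutes with $\OSPrn$ follows because $\beta$ and hence $\beta^{\vee}$ are $\mg$-invariant by the very definition of $\osprn$, while the graded flip commutes with the diagonal $\mathfrak{gl}(V)$-action and hence with $\mg\subset\mathfrak{gl}(V)$; equivariance for the disconnected element $\sigma\in G\smallsetminus G'$ is immediate since $\sigma$ preserves $\beta$.

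For surjectivity, the bilinear form $\beta$ induces a $G$-equivariant isomorphism $V\cong V^{*}$, hence
\[
\op{End}_G(V^{\otimes d})\;\cong\;\bigl(V^{\otimes d}\otimes (V^{\otimes d})^{*}\bigr)^{G}\;\cong\;\bigl(V^{\otimes 2d}\bigr)^{G}.
\]
Under this identification, one checks that the image of a Brauer diagram $D$ corresponds to the invariant tensor obtained by contracting the tensor positions paired by $D$ using $\beta^{\vee}$. The First Fundamental Theorem of invariant theory for the orthosymplectic supergroup $\OSPrn$, established in the form stated in \cite[Theorem~5.6]{LZfirst} (see also \cite[Theorem~3.4]{SergICM}), asserts that $(V^{\otimes 2d})^{G}$ is spanned by precisely such contraction tensors indexed by perfect matchings of $\{1,\dots,2d\}$, i.e.\ by Brauer diagrams. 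This exactly produces the required surjectivity.

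The main obstacle is the FFT input itself: the classical Brauer argument in the non-super setting uses polarisation and a reduction to invariants under a torus, and in the super setting one has to handle the odd directions $V_{1}$ carefully, keeping track of signs and showing that no additional Pfaffian-type invariants occur. It is precisely the use of the \emph{full} orthosymplectic group $G=\OSPrn$ (rather than $\SOSPrn$) that rules out such extra invariants, which is why the surjectivity statement is clean in our setting and matches with Brauer diagrams without modification.
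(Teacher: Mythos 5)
The paper does not actually prove this proposition; it is taken as a known black-box and simply cited from \cite[Theorem~3.4]{SergICM} and \cite[Theorem~5.6]{LZfirst}, with the paragraph following the proposition merely explaining how a Brauer diagram acts on $V^{\otimes d}$. Your proposal is a correct and slightly more detailed unwinding of that same citation: the generator-and-relations construction, the verification of $e_i^2=\delta e_i$ via $\beta\circ\beta^\vee=\op{sdim}V=\delta$, and the equivariance check all match the paper's informal description of the map, and your reduction $\op{End}_G(V^{\otimes d})\cong(V^{\otimes 2d})^G$ followed by the spanning statement for invariant contraction tensors is the standard route. However, the essential surjectivity input — the First Fundamental Theorem for $\OSPrn$ — is exactly the content of the theorems you cite, so the hard step is deferred to the same sources the paper cites; your proposal does not give an independent proof of that step (nor should it be expected to, since this is the genuine nontrivial ingredient). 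Where your write-up adds real value is the observation in your last paragraph: the FFT for the full group $\OSPrn$ excludes Pfaffian-type extra invariants, which is why the statement is clean for $\OSP$ but fails for $\SOSP(2m|2n)$. This is precisely the structural point the paper emphasizes immediately after the proposition (with references to \cite{LZg} and \cite[Remark~5.8]{ESSchurWeyl} and Remark~\ref{obsVera}), and it is the main reason the whole paper is set up over $\OSPrn$ rather than $\SOSPrn$. So there is no gap, but you should present the surjectivity step explicitly as an appeal to the cited FFT rather than implying it has been derived.
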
   

Hereby a Brauer diagram $D$ acts on a tensor product $v_1\otimes v_2\otimes\cdots\otimes v_d$ as follows: We identify the $d$ tensor factors with the bottom points of the diagrams. Whenever there is a cap (connecting horizontally two bottom points) we pair the corresponding vectors using $\beta$ and obtain a scalar multiple of the vector $v_{i_1}\otimes \cdots \otimes v_{i_t}$, where $t$ equals $d$ minus twice the number of caps and $v_{i_j}=v_{k}$ if the $j$th top point not connected to another top point (by a cup) is connected with the $k$th point at the bottom.  Finally we insert for each cup a pair of new factors arising as the image of $1 \in \mathbb{C}$ under the counit map $\mC\mapsto V\otimes V$, see e.g. \cite[(3.3)]{SergICM} or \cite[Theorem~3.11]{ESSchurWeyl} for details.

We like to stress that the map \eqref{surj} fails to be surjective in general if we work with $G=\SOSP(2m|2n)$ or its Lie algebra $\osp(2m|2n)$, see  e.g. \cite{LZg} and  \cite[Remark 5.8]{ESSchurWeyl} as well as Remark~\ref{obsVera}.

Because of Proposition~\ref{prop:Brauersurj}, we chose to work with $G=\OSPrn$ instead of the more commonly studied supergroup $\SOSPrn$. This requires then however a translation and adaption of the results from the literature (including \cite{GS1}, \cite{GS2}) to $\OSPrn$. In case $r=2m+1$ is odd this is an easy task, since we have 
 \begin{eqnarray}
 \label{easyodd}
 \op{OSp}(2m+1|2n)&\cong&\op{SOSp}(2m+1|2n)\times \mZ /2\mZ,
 \end{eqnarray}
   where the generator of the cyclic group is minus the identity. If $r=2m$ is even, we only have a semidirect product
    \begin{eqnarray}
    \label{semidirect}
   \op{OSp}(2m|2n)\cong\op{SOSp}(2m|2n)\rtimes \mZ /2\mZ,
    \end{eqnarray}
 and the situation is rather involved. A larger part of the present paper is devoted to this problem. We believe that, in contrast to the case of $\SOSPrn$, the blocks for $\OSPrn$ are completely determined by their atypicality, see Remark~\ref{remarkatyp}. 

\begin{remarks}
\label{rem:brauercat}
Instead of considering only single tensor product spaces $V^{\otimes d}$ as in \eqref{surj}, one might prefer to  work with the  tensor subcategory $(V,\otimes)$ of $\cF(\OSP(r|2n))$ generated by $V$ (for any fixed nonnegative integers $r$, $n$). Then the surjection \eqref{surj} can in fact be extended to a full monoidal functor from the Brauer {\it category} $\op{Br}(\de)$ to $(V,\otimes)$, see e.g. \cite{CW}. An object in the Brauer category (which is just a natural number $d$) is sent to $V^{\otimes d}$ and a basis morphism (that is a Brauer diagram as in \eqref{diagram} but not necessarily with the same number of bottom and top points) is sent to the corresponding intertwiner. Hence the Brauer category controls all intertwiners. Again, this statement is not true for the special orthosymplectic groups, not even for the odd cases $\op{SOSp}(2m+1|2n)$, since one can find some integer $d$ with a non-trivial morphism from $V^{\otimes d}$ to $V^{\otimes (d+1)}$, see Remark~\ref{obsVera}. Such a morphism however can not come from a morphism in the Brauer category, since for a diagram in the Brauer category the number of dots on the top and on the bottom of the diagram have the same parity. 
The Karoubi envelope of the additive closure of the Brauer category can also be identified with Deligne's universal symmetric category $\underline{\op{Rep}}(O_\delta)$, \cite{Deligne2}, as used e.g. in \cite{CH}, \cite{SergICM}.
\end{remarks}

Using Proposition~\ref{prop:Brauersurj} and Lemma~\ref{B}, we can find an idempotent $e=e_{d,\delta}$ in $\Brd$ such that the following holds:

\begin{prop}
\label{D}
Let $J$ and $P'=\oplus_{\la\in J} P(\la)$ be as in Lemma~\ref{B}. Then there is an idempotent $e=e_{d,\delta}$ in $\Brd$ together with a surjective algebra homomorphism 
\begin{eqnarray}
\label{eBe}
\Phi=\Phi_{d,\delta}:\,\, e\Brd e&\surj&\op{End}_\cF(P')
\end{eqnarray}
identifying the primitive idempotents in both algebras. In particular every idempotent in $\op{End}_\cF(P')$ lifts.
\end{prop}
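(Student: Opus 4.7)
The plan is to combine Lemma~\ref{B} with Proposition~\ref{prop:Brauersurj} and then appeal to standard lifting of idempotents along surjections of finite-dimensional algebras.

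First, by Lemma~\ref{B} we can choose $d\geq 0$ (with parity matching the chosen block $\cB$, as required by the vanishing observation in Remark~\ref{rem:brauercat}) so that there is a direct sum decomposition $V^{\otimes d}\cong P'\oplus Q$ in $\cF$. Let $e'\in A:=\op{End}_\cF(V^{\otimes d})$ be the associated projection idempotent; the standard Morita identification for idempotent truncations then gives $e'Ae'\cong\op{End}_\cF(P')$. By Proposition~\ref{prop:Brauersurj} we have a surjection $\Phi_d:\Brd\surj A$ of finite-dimensional $\mC$-algebras. We lift $e'$ to an idempotent $e=e_{d,\de}\in\Brd$ with $\Phi_d(e)=e'$, and set $\Phi:=\Phi_{d,\de}:=\Phi_d|_{e\Brd e}$. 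Then $\Phi(e\Brd e)=\Phi_d(e)A\Phi_d(e)=e'Ae'=\op{End}_\cF(P')$, giving the desired surjection.

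For the lifting itself, the key input is that any surjection of finite-dimensional (hence semiperfect) $\mC$-algebras admits lifting of idempotents. Concretely, the induced surjection $\Brd/J(\Brd)\surj A/J(A)$ is a surjection between finite products of matrix algebras over $\mC$ and therefore splits as an algebra map (it simply forgets certain Wedderburn factors); through this splitting we lift $e'$ first to an idempotent of $\Brd/J(\Brd)$, and then through the nilpotent radical $J(\Brd)$ to an idempotent $e\in\Brd$. The lift is canonical only up to conjugation, but this is harmless because the original idempotent $e'$ is itself only well defined up to the choice of splitting $V^{\otimes d}\cong P'\oplus Q$.

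Finally, for the identification of primitive idempotents, the Krull--Schmidt decomposition $P'=\bigoplus_{\la\in J}P(\la)$ provides a complete set of primitive orthogonal idempotents $\{\pi_\la\}_{\la\in J}$ in $\op{End}_\cF(P')$ summing to $e'$. Applying the same lifting procedure inside the sub\-algebra $e\Brd e$ yields primitive orthogonal idempotents $\{e_\la\}_{\la\in J}\subset e\Brd e$ with $\sum_{\la\in J}e_\la=e$ and $\Phi(e_\la)=\pi_\la$, giving the asserted bijection. Any idempotent of $\op{End}_\cF(P')$ is, after conjugation, a sum of $\pi_\la$'s, so the same argument yields the final lifting statement. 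The main technical obstacle is the lifting step: the kernel of $\Phi_d$ need not be nilpotent (it can contain entire semisimple blocks of $\Brd$ associated to cell modules that vanish on $V^{\otimes d}$), so one cannot appeal directly to the usual ``idempotents lift modulo a nil ideal'' form of the theorem; the Wedderburn--Malcev splitting of the semisimple quotients is what makes the argument work.
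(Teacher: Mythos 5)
The paper does not actually spell out a proof of Proposition~\ref{D}; it is stated in the overview section with the lead-in ``Using Proposition~\ref{prop:Brauersurj} and Lemma~\ref{B}, we can find an idempotent $e=e_{d,\delta}$\dots'', deferring the systematic treatment to Part~II. Your argument follows exactly this indicated route (Lemma~\ref{B} gives $V^{\otimes d}\cong P'\oplus Q$, Proposition~\ref{prop:Brauersurj} gives the surjection $\Brd\surj\End_\cF(V^{\otimes d})$, then lift the summand idempotent), and you correctly isolate the one non-routine point: the kernel of $\Phi_d$ is typically \emph{not} nilpotent, so the naive ``idempotents lift modulo a nil ideal'' does not apply directly, and one must instead go through the semisimple quotients. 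That is the right way to think about it, and the Wedderburn--Malcev/semiperfect machinery you invoke is the standard tool.

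Two places could be tightened, though neither is a genuine gap. First, the lift you obtain a priori only satisfies $\Phi_d(e)$ \emph{conjugate} to $e'$, so $\Phi_d(e\Brd e)$ is an isomorphic copy of $e'Ae'$ rather than literally equal; you remark this is harmless, and it is, since the statement only asserts the \emph{existence} of a surjection $\Phi$ onto $\End_\cF(P')$, which you get after composing with the conjugation isomorphism. (Alternatively, note that a unit of a finite-dimensional algebra lifts along any surjection to a unit---lift it mod $J$, correct by an element of $1+J$---and the conjugation can thus be absorbed to produce a literal lift.) Second, when you lift the complete set $\{\pi_\la\}$ to $\{e_\la\}$ you should justify that the $e_\la$ are \emph{primitive}: the reason is that the Wedderburn section $A/J(A)\hookrightarrow \Brd/J(\Brd)$ is, up to isomorphism, the inclusion of a subproduct of matrix factors, hence carries rank-one projections to rank-one projections; lifting through the nilpotent radical $J(\Brd)$ then yields idempotents $g$ with $g\Brd g/J(g\Brd g)\cong\mC$, i.e.\ primitive. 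Conversely, it is worth noting why $\Phi$ does not collapse distinct $e_\la$: a nonzero image of a primitive idempotent under an algebra surjection is again primitive (a quotient of a local ring is local or zero), and if $e_\la,e_\mu$ were conjugate their images $\pi_\la,\pi_\mu$ would be, forcing $P(\la)\cong P(\mu)$, hence $\la=\mu$. With these two clarifications your argument is complete and matches what the paper appears to intend.
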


Comes and Heidersdorf obtain in \cite[Theorem 7.3]{CH} a classification of the indecomposable summands in $V^{\otimes d}$ in terms of idempotents of the Brauer algebra and our (yet another) cup diagram combinatorics for the Brauer algebra developed in \cite{ES2}. They moreover prove in \cite[Lemma 7.15]{CH} that the indecomposable {\it projective} summands $P(\la)$ correspond to cup diagrams with maximal possible number, namely $\op{min}\{m,n\}-\op{rk}(\la)$, of cups. Here $\op{rk}(\la)$ denotes the rank of $\lambda$ which is a combinatorially defined nonnegative number. Unfortunately their theorem provides no way to read off the weight $\la$ from the cup diagram. In part II we will show that a diagrammatic trick as in \cite[Lemma 8.18]{BS5} for the walled Brauer algebra can be applied in our set-up as well and provides a correspondence that allows to read off the head of each indecomposable summand, and in particular of the projective summand.

More precisely,  let $c$ be the cup diagram corresponding to a projective summand in $V^{\otimes d}$ via \cite[Lemma 7.15]{CH}. 
 Let $\nu$ be the corresponding diagrammatic weight, that is the unique diagrammatic weight $\nu$ such that $c=\underline{\nu}$, see Remark~\ref{weightforc}. Now let $\nu^\dagger$ be the diagrammatic weight obtained from $\nu$ by changing all labels attached to rays in $c$ from $\up$ into $\down$ and from $\down$ into $\up$. Then the correspondence is given by the following:

\begin{prop}
\label{PropE}
In the set-up from above we have $\nu^\dagger=\la^\infty$, with $\la^\infty$ the infinite diagrammatic weight attached to $\la$ via \eqref{Sla}.
\end{prop}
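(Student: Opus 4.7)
The plan is to identify both $\nu^\dagger$ and $\lambda^\infty$ as the diagrammatic weight attached to the head of the indecomposable projective summand of $V^{\otimes d}$ cut out by the Comes–Heidersdorf idempotent $e_c$, reached via two different routes that are then forced to coincide. By construction $\nu$ is the unique diagrammatic weight with $\underline{\nu}=c$, so its rays are read directly from the shape of $c$ (together with the frozen-vertex convention of Definition~\ref{superweight}). On the other hand $\lambda^\infty$ is read from the highest weight $\lambda$ via \eqref{Sla}. Since the head of $P(\lambda)$ is $L(\lambda)$, the two weights must lie in the same diagrammatic block $\Lambda$; by Proposition~\ref{blocksindiagramsodd} this already fixes the core symbols $\times,\circ$ and the parity of $\wedge$'s, and since $\dagger$ acts only on ray labels, the actual content of the proposition reduces to the claim that the $\wedge/\vee$ pattern on the rays of $\nu^\dagger$ matches that of $\lambda^\infty$.

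First I would use Proposition~\ref{prop:Brauersurj} and Proposition~\ref{D} to transport the question into $\op{Br}_d(\delta)$. Via \cite[Lemma 7.15]{CH} the projective summands of $V^{\otimes d}$ are cut out by idempotents $e_c$ attached to cup diagrams $c$ with the maximal number of cups, and these idempotents can be described as simultaneous eigenvectors of the Jucys–Murphy elements of $\op{Br}_d(\delta)$ for a specific residue sequence; this is the point at which Lemma~\ref{lemplusminus} and the JM-theory promised for Part~II enter. The residue sequence is read off from $c$ by walking across the vertices, and the very same datum, once passed through the frozen-vertex mechanism, encodes a diagrammatic weight. The flip $\dagger$ is forced on us at precisely this step: the Comes–Heidersdorf recipe records the orientation of rays dictated by the Brauer-algebraic ``down'' side of $e_c$, whereas the head $L(\lambda)$ reads the weight on the dual, ``up'' side, exactly as in the walled Brauer analogue \cite[Lemma 8.18]{BS5}.

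Concretely, I would argue by induction on $d$ using the translation functors $\vartheta=V\otimes(-)$, which lift to the Brauer-algebra side under \eqref{surj} and which act on cup diagrams by the local moves worked out in \cite{ES2}. The inductive claim is that $\nu^\dagger=\lambda^\infty$ is preserved under such a move, reducing the proposition to the base case where $c$ has no cups at all, i.e.\ the typical (defect zero) situation, in which the head is determined by the rays alone and $\nu=\lambda^\infty$ is essentially tautological from \eqref{Sla} together with the frozen-vertex normalization.

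The main obstacle will be the bookkeeping of ray labels through the inductive step: creating a single cup by $\vartheta$ can, via the global matching rule of Definition~\ref{decoratedcups}, convert an $\wedge$-ray into a $\vee$-ray at a vertex far from the site of the local move, and one has to verify that the very same non-local rearrangement is produced on the highest-weight side by the corresponding shift $\lambda\mapsto\lambda'$. Equivalently, one must confirm that the JM-eigenvalue assignment identifying $e_c$ with $P(\lambda)$, after the flip $\dagger$, agrees with the assignment implicit in \eqref{Sla}. This is the combinatorial heart of the argument and is precisely the computation deferred to Part~II.
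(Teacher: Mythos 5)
The paper itself offers no proof of Proposition~\ref{PropE} in Part~I. The text just before the statement says that reading $\lambda$ off from the cup diagram $c$ via a trick in the style of \cite[Lemma 8.18]{BS5} "will be shown in Part~II," and a later remark uses the "identification from Proposition~\ref{PropE}" as input established there. So at the level of strategy your proposal matches the paper: you invoke the Comes--Heidersdorf classification, the walled Brauer analogy, and ultimately defer the Jucys--Murphy verification to Part~II, which is exactly what the paper does as well.

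As a standalone sketch, however, it has concrete flaws that you should repair before it could be called an argument. The claimed base case ``$c$ has no cups, so $\nu=\lambda^\infty$ is essentially tautological'' contradicts the statement: the proposition asserts $\nu^\dagger=\lambda^\infty$, and the flip $\dagger$ is never vacuous, since $\nu$ (being the unique weight with $\underline{\nu}=c$, and $c$ having only finitely many cups) is of super type, with $\down$'s on its rays far to the right, whereas $\lambda^\infty$ is of hook partition type, with $\up$'s there. Your invocation of the frozen-vertex convention of Definition~\ref{superweight} is also misplaced: that mechanism converts $\pla^\infty$ to $\pla^\owedge$, but Proposition~\ref{PropE} is a statement about $\lambda^\infty$ alone, and no frozen vertex enters it. Finally, your opening appeal to Proposition~\ref{blocksindiagramsodd} to ``already fix the core symbols'' is circular: that result ties the block of $L(\lambda)$ to the core diagram of $\pla^\infty$, which is only usable once you know which $\lambda$ the summand cut out by the idempotent is the projective cover of --- and identifying that $\lambda$ from $c$ is precisely what the proposition is supposed to deliver.
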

For examples see Section~\ref{sec:examples}. Note that for  $P'$ (as in Lemma~\ref{B}) Proposition~\ref{PropE} provides a description of the idempotent $e$ in \eqref{eBe}.

\subsection*{The shadow of a quasi-hereditary or cellular structure}
\label{shadow}
Fortunately, the (complex) representation theory of $\Brd$ for arbitrary $\de\in\mZ$ is by now reasonably well understood thanks to the results in \cite{PaulMartin},  \cite{CDM}, \cite{CoxdVi}, \cite{ES2}, \cite{ESKoszul}. In particular it is known that  $\Brd$ is a quasi-hereditary algebra if $\de\not=0$ and still cellular in case $\de=0$,  \cite{PaulMartin}, see also \cite[Theorem~5.4]{ES2}.
 For the sake of simplicity let us assume for the next paragraph that $\de\not=0$. Let $\cP_d$ be the usual labelling set of simple modules for $\Brd$  by partitions, see \cite{PaulMartin}, \cite{CoxdVi}, and denote by $L_d(\alpha)$,  $P_d(\alpha)$,  and $\Delta_d(\alpha)$ the simple module, its projective cover and the corresponding standard module respectively attached to $\alpha\in \cP_d$. Standard properties for quasi-hereditary algebras, the BGG-reciprocity (see \cite[A2.2~(iv)]{Donkin}) and the existence of a duality preserving the simple objects, give us that
 \begin{eqnarray}
 \op{dim}\op{Hom}_{\Brd}(P_d(\alpha),P_d(\beta))&=&[P_d(\beta) :L(\alpha)]\quad\nonumber
 \end{eqnarray}
 is equal to 
 \begin{eqnarray}
 \sum_{\eta\in\cP_d}[\Delta_d(\eta) :L(\alpha)](P_d(\beta) :\Delta(\eta))
 &=&\sum_{\eta\in\cP_d}(P_d(\alpha) :\Delta(\eta))(P_d(\beta) :\Delta(\eta))\quad\quad\label{qh}
 \end{eqnarray}
where $[M:L]$ denotes the multiplicity of a simple module $L$ in a Jordan-H\"older series of $M$ and $(P:\Delta)$ denotes the multiplicity of $\Delta$ appearing as a subquotient in a standard filtration of $P$.  As first observed in \cite{PaulMartin}, see also \cite{CoxdVi}, all the occurring multiplicities are either $0$ or $1$ and given by some parabolic Kazhdan-Lusztig polynomial (which is in fact monomial) evaluated at $1$. 

Now since $\Brd$ is quasi-hereditary with standard modules $\Delta_d(\alpha)$, the idempotent truncation $e\Brd e$ from \eqref{eBe} is cellular, with cell modules $\Delta_d(\alpha)e$, see \cite[Proposition~4.3]{KoenigXi}. Hence the endomorphism algebra in question is by Proposition~\ref{D} {\it a quotient} of a cellular algebra. Unfortunately,  we have the following:
\begin{center}
{\it Quotients of cellular algebras need not be cellular}. 
\end{center}

However, there is still some extra structure. Given a projective $e\Brd e$-module $P_d(\la)$ with $\la^\dagger\in J$ and $e$ as in Proposition~\ref{D}, and a fixed filtration with subquotients certain cell modules $\Delta^{e\Brd e}(\nu^\dagger)$, then this filtration induces a filtration\footnote{More generally given a finite-dimensional algebra $A$ and a quotient algebra $A/I$ with surjection $\Psi:A\rightarrow A/I$, any $A$-module filtration of $Ae_\la$ for an idempotent $e_\la$  induces a filtration on $A/I \Psi(e_\la)$ by taking just the image.} of the projective module $P(\la^\dagger)\in\cF$ via the algebra homomorphism $\Phi_{d,\delta}$.
 
The shape of the successive subquotients, $\Delta^{\cF}(\la^\dagger, \nu^\dagger)=\Delta(\la^\dagger, \nu^\dagger)$  does however in general  not only depend on $\nu^\dagger$, but also on $\la^\dagger$, that means on the projective module we chose. (A priori, in case of higher multiplicities,  two subquotients might even differ although they arise from isomorphic cell modules in $P_d(\la)$. But this turns out to be irrelevant for our counting and so we can ignore it.)  It is the multiplicities of these quotients of the cell modules which we count in our main Theorem~\ref{A}. In particular we still have a well-defined {\it positive counting formula} for the multiplicities for each given pair $(\la, \nu)$. Hence, although we do not have standard or cell modules, we still have some control about the structure of projective objects. Moreover,
\begin{center}
{\it The failure of quasi-heriditarity and cellularity of the category $\cF$ is encoded in the kernel of the maps $\Phi_{d,\delta}$, from \eqref{eBe}.}
\end{center}
We need now to connect this information with Theorem~\ref{A} and describe the kernel.

\subsection*{Graded version  $\Brgraded$ of the Brauer algebra $\Brd$}
To determine the number $(P_d(\alpha) :\Delta_d(\eta))$ one can, as in \cite{CoxdVi} or \cite{ES2},  first assign to each of the partitions $\alpha$ and $\eta$ a diagrammatic weight, denoted by the same letter and compute the corresponding cup diagram $\underline{\alpha}$  using the rules in Definition~\ref{decoratedcups}. Then the multiplicity in question is non-zero (and therefore equal to $1$) if and only if $\underline{\alpha}\eta$ is oriented in the sense of Figure~\ref{oriented}, see \cite[(8.64)]{ESperv}. 

Now consider the endomorphism ring 
\[
\Bd:=\op{End}_{\Brd}(\oplus_{\alpha\in\cP_d}P_d(\alpha))
\]
of a minimal projective generator of  $\Brd$. That is $\Bd$ is the basic algebra underlying $\Brd$. 
Then a basis of $\Bd$ can be labelled by pairs of oriented cup diagrams of the form $(\underline{\alpha}\eta, \underline{\beta}\eta)$  or equivalently by oriented circle diagrams  $\underline{\alpha}\eta\overline{\beta}$, where $\alpha, \eta, \beta\in \cP_d$.

By \cite[Section 6.2]{ESperv}, there is an algebra structure  $\Bgraded$ on the vector space spanned by such circle diagrams using the multiplication rules of the type ${\rm D}$ Khovanov algebra from \cite{ESperv}. Using the degree of circle diagrams from Figure~\ref{oriented}, this turns $\Bd$ into a $\mZ$-graded algebra $\Bgraded$. It provides a new realization of the basic Brauer algebra, namely a graded lift of the basic algebra $\Bd$:

\begin{theorem}{\cite[Theorem A]{ESKoszul}}
The algebra $\Bgraded$ is isomorphic to the basic Brauer algebra $\Bd$ as ungraded algebras. 
\end{theorem}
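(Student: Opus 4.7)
My plan would be to identify both algebras as the basic algebra of the same additive category, namely the category of projective-injective objects in an appropriate parabolic category $\cO$ of type $\rm D$.

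First, I would verify that the two algebras agree on the level of vector space dimensions. Using the quasi-hereditary (or cellular when $\delta=0$) structure of $\op{Br}_d(\delta)$ and BGG reciprocity as in \eqref{qh}, the dimension of the $(\alpha,\beta)$-component of $\Bd$ equals $\sum_{\eta\in\cP_d} (P_d(\alpha):\Delta_d(\eta))\cdot(P_d(\beta):\Delta_d(\eta))$. By the Cox--De Visscher/Martin combinatorics referenced before the theorem, each multiplicity is $0$ or $1$, with value $1$ precisely when $\underline{\alpha}\eta$ is oriented. Thus the dimension counts oriented circle diagrams $\underline{\alpha}\eta\overline{\beta}$, which by construction is the dimension of the corresponding piece of $\Bgraded$.

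Second, I would realize $\Bd$ Lie-theoretically. For each block of $\op{Br}_d(\delta)\text{-mod}$ one can pick a classical orthogonal or symplectic Lie algebra $\mathfrak{k}\in\{\mathfrak{so}(N),\mathfrak{sp}(N)\}$ with $\delta$ and $N$ compatible so that the Brauer--Schur--Weyl surjection $\op{Br}_d(\delta)\twoheadrightarrow\op{End}_\mathfrak{k}(V^{\otimes d})$ has kernel annihilated by the idempotent $e_\cB$ cutting out the chosen block. The indecomposable summands of $e_\cB V^{\otimes d}$ are, via translation from the (anti)dominant block, the projective-injective modules in a block of parabolic category $\cO(\mathfrak{k}_{N+d})$ with maximal parabolic of type $\rm A$. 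Then I would invoke \cite[Theorem~9.1]{ESperv}, which identifies the endomorphism algebra of a projective generator of such a block with an idempotent truncation of the type $\rm D$ Khovanov algebra $\mathbb{D}_\Lambda$. Restricting further to the projective-injective summands gives an idempotent truncation whose basic algebra is, by the very definition of the Khovanov multiplication on circle diagrams, the algebra $\Bgraded$.

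The hard part will be making this comparison work uniformly across all $\alpha\in\cP_d$, including those in non-stable range where $V^{\otimes d}$ has different decomposition behaviour. This forces a block-by-block choice of $N$, and one must verify that the various idempotent truncations splice together compatibly; equivalently, that the stable Brauer combinatorics is correctly captured by a single limit of Khovanov algebras. The other subtle point is matching the multiplicative structure: the surgery rules of the Khovanov algebra need to correspond to composition of morphisms coming from translation functors on the parabolic $\cO$ side, which is the technical heart of \cite{ESperv}. Once these compatibilities are in place, both $\Bgraded$ and $\Bd$ are identified with the basic algebra of the same idempotent-truncated block, producing the desired ungraded isomorphism with the bonus that $\Bgraded$ supplies a $\mZ$-grading.
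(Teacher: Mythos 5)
The paper cites this statement from \cite{ESKoszul} without reproducing the proof, so there is no internal argument to compare against. Your high-level plan -- realize $\Bd$ as the basic algebra of the projective-injective part of a parabolic category $\cO$ of type $\rm D$ and then invoke \cite[Theorem~9.1]{ESperv} -- is indeed the flavor of the argument in \cite{ESKoszul}, and your reduction of the dimension count to \eqref{qh} and the oriented circle diagram combinatorics is correct. But the mechanism you propose for entering category $\cO$ has a fatal flaw.

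The Brauer--Schur--Weyl map $\op{Br}_d(\delta)\twoheadrightarrow\op{End}_{\mathfrak{k}}(V^{\otimes d})$ with $\mathfrak{k}\in\{\mathfrak{so}(N),\mathfrak{sp}(N)\}$ forces $N$ to equal $|\delta|$ (or $\delta=-2n$), so $N$ cannot be adjusted block by block; more importantly, $\mathfrak{k}$ is a semisimple Lie algebra over $\mathbb{C}$, so $V^{\otimes d}$ is a direct sum of irreducibles and $\op{End}_{\mathfrak{k}}(V^{\otimes d})$ is a product of matrix algebras. The entire Jacobson radical of $\op{Br}_d(\delta)$ therefore lies in the kernel of this map, and one cannot recover the non-semisimple structure of $\Bd$ this way; no idempotent $e_\cB$ can "annihilate" a kernel that contains the radical without collapsing the block to its semisimplification. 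The subsequent jump from finite-dimensional $\mathfrak{k}_N$-modules to parabolic category $\cO(\mathfrak{k}_{N+d})$ is not a defined construction. What \cite{ESKoszul} actually uses is a category $\cO$ version of Schur--Weyl duality: the Brauer algebra (as a level-two cyclotomic quotient of a Nazarov--Wenzl/VW algebra) acts on $M_{\mathfrak{p}}\otimes V^{\otimes d}$, where $M_{\mathfrak{p}}$ is a parabolic Verma module for a fixed Lie algebra of type $\rm D$ (or $\rm B$) with maximal parabolic of type $\rm A$, and the parameter $\delta$ is encoded in the highest weight of $M_{\mathfrak{p}}$, not in the rank. It is this degenerate affine BMW-style functor, not finite-dimensional Schur--Weyl duality, that lands in the projective-injective subcategory of parabolic $\cO$ and allows ESperv to be applied. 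As written, your step two cannot be repaired without replacing the finite-dimensional invariant theory input by the category $\cO$ one.
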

In fact this grading can also be extended to provide a grading on $\Brd$, but for our purposes it suffices to work with the basic algebra $\Bd$. 

\subsection*{Explicit endomorphism algebra}
Given the diagrammatic description  $\Bgraded$ of $\Bd$,  the idempotent truncation  $e\Brgraded e$  (which is by definition a subalgebra) is easily described by only allowing certain cup diagrams depending on $e$, in fact precisely the $\underline{\la}$ corresponding to elements in $J$. However, the description of the kernel of $\Phi_{d,\delta}$ is more tricky. For \eqref{surj} this kernel was described in \cite{LZsecond}, but their description is not very suitable for our purposes. Instead  we obtain a similar result as in \cite[Theorem~8.1 and Corollary~8.2]{BS5} (although the proof is quite different), which will be explained in Part~II. It implies that the kernel is controlled by the ideal $\mathbb{I}$ of nuclear endomorphisms.

To summarize: For any choice of block $\cB$ and set of weights $J$ as in Lemma~\ref{B} and $\Phi_{d,\delta}$ as in Proposition~\ref{D}, we will  map  in Part~II the circle diagrams from $\Brgraded$, picked out by $e\Brgraded e$, to the corresponding basis element of some Khovanov algebra $\mathbb{D}_\Lambda$ using the identification from Proposition~\ref{PropE} and the identification from Definition~\ref{superweight} of integral highest weights with diagrammatic weights. We will show that under this assignment the kernel of $\Phi_{d,\delta}$ restricted to $e\Brd e$ is mapped to the ideal $\mathbb{I}$ of nuclear circle diagrams.  As a result we deduce finally Theorem~\ref{main}. The construction induces a grading on each block $\cB$ which we expect to be Koszul.

\subsection*{Acknowledgements}We are very grateful to Vera Serganova for sharing her insight and pointing out a mistake in a first draft of the paper. We thank  Jonathan Comes, Kevin Coloumbier, Antonio Sartori and Wolfgang Soergel for useful discussions, and Volodymyr Mazorchuk and Michel Van den Bergh for remarks on a first draft. We finally thank the referee for many helpful comments.

\section{An illustrating example: \texorpdfstring{$\cF(\op{SOSp}(3|2))$}{F(SOSp(3,2)}}
\label{32}

Before we start we describe blocks of $\cF(\op{SOSp}(3|2))$ in terms of a quiver with relations using  Theorems~\ref{A} and~\ref{main}, see also Section~\ref{sec:oddcase} for the precise passage to $\cF(\op{OSp}(3|2))$. In this case $m=n=1$ and $\delta=1$. By \cite[Lemma 7 (ii)]{GS1}, all blocks are semisimple or equivalent to the principal block $\cB$ (of atypicality $1$) containing the trivial representation. We therefore restrict ourselves to this block. 
The explicit description of this category is not new, but was obtained already by Germoni in \cite[Theorem 2.1.1]{Germoni}. We reproduce the result here using our diagram algebras. 

\subsection{The indecomposable projectives and the algebra}
By Definition~\ref{dominance}, the block $\cB$ contains the simple modules $L^\mg(\la)$ of (with our choice of Borel) highest weight $\la$,  where $\la\in\{\la_a\mid a\geq0\}$ whose elements written in the standard basis are $\la_0=(0\,|\,0)$ and $\la_a=(a\,|\,a-1)$ for $a>0$. We abbreviate the corresponding module by $L(a) $ and let $P(a)$ be its projective cover.  We assign to $P(a)$ via Definitions~\ref{superweight} and~\ref{decoratedcups} the cup diagram $\underline{\la_a}$ as shown in the second line of Figure~\ref{PIMS} (with infinitely many rays to the right), see also Section~\ref{sec:osp32}.

\special{em:linewidth 0.7pt} \unitlength 0.80mm
\linethickness{1pt}
{\intextsep0.3cm
\begin{figure}[!ht]
\begin{array}[t]{c|c|c|c|c|c}
P(0)&P(1)&P(2)&P(3)&P(4)&\cdots\\
\hline
\hline
&&&&&\\
\begin{tikzpicture}[scale=0.5]
\draw (-5,0) .. controls +(0,-.5) and +(0,-.5) .. +(.5,0);
\fill (-4.75,-0.35) circle(3pt);
\draw (-4,0) -- +(0,-.8);
\fill (-4,-.4) circle(3pt);
\draw (-3.5,0) -- +(0,-.8);
\draw (-3,0) -- +(0,-.8);
\draw (-2.5,0) -- +(0,-.8) node[midway,right] {$\cdots$};
\end{tikzpicture}
&
\begin{tikzpicture}[scale=0.5]
\draw (-.5,0) .. controls +(0,-.5) and +(0,-.5) .. +(.5,0);
\draw (.5,0) -- +(0,-.8);
\draw (1,0) -- +(0,-.8);
\draw (1.5,0) -- +(0,-.8);
\draw (2,0) -- +(0,-.8) node[midway,right] {$\cdots$};
\end{tikzpicture}
&
\begin{tikzpicture}[scale=0.5]
\draw (4,0) -- +(0,-.8);
\draw (4.5,0) .. controls +(0,-.5) and +(0,-.5) .. +(.5,0);
\draw (5.5,0) -- +(0,-.8);
\draw (6,0) -- +(0,-.8);
\draw (6.5,0) -- +(0,-.8) node[midway,right] {$\cdots$};
\end{tikzpicture}
&
\begin{tikzpicture}[scale=0.5]
\draw (8.5,0) -- +(0,-.8);
\draw (9,0) -- +(0,-.8);
\draw (9.5,0) .. controls +(0,-.5) and +(0,-.5) .. +(.5,0);
\draw (10.5,0) -- +(0,-.8);
\draw (11,0) -- +(0,-.8) node[midway,right] {$\cdots$};
\end{tikzpicture}
&
\begin{tikzpicture}[scale=0.5]
\draw (13,0) -- +(0,-.8);
\draw (13.5,0) -- +(0,-.8);
\draw (14,0) -- +(0,-.8);
\draw (14.5,0) .. controls +(0,-.5) and +(0,-.5) .. +(.5,0) ;
\draw (15.5,0) -- +(0,-.8);
\node at (16.3,-.42) {$\cdots$};
\end{tikzpicture}
&\cdots \\ \hline
&&&&&\\

\begin{picture}(20.00,8.00)
\special{em:linewidth 0.4pt} \unitlength 0.80mm
\linethickness{1pt}
\put(7.50,10.00){\makebox(0,0)[cc]{\tiny{$0$}}}
\put(7.50,5.00){\makebox(0,0)[cc]{\tiny{$2$}}}
\put(5.00,0){\makebox(0,0)[cc]{\tiny{$0$}}}
\put(10.00,0){\makebox(0,0)[cc]{\tiny{$3$}}}
\linethickness{0.3pt}
\dashline{1}(5.50,11.50)(5.50,8.50)
\dashline{1}(9.50,11.50)(9.50,8.50)
\dashline{1}(5.50,11.50)(9.50,11.50)
\dashline{1}(5.50,8.50)(9.50,8.50)
\dashline{1}(2.50,-2.00)(12.50,-2.00)
\dashline{1}(2.50,2.00)(2.50,-2.00)
\dashline{1}(12.50,2.00)(12.50,-2.00)
\dashline{1}(12.50,2.00)(7.50,7.50)
\dashline{1}(2.50,2.00)(7.50,7.50)
\end{picture}
&
\begin{picture}(20.00,0.00)
\special{em:linewidth 0.4pt} \unitlength 0.80mm
\linethickness{1pt}
\put(7.50,10.00){\makebox(0,0)[cc]{\tiny{$1$}}}
\put(7.50,5.00){\makebox(0,0)[cc]{\tiny{$2$}}}
\put(7.50,0){\makebox(0,0)[cc]{\tiny{$1$}}}
\linethickness{0.3pt}
\dashline{1}(5.50,11.50)(5.50,3.00)
\dashline{1}(9.50,11.50)(9.50,3.00)
\dashline{1}(5.50,11.50)(9.50,11.50)
\dashline{1}(5.50,3.00)(9.50,3.00)
\dashline{1}(5.50,2.00)(5.50,-2.00)
\dashline{1}(9.50,2.00)(9.50,-2.00)
\dashline{1}(5.50,2.00)(9.50,2.00)
\dashline{1}(5.50,-2.00)(9.50,-2.00)
\end{picture}
&
\begin{picture}(20.00,0.00)
\special{em:linewidth 0.4pt} \unitlength 0.80mm
\linethickness{1pt}
\put(7.50,10.00){\makebox(0,0)[cc]{\tiny{$2$}}}
\put(2.50,5.00){\makebox(0,0)[cc]{\tiny{$0$}}}
\put(7.50,5.00){\makebox(0,0)[cc]{\tiny{$1$}}}
\put(12.50,5.00){\makebox(0,0)[cc]{\tiny{$3$}}}
\put(7.50,0){\makebox(0,0)[cc]{\tiny{$2$}}}
\linethickness{0.3pt}
\dashline{1}(5.50,6.50)(5.50,-2.00)
\dashline{1}(9.50,6.50)(9.50,-2.00)
\dashline{1}(5.50,6.50)(9.50,6.50)
\dashline{1}(5.50,-2.00)(9.50,-2.00)
\dashline{1}(0.50,7.50)(0.50,2.50)
\dashline{1}(4.50,7.50)(4.50,2.50)
\dashline{1}(0.50,2.50)(4.50,2.50)
\dashline{1}(10.50,7.50)(10.50,2.50)
\dashline{1}(14.50,7.50)(14.50,2.50)
\dashline{1}(10.50,2.50)(14.50,2.50)
\dashline{1}(5.50,11.50)(9.50,11.50)
\dashline{1}(4.50,7.50)(10.50,7.50)
\dashline{1}(0.50,7.50)(5.50,11.50)
\dashline{1}(14.50,7.50)(9.50,11.50)
\end{picture}&
\begin{picture}(20.00,8.00)
\special{em:linewidth 0.4pt} \unitlength 0.80mm
\linethickness{1pt}

\put(7.50,10.00){\makebox(0,0)[cc]{\tiny{$3$}}}
\put(5.50,5.00){\makebox(0,0)[cc]{\tiny{$2$}}}
\put(12.50,5.00){\makebox(0,0)[cc]{\tiny{$4$}}}
\put(3.00,0){\makebox(0,0)[cc]{\tiny{$3$}}}
\put(8.00,0){\makebox(0,0)[cc]{\tiny{$0$}}}
\linethickness{0.3pt}
\dashline{1}(5.50,11.50)(5.50,8.50) 
\dashline{1}(5.50,11.50)(9.50,11.50) 
\dashline{1}(5.50,8.50)(10.50,3.00)  %
\dashline{1}(9.50,11.50)(14.50,6.00) %
\dashline{1}(11.50,3.00)(14.50,3.00) %
\dashline{1}(14.50,3.00)(14.50,6.00)

\dashline{1}(0.50,-2.00)(10.50,-2.00)
\dashline{1}(0.50,2.00)(0.50,-2.00)
\dashline{1}(10.50,2.00)(10.50,-2.00)
\dashline{1}(10.50,2.00)(5.50,7.50)
\dashline{1}(0.50,2.00)(5.50,7.50)
\end{picture}
&
\begin{picture}(20.00,8.00)
\special{em:linewidth 0.4pt} \unitlength 0.80mm
\linethickness{1pt}
\put(7.50,10.00){\makebox(0,0)[cc]{\tiny{$4$}}}
\put(2.50,5.00){\makebox(0,0)[cc]{\tiny{$3$}}}
\put(12.50,5.00){\makebox(0,0)[cc]{\tiny{$5$}}}
\put(7.50,0){\makebox(0,0)[cc]{\tiny{$4$}}}
\linethickness{0.3pt}
\dashline{1}(5.50,11.50)(5.50,8.50) 
\dashline{1}(5.50,11.50)(9.50,11.50) 
\dashline{1}(5.50,8.50)(10.50,3.00)  %
\dashline{1}(9.50,11.50)(14.50,6.00) %
\dashline{1}(11.50,3.00)(14.50,3.00) %
\dashline{1}(14.50,3.00)(14.50,6.00)
%
\dashline{1}(0.50,6.50)(0.50,3.50) 
\dashline{1}(0.50,6.50)(4.50,6.50) 
\dashline{1}(0.50,3.50)(5.50,-2.00)  %
\dashline{1}(4.50,6.50)(9.50,1.00) %
\dashline{1}(6.50,-2.00)(9.50,-2.00) %
\dashline{1}(9.50,-2.00)(9.50,1.00)
\end{picture}
&\cdots\\
&&&&&\\ \hline
&&&&&\\

\begin{picture}(20.00,8.00)
\special{em:linewidth 0.4pt} \unitlength 0.80mm
\linethickness{1pt}
\put(7.50,10.00){\makebox(0,0)[cc]{\tiny{$0$}}}
\put(7.50,5.00){\makebox(0,0)[cc]{\tiny{$2$}}}
\put(7.50,0){\makebox(0,0)[cc]{\tiny{$0$}}}
\end{picture}
&
\begin{picture}(20.00,0.00)
\special{em:linewidth 0.4pt} \unitlength 0.80mm
\linethickness{1pt}
\put(7.50,10.00){\makebox(0,0)[cc]{\tiny{$1$}}}
\put(7.50,5.00){\makebox(0,0)[cc]{\tiny{$2$}}}
\put(7.50,0){\makebox(0,0)[cc]{\tiny{$1$}}}
\linethickness{0.3pt}
\end{picture}
&
\begin{picture}(20.00,0.00)
\special{em:linewidth 0.4pt} \unitlength 0.80mm
\linethickness{1pt}
\put(7.50,10.00){\makebox(0,0)[cc]{\tiny{$2$}}}
\put(2.50,5.00){\makebox(0,0)[cc]{\tiny{$0$}}}
\put(7.50,5.00){\makebox(0,0)[cc]{\tiny{$1$}}}
\put(12.50,5.00){\makebox(0,0)[cc]{\tiny{$3$}}}
\put(7.50,0){\makebox(0,0)[cc]{\tiny{$2$}}}
\end{picture}&
\begin{picture}(20.00,8.00)
\special{em:linewidth 0.4pt} \unitlength 0.80mm
\linethickness{1pt}
\put(7.50,10.00){\makebox(0,0)[cc]{\tiny{$3$}}}
\put(2.50,5.00){\makebox(0,0)[cc]{\tiny{$2$}}}
\put(12.50,5.00){\makebox(0,0)[cc]{\tiny{$4$}}}
\put(7.50,0){\makebox(0,0)[cc]{\tiny{$3$}}}
\linethickness{0.3pt}
\end{picture}
&
\begin{picture}(20.00,8.00)
\special{em:linewidth 0.4pt} \unitlength 0.80mm
\linethickness{1pt}
\put(7.50,10.00){\makebox(0,0)[cc]{\tiny{$4$}}}
\put(2.50,5.00){\makebox(0,0)[cc]{\tiny{$3$}}}
\put(12.50,5.00){\makebox(0,0)[cc]{\tiny{$5$}}}
\put(7.50,0){\makebox(0,0)[cc]{\tiny{$4$}}}
\end{picture}
&\cdots \\
&&&&\\
\hline
\end{array}
\abovecaptionskip0.25cm
\caption{Indecomposable projectives in $\mathbbm{1}_\cB \mathbb{D}_\Lambda \mathbbm{1}_\cB $  versus $\mathbbm{1}_\cB \mathbb{D}_\Lambda \mathbbm{1}_\cB/\mathbb{I}.$ }
\label{PIMS}
\end{figure}}

The oriented circle diagrams built from the given cup diagrams are displayed in Figure~\ref{basis23}. They are obtained by putting one of the cup diagrams upside down on top of another one, and then orienting the result in the sense of Figure~\ref{oriented}.

{\intextsep0.2cm
\begin{figure}[!h]
\begin{eqnarray*}
\arraycolsep=.5pt
\begin{array}[t]{c|ccccc}
\op{deg} = 0\;&
\begin{tikzpicture}[anchorbase,thick,scale=0.32]
\node at (0,0) {$\scriptstyle \up$};
\node at (1,0) {$\scriptstyle \up$};
\node at (2,0) {$\scriptstyle \up$};
\node at (3,0) {$\scriptstyle \down$};
\node at (4,0) {$\scriptstyle \down$};
\node at (5,0) {$\scriptstyle \cdots$};
\draw (0,0) .. controls +(0,-1) and +(0,-1) .. +(1,0);
\fill (0.5,-.75) circle(4pt);
\draw (2,0) -- +(0,-1.5);
\fill (2,-.75) circle(4pt);
\draw (3,0) -- +(0,-1.5);
\draw (4,0) -- +(0,-1.5);

\draw (0,0) .. controls +(0,1) and +(0,1) .. +(1,0);
\fill (0.5,.75) circle(4pt);
\draw (2,0) -- +(0,1.5);
\fill (2,.75) circle(4pt);
\draw (3,0) -- +(0,1.5);
\draw (4,0) -- +(0,1.5);

\node at (2.5,-2.25) {$\mathbbm{1}_0$};
\end{tikzpicture}
&
\begin{tikzpicture}[anchorbase,thick,scale=0.32]
\node at (0,0) {$\scriptstyle \down$};
\node at (1,0) {$\scriptstyle \up$};
\node at (2,0) {$\scriptstyle \down$};
\node at (3,0) {$\scriptstyle \down$};
\node at (4,0) {$\scriptstyle \down$};
\node at (5,0) {$\scriptstyle \cdots$};
\draw (0,0) .. controls +(0,-1) and +(0,-1) .. +(1,0);
\draw (2,0) -- +(0,-1.5);
\draw (3,0) -- +(0,-1.5);
\draw (4,0) -- +(0,-1.5);

\draw (0,0) .. controls +(0,1) and +(0,1) .. +(1,0);
\draw (2,0) -- +(0,1.5);
\draw (3,0) -- +(0,1.5);
\draw (4,0) -- +(0,1.5);

\node at (2.5,-2.25) {$\mathbbm{1}_1$};
\end{tikzpicture}
&
\begin{tikzpicture}[anchorbase,thick,scale=0.32]
\node at (0,0) {$\scriptstyle \down$};
\node at (1,0) {$\scriptstyle \down$};
\node at (2,0) {$\scriptstyle \up$};
\node at (3,0) {$\scriptstyle \down$};
\node at (4,0) {$\scriptstyle \down$};
\node at (5,0) {$\scriptstyle \cdots$};
\draw (0,0) -- +(0,-1.5);
\draw (1,0) .. controls +(0,-1) and +(0,-1) .. +(1,0);
\draw (3,0) -- +(0,-1.5);
\draw (4,0) -- +(0,-1.5);

\draw (0,0) -- +(0,1.5);
\draw (1,0) .. controls +(0,1) and +(0,1) .. +(1,0);
\draw (3,0) -- +(0,1.5);
\draw (4,0) -- +(0,1.5);

\node at (2.5,-2.25) {$\mathbbm{1}_2$};
\end{tikzpicture}
&
\begin{tikzpicture}[anchorbase,thick,scale=0.32]
\node at (0,0) {$\scriptstyle \down$};
\node at (1,0) {$\scriptstyle \down$};
\node at (2,0) {$\scriptstyle \down$};
\node at (3,0) {$\scriptstyle \up$};
\node at (4,0) {$\scriptstyle \down$};
\node at (5,0) {$\scriptstyle \cdots$};
\draw (0,0) -- +(0,-1.5);
\draw (1,0) -- +(0,-1.5);
\draw (2,0) .. controls +(0,-1) and +(0,-1) .. +(1,0);
\draw (4,0) -- +(0,-1.5);

\draw (0,0) -- +(0,1.5);
\draw (1,0) -- +(0,1.5);
\draw (2,0) .. controls +(0,1) and +(0,1) .. +(1,0);
\draw (4,0) -- +(0,1.5);

\node at (2.5,-2.25) {$\mathbbm{1}_3$};
\end{tikzpicture}
&\\ \hline
&&&&&\\
\op{deg} = 1\;&
\begin{tikzpicture}[anchorbase,thick,scale=0.32]
\node at (0,0) {$\scriptstyle \down$};
\node at (1,0) {$\scriptstyle \down$};
\node at (2,0) {$\scriptstyle \up$};
\node at (3,0) {$\scriptstyle \down$};
\node at (4,0) {$\scriptstyle \down$};
\node at (5,0) {$\scriptstyle \cdots$};
\draw (0,0) .. controls +(0,-1) and +(0,-1) .. +(1,0);
\fill (0.5,-.75) circle(4pt);
\draw (2,0) -- +(0,-1.5);
\fill (2,-.75) circle(4pt);
\draw (3,0) -- +(0,-1.5);
\draw (4,0) -- +(0,-1.5);

\draw (0,0) -- +(0,1.5);
\draw (1,0) .. controls +(0,1) and +(0,1) .. +(1,0);
\draw (3,0) -- +(0,1.5);
\draw (4,0) -- +(0,1.5);

\node at (2.5,-2.25) {$f_0$};
\end{tikzpicture}
&
\begin{tikzpicture}[anchorbase,thick,scale=0.32]
\node at (0,0) {$\scriptstyle \down$};
\node at (1,0) {$\scriptstyle \up$};
\node at (2,0) {$\scriptstyle \down$};
\node at (3,0) {$\scriptstyle \down$};
\node at (4,0) {$\scriptstyle \down$};
\node at (5,0) {$\scriptstyle \cdots$};
\draw (0,0) .. controls +(0,-1) and +(0,-1) .. +(1,0);
\draw (2,0) -- +(0,-1.5);
\draw (3,0) -- +(0,-1.5);
\draw (4,0) -- +(0,-1.5);

\draw (0,0) -- +(0,1.5);
\draw (1,0) .. controls +(0,1) and +(0,1) .. +(1,0);
\draw (3,0) -- +(0,1.5);
\draw (4,0) -- +(0,1.5);

\node at (2.5,-2.25) {$f_1$};
\end{tikzpicture}
&
\begin{tikzpicture}[anchorbase,thick,scale=0.32]
\node at (0,0) {$\scriptstyle \down$};
\node at (1,0) {$\scriptstyle \down$};
\node at (2,0) {$\scriptstyle \up$};
\node at (3,0) {$\scriptstyle \down$};
\node at (4,0) {$\scriptstyle \down$};
\node at (5,0) {$\scriptstyle \cdots$};
\draw (0,0) -- +(0,-1.5);
\draw (1,0) .. controls +(0,-1) and +(0,-1) .. +(1,0);
\draw (3,0) -- +(0,-1.5);
\draw (4,0) -- +(0,-1.5);

\draw (0,0) -- +(0,1.5);
\draw (1,0) -- +(0,1.5);
\draw (2,0) .. controls +(0,1) and +(0,1) .. +(1,0);
\draw (4,0) -- +(0,1.5);

\node at (2.5,-2.25) {$f_2$};
\end{tikzpicture}
&
\begin{tikzpicture}[anchorbase,thick,scale=0.32]
\node at (0,0) {$\scriptstyle \down$};
\node at (1,0) {$\scriptstyle \down$};
\node at (2,0) {$\scriptstyle \down$};
\node at (3,0) {$\scriptstyle \up$};
\node at (4,0) {$\scriptstyle \down$};
\node at (5,0) {$\scriptstyle \cdots$};
\draw (0,0) -- +(0,-1.5);
\draw (1,0) -- +(0,-1.5);
\draw (2,0) .. controls +(0,-1) and +(0,-1) .. +(1,0);
\draw (4,0) -- +(0,-1.5);

\draw (0,0) -- +(0,1.5);
\draw (1,0) -- +(0,1.5);
\draw (2,0) -- +(0,1.5);
\draw (3,0) .. controls +(0,1) and +(0,1) .. +(1,0);

\node at (2.5,-2.25) {$f_3$};
\end{tikzpicture}\\
&&&&&\\
&
\begin{tikzpicture}[anchorbase,thick,scale=0.32]
\node at (0,0) {$\scriptstyle \down$};
\node at (1,0) {$\scriptstyle \down$};
\node at (2,0) {$\scriptstyle \up$};
\node at (3,0) {$\scriptstyle \down$};
\node at (4,0) {$\scriptstyle \down$};
\node at (5,0) {$\scriptstyle \cdots$};
\draw (0,0) .. controls +(0,1) and +(0,1) .. +(1,0);
\fill (0.5,.75) circle(4pt);
\draw (2,0) -- +(0,1.5);
\fill (2,.75) circle(4pt);
\draw (3,0) -- +(0,1.5);
\draw (4,0) -- +(0,1.5);

\draw (0,0) -- +(0,-1.5);
\draw (1,0) .. controls +(0,-1) and +(0,-1) .. +(1,0);
\draw (3,0) -- +(0,-1.5);
\draw (4,0) -- +(0,-1.5);

\node at (2.5,-2.25) {$g_0$};
\end{tikzpicture}
&
\begin{tikzpicture}[anchorbase,thick,scale=0.32]
\node at (0,0) {$\scriptstyle \down$};
\node at (1,0) {$\scriptstyle \up$};
\node at (2,0) {$\scriptstyle \down$};
\node at (3,0) {$\scriptstyle \down$};
\node at (4,0) {$\scriptstyle \down$};
\node at (5,0) {$\scriptstyle \cdots$};
\draw (0,0) .. controls +(0,1) and +(0,1) .. +(1,0);
\draw (2,0) -- +(0,1.5);
\draw (3,0) -- +(0,1.5);
\draw (4,0) -- +(0,1.5);

\draw (0,0) -- +(0,-1.5);
\draw (1,0) .. controls +(0,-1) and +(0,-1) .. +(1,0);
\draw (3,0) -- +(0,-1.5);
\draw (4,0) -- +(0,-1.5);

\node at (2.5,-2.25) {$g_1$};
\end{tikzpicture}
&
\begin{tikzpicture}[anchorbase,thick,scale=0.32]
\node at (0,0) {$\scriptstyle \down$};
\node at (1,0) {$\scriptstyle \down$};
\node at (2,0) {$\scriptstyle \up$};
\node at (3,0) {$\scriptstyle \down$};
\node at (4,0) {$\scriptstyle \down$};
\node at (5,0) {$\scriptstyle \cdots$};
\draw (0,0) -- +(0,1.5);
\draw (1,0) .. controls +(0,1) and +(0,1) .. +(1,0);
\draw (3,0) -- +(0,1.5);
\draw (4,0) -- +(0,1.5);

\draw (0,0) -- +(0,-1.5);
\draw (1,0) -- +(0,-1.5);
\draw (2,0) .. controls +(0,-1) and +(0,-1) .. +(1,0);
\draw (4,0) -- +(0,-1.5);

\node at (2.5,-2.25) {$g_2$};
\end{tikzpicture}
&
\begin{tikzpicture}[anchorbase,thick,scale=0.32]
\node at (0,0) {$\scriptstyle \down$};
\node at (1,0) {$\scriptstyle \down$};
\node at (2,0) {$\scriptstyle \down$};
\node at (3,0) {$\scriptstyle \up$};
\node at (4,0) {$\scriptstyle \down$};
\node at (5,0) {$\scriptstyle \cdots$};
\draw (0,0) -- +(0,1.5);
\draw (1,0) -- +(0,1.5);
\draw (2,0) .. controls +(0,1) and +(0,1) .. +(1,0);
\draw (4,0) -- +(0,1.5);

\draw (0,0) -- +(0,-1.5);
\draw (1,0) -- +(0,-1.5);
\draw (2,0) -- +(0,-1.5);
\draw (3,0) .. controls +(0,-1) and +(0,-1) .. +(1,0);

\node at (2.5,-2.25) {$g_3$};
\end{tikzpicture}\\
\hline
&&&&&\\
\op{deg} = 2\;&\begin{tikzpicture}[anchorbase,thick,scale=0.32]
\node at (0,0) {$\scriptstyle \down$};
\node at (1,0) {$\scriptstyle \down$};
\node at (2,0) {$\scriptstyle \up$};
\node at (3,0) {$\scriptstyle \down$};
\node at (4,0) {$\scriptstyle \down$};
\node at (5,0) {$\scriptstyle \cdots$};
\draw (0,0) .. controls +(0,-1) and +(0,-1) .. +(1,0);
\fill (0.5,-.75) circle(4pt);
\draw (2,0) -- +(0,-1.5);
\fill (2,-.75) circle(4pt);
\draw (3,0) -- +(0,-1.5);
\draw (4,0) -- +(0,-1.5);

\draw (0,0) .. controls +(0,1) and +(0,1) .. +(1,0);
\fill (0.5,.75) circle(4pt);
\draw (2,0) -- +(0,1.5);
\fill (2,.75) circle(4pt);
\draw (3,0) -- +(0,1.5);
\draw (4,0) -- +(0,1.5);

\node at (2.5,-2.25) {$g_0 \circ f_0$};
\end{tikzpicture}
&
\begin{tikzpicture}[anchorbase,thick,scale=0.32]
\node at (0,0) {$\scriptstyle \up$};
\node at (1,0) {$\scriptstyle \down$};
\node at (2,0) {$\scriptstyle \down$};
\node at (3,0) {$\scriptstyle \down$};
\node at (4,0) {$\scriptstyle \down$};
\node at (5,0) {$\scriptstyle \cdots$};
\draw (0,0) .. controls +(0,-1) and +(0,-1) .. +(1,0);
\draw (2,0) -- +(0,-1.5);
\draw (3,0) -- +(0,-1.5);
\draw (4,0) -- +(0,-1.5);

\draw (0,0) .. controls +(0,1) and +(0,1) .. +(1,0);
\draw (2,0) -- +(0,1.5);
\draw (3,0) -- +(0,1.5);
\draw (4,0) -- +(0,1.5);

\node at (2.5,-2.25) {$g_1 \circ f_1$};
\end{tikzpicture}
&
\begin{tikzpicture}[anchorbase,thick,scale=0.32]
\node at (0,0) {$\scriptstyle \down$};
\node at (1,0) {$\scriptstyle \up$};
\node at (2,0) {$\scriptstyle \down$};
\node at (3,0) {$\scriptstyle \down$};
\node at (4,0) {$\scriptstyle \down$};
\node at (5,0) {$\scriptstyle \cdots$};
\draw (0,0) -- +(0,-1.5);
\draw (1,0) .. controls +(0,-1) and +(0,-1) .. +(1,0);
\draw (3,0) -- +(0,-1.5);
\draw (4,0) -- +(0,-1.5);

\draw (0,0) -- +(0,1.5);
\draw (1,0) .. controls +(0,1) and +(0,1) .. +(1,0);
\draw (3,0) -- +(0,1.5);
\draw (4,0) -- +(0,1.5);

\node at (2.5,-2.25) {$\scriptstyle f_0\circ g_0= g_2 \circ f_2 = f_1 \circ g_1$};
\end{tikzpicture}
&
\begin{tikzpicture}[anchorbase,thick,scale=0.32]
\node at (0,0) {$\scriptstyle \down$};
\node at (1,0) {$\scriptstyle \down$};
\node at (2,0) {$\scriptstyle \up$};
\node at (3,0) {$\scriptstyle \down$};
\node at (4,0) {$\scriptstyle \down$};
\node at (5,0) {$\scriptstyle \cdots$};
\draw (0,0) -- +(0,-1.5);
\draw (1,0) -- +(0,-1.5);
\draw (2,0) .. controls +(0,-1) and +(0,-1) .. +(1,0);
\draw (4,0) -- +(0,-1.5);

\draw (0,0) -- +(0,1.5);
\draw (1,0) -- +(0,1.5);
\draw (2,0) .. controls +(0,1) and +(0,1) .. +(1,0);
\draw (4,0) -- +(0,1.5);

\node at (2.5,-2.25) {$g_3 \circ f_3 = f_2 \circ g_2$};
\end{tikzpicture}\\
&&&&&\vspace{-2mm}\\
&
\fbox{\begin{tikzpicture}[anchorbase,thick,scale=0.32]
\node at (0,0) {$\scriptstyle \down$};
\node at (1,0) {$\scriptstyle \down$};
\node at (2,0) {$\scriptstyle \up$};
\node at (3,0) {$\scriptstyle \down$};
\node at (4,0) {$\scriptstyle \down$};
\node at (5,0) {$\scriptstyle \cdots$};
\draw (0,0) .. controls +(0,-1) and +(0,-1) .. +(1,0);
\fill (0.5,-.75) circle(4pt);
\draw (2,0) -- +(0,-1.5);
\fill (2,-.75) circle(4pt);
\draw (3,0) -- +(0,-1.5);
\draw (4,0) -- +(0,-1.5);

\draw (0,0) -- +(0,1.5);
\draw (1,0) -- +(0,1.5);
\draw (2,0) .. controls +(0,1) and +(0,1) .. +(1,0);
\draw (4,0) -- +(0,1.5);

\node at (2.5,-2.25) {$f_2 \circ f_0$};
\end{tikzpicture}}
&
\fbox{\begin{tikzpicture}[anchorbase,thick,scale=0.32]
\node at (0,0) {$\scriptstyle \down$};
\node at (1,0) {$\scriptstyle \down$};
\node at (2,0) {$\scriptstyle \up$};
\node at (3,0) {$\scriptstyle \down$};
\node at (4,0) {$\scriptstyle \down$};
\node at (5,0) {$\scriptstyle \cdots$};
\draw (0,0) .. controls +(0,1) and +(0,1) .. +(1,0);
\fill (0.5,.75) circle(4pt);
\draw (2,0) -- +(0,1.5);
\fill (2,.75) circle(4pt);
\draw (3,0) -- +(0,1.5);
\draw (4,0) -- +(0,1.5);

\draw (0,0) -- +(0,-1.5);
\draw (1,0) -- +(0,-1.5);
\draw (2,0) .. controls +(0,-1) and +(0,-1) .. +(1,0);
\draw (4,0) -- +(0,-1.5);

\node at (2.5,-2.25) {$g_0\circ g_2$};
\end{tikzpicture}}
\end{array}
\end{eqnarray*}
\abovecaptionskip0cm
\belowcaptionskip0cm
\caption{The homogeneous basis vectors of $\mathbbm{1}_\cB\mathbb{D}_\Lambda \mathbbm{1}_\cB$ (with their degrees).}
\label{basis23}
\end{figure}
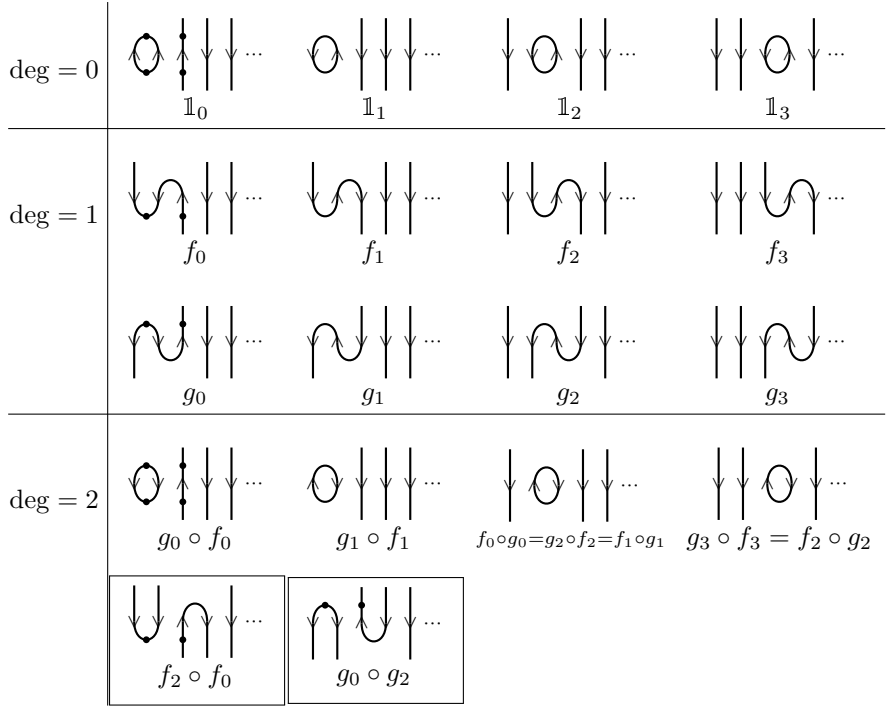}

These diagrams form the basis of the algebra  $\mathbbm{1}_\cB \mathbb{D}_\Lambda \mathbbm{1}_\cB$. The multiplication is given by the rules from \cite[Section 6.2]{ESperv}. The last two (framed) oriented circle diagrams in Figure~\ref{basis23} are exactly those which contain at least one non-propagating line. They span the nuclear ideal $\mathbb{I}$, in $\mathbbm{1}_\cB \mathbb{D}_\Lambda \mathbbm{1}_\cB$, see Proposition~\ref{isideal}. 

The algebra  $\mathbbm{1}_\cB\mathbb{D}_\Lambda \mathbbm{1}_\cB$  can be equipped with a positive $\mathbb{Z}$-grading, via Figure~\ref{oriented}, such that the basis vectors are homogeneous of degree as displayed in Figure~\ref{basis23} with homogeneous ideal $\mathbb{I}$. It descends to a grading on  $\mathbbm{1}_\cB\mathbb{D}_\Lambda \mathbbm{1}_\cB/\mathbb{I}$, and hence gives a grading on the category $\cB$ via Theorem~\ref{main}.

\subsection{The block \texorpdfstring{$\cB$}{B} in terms of a quiver with relations}
From the definition of the multiplication, see \cite[Theorem~6.2]{ESperv}, we directly deduce, using Theorem~\ref{main}, an explicit description of the locally finite endomorphism ring $\op{End}^{\fin}_\cF(P)$:

 \begin{theorem}
 \label{exs}
 The algebra $\mathbbm{1}_\cB \mathbb{D}_\Lambda \mathbbm{1}_\cB /\mathbb{I}$ is isomorphic (as a graded algebra) to the path algebra of the following infinite quiver (with grading given by length of paths)
\small
\abovedisplayskip0.25em
\belowdisplayskip0.25em
\begin{eqnarray}
\label{quiver}
\begin{xy}
  \xymatrix{
  0\ar@/^/[dr]^{f_0}\\
  &2\ar@/^/[ul]^{g_0}\ar@/^/[dl]^{g_1}\ar@/^/[r]^{f_2}
  &3\ar@/^/[l]^{g_2}\ar@/^/[r]^{f_3}
  &4\ar@/^/[l]^{g_3}\ar@/^/[r]^{f_4}
  &5\ar@/^/[l]^{g_4}\cdots\\
 1\ar@/^/[ur]^{f_1}}
\end{xy}
\end{eqnarray}
\normalsize
modulo the (homogeneous) ideal generated by (the homogeneous relations) 
\abovedisplayskip0.25em
\belowdisplayskip0.25em
\[
f_{i+1}\circ f_i=0=g_{i}\circ g_{i+1}\, ,\quad g_{i+1}\circ f_{i+1}=f_{i}\circ g_{i} \quad \text{for }i\geq 0
\]
\abovedisplayskip0.25em
\belowdisplayskip0.25em
\[
g_0\circ f_1=0=g_1\circ f_0\, ,\quad f_0\circ g_0=g_1\circ f_1=g_2\circ f_2 \, , \quad f_2\circ f_0=0=g_0\circ g_2.
\]
Here, the last two relations are the relations from $\mathbb{I}$. In particular, the category of finite-dimensional modules of this algebra is equivalent to the principal block $\cB$ of  $\cF(\op{SOSp}(3|2))$. 
\end{theorem}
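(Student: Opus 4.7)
The plan is to deduce this theorem as a direct computation from Theorem~\ref{main} applied to the case at hand, together with explicit calculations in the Khovanov algebra $\mathbbm{1}_\cB \mathbb{D}_\Lambda \mathbbm{1}_\cB$ using the multiplication rules of \cite[Section~6.2]{ESperv}. The crucial point is that Figure~\ref{basis23} already displays a graded basis of $\mathbbm{1}_\cB \mathbb{D}_\Lambda \mathbbm{1}_\cB$, indexed by oriented circle diagrams on the cup diagrams of Figure~\ref{PIMS}. Hence the theorem reduces to identifying this basis with a basis of the path algebra modulo the listed relations.

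First I would identify the algebra generators with explicit diagrams: the degree-zero basis elements $\mathbbm{1}_i$ (for $i\geq 0$) are the idempotents associated to vertex $i$ of the quiver \eqref{quiver}, and the degree-one basis elements $f_i$, $g_i$ are the arrows, where $f_i$ has the cup diagram $\underline{\la_i}$ on top and $\underline{\la_{i+1}}$ (upside down) on the bottom, and $g_i$ is obtained from $f_i$ by flipping top and bottom. (For $i=0,1$ the adjacency is to vertex $2$, reflecting the two cups at positions $0{-}1$ in $\underline{\la_0}$ and $\underline{\la_1}$.) It is immediate from the picture that $\mathbbm{1}_j\circ f_i = \delta_{j,t(f_i)} f_i$ and similarly for the $g_i$, so the assignment extends to an algebra homomorphism $\Psi$ from the path algebra of \eqref{quiver} to $\mathbbm{1}_\cB \mathbb{D}_\Lambda \mathbbm{1}_\cB$.

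Next I would verify the relations one by one by performing the surgery procedure of \cite[Section~6.2]{ESperv}, which in the present case reduces to a handful of small diagrammatic computations. The vanishing relations $f_{i+1}\circ f_i=0$, $g_i\circ g_{i+1}=0$, $g_0\circ f_1=0$, $g_1\circ f_0=0$ all arise because the relevant surgery either forces an incompatible orientation on a newly created circle (a clockwise-oriented circle with a decoration, or two clockwise circles) or yields a diagram no longer in $\mathbbm{1}_\cB$. The two ``commutation'' relations $g_{i+1}\circ f_{i+1}=f_i\circ g_i$ and the triple coincidence $f_0\circ g_0 = g_1\circ f_1 = g_2\circ f_2$ are computed directly: both sides produce, up to sign, the same degree-two diagram displayed in Figure~\ref{basis23}. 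Finally the two framed relations $f_2\circ f_0 = 0$ and $g_0\circ g_2 = 0$ are not relations in $\mathbbm{1}_\cB \mathbb{D}_\Lambda \mathbbm{1}_\cB$ itself; rather, the corresponding nonzero diagrams in Figure~\ref{basis23} contain a non-propagating line and hence lie in $\mathbb{I}$ by Proposition~\ref{isideal}, so they become zero after passing to the quotient.

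To conclude, I would compare dimensions in each degree: for each pair $(\la_a,\la_b)$ the number of paths modulo relations from $a$ to $b$ matches the number of oriented circle diagrams in Figure~\ref{basis23} with the given source and target. Since $\Psi$ takes a spanning set of the path algebra quotient to a basis of $\mathbbm{1}_\cB \mathbb{D}_\Lambda \mathbbm{1}_\cB/\mathbb{I}$, it is an isomorphism of graded algebras. Combined with Theorem~\ref{main}, this yields the equivalence of the module category with the principal block $\cB$ of $\cF(\op{SOSp}(3|2))$; the passage from $\op{OSp}$ to $\op{SOSp}$ in the odd case is harmless via \eqref{easyodd} as discussed in Section~\ref{sec:oddcase}. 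The main (mild) obstacle is the case-by-case bookkeeping of the surgery rules near the decorated cups at positions $0{-}1$, where the $\up/\down$ parities and dot conventions of \cite[Section~6.2]{ESperv} have to be tracked carefully; once this is done everything else is a straightforward comparison with Figure~\ref{basis23}.
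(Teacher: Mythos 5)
Your proposal takes essentially the same approach the paper intends: the paper's proof is literally the one-line remark preceding the theorem (``directly deduce from the multiplication rules of \cite[Theorem~6.2]{ESperv}, using Theorem~\ref{main}''), and you spell out precisely that computation — matching the basis of Figure~\ref{basis23} with paths, verifying the relations by surgery, invoking Proposition~\ref{isideal} to kill the two framed diagrams, and transferring from $\op{OSp}$ to $\op{SOSp}$ via \eqref{easyodd}. One small imprecision: the clause ``yields a diagram no longer in $\mathbbm{1}_\cB$'' cannot actually cause vanishing, since $\mathbbm{1}_\cB\mathbb{D}_\Lambda\mathbbm{1}_\cB$ is closed under multiplication — the genuine source of the zero relations is that the surgery produces an inconsistent orientation (or, in the quotient, a nuclear diagram).
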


The structure of the indecomposable projective modules for $\mathbbm{1}_\cB \mathbb{D}_\Lambda \mathbbm{1}_\cB$ is displayed in the third line of Figure~\ref{PIMS}, where each number stands for the corresponding simple module. The height where the number of a simple module occurs, indicates the degree it is concentrated in, when we consider it as a module for the graded algebra. We displayed the grading filtration which in this case however agrees with the radical and the socle filtration. In comparison, the fourth line shows the structure of the indecomposable projective modules for $\mathbbm{1}_\cB\mathbb{D}_\Lambda \mathbbm{1}_\cB/\mathbb{I}$ . 

The description of $\cF(\op{SOSp}(3|2))$  in Theorem~\ref{exs} reproduces Germoni's result, \cite{Germoni}. The algebra $\mathbbm{1}_\cB \mathbb{D}_\Lambda \mathbbm{1}_\cB/\mathbb{I}$  in this example also occurs under the name {\it zigzag algebra} (of type ${\rm D}_\infty$) in the literature,  see e.g. \cite[2.3]{CautisLicata}. In contrast to the general case of  $\cF(\OSPrn)$, it is tame representation type as shown in \cite{Germoni}. 

\subsection{The failure of quasi-hereditarity and cellularity}
The algebra $\mathbb{D}_\Lambda$ is quasi-hereditary by \cite[Section~6]{ESperv}, and so $\mathbbm{1}_\cB \mathbb{D}_\Lambda \mathbbm{1}_\cB$  is a cellular algebra, \cite[Proposition 4.3]{KoenigXi}. Hence we have cell modules  $\Delta(\la)=\Delta_{\mathbbm{1}_\cB\mathbb{D}_\Lambda \mathbbm{1}_\cB}(\la)$, indexed by some labelling set  (in fact certain weights $\la\in\Lambda$, but we ignore this here). We indicate  in Figure~\ref{PIMS} (by grouping the composition factors) these cell modules. Note that there are two cell modules with simple head labelled by $1$, since the truncation of our quasi-hereditary algebra $\mathbb{D}_\Lambda$ is not compatible with the quasi-hereditary ordering. Hence although $\mathbb{D}_\Lambda $ is quasi-hereditary, the truncation $\mathbbm{1}_\cB \mathbb{D}_\Lambda \mathbbm{1}_\cB$  is only cellular. Factoring out the ideal $\mathbb{I}$ of nuclear morphisms means we kill some of the simple composition factors. The result is displayed in the last line of Figure~\ref{PIMS}. One can also see there for instance that the cell module $\Delta(2)$ gives rise to a different subquotient in $P(0)$ than in $P(3)$, namely in the notation from Section~\ref{shadow} we have
\begin{eqnarray}
\Delta(2)={{\scriptstyle \begin{array}{cccc}&2&\\0&&3\end{array}}}
&\rightsquigarrow&
\Delta(0,2)\;=\;\begin{array}{c}2\\0\end{array}\quad\text{and}\quad 
\Delta(3,2)\;=\;\begin{array}{c}2\\3\end{array}
\end{eqnarray}
We leave it  to the reader to show that this algebra $\mathbbm{1}_\cB\mathbb{D}_\Lambda \mathbbm{1}_\cB/\mathbb{I}$, is not cellular.

\subsection{The \texorpdfstring{$0$}{0}-Calabi-Yau property}
We observe that the resulting projective modules for $\mathbbm{1}_\cB \mathbb{D}_\Lambda \mathbbm{1}_\cB/\mathbb{I}$ are self-dual and they are in fact the maximal self-dual quotients of the indecomposable projective modules for $\mathbbm{1}_\cB \mathbb{D}_\Lambda \mathbbm{1}_\cB$. Hence the projective modules become injective, a property which is well-known to hold in $\cB$, see \cite{BKN}. More conceptually let $\tilde{\tau}:\mathbbm{1}_\cB \mathbb{D}_\Lambda \mathbbm{1}_\cB\rightarrow \mathbb{C}$ be the linear (trace) map defined on basis vectors $b$ from Figure~\ref{basis23} by 
\begin{eqnarray*}
\tilde{\tau}(b)&=&
\begin{cases}
1&\text{\begin{minipage}[t]{9cm}if $b$ is of the form $\underline{\la}\nu\overline{\la}$ (i.e. it has reflection symmetry in the horizontal reflection line), and $\op{deg}(b)=2,$\end{minipage}}\\
0&\text{otherwise},
\end{cases}
\end{eqnarray*}
and consider the corresponding bilinear map $\tau$ defined on basis vectors as 
\begin{eqnarray}
\tau:\quad\mathbbm{1}_\cB \mathbb{D}_\Lambda \mathbbm{1}_\cB\times \mathbbm{1}_\cB \mathbb{D}_\Lambda \mathbbm{1}_\cB\rightarrow\mathbb{C},&&
\tau(b_1,b_2)=\tilde{\tau}(b_1b_2),
\end{eqnarray}
This is by definition a symmetric form, which is however degenerate with radical $\op{rad}(\tau)$ spanned by the nuclear morphisms $f_2\circ f_0$, $g_0\circ g_2$. In particular $\mathbbm{1}_\cB \mathbb{D}_\Lambda \mathbbm{1}_\cB/ \mathbb{I}= \mathbbm{1}_\cB \mathbb{D}_\Lambda \mathbbm{1}_\cB/\op{rad}(\tau)$ is a noncommutative symmetric Frobenius algebra. 
 \begin{center}
{\it The block $\cB$ is the maximal $0$-Calabi-Yau quotient (with respect to $\tau$) of the category of finite-dimensional $\mathbbm{1}_\cB \mathbb{D}_\Lambda \mathbbm{1}_\cB$-modules. }
\end{center}
A corresponding characterisation holds for arbitrary blocks and arbitrary $m,n$ and will be studied in detail in a subsequent paper. 
\subsection{Koszulity}\label{Koszulity}
By constructing an explicit (infinite) linear projective resolutions for each simple module one can check in this special example, that the algebra $\mathbbm{1}_\cB \mathbb{D}_\Lambda \mathbbm{1}_\cB/\mathbb{I}$  here is a locally finite Koszul algebra in the sense of \cite{MOS}. Hence Conjecture~\ref{conj:koszul} holds in this case.
\renc{\thetheorem}{\arabic{section}.\arabic{theorem}}
\renewcommand\thesection{\arabic{section}}
\setcounter{section}{0}
\section{The orthosymplectic supergroup and its Lie algebra}
\label{super}
For the general theory of Lie superalgebras we refer to \cite{Musson}. 
\subsection{Lie superalgebras}
By a {\it (vector) superspace} we always mean a finite-dimensional $\mZ/2\mZ$ -graded vector space $V = V_0 \oplus V_1$. For  any homogeneous element $v\in V$ we denote by $| v | \in \{0,1 \}$ its parity.  The integer $\op{dim}V_0-\op{dim}V_1$ is called the {\it supertrace} of $V$, and the tuple ${\rm sdim}V = (\op{dim}V_0\,|\,\op{dim}V_1)$ the {\it superdimension} of $V$. Given a superspace $V$ let $\mathfrak{gl}(V)$ be the corresponding {\it general Lie superalgebra}, i.e. the superspace ${\rm End}_\mathbb{C}(V)$ of all endomorphism  with the superbracket defined on homogeneous elements by
\begin{eqnarray}
\label{bracket}
[X,Y] = X \circ Y - (-1)^{|X| \cdot |Y|}Y \circ X .
\end{eqnarray}
If $V$ has superdimension $a\,|\,b$ then $\mathfrak{gl}(V)$ is also denoted by $\mathfrak{gl}(a\,|\,b)$. It can be realized as the space of $(a+b)\times (a+b)$-matrices viewed as superspace with the matrix units on the block diagonals being even, and the other matrix units being odd elements, and the bracket given by the supercommutator \eqref{bracket}.

We fix now  $r,n\in\mathbb{Z}_{\geq0}$ and a superspace $V = V_0 \oplus V_1$ of superdimension $r\,|\,2n$ equipped with a non-degenerate supersymmetric bilinear form $\left\langle - , - \right\rangle$, i.e. a bilinear form $V\times V\rightarrow \mC$ which is symmetric when restricted to $V_0 \times V_0$, skew-symmetric on $V_1 \times V_1$ and zero on mixed products. From now on we fix also $m\in\mathbb{Z}_{\geq0}$ such that $r=2m$ or $r=2m+1$. We denote by $\de=r-2n$, the supertrace of the natural representation.

\begin{definition}
The \emph{orthosymplectic Lie superalgebra} $\mg=\mathfrak{osp}(V)$ is the Lie supersubalgebra of $\mathfrak{gl}(V)$ consisting of all endomorphisms which respect a fixed supersymmetric bilinear form. Explicitly, a homogeneous element $X \in \mathfrak{osp}(V)$ has to satisfy for any homogeneous $v\in V$
\begin{equation}
\label{oh}
\left\langle Xv,w \right\rangle + (-1)^{|X| \cdot |v|} \left\langle v,Xw\right\rangle = 0.
\end{equation}
\end{definition}
In case one prefers a concrete realization in terms of endomorphism of a superspace one could choose a homogeneous basis $v_i$, $1 \leq i \leq r+2n$, of $V$ and consider the supersymmetric bilinear form given by the (skew)symmetric matrices 
\begin{eqnarray*}
J^{\mathrm{sym}}=\begin{pmatrix}
1&0&0\\
0&0&\mathbf{1}_m\\
0&\mathbf{1}_m&0
\end{pmatrix}
&\quad\text{and}\quad&
J^{\mathrm{skew}}=\begin{pmatrix}
0&\mathbf{1}_n\\
-\mathbf{1}_n&0
\end{pmatrix}
\end{eqnarray*}
where $\mathbf{1}_k$ denotes the respective identity matrix and $r$ is equal to $2m+1$ or equal to $2m$, in the latter case the first column and row of $J^{\mathrm{sym}}$ are removed. Then $\mg$ can be realized as the Lie super subalgebra of matrices \tiny $\begin{pmatrix}
A&B\\C&D
\end{pmatrix}$
\normalsize
in $\mathfrak{gl}(r|2n)$ 
where 
$$A^tJ^{\mathrm{sym}}+J^{\mathrm{sym}}A=B^tJ^{\mathrm{sym}}-J^{\mathrm{skew}}C=D^tJ^{\mathrm{skew}}+J^{\mathrm{skew}}D=0.$$
The even part $\mg_0$ (resp. $\mg_1$) is the subset of all such matrices with $B=C=0$ (resp. $A=D=0$). In particular, $\mg_0\cong\mathfrak{so}(r)\oplus\mathfrak{sp}(2n)$ with its standard Cartan $\mathfrak{h}=\mathfrak{h}_0$ of all diagonal matrices. We denote therefore $\mg$ also by $\osprn$. Let 
\begin{eqnarray}
\label{intweights}
X=X(\mg)=\bigoplus_{i=1}^m\mathbb{Z}\eps_i\oplus\bigoplus_{j=1}^n\mathbb{Z}\delta_j.
\end{eqnarray}
be the integral weight lattice. Here the  
 $\eps_i$'s and $\delta_j$'s are the standard basis vectors of $\mh^*$ picking out the $i$-th respectively $(r+j)$-th diagonal matrix entry. We fix on $\mh^*$ the standard symmetric bilinear form $(\eps_i,\eps_j)=\delta_{i,j}$, $(\eps_i,\delta_j)=0$, $(\delta_i,\delta_j)=-\delta_{i,j}$ for $1\leq i\leq m$ and $1\leq j\leq n$. We define the parity (an element in $\mathbb{Z}/2\mathbb{Z}$) of the $\varepsilon$'s to be $0$ and the parity of the $\delta$'s to be $1$ and extend this to the whole weight lattice as the unique map of abelian groups to $\mathbb{Z}/2\mathbb{Z}$. In the following by a {\it weight} we always mean an {\it integral weight}. We will often denote weights as $(m+n)$-tuples $(a_1,a_2,\ldots, a_m \mid b_1,b_2,\dots, b_n)$, with the $\varepsilon$-coefficients to the left and the $\delta$-coefficients to the right of the vertical line.

Now $\mg$ decomposes into {\it root spaces} that is into weight spaces with respect to the adjoint action of $\mh$, $$\mg=\mh\oplus\bigoplus_{\alpha\in\Delta}\mg_\alpha.$$

One can check that $\mg_\alpha$ is either even or odd. Hence we can talk about {\it even roots} and {\it odd roots}.
Explicitly, the roots for $\osp(2m|2n)$ respectively $\osp(2m+1|2n)$ are the following (with $1\leq i\leq r$, $1\leq j\leq n$ such that the expressions exist),
\begin{eqnarray}
\label{roots}
\Delta(2m|2n)&=&\{\pm\eps_i\pm\eps_{i'}, \pm\delta_j\pm\delta_{j'}\mid i\not=i'\}\cup\{\pm\varepsilon_i\pm\delta_j\},\\
\Delta(2m+1|2n)&=&\{\pm\varepsilon_i,\pm\eps_i\pm\eps_{i'}, \pm\delta_j\pm\delta_{j'}\mid i\not=i'\}\cup\{\pm\delta_j,\pm\varepsilon_i\pm\delta_j\},\nonumber
\end{eqnarray}
where  all signs can be chosen independently. In each case the first set contains the even and the second the odd roots. 

\subsection{Supergroups and super Harish-Chandra pairs}

Let $\op{G}(r|2n)$ be the affine algebraic supergroup $\OSP(r|2n)$ over $\mC$. Using scheme-theoretic language, $\op{G}(r|2n)$ can be regarded as a functor $G$ from the category of commutative superalgebras over $\mC$
to the category of groups, mapping a commutative superalgebra $A = A_0\oplus A_{\overline{1}}$
to the group $\op{G}(A)$ of all  invertible $(r+2n) \times (r+2n)$ orthosymplectic matrices over $A$, see \cite[Section 3]{Vera}. This functor is representable by {\it an affine super Hopf algebra}  (i.e. a finitely generated supercommutative super Hopf algebra) $R=\mC[G]$, and there is a contravariant equivalence of categories between the categories of algebraic supergroups and of affine super Hopf algebras extending the situation of algebraic groups in the obvious way, see e.g. \cite{Fioresi}, \cite{Hopf}. By restricting the functor $\op{G}$ to commutative algebras defines an (ordinary) algebraic group $G_0$ represented by $R/I=\mC[G]/I$, where $I$ is the ideal generated by the odd part of $R$. In case of $\op{G}(r|2n)$  this algebraic group is just $\op{O}(r)\times \op{Sp}(2n)$. Similarly, we also have the affine algebraic supergroup $G'=\SOSP(r|2n)$ over $\mC$ with algebraic group $\op{SO}(r)\times \op{Sp}(2n)$. They both have $\osprn$ as the associated Lie superalgebra. We refer to \cite[Section 3]{Vera} for more details on these constructions.

We are interested in the category $\cC(r|2n)$ of finite-dimensional $\op{G}$-modules or equivalently the category of {\it integrable} $\mg$-modules, that is {\it Harish-Chandra modules} for the super Harish-Chandra pair $(\mg,G,\op{a})$, where $\op{a}$ is the adjoint action, see \cite{Vish}.  
To make this more precise we recall some facts.
\begin{definition}
A {\it super Harish-Chandra pair} is a triple $(\mathfrak{g},G_0,\op{a})$ where $\mathfrak{g}=\mathfrak{g}_0\oplus\mathfrak{g}_1$ is a Lie superalgebra, $G_0$ is an algebraic group with Lie algebra $\mathfrak{g}_0$, and $a$ is a $G_0$-module structure on $\mathfrak{g}$ whose differential is the adjoint action of $\mathfrak{g}_0$.  A {\it Harish-Chandra module} for such a triple or shorter a {\it $(\mathfrak{g},G_0,\op{a})$-module} is then a $\mg$-module $M$ with a compatible $G_0$-module structure (that means the derivative of the $G_0$-action agrees with the action of $\mg_0$). We denote by  $(\mathfrak{g},G_0,\op{a})-\op{mod}$ the category of finite-dimensional $(\mathfrak{g},G_0,\op{a})$-modules.
\end{definition}
Given any super Harish-Chandra pair $(\mathfrak{g},G_0,\op{a})$ one can construct a Hopf superalgebra $R=\mC[G]$ such that $\mg$ is the Lie algebra of the supergroup $G$ and $R/I=\mC[G_0]$.  Namely $R=\op{Hom}_{U(\mg_0)}(U(\mg),\mC[G_0])$, where $U(\mathfrak{h})$ denotes the universal enveloping algebra of a Lie superalgebra $\mathfrak{h}$, and where ${U(\mg_0)}$ acts by left invariant derivations on $\mC[G_0]$, see \cite[(3.1)]{Vera} for precise formulas and the description of the Hopf algebra structure - with the dependence on the action $\op{a}$. This assignment  $\Phi: (\mathfrak{g},G_0,\op{a})\mapsto G$
for any super Harish-Chandra pair can be extended in fact to the following equivalence of categories, see \cite{Vish}, \cite{Balduzzi} for the super case, but the arguments are very much parallel to the classical case from  \cite{Kostant}.

\begin{prop} The assignment  $\Phi: (\mathfrak{g},G_0,\op{a})\mapsto G$ induces the following:
\begin{enumerate}
\item 
The category of super Harish-Chandra pairs is equivalent to the category of algebraic supergroups. 
\item 
Moreover the category of finite-dimensional $(\mathfrak{g},G_0,\op{a})$-modules, denoted by $(\mathfrak{g},G_0,\op{a})\!-\!\op{mod}$, is equivalent to the category  $G\!-\!\op{mod}$ of finite-dimensional $G$-modules. 
\end{enumerate}
\end{prop}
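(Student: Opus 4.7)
The plan is to construct an explicit quasi-inverse $\Psi$ to $\Phi$ for part (1) and then deduce part (2) by transporting module structures through this equivalence. On objects, $\Psi$ sends an affine algebraic supergroup $G$ with representing Hopf superalgebra $R=\mC[G]$ to the triple $(\mg,G_0,\op{a})$, where $G_0$ is the ordinary algebraic group represented by $R/I$ with $I$ the ideal generated by $R_1$, the Lie superalgebra $\mg$ is obtained from the super-tangent space of $G$ at the identity (equivalently the primitive elements of the finite dual of $R$, with bracket given by the supercommutator), and $\op{a}$ is the linearization of the conjugation action of $G_0 \subset G$ on $G$ restricted to $\mg$. Both $\Phi$ and $\Psi$ are then functorial: morphisms of super Harish-Chandra pairs transport to morphisms of the associated Hopf superalgebras and vice versa.

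The key input to verify $\Phi\circ\Psi\cong\op{id}$ is the super-PBW isomorphism $U(\mg)\cong U(\mg_0)\otimes\bigwedge\mg_1$ as left $U(\mg_0)$-modules, which identifies $\op{Hom}_{U(\mg_0)}(U(\mg),\mC[G_0])$ with $\mC[G_0]\otimes\bigwedge\mg_1^*$ as a superspace. On the other side, any affine Hopf superalgebra $R$ carries a filtration by powers of $I$ whose associated graded identifies with $\mC[G_0]\otimes\bigwedge\mg_1^*$, and this filtration splits canonically. Matching Hopf structure maps reduces to a comparison on the generators $\mg_1$ and the subalgebra $\mC[G_0]$, where both are by construction determined by $(G_0,\op{a})$. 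The reverse isomorphism $\Psi\circ\Phi\cong\op{id}$ is a bookkeeping computation: super-point-derivations at the identity of $\op{Hom}_{U(\mg_0)}(U(\mg),\mC[G_0])$ recover $\mg$, the quotient by the odd ideal recovers $\mC[G_0]$, and the $G_0$-conjugation action recovers $\op{a}$.

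For part (2), the forward functor sends a finite-dimensional $G$-module $M$ (an $R$-comodule) to the triple consisting of $M$ together with the $\mg$-action obtained by differentiating the coaction at the identity, and the $G_0$-action obtained by restriction along $R\surj R/I$. A quasi-inverse sends a $(\mg,G_0,\op{a})$-module $M$ to the $R$-comodule whose coaction $\Delta_M$ is characterized by the formula $\Delta_M(m)(u)(g)=g^{-1}(um)$ for $m\in M$, $u\in U(\mg)$, $g\in G_0$; the compatibility between the $G_0$- and $\mg$-module structures encoded by $\op{a}$ is precisely what makes this assignment $U(\mg_0)$-linear in $u$, hence well-defined in $M\otimes R$ via the finite-dimensional identification $\op{Hom}_{U(\mg_0)}(U(\mg),M\otimes\mC[G_0])\cong M\otimes R$. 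In finite dimensions, integrability is automatic since the image of any vector lies in a finitely generated subcomodule.

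The most delicate step will be verifying coassociativity and counitality of the reconstructed coaction in (2), together with multiplicativity of the reconstructed product in (1). Both reduce to the same super-Leibniz identity: the $\mg_1$-action combined with the $G_0$-equivariance built into $\op{a}$ controls exactly how the odd part interacts with the even part under iterated (co)actions. Once this identity is established, fully faithfulness and essential surjectivity for both (1) and (2) follow by standard unit/counit arguments, reducing the proposition to these algebraic identifications.
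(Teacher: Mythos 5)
The paper itself does not prove this proposition: it constructs $\Phi$ via $R=\op{Hom}_{U(\mg_0)}(U(\mg),\mC[G_0])$ and then cites \cite{Vish}, \cite{Balduzzi} (and, for the classical analogue, \cite{Kostant}) for the equivalence. Your sketch reconstructs exactly that standard argument: build the quasi-inverse $\Psi$ by taking $(G_0,\mg,\op{a})$ from $R$, use super-PBW to identify $R\cong\mC[G_0]\otimes\bigwedge\mg_1^*$ and compare Hopf structures, then transport module structures through the identification $M\otimes R\cong\op{Hom}_{U(\mg_0)}(U(\mg),M\otimes\mC[G_0])$. So the approach is the same as the one the paper defers to.

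One technical point to flag: your proposed coaction formula $\Delta_M(m)(u)(g)=g^{-1}(um)$ is not, as written, $U(\mg_0)$-linear in $u$ under the left-invariant-derivation action of $U(\mg_0)$ on $\mC[G_0]$. Computing both sides for $X\in\mg_0$ produces a mismatch unless the conjugation action $\op{a}$ of $G_0$ on $\mg$ enters explicitly, either by replacing $um$ with $(\op{a}(g^{-1})u)\cdot m$ or by evaluating $g$ on the other side. This is exactly where the "compatibility encoded by $\op{a}$" you invoke must be used; the precise formula is what \cite[(3.1)]{Vera} pins down, and the statement in your last paragraph that $U(\mg_0)$-linearity follows automatically should be replaced by a verification that the adjoint twist makes it so. This is a bookkeeping correction, not a conceptual gap, and does not affect the soundness of the overall strategy.
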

 The category $\cC(G)$ of finite-dimensional $\op{G}$-modules has enough projectives and enough injectives, \cite[Lemma 9.1]{Vera}, in fact projective and injective modules agree, \cite[Proposition 2.2.2]{BKN}. 

\section{Finite-dimensional representations}
We are interested in the category of Harish-Chandra modules for the particular super Harish-Chandra pairs arising from the (special) orthosymplectic supergroups. Since the action $\op{a}$ in this cases is always the adjoint action, we will usually omit it in the notation. 
From now on we fix $r,n\in\mathbb{Z}_{\geq0}$ and use the following abbreviations:
\abovedisplayskip0.5em
\belowdisplayskip0.5em
\begin{eqnarray*}
&\mg=\osp(r|2n) \qquad G=\OSPrn,\qquad G'=\SOSPrn,&\\
& \cC=\cC(\OSPrn),\qquad \cC'=\cC(\SOSPrn).&
\end{eqnarray*}

The simple objects in $\cC'$ are (viewed as Harish-Chandra modules) highest weight modules, and every simple object is up to isomorphism and parity shift uniquely determined by its highest weight, see e.g. \cite[Theorem 9.9]{Vera}. The category $\cC'$ decomposes into a direct sum of two equivalent subcategories
\[ \cC'=\cF(\SOSPrn)\oplus \parswitch \cF(\SOSPrn),\]
namely $\cF(\SOSPrn)$ and its parity shift $\parswitch\cF(\SOSPrn)$, where the category $\cF(\SOSPrn)$ contains all objects such that the parity of any weight space agrees with the parity of the corresponding weight. Similarly, the categories $\cC$ decomposes into $\cF=\cF(\OSPrn)$ and its parity shift, where $\cF$ consists of those modules that lie in $\cF(\SOSPrn)$ when restricted to $\SOSPrn$.  Therefore it suffices to restrict ourselves to study the summands
\abovedisplayskip0.25em
\belowdisplayskip0.25em
\[ \cF'=\cF(\SOSPrn)\qquad\text {respectively}\qquad \cF=\cF(\OSPrn),\]
which we will consider now in more detail.

\subsection{Finite-dimensional representations of \texorpdfstring{$\SOSPrn$}{SOSp(r|2n)}}
We first consider the case of the {\it special} orthosymplectic group. With a fixed Borel subalgebra in $G'$, every irreducible module in $\cF'$ (viewed as integrable module for $\mg$) is a quotient of a Verma module, in particular a highest weight module $L(\la)$ for some highest weight $\la$, see \cite[Theorem 9.9]{Vera}. The occurring highest weights are precisely the {\it dominant} weights. The explicit dominance condition on the coefficients of $\la$ in our chosen basis \eqref{intweights} depends on the choice of Borel we made, since in the orthosymplectic case Borels are not always pairwise conjugate. We follow now closely \cite{GS2} and fix the slightly unusual choice of Borel with maximal possible number of odd simple roots, see \cite{GS1}, with the simple roots given as follows:

\noindent For $\osp(2m+1|2n)$:
\begin{eqnarray*}
\;\;\text{if $m\geq n:$}&&
\!\!\!\!\!\!\!\!\!\!\left\{
\begin{array}[c]{c}
\eps_1-\eps_2,\eps_2-\eps_3,\ldots, \eps_{m-n}-\eps_{m-n+1},\\
\eps_{m-n+1}-\delta_1,\delta_{1}-\eps_{m-n+2}, \eps_{m-n+2}-\delta_2,\ldots,\eps_m-\delta_n,\delta_n.
\end{array}
\right.\quad\\
\;\;\text{if $m< n:$}&&
\!\!\!\!\!\!\!\!\!\!\left\{
\begin{array}[c]{c}
\delta_1-\delta_2,\delta_2-\delta_3,\ldots, \delta_{n-m-1}-\delta_{n-m},\\
\delta_{n-m}-\eps_1,\eps_{1}-\delta_{n-m+1}, \delta_{n-m+1}-\eps_2,\ldots,\eps_m-\delta_n,\delta_n.
\end{array}
\right.\quad
\end{eqnarray*}
For $\osp(2m|2n)$:
\begin{eqnarray*}
\;\;\text{if $m> n:$}&&
\!\!\!\!\!\!\!\!\!\!\left\{
\begin{array}[c]{c}
\eps_1-\eps_2,\eps_2-\eps_3,\ldots, \eps_{m-n-1}-\eps_{m-n},\\
\eps_{m-n}-\delta_1,\delta_{1}-\eps_{m-n+1}, \eps_{m-n+1}-\delta_2,\ldots,\delta_n-\varepsilon_m,\\
\delta_n+\varepsilon_m.
\end{array}
\right.\quad
\\
\;\;\text{if $m\leq n:$}&&
\!\!\!\!\!\!\!\!\!\!\left\{
\begin{array}[c]{c}
\delta_1-\delta_2,\delta_2-\delta_3,\ldots, \delta_{n-m}-\delta_{n-m+1},\\
\delta_{n-m+1}-\eps_1,\eps_{1}-\delta_{n-m+2}, \delta_{n-m+2}-\eps_2,\ldots,\delta_n-\varepsilon_m,\\
\delta_n+\varepsilon_m.
\end{array}
\right.
\end{eqnarray*}

This choice of Borel has the advantage that the dominance conditions look similar to the ordinary ones for semisimple Lie algebras and moreover is best adapted to our diagrammatics. To formulate it, let $\rho$ be half of the sum of positive even roots minus the sum of positive odd roots, explicitly given as follows.

\noindent 
For $\mg=\osp(2m+1|2n)$: In this case $\tfrac{\de}{2}=m-n+\tfrac{1}{2}$ and 
\abovedisplayskip0.25em
\belowdisplayskip0.25em
\begin{eqnarray*}
\rho&=&
\begin{cases}
\left(\tfrac{\de}{2}-1, \tfrac{\de}{2}-2,\ldots,\tfrac{1}{2},-\frac{1}{2},\ldots,-\tfrac{1}{2}\, \middle|\, \frac{1}{2},\dots,\tfrac{1}{2}\right)&\text{if $m\geq n$},\\
\left(-\tfrac{1}{2},\ldots, -\tfrac{1}{2} \, \middle|\, -\tfrac{\de}{2}, -\tfrac{\de}{2}-1,\ldots, \tfrac{1}{2},\ldots, \tfrac{1}{2}\right)&\text{if $m<n$.}\\
\end{cases}
\end{eqnarray*}
For $\mg=\osp(2m|2n)$: In this case $\tfrac{\de}{2}=m-n$ and 
\abovedisplayskip0.25em
\belowdisplayskip0.25em
\begin{eqnarray*}
\rho&=&
\begin{cases}
\left(\tfrac{\de}{2}-1,\tfrac{\de}{2}-2,\ldots,1,0, \dots,0 \, \middle| \, 0,\ldots, 0 \right)&\text{ if $m>n$,}\\
\left(0,\ldots, 0\, \middle| \, -\tfrac{\de}{2},-\tfrac{\de}{2}-1,\ldots,1,0,\dots,0\right)&\text{ if $m\leq n$}.
\end{cases}
\end{eqnarray*}

\begin{remark}
Note that $n=0$ gives $\rho=(m-1,m-2,\ldots,0)$ for $m$ even and $\rho=(m-\tfrac{1}{2},m-\tfrac{3}{2},\ldots,\tfrac{1}{2})$ for $m$ odd; and $\rho=(n,n-1,\ldots,1)$ in case $m=0$. These are the values for $\rho$ for the semisimple Lie algebras of type $\mathbf{D}_m$, $\mathbf{B}_m$, $\mathbf{C}_n$.
\end{remark}

\begin{definition}
\label{dominance}
For our choice of Borel, a weight $\la\in X(\mg)$ is {\it dominant} if
\begin{eqnarray}
\label{laab}
\la+\rho&=&\sum_{i=1}^m a_i\eps_i+\sum_{j=1}^n b_j\delta_j
\end{eqnarray}
satisfies the following dominance condition, see \cite{GS1}.\\[-0.3cm]

\noindent
For $\mg=\osp(2m+1|2n)$:
\begin{enumerate}[(i)]
\item either $a_1>a_2>\cdots> a_m\geq \frac{1}{2}$ and  $b_1>b_2>\cdots> b_n\geq \frac{1}{2}$,
\item
or $a_1>a_2>\cdots> a_{m-l-1}> a_{m-l}=\cdots= a_m=-\frac{1}{2}$ and \hfill\\ \phantom{or }$b_1>b_2>\cdots> b_{n-l-1}\geq b_{n-l}=\cdots= b_n=\frac{1}{2}$,
\end{enumerate}

\noindent
For $\mg=\osp(2m|2n)$:
\begin{enumerate}[(i)]
\item either $a_1>a_2>\cdots> a_{m-1}>|a_m|$ and  $b_1>b_2>\cdots> b_n>0$,
    \item
or $a_1>a_2>\cdots> a_{m-l-1}\geq a_{m-l}=\cdots= a_m=0$ and \hfill\\ \phantom{or }$b_1>b_2>\cdots> b_{n-l-1}> b_{n-l}=\cdots= b_n=0$.
\end{enumerate}
The set of dominant weights is denoted $X^+(\mg)$. Note that
\abovedisplayskip0.25em
\belowdisplayskip0.5em
\[
X^+(\osp(2m+1|2n))\subset (\mZ+\tfrac{1}{2})^{m+n} \text{ and } X^+(\osp(2m|2n))\subset \mZ^{m+n}.\]
\end{definition}

\begin{definition}
Assume $r=2m$. If $\la\in X^+(\mg)$, written in the form \eqref{laab}, satisfies $a_m\not=0$, then we write $\la=\la_+$ if $a_m>0$, and  we write $\la=\la_-$ if $a_m<0$.
\end{definition}

\begin{definition}
\label{tailg}
Weights satisfying $(i)$ are called {\it tailless} and the number $l+1$ from Definition~\ref{dominance} is the \it{tail length}, $\op{tail}(\la)$, of $\la$.
\end{definition}

\begin{ex}{\rm
The zero weight is always dominant with maximal possible tail length, namely  $\op{tail}(0)=\op{min}\{m,n\}$.} 
\end{ex}

For $\la\in X^+(\mg)$ let $P^\mg(\la)$ be the projective cover of $L^\mg(\la)$, see \cite{BKN} for a construction, and $I^\mg(\la)$ its injective hull. Then the $P^\mg(\la)$ (respectively $I^\mg(\la)$), with $\la\in X^+(\mg)$,  form a complete non-redundant set of representatives for the isomorphism classes of indecomposable projective (resp. injective) objects in $\cF'$.

\subsection{Finite-dimensional representations of \texorpdfstring{$\OSP(r|2n)$}{OSp(r,2n)}}
We recall the classification of simple finite-dimensional representations of $G$ using the one for $G'$. 

For this let $\sigma\in\mZ/2\mZ$ be the non-unit element. Via \eqref{semidirect} it corresponds to an element in $\mathrm{O}(2m)$, also called $\sigma$, which acts by conjugation on  $\mathrm{SO}(2m)$, as well as $\mathfrak{g}$ and preserving the Cartan $\mathfrak{h}$. On weights it acts as $\sigma(\varepsilon_m)=-\varepsilon_m$ and $\sigma(\varepsilon_i)=\varepsilon_i$, $\sigma(\delta_i)=\delta_i$ for $1\leq i\leq m-1$, $1\leq j\leq n$. We have $\mathrm{O}(2m)=\mathrm{SO}(2m)\cup \sigma \mathrm{SO}(2m)$.

To construct the irreducible representations we use  a very special case of Harish-Chandra induction which we recall now. Let $(\mg, H, a)$ be a super Harish-Chandra pair and $H'$ a subgroup of $H$ such that $(\mg, H', a'=a_{|{H'}})$ is also a super-Harish Chandra pair. Then there is a {\it (Harish-Chandra) induction functor} 
\begin{eqnarray}
\label{ind}
\op{Ind}^{\mg, H}_{\mg, H'}: \quad(\mg, H', a')-\op{mod}&\longrightarrow &(\mg, H,\op{a})-\op{mod},
\end{eqnarray}
where $\op{Ind}^{\mg, H}_{\mg, H'}N=\{f: H \rightarrow N\mid f(xh)=xf(h), h\in H, x\in H'\}$ is the usual induction for algebraic groups, \cite[3.3]{Jantzen}. The $H$-action is given by the right regular action and the $\mg$-action is just the $\mg$-action on $N$. This functor  $\op{Ind}^{\mg, H}_{\mg, H'}$ is left exact. It sends injective objects to injective objects, \cite[Proposition 3.9]{Jantzen}, and it is right adjoint to the restriction functor $\op{Res}^{\mg, H'}_{\mg, H}$, \cite[Proposition 3.4]{Jantzen}. \\

We apply this to the two super Harish-Chandra pairs $(\mg,G')$ and $(\mg,G)$. 

\subsubsection{The odd case: \texorpdfstring{$\SOSP(2m+1|2n)$}{SOSp(2m+1,2n)}} \label{sec:oddcase}
In this section we assume $r=2m+1$ is odd. The element $\sigma$ is central and thanks to \eqref{easyodd} we can describe the simple objects in $\cF$:

\begin{prop}\label{def:labelssimples1}
For $G=\OSP(2m+1|2n)$ the set
\begin{eqnarray*}
X^+(G) &=& X^+(\mathfrak{g})\times\mathbb{Z}/2\mathbb{Z}=\{ (\lambda,\epsilon) \mid \lambda \in X^+(\mathfrak{g}), \epsilon\in\{\pm\}\}
\end{eqnarray*}
is a labelling set for the isomorphism classes of irreducible $G$-modules in $\cF$. The simple module $L(\la,\pm)$ is hereby just the simple $G'$-module $L^\mg(\la)$ extended to a module for $G$ by letting $\sigma$ act by $\pm 1$. 
\end{prop}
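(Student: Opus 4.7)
The plan is to reduce the classification to the fact that irreducibles of a direct product of groups are external tensor products, using the decomposition \eqref{easyodd}, which gives $G = G' \times \langle \sigma \rangle$ with $\sigma$ central of order two. At the level of super Harish-Chandra pairs this means $G_0 = G'_0 \times \langle \sigma \rangle$, and since $\sigma$ is central in $G$ it acts trivially on $\mg$ by the adjoint representation. Consequently, for any $(\mg, G_0)$-module $L$ the element $\sigma$ commutes with both the $\mg$-action and the $G'_0$-action, and hence with the entire $G$-action.

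To construct the candidate simples, for each $\lambda \in X^+(\mg)$ I would define $L(\lambda, \pm)$ to be the underlying vector space of $L^\mg(\lambda) \in \cF'$ with the $G$-action obtained by keeping the given $G'$-action and declaring $\sigma$ to act as $\pm \op{id}$. This is a well-defined $(\mg, G_0)$-module, since $\sigma$ is central in $G_0$ and discrete (so its differential is zero and the compatibility of $\mg_0$ with $G_0$ is automatic); and it lies in $\cF$ rather than $\parswitch \cF$ because the weight-space parities are unchanged by the added $\sigma$-action. Simplicity is immediate: any $G$-submodule is in particular a $G'$-submodule, and $L^\mg(\lambda)$ is simple in $\cF'$.

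Conversely, for any simple $L \in \cF$, centrality of $\sigma$ makes $\sigma$ into a $G$-endomorphism squaring to the identity, so Schur's lemma (in the ordinary sense, since $\sigma$ is even) forces $\sigma$ to act by a scalar, necessarily $\pm 1$. Thus every $G'$-submodule of $L$ is automatically $\sigma$-stable, hence a $G$-submodule, so $L|_{G'}$ is itself simple in $\cF'$. By the classification recalled in the previous subsection, $L|_{G'} \cong L^\mg(\lambda)$ for a unique $\lambda \in X^+(\mg)$, and combined with the sign of the $\sigma$-action this identifies $L$ with $L(\lambda, +)$ or $L(\lambda, -)$. The two sign-extensions at the same $\lambda$ are non-isomorphic because any $G$-intertwiner $\phi$ between them would have to satisfy $\phi = -\phi$, and different $\lambda$'s give non-isomorphic $G'$-modules already upon restriction. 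The one point that requires care is the verification of Harish-Chandra pair compatibility for the extended action, but this is straightforward once one has observed that $\sigma$ is a discrete central element with trivial adjoint action; no serious obstacle arises.
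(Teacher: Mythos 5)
Your proof is correct and follows the same approach the paper intends: the paper states the proposition without proof, presenting it as an immediate consequence of the direct product decomposition $G \cong G' \times \mZ/2\mZ$ from~\eqref{easyodd} (with $\sigma = -\mathrm{id}$ central and acting trivially by the adjoint representation), which is precisely the structure your argument unwinds. Your verification — that $\sigma$ must act by $\pm 1$ on any simple by the (even) Schur's lemma, that restriction to $G'$ is therefore simple, and that the two sign extensions are inequivalent and exhaust the isomorphism classes — is the standard argument that the authors leave implicit.
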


Observe that $\op{Ind}^{\mg, G}_{\mg, G'}L^\mg(\la)\cong L(\la,+)\oplus L(\la,-)$. By construction, the category $\cF$ decomposes as $\cF^+\oplus \cF^-$, where $\cF^{\pm}$ is the full subcategory of $\cF$ containing all representations with composition factors only of the form $L(\la,\pm)$, and moreover $\cF^{\pm}\cong \cF'$. 

\begin{remark}
\label{theds}
Note that the natural vector representation $V=\mC^{2m+1|2n}$ can be identified with $L(\epsilon_1,-1)$ in case $m > n$ and with $L(\delta_1,-1)$ in case $m \leq n$. In particular, $-\mathrm{id} \in G$ acts on a $d$-fold tensor product $V^{\otimes d}$ by $(-1)^d$. This implies that there is no $G$-equivariant morphism from $V^{\otimes d}$ to $V^{\otimes d'}$ in case $d\not\equiv d'\op{mod} 2$.
\end{remark}

\begin{remark} \label{rem:plusminusodd}
In particular we have for $\la,\mu\in X^+(\mg)$
\begin{eqnarray}
\label{plusminusIodd}
\op{Hom}_\cF(I(\la,+),I(\mu,-))\;=\!&\!\{0\}\!&\!=\;\op{Hom}_\cF(I(\la,-),I(\mu, +)),\\
\label{plusminusPodd}
\op{Hom}_\cF(P(\la,+),P(\mu,-))\;=\!&\!\{0\}\!&\!=\;\op{Hom}_\cF(P(\la,-),P(\mu, +)),
\end{eqnarray}
and the nonzero morphism spaces are controlled by those for $\mg$, more precisely
\begin{eqnarray}
\label{olddim}
\op{Hom}_\cF(P(\la,\pm),P(\mu,\pm))&\!=&\!\op{Hom}_{\cF'}(P^\mg(\la),P^\mg(\mu)).
\end{eqnarray}
\end{remark}

\begin{corollary}
\label{corblockodd}
Let $(\la,\epsilon)$ and $(\mu,\epsilon')$ in $X^+(G)$. Then $L(\la,\epsilon)$ and $L(\mu,\epsilon')$ are in the same block of $\cF$ if and only if $\epsilon=\epsilon'$ and $L^\mg(\la)$ and $L^\mg(\mu)$ are in the same block of $\cF'$.
\end{corollary}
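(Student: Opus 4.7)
The plan is to reduce the block question for $\cF$ entirely to the already-established structure of $\cF'$, using the direct sum decomposition $\cF = \cF^+ \oplus \cF^-$ from Proposition~\ref{def:labelssimples1} together with the Hom-vanishing statements just recorded.

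First I would handle the ``only if'' direction. Recall that two simple modules lie in the same block exactly when they can be connected by a zig-zag of simples with non-vanishing $\mathrm{Ext}^1$'s between consecutive members, equivalently when their projective covers are linked by a chain of nonzero morphisms. By \eqref{plusminusPodd} we have $\op{Hom}_\cF(P(\la,+),P(\mu,-)) = 0 = \op{Hom}_\cF(P(\la,-),P(\mu,+))$ for all $\la,\mu \in X^+(\mg)$. Hence any such chain of nonzero morphisms between projective covers cannot cross the sign, and consequently $L(\la,\epsilon)$ and $L(\mu,\epsilon')$ being in the same block forces $\epsilon=\epsilon'$. Once $\epsilon=\epsilon'$, restriction along $G' \subset G$ sends $L(\la,\epsilon)$ and $L(\mu,\epsilon)$ to $L^\mg(\la)$ and $L^\mg(\mu)$, and any extension between them restricts to an extension in $\cF'$; equivalently, by \eqref{olddim}, a nonzero morphism between $P(\la,\epsilon)$ and $P(\mu,\epsilon)$ in $\cF$ corresponds to a nonzero morphism between $P^\mg(\la)$ and $P^\mg(\mu)$ in $\cF'$, so $L^\mg(\la)$ and $L^\mg(\mu)$ must lie in the same block of $\cF'$.

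For the ``if'' direction, assume $\epsilon=\epsilon'$ and that $L^\mg(\la)$ and $L^\mg(\mu)$ lie in the same block of $\cF'$. Then there is a chain of dominant weights $\la = \nu_0, \nu_1, \ldots, \nu_k = \mu$ so that consecutive pairs $L^\mg(\nu_i), L^\mg(\nu_{i+1})$ admit a nonzero morphism between their projective covers in $\cF'$. Using the identification \eqref{olddim} again, this chain lifts verbatim to a chain of nonzero morphisms between the $P(\nu_i,\epsilon)$ in $\cF$, showing that $L(\la,\epsilon)$ and $L(\mu,\epsilon)$ lie in the same block of $\cF$.

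There is really no obstacle here; the whole content is already packaged in Proposition~\ref{def:labelssimples1} and Remark~\ref{rem:plusminusodd}. The only point worth being careful about is to note that one must invoke \eqref{plusminusPodd} (or equivalently \eqref{plusminusIodd}, via duality) rather than merely $\op{Hom}(L(\la,+),L(\mu,-))=0$, because block equivalence is governed by $\mathrm{Ext}^1$'s among simples, i.e.\ by morphisms between projective covers and not just morphisms between simples. Once this is observed, the corollary is immediate, and in fact it shows more: the assignment $(\la,\pm)\mapsto \la$ induces a bijection between blocks of $\cF^{\pm}$ and blocks of $\cF'$, confirming the equivalences $\cF^{\pm}\cong \cF'$ at the level of block decompositions.
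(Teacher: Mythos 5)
Your proof is correct and takes essentially the same approach the paper has in mind: the corollary is an immediate consequence of the decomposition $\cF=\cF^+\oplus\cF^-$ with $\cF^\pm\cong\cF'$, whose block-theoretic content is precisely the Hom-vanishing \eqref{plusminusPodd} and the identification \eqref{olddim} that you invoke. Your observation that one must argue with morphisms between projective covers rather than between simples is the right point of care, and the rest is the routine chain-linkage characterization of blocks.
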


\subsubsection{The even case: \texorpdfstring{$\SOSP(2m|2n)$}{SOSp(2m,2n)}} \label{sec:evencase}
Let now $r=2m$ be even. In this case the situation is slightly more involved, since $\sigma$ is not central. We first construct the irreducible representations using Harish-Chandra induction. 

\begin{definition} \label{def:labelssimples2}
For $G=\OSP(2m|2n)$ we introduce the following set:
\begin{eqnarray*}
X^+(G) &=&\{ (\lambda,\epsilon) \mid \lambda \in X^+(\mathfrak{g})/\sigma \text { and } \epsilon \in {\rm Stab}_{\sigma}(\lambda) \},
\end{eqnarray*}
where ${\rm Stab}_{\sigma}$ denotes the stabilizer of $\lambda$ under the group generated by $\sigma$.
\end{definition}

\begin{notation}
\label{notationweights}
To avoid overloading of notation we usually just write $\la$ instead of $(\lambda,\epsilon)$ if the representatives of $\la$ have trivial stabilizer. Otherwise the orbit has a unique element. In this case the stabilizer has two elements and we often write $(\lambda,+)$ for $(\lambda, e)$ and $(\lambda,-)$ for $(\lambda,\sigma)$. In addition we write $\lambda^G$ for the $\sigma$-orbit of $\lambda \in X^+(\mg)$. We will omit this superscript if the orbit consists of a single element.
\end{notation}
\begin{prop}
\label{erste}
Consider $\mg=\osp(2m|2n)$, $G=\OSP(2m|2n)$, and $G'=\SOSP(2m|2n)$. Assume 
\abovedisplayskip0.25em
\belowdisplayskip0.25em
\begin{eqnarray}
\label{oje}
\la=\sum_{i=1}^m a_i\varepsilon_i+\sum_{j=1}^n b_j\delta_j-\rho
\in X^+(\mg)
\end{eqnarray}
and let $L^\mg(\la)\in \cF'$ be the corresponding irreducible highest weight representation of $G'$ with injective cover $I^\mg(\la)$.  Then with $\op{Ind}^{\mg, G}_{\mg, G'}$ from \eqref{ind} the following holds: 
\begin{enumerate}
\item for induced irreducible representations:
\begin{enumerate}
\item If  $a_m\not=0$ then the $(\osp(2m|2n), \OSP(2m|2n))$-module 
\begin{eqnarray}
\label{Lla}
L(\la^G) = L(\la^G,e) &:=&\op{Ind}^{\mg, G}_{\mg, G'}L^\mg(\la)
\end{eqnarray}
is irreducible. Moreover, 
\begin{equation}
\label{zumGlueck}
\op{Ind}^{\mg, G}_{\mg, G'}L^\mg(\la)\;\;\cong\;\;\op{Ind}^{\mg, G}_{\mg, G'}L^\mg(\sigma(\la)).
\end{equation}
\item  If  $a_m=0$ then  
\begin{eqnarray}
\label{Lplus}
\op{Ind}^{\mg, G}_{\mg, G'}L^\mg(\la)&=:& L(\la,+)\oplus L(\la,-)
\end{eqnarray}
is a direct sum of $L(\la,+)$, and $L(\la,-)$, two non-isomorphic irreducible $(\osp(2m|2n), \OSP(2m|2n))$-modules. As $G'$-modules they  are isomorphic to $L^\mg(\la)$. 
\end{enumerate}
\item  for induced injective representations:
\begin{enumerate}
\item If  $a_m\not=0$ then  $I(\la^G):=\op{Ind}^{\mg, G}_{\mg, G'}I^\mg(\la)$ is the indecomposable injective hull of $L(\la^G)$.
\item If  $a_m=0$ then  $\op{Ind}^{\mg, G}_{\mg, G'}I^\mg(\la)\cong I(\la,+)\oplus I(\la,-),$ where  $I(\la,\pm)$ denotes the injective hull of $L(\la,\pm)$.
\end{enumerate}
\end{enumerate}
The same formulas hold for the indecomposable projective objects. 
\end{prop}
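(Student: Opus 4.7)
\textbf{Proof plan for Proposition~\ref{erste}.}
The situation is an instance of Clifford/Mackey theory for the degree-two extension $G = G' \rtimes \langle \sigma\rangle$, and I would organize the argument along those lines. Since $G/G'$ is finite, the Harish-Chandra induction functor $\op{Ind} = \op{Ind}^{\mg,G}_{\mg,G'}$ is exact, and standard Mackey decomposition yields, for every $(\mg,G')$-module $N$,
\[
(\op{Ind}\, N)\big|_{G'} \;\cong\; N \oplus {}^{\sigma}\! N,
\]
where ${}^{\sigma}\! N$ is $N$ with the $G'$-action twisted by conjugation by $\sigma$. For a highest weight module $N = L^\mg(\la)$ with highest weight $\la$, the twist acts on highest weights by $\la \mapsto \sigma(\la)$, hence ${}^{\sigma} L^\mg(\la) \cong L^\mg(\sigma(\la))$. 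With the dominance condition formulated in Definition~\ref{dominance} and the explicit formula for the $\sigma$-action on $\mh^*$, one has $\sigma(\la) = \la$ precisely when $a_m = 0$. So the case dichotomy in the proposition corresponds exactly to whether $L^\mg(\la)$ and ${}^{\sigma} L^\mg(\la)$ are isomorphic as $G'$-modules.

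For part~(1a), assume $a_m \neq 0$, so that $L^\mg(\la)$ and $L^\mg(\sigma(\la))$ are non-isomorphic simple $G'$-modules. Any nonzero $G$-submodule $M \subset \op{Ind}\, L^\mg(\la)$ is in particular a $G'$-submodule, hence a sum of isotypic components, i.e. contains $L^\mg(\la)$ or $L^\mg(\sigma(\la))$; but $\sigma \in G$ interchanges these two isotypic summands, so $M$ must contain both and therefore $M = \op{Ind}\, L^\mg(\la)$. This shows irreducibility and defines $L(\la^G)$. The isomorphism \eqref{zumGlueck} is a special case of the general Mackey formula $\op{Ind}\, N \cong \op{Ind}\, {}^\sigma \! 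N$: concretely, Frobenius reciprocity gives $\Hom_G(\op{Ind}\, L^\mg(\la), \op{Ind}\, L^\mg(\sigma\la)) = \Hom_{G'}(L^\mg(\la), L^\mg(\la) \oplus L^\mg(\sigma\la)) \neq 0$, and since both sides are simple of equal dimension, they coincide.

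For part~(1b), assume $a_m = 0$, so $\op{Ind}\, L^\mg(\la) \mid_{G'} \cong L^\mg(\la)^{\oplus 2}$. Then $\op{End}_{G'}(\op{Ind}\, L^\mg(\la)) \cong M_2(\mC)$, and $\sigma$ acts on this algebra as the swap of the two summands, i.e.\ by conjugation by $\bigl(\begin{smallmatrix}0 & 1 \\ 1 & 0\end{smallmatrix}\bigr)$. The fixed subalgebra $\op{End}_G(\op{Ind}\, L^\mg(\la))$ is therefore two-dimensional and semisimple, split by the two orthogonal idempotents $\tfrac{1}{2}(1 \pm \sigma)$. These define the two non-isomorphic summands $L(\la,+)$ and $L(\la,-)$, both of which restrict to $L^\mg(\la)$ on $G'$; their non-isomorphism is detected by the eigenvalue of $\sigma$ on a highest weight vector. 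The crucial point, and the main technical subtlety, is to choose a lift of $\sigma$ acting on $L^\mg(\la)$ compatibly with $G'$ (possible because $\sigma$ fixes $\la$, so one can rescale the canonical intertwiner $L^\mg(\la) \to {}^\sigma L^\mg(\la)$ to have square the identity); the two such rescalings, differing by a sign, produce $L(\la,\pm)$.

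For part~(2), I would exploit that $\op{Ind}$ sends injectives to injectives (since it is right adjoint to the exact $\Res$), together with exactness of $\op{Ind}$. In case (2a), $\op{Ind}\, I^\mg(\la)$ is injective with $G'$-socle $L^\mg(\la) \oplus L^\mg(\sigma\la)$, which as a $G$-module is exactly the simple $L(\la^G)$; hence $\op{Ind}\, I^\mg(\la)$ is an indecomposable injective with simple socle $L(\la^G)$, so it is $I(\la^G)$. In case (2b), $(\op{Ind}\, I^\mg(\la))\mid_{G'} \cong I^\mg(\la)^{\oplus 2}$ has $G'$-socle $L^\mg(\la)^{\oplus 2} \cong L(\la,+) \oplus L(\la,-)$ as a $G$-module, so $\op{Ind}\, I^\mg(\la)$ is the direct sum of the two corresponding indecomposable injectives. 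The statement for projectives follows either by the same argument (using that $\op{Ind}$ is also left adjoint to $\Res$, since the index $[G:G'] = 2$ is finite and invertible), or more directly from the identification of projectives with injectives in $\cF$ recalled above \cite[Proposition 2.2.2]{BKN}. The main obstacle throughout is the well-definedness of the sign $\epsilon \in \{\pm\}$ in case $a_m = 0$, which requires a clean choice of normalization for the action of $\sigma$; once this is fixed, the remaining steps are formal consequences of Frobenius reciprocity and the exactness of induction.
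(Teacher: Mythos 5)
Your proof is correct and for parts (1) and (2) takes essentially the same route as the paper: part (1) is Clifford/Mackey theory for the degree-two extension (the paper simply cites the classical analogue \cite[5.5.5]{GW} for this step, while you spell out the isotypic-component and endomorphism-algebra arguments), and part (2) is the socle computation via Frobenius reciprocity with the Mackey decomposition of the restriction. The one place you genuinely diverge is the treatment of projectives. The paper identifies each indecomposable projective $P^\mg(\la)$ with an indecomposable injective $I^\mg(\la+\nu)$ via the weight shift by $\nu = \op{wt}\bigl(\textstyle\bigwedge^N \mg_1\bigr)$, and then has to verify explicitly, using the list of odd roots of $\osp(2m|2n)$, that the $\varepsilon_m$-coefficient of $\nu$ vanishes, so that the dichotomy $a_m=0$ vs.\ $a_m\neq 0$ is preserved under $\Phi$. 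Your first alternative --- that $\op{Ind}$ is also left adjoint to $\Res$ (two-sided adjointness in the finite-index, char.\ $0$ situation, so $\op{Ind}$ preserves projectives) and then you run the head computation dual to the socle one --- bypasses this entirely and is a cleaner argument. However, your second alternative, ``more directly from the identification of projectives with injectives,'' is precisely what the paper does, and you should be aware that this version is not immediate: the bijection $P(\la) \leftrightarrow I(\Phi(\la))$ shifts weights, so one must check (as the paper does) that $\Phi$ respects the $a_m = 0$ dichotomy before transferring the formulas from case (2) to case (3).
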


As a consequence we obtain the following:

\begin{prop}
\label{zweite}
The $\{ L(\lambda,\epsilon) \mid (\lambda,\epsilon) \in X^+(G)\}$ are a complete non-redundant set of representatives for the isomorphism classes of irreducible $G$-modules in $\cF$.
\end{prop}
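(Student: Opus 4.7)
The plan is to combine Proposition~\ref{erste} with Frobenius reciprocity for the Harish-Chandra induction functor $\op{Ind}^{\mg,G}_{\mg,G'}$. Since $G'$ sits inside $G$ as a normal subgroup of index~$2$ (cf.~\eqref{semidirect}), this is essentially Clifford theory in the super Harish-Chandra pair setting.

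To establish completeness, I would start with any simple module $L \in \cF$, restrict it to $G'$, and pick a simple $G'$-constituent $L^\mg(\la)$ with $\la \in X^+(\mg)$ (the restriction lies in $\cF'$, whose simples are classified by $X^+(\mg)$). Adjunction turns the inclusion $L^\mg(\la) \hookrightarrow \op{Res}^{\mg,G'}_{\mg,G} L$ into a nonzero, hence surjective, $G$-morphism $\op{Ind}^{\mg,G}_{\mg,G'} L^\mg(\la) \twoheadrightarrow L$. Proposition~\ref{erste}(1) describes the source explicitly: it is simple and equal to $L(\la^G)$ if $a_m \ne 0$, and equal to $L(\la,+) \oplus L(\la,-)$ if $a_m = 0$. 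In either case $L$ is isomorphic to one of the $L(\la',\epsilon)$ with $(\la',\epsilon) \in X^+(G)$.

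For non-redundancy, I would use that the restriction to $G'$ of an induced module is controlled by the coset decomposition $G = G' \sqcup \sigma G'$, which yields
\[
\op{Res}^{\mg,G'}_{\mg,G} \op{Ind}^{\mg,G}_{\mg,G'} L^\mg(\la) \;\cong\; L^\mg(\la) \oplus L^\mg(\sigma(\la)).
\]
When $a_m \ne 0$ the simple $G$-module $L(\la^G)$ therefore restricts to $L^\mg(\la) \oplus L^\mg(\sigma(\la))$, which recovers the orbit $\la^G$; so $L(\la^G) \cong L(\mu^G)$ forces $\la^G = \mu^G$. When $a_m = 0$, both $L(\la,+)$ and $L(\la,-)$ restrict to the simple $G'$-module $L^\mg(\la)$, so an isomorphism $L(\la,\epsilon) \cong L(\mu,\epsilon')$ forces $\la = \mu$; and the two modules $L(\la,+)$ and $L(\la,-)$ are the two distinct extensions of $L^\mg(\la)$ from $G'$ to $G$, distinguished by the scalar $\pm 1$ by which the coset representative $\sigma$ acts on the highest weight line.

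The main obstacle is really contained in Proposition~\ref{erste}: once one knows that an induced $L^\mg(\la)$ stays simple precisely when $\la$ has nontrivial $\sigma$-orbit and otherwise splits into exactly two non-isomorphic simples, the rest is the standard Clifford-theoretic bookkeeping. The only extra care needed is to verify the displayed restriction formula in the super Harish-Chandra framework, but this follows from the compatibility of Harish-Chandra induction with the ordinary algebraic-group induction (recalled around \eqref{ind}) applied to the index-$2$ inclusion $G' \subset G$.
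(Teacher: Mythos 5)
Your proposal is correct and takes essentially the same approach as the paper: the paper's proof of Proposition~\ref{zweite} simply delegates to the classical Clifford theory for an index-$2$ normal subgroup (it cites \cite[5.5.5]{GW}), and what you have written is a faithful unfolding of that argument in the super Harish-Chandra setting, using Proposition~\ref{erste} to identify the induced modules. One small caveat: the paper only records that $\op{Ind}^{\mg,G}_{\mg,G'}$ is \emph{right} adjoint to restriction, so to get your surjection $\op{Ind}^{\mg,G}_{\mg,G'}L^\mg(\la)\twoheadrightarrow L$ from $L^\mg(\la)\hookrightarrow\op{Res}L$ you should either invoke that induction along the index-$2$ inclusion is also a left adjoint (induction and coinduction agree here), or instead use the stated right adjoint together with the semisimplicity of $\op{Res}L$ (Clifford's theorem) to replace the inclusion by a projection $\op{Res}L\twoheadrightarrow L^\mg(\la)$ and obtain an embedding $L\hookrightarrow\op{Ind}^{\mg,G}_{\mg,G'}L^\mg(\la)$; either route completes the argument.
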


\begin{proof}[Proof of Propositions~\ref{erste} and~\ref{zweite}]
The arguments for (1) of Proposition~\ref{erste} and the classification of irreducible representations in Proposition~\ref{zweite} are precisely as in the classical case, see  e.g.  \cite[5.5.5]{GW}. By construction and the proof there, 
\begin{equation}
\label{eqres}
\op{Res}^{\mg, G'}_{\mg, G}L(\la,\pm)\cong L^\mg(\la) \text{ and } \op{Res}^{\mg, G'}_{\mg, G}L(\la^G)\cong L^\mg(\la)\oplus L^\mg(\sigma(\la))
\end{equation}
where $(\la,\pm)$ is as in (1)(b) respectively $\la$ as in (1)(a). 
More precisely, it is proved that the modules $L(\la,\pm)$  are isomorphic to $L^\mg(\la)$ as $G'$-modules; with the action extended to $G$ such that  $\sigma$ acts on the highest weight vector by multiplication with the scalar $1$ or $-1$ (but see also Lemma~\ref{lemplusminus} and Remark~\ref{rem:plusminusodd}).

Since the functor $\op{Ind}^{\mg, G}_{\mg, G'}$ sends injective objects to injective objects, and is right adjoint to the restriction functor, \cite[Propositions 3.4~and~3.9]{Jantzen}, the statements (2) of Proposition~\ref{erste} can be deduced as follows. Let $\mu\in X^+(\mg)$ with 
\begin{eqnarray}
\label{muinab}
\mu=\sum_{i=1}^m a'_i\varepsilon_i+\sum_{j=1}^n b'_j\delta_j-\rho.
\end{eqnarray}
If $a_m\not=0$ in \eqref{oje} we obtain, for any simple $G$-module $L\in\cF$, by adjunction and the first paragraph of the proof
\begin{align*}
&\op{Hom}_\cF(L, \op{Ind}^{\mg, G}_{\mg, G'}I^\mg(\la))\\
\cong&
\begin{cases}
\op{Hom}_{\cF'}(L^\mg(\mu)\oplus L^\mg(\sigma(\mu)), I^\mg(\la)) & \text{if $L=L(\mu^G)$, i.e. $a'_m\not=0$,}\\
\op{Hom}_{\cF'}(L^\mg(\mu), I^\mg(\la))=\{0\} & \text{if $L=L(\mu, \pm)$, i.e. $a'_m=0$,}
\end{cases}
\end{align*}
If $a_m=0$, we have
\begin{align*}
&\op{Hom}_\cF(L, \op{Ind}^{\mg, G}_{\mg, G'}I^\mg(\la))\\
\cong&
\begin{cases}
\op{Hom}_{\cF'}(L^\mg(\mu)\oplus L^\mg(\sigma(\mu)), I^\mg(\la))=\{0\},&\text{if $L=L(\mu^G)$, i.e. $a'_m\not=0$,}\\
\op{Hom}_{\cF'}(L^\mg(\mu), I^\mg(\la)),&\text{if $L=L(\mu, \pm)$, i.e. $a'_m=0$,}
\end{cases}
\end{align*}
noting that in the first case the homomorphism space vanishes since $ I^\mg(\la)$ is the injective hull of $ L^\mg(\la)$. This proves part (2) in Proposition~\ref{erste}.

To prove the analogous statements (3) for indecomposable projective modules, recall that any indecomposable projective is also injective, \cite[Proposition 2.2.2]{BKN}. Hence $P(\la^G)\cong I(\Phi(\la^G))$ and $P(\la,\pm)\cong I(\Phi(\la,\pm))$ for some function $\Phi:X^+(\mg)\rightarrow X^+(\mg)$. By \cite[Proposition 2.2.1]{BKN}, the function $\Phi$ can be computed as follows: let $N=\dim \mg_{1}=8mn$ and consider the $1$-dimensional $\mg_0$-module $\bigwedge^N\mg_1$ of weight $\nu$.  Then there is an isomorphism of $G'$-modules $P^\mg(\la)\cong I^\mg(\la+\nu)$.  Set $\mu=\la+\nu$. Then, using the explicit description \eqref{roots} of the odd  roots in $\osp(2m|2n)$, one can easily check that in $\nu$ the coefficient for $\varepsilon_m$ vanishes, and therefore $a_m\not=0$ if and only if $a_m'\not=0$ in the notation of \eqref{oje} and \eqref{muinab}. Hence, $\Phi$ preserves the condition $a_m\not=0$. Therefore, the formulas for induced projective modules agree with the formulas for the induced injective modules.  
\end{proof}

We have restriction formulas for the projective-injective modules as follows.
\begin{lemma}
\label{res}
Let $\la\in X^+(\mg)$. There are isomorphisms of  $G'$-modules 
\begin{eqnarray*}
\op{Res}^{\mg, G'}_{\mg, G}I(\la^G) &\cong& I^\mg(\la)\oplus I^\mg(\sigma(\la)) \text{ if } \lambda \neq \sigma(\lambda) \text{ and} \\
\op{Res}^{\mg, G'}_{\mg, G}I(\la,\pm)&\cong& I^\mg(\la) \text{ otherwise}.
\end{eqnarray*}
Similarly for the indecomposable projective objects. 
\end{lemma}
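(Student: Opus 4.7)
The plan is to apply a Mackey-type decomposition for induction from the index-two normal subgroup $G'\triangleleft G$, together with the identification of the $\sigma$-twist of $I^\mg(\la)$ with $I^\mg(\sigma(\la))$. Concretely, since $G=G'\sqcup\sigma G'$, for any finite-dimensional $(\mg,G')$-module $N$ one obtains
\[
\op{Res}^{\mg,G'}_{\mg,G}\op{Ind}^{\mg,G}_{\mg,G'}N\;\cong\;N\oplus\sigma^*N,
\]
where $\sigma^*N$ denotes $N$ with the $G'$-action twisted by conjugation by $\sigma$; this is a direct consequence of the explicit description $\op{Ind}^{\mg,G}_{\mg,G'}N=\{f\colon G\to N\mid f(xh)=xf(h)\}$ together with evaluation at the two coset representatives $e$ and $\sigma$.

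First I would verify the twist identification $\sigma^*I^\mg(\la)\cong I^\mg(\sigma(\la))$. Since $\sigma$ acts on $\mh^*$ by sending $\varepsilon_m\mapsto-\varepsilon_m$ and fixing the other basis elements, the twist $\sigma^*M$ of any $(\mg,G')$-module has its weight spaces permuted by $\sigma$. Therefore $\sigma^*L^\mg(\la)\cong L^\mg(\sigma(\la))$, and since the functor $\sigma^*$ is exact and preserves injectives (it is an auto-equivalence of $\cF'$), it sends the injective hull of $L^\mg(\la)$ to the injective hull of $L^\mg(\sigma(\la))$, as required. Case~1 ($\la\neq\sigma(\la)$, i.e.\ $a_m\neq 0$) then follows immediately: by definition $I(\la^G)=\op{Ind}^{\mg,G}_{\mg,G'}I^\mg(\la)$, and the Mackey formula yields $\op{Res}I(\la^G)\cong I^\mg(\la)\oplus\sigma^*I^\mg(\la)\cong I^\mg(\la)\oplus I^\mg(\sigma(\la))$.

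For Case~2 ($\la=\sigma(\la)$, i.e.\ $a_m=0$), the Mackey formula applied to the $\sigma$-fixed module $I^\mg(\la)$ gives
\[
\op{Res}\bigl(I(\la,+)\oplus I(\la,-)\bigr)\;\cong\;\op{Res}\op{Ind}^{\mg,G}_{\mg,G'}I^\mg(\la)\;\cong\;I^\mg(\la)\oplus I^\mg(\la),
\]
using Proposition~\ref{erste}(2)(b) on the left. It remains to split this equality. The restriction functor $\op{Res}^{\mg,G'}_{\mg,G}$ is exact and, for the finite-index inclusion $G'\subset G$, has induction as both left and right adjoint; in particular it preserves injective objects. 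Hence each $\op{Res}I(\la,\pm)$ is injective in $\cF'$, and by \eqref{eqres} contains $L^\mg(\la)$ as a composition factor, so it has $I^\mg(\la)$ as a direct summand. Krull--Schmidt applied to the above isomorphism then forces $\op{Res}I(\la,\pm)\cong I^\mg(\la)$ for both choices of sign. The analogous statements for projectives follow either by the same argument (restriction preserves projectives as well), or by invoking \cite[Proposition~2.2.2]{BKN} that projective and injective objects coincide. The only subtle point is the twist identification and the use of Krull--Schmidt to separate the two indecomposable summands in Case~2; everything else is a mechanical application of the index-two Mackey decomposition.
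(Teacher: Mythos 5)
Your proof is correct and takes a genuinely different route from the paper. The paper works with \emph{projectives} first: it observes that $\op{Hom}_\cF(P,-)\circ\op{Ind}$ is a composition of exact functors (since $\op{Ind}$ is exact and right adjoint to $\op{Res}$), concluding that $\op{Res}P$ is projective; it then reads off the indecomposable summands of $\op{Res}P(\la^G)$ and $\op{Res}P(\la,\pm)$ purely by adjunction against the simple modules, using the formulas \eqref{Lla} and \eqref{Lplus}, and finally transfers to the injective statement via the shift $\Phi$ from the proof of Proposition~\ref{zweite}. You instead apply the index-two Mackey decomposition $\op{Res}\circ\op{Ind}\,N\cong N\oplus\sigma^*N$, identify the twist $\sigma^*I^\mg(\la)\cong I^\mg(\sigma(\la))$ since $\sigma^*$ is an auto-equivalence preserving injective hulls, and use Krull--Schmidt to separate the two summands in the ramified case. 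This is a clean alternative: the Mackey route makes the appearance of $I^\mg(\sigma(\la))$ conceptually transparent, whereas the paper's counting-by-adjunction avoids having to discuss preservation of injectives at the outset.

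Two small points deserve tightening. First, your assertion that $\op{Ind}^{\mg,G}_{\mg,G'}$ is \emph{both} left and right adjoint to $\op{Res}$ is plausible for an index-two inclusion of supergroups, but it is not established in the paper (only right-adjointness is recorded, following \cite[Proposition~3.4]{Jantzen}). You can avoid relying on this: since $\op{Res}$ preserves projectives (by the paper's argument, which only needs exactness of $\op{Ind}$ and one adjunction) and projective and injective objects coincide in both $\cF$ and $\cF'$ by \cite[Proposition~2.2.2]{BKN}, it follows formally that $\op{Res}$ preserves injectives. Second, in the ramified case you write that $\op{Res}I(\la,\pm)$ contains $L^\mg(\la)$ ``as a composition factor''; what you need, and what \eqref{eqres} actually gives (since $L(\la,\pm)\hookrightarrow I(\la,\pm)$ is the socle and $\op{Res}$ is exact), is that $L^\mg(\la)$ embeds as a \emph{submodule}, hence lies in the socle of the injective object $\op{Res}I(\la,\pm)$; only then does it follow that $I^\mg(\la)$ is a direct summand. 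With these two adjustments your argument is complete.
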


\begin{proof}
Let $P\in\cF$ be indecomposable projective. Then $\op{Hom}_\cF(P,\underline{\phantom{x}})$ is exact. The induction functor $\op{Ind}^{\mg, G}_{\mg, G'}$ is exact as well, due to  \eqref{semidirect}, see \cite[3.8.(3),~or~4.9]{Jantzen}. Moreover it is right adjoint to the restriction functor, thus we obtain that $\op{Res}^{\mg, G'}_{\mg, G}P$ is projective. The restriction formulas for projective modules follow then using adjunction from \eqref{Lla} and \eqref{Lplus}. Via the identification with indecomposable injective objects (as in the last part of the proof of Proposition~\ref{zweite}), the claims follow also for these.
\end{proof}

\begin{lemma}
Assume $\la\in X^+(\mg)$ with $a_m=0$ in the notation from \eqref{oje}. Then as $G'$-modules $I(\la,+)\cong I(\la,-)$,  similarly for $P(\la,\pm)$.
\end{lemma}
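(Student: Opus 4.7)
The plan is to derive this lemma directly from Lemma~\ref{res}, which already records how the indecomposable injective (and projective) objects for $G$ restrict to $G'$. The only thing to combine are the hypothesis $a_m=0$ and the dichotomy between $\lambda=\sigma(\lambda)$ and $\lambda\neq\sigma(\lambda)$ used there.

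First I would verify that $a_m=0$ is exactly the condition $\sigma(\lambda)=\lambda$. Since $\sigma$ acts on the weight lattice by $\sigma(\varepsilon_m)=-\varepsilon_m$ and fixes every other basis vector, the equality $\sigma(\lambda)=\lambda$ is equivalent to the vanishing of the $\varepsilon_m$-coefficient of $\lambda$. Inspecting the explicit formulas for $\rho$ in both subcases $m>n$ and $m\leq n$ of $\mathfrak{osp}(2m|2n)$, one sees that the $\varepsilon_m$-coefficient of $\rho$ is zero. Hence in the notation $\lambda+\rho=\sum a_i\varepsilon_i+\sum b_j\delta_j$ of \eqref{oje} the condition $a_m=0$ is equivalent to $\sigma(\lambda)=\lambda$, placing us precisely in the ``otherwise'' clause of Lemma~\ref{res}.

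Once this reduction is in place, Lemma~\ref{res} tells us that both $\op{Res}^{\mathfrak{g},G'}_{\mathfrak{g},G}I(\lambda,+)$ and $\op{Res}^{\mathfrak{g},G'}_{\mathfrak{g},G}I(\lambda,-)$ are isomorphic to $I^{\mathfrak{g}}(\lambda)$, whence $I(\lambda,+)\cong I(\lambda,-)$ as $G'$-modules. The statement for $P(\lambda,\pm)$ then follows either by applying the projective version of Lemma~\ref{res} (which was stated for both cases), or equivalently by invoking the shift identity $P(\lambda,\pm)\cong I(\lambda+\nu,\pm)$ from the end of the proof of Proposition~\ref{erste}, since the weight $\nu$ of $\bigwedge^{\dim \mathfrak{g}_1}\mathfrak{g}_1$ has vanishing $\varepsilon_m$-coefficient, so the hypothesis $a_m=0$ is preserved under the shift $\lambda\mapsto\lambda+\nu$.

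There is essentially no obstacle here, only the bookkeeping check of the $\varepsilon_m$-coefficient of $\rho$ and of $\nu$. A more conceptual alternative I would also flag is the following: $L(\lambda,-)\cong L(\lambda,+)\otimes\chi$ as $G$-modules, where $\chi$ is the sign character of $G/G'\cong\mathbb{Z}/2\mathbb{Z}$ pulled back to $G$; tensoring with the invertible object $\chi$ is an autoequivalence of $\mathcal{F}$ sending projective covers to projective covers and injective hulls to injective hulls, and it becomes the identity after restriction to $G'$, yielding the claim in one step.
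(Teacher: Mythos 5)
Your shortcut via Lemma~\ref{res} is correct and genuinely more economical than the paper's own argument. You correctly observe that the $\varepsilon_m$-coefficient of $\rho$ is zero in both subcases $m>n$ and $m\leq n$ of $\osp(2m|2n)$, so $\sigma(\rho)=\rho$ and hence $a_m=0$ is exactly the condition $\sigma(\lambda)=\lambda$; this is the ``otherwise'' clause of Lemma~\ref{res}, which then immediately yields $\op{Res}^{\mg,G'}_{\mg,G}I(\lambda,\pm)\cong I^\mg(\lambda)$ and therefore $I(\lambda,+)\cong I(\lambda,-)$ as $G'$-modules. The transfer to $P(\lambda,\pm)$ via the projective clause of Lemma~\ref{res} (or via the shift $\lambda\mapsto\lambda+\nu$, with $\nu$ having vanishing $\varepsilon_m$-coefficient) is also fine, and there is no circularity since the proof of Lemma~\ref{res} rests only on adjunction, exactness of $\op{Ind}^{\mg,G}_{\mg,G'}$, and Propositions~\ref{erste}/\ref{zweite}.

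By contrast the paper's own proof proceeds differently: it first proves that Harish-Chandra induction commutes with the Lie-algebra induction $U(\mg)\otimes_{U(\mg_0)}(-)$ (the isomorphism \eqref{IndInd}), then realizes $P(\lambda,\pm)$ as a summand of $U(\mg)\otimes_{U(\mg_0)}L_0(\lambda,\pm)$ and tracks the scalar by which $\sigma$ acts on the highest weight vector. That argument buys more than the stated lemma --- it controls exactly which induced module each $P(\lambda,\pm)$ sits inside and how $\sigma$ acts on its top --- whereas your route is content to read off the $G'$-isomorphism type from the restriction formula. Both are valid; yours is shorter, the paper's is more informative. Your flagged alternative using the sign character $\chi$ of $G/G'$ is also sound: tensoring with $\chi$ is an autoequivalence of $\cF$ that preserves projectives and injectives, swaps the labels $(\lambda,+)\leftrightarrow(\lambda,-)$, and is trivial after restriction to $G'$, giving the claim in one line.
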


\begin{proof}
We first claim that our Harish-Chandra induction commutes with Lie algebra induction in the following sense. Let $M$ be a finite-dimensional Harish-Chandra module for $(\mg_0,G')$. Then there is a natural isomorphism of Harish-Chandra modules for $(\mg,G)$ as follows
\begin{eqnarray}
\label{IndInd}
U(\mg)\otimes_{U(\mg_0)}(\op{Ind}^{\mg_0, G}_{\mg_0, G'}M)&\cong&\op{Ind}^{\mg, G}_{\mg, G'}(U(\mg)\otimes_{U(\mg_0)}M).\nonumber\\
u\otimes f&\mapsto &f_u,
\end{eqnarray}
where $f_u(g)=u\otimes f(g)$ for any $g\in G$. The map is obviously well-defined and injective, and therefore also an isomorphism by a dimension count using that $U(\mg)$ is free over $U(\mg_0)$ of finite rank. 

By Proposition~\ref{erste} (2)(b) we obtain $P(\la,+)\oplus P(\la,-)\cong \op{Ind}^{\mg, G}_{\mg, G'} P^\mg(\la)$. On the other hand, by \cite[Proof of Proposition 2.2.2]{BKN}, the indecomposable $(\mg,G')$-module $P^\mg(\la)$ is a summand of $U(\mg)\otimes_{U(\mg_0)}L_0(\la)$, where $L_0(\la)$ is the  irreducible $(\mg_0, G')$-Harish-Chandra module of highest weight $\la$. 

Together with \eqref{IndInd}, Proposition~\ref{erste} implies that $P(\la,\pm)$ is a summand of
\begin{eqnarray*}
U(\mg)\otimes_{U(\mg_0)}(\op{Ind}^{\mg_0, G}_{\mg_0, G'}L_0(\la))\cong 
 U(\mg)\otimes_{U(\mg_0)}(L_0(\la,+)\oplus L_0(\la,-))
\end{eqnarray*}
By carefully following the highest weight vectors through the isomorphism we obtain that $P(\la,\pm)$ is in fact a summand of  $U(\mg)\otimes_{U(\mg_0)}L_0(\la,\pm)$. By Proposition~\ref{erste}~(1)(b) the action of $\sigma$ on the highest weight vector of $L_0(\la,+)$ is given by a scalar, hence it acts by the same scalar on the highest weight vector of $U(\mg)\otimes_{U(\mg_0)}L_0(\la,+)$, and thus also on  the highest weight vector of $P(\la,\pm)$. The analogous statements hold then for $I(\la,\pm)$ as well (again via the identification $\Phi$ from the proof of Proposition~\ref{zweite}).
\end{proof}


We deduce now a few dimension formulas for homomorphism spaces.
\begin{prop}  
\label{blocksO}
With the notations from Proposition~\ref{erste}, in particular \eqref{oje} and \eqref{muinab}, we have the following.
\begin{enumerate} 
\item Let $(\la,\epsilon),\mu^G \in X^+(G)$ with $\mathrm{Stab}_\sigma(\mu)$ being trivial, then 
\begin{eqnarray}
\label{first}
\op{dim}\op{Hom}_{\cF}(I(\la,\epsilon),I(\mu^G))&\!\!=&\!\!\op{dim}\op{Hom}_{\cF'}(I^\mg(\la),I^\mg(\mu))\\
\label{firstb}
&\!\!=&\!\!\op{dim}\op{Hom}_{\cF'}(I^\mg(\la),I^\mg(\sigma(\mu)),\\
\label{second}
\op{dim}\op{Hom}_{\cF}(I(\mu^G),I(\la,\epsilon))
&\!\!=&\!\!\op{dim}\op{Hom}_{\cF'}(I^\mg(\mu), I^\mg(\la))\\
\label{secondb}
&\!\!=&\!\!\op{dim}\op{Hom}_{\cF'}(I^\mg(\sigma(\mu)), I^\mg(\la)).
\end{eqnarray}
\item Let $\la^G,\mu^G \in X^+(G)$ with $\mathrm{Stab}_\sigma(\lambda)$ and $\mathrm{Stab}_\sigma(\mu)$ trivial, then
\begin{eqnarray}
\label{eins}
\op{dim}\op{Hom}_{\cF}(I(\la^G),I(\mu^G))&\!\!=&\!\!\op{dim}\op{Hom}_{\cF'}(I^\mg(\la),I^\mg(\mu))\\
\label{einsb}
&\!\!=&\!\!\op{dim}\op{Hom}_{\cF'}(I^\mg(\sigma(\la)),I^\mg(\sigma(\mu)),
\end{eqnarray}
where $\lambda$ and $\mu$ are chosen such that either $a_m > 0 < a_m'$ or $a_m < 0 > a_m'$.
\item Let $(\la,\epsilon),(\mu,\epsilon^\prime) \in X^+(G)$, then
\begin{eqnarray}
&&\op{dim}\op{Hom}_{\cF}(I(\mu,+) \oplus I(\mu,-),I(\mu,+) \oplus I(\mu,-)) \quad\quad\nonumber\\
&=&2\op{dim}\op{Hom}_{\cF'}(I^\mg(\la),I^\mg(\mu)),
\end{eqnarray}
\end{enumerate}
The analogous formulas hold for indecomposable projective objects.
\end{prop}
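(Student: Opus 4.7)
The plan is to reduce every $\Hom$-computation in $\cF$ to one in $\cF'$ via the adjunction between $\op{Ind}=\op{Ind}^{\mg, G}_{\mg, G'}$ and $\op{Res}=\op{Res}^{\mg, G'}_{\mg, G}$, using Lemma~\ref{res} and the identifications $I(\mu^G)\cong \op{Ind}\,I^\mg(\mu)$ and $I(\mu,+)\oplus I(\mu,-)\cong \op{Ind}\,I^\mg(\mu)$ from Proposition~\ref{erste}. Beyond the right adjunction of \cite[Proposition~3.4]{Jantzen}, we exploit that $[G:G']=2$ is finite, so $\op{Ind}$ coincides with the corresponding coinduction functor and is therefore \emph{biadjoint} to $\op{Res}$; concretely, as a $G'$-module one has $\op{Ind}\,N\cong N\oplus N$ (with $\sigma$ permuting the summands), which makes both adjunctions manifest. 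This lets us freely unfold the induced module from either slot of $\Hom_\cF$.

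With this biadjunction, part~(1) is immediate. Since $(\la,\epsilon)$ has nontrivial $\sigma$-stabilizer, $\sigma(\la)=\la$ and Lemma~\ref{res} gives $\op{Res}\,I(\la,\epsilon)\cong I^\mg(\la)$. Applying the right (resp.\ left) adjunction to $I(\mu^G)\cong \op{Ind}\,I^\mg(\mu)$ yields \eqref{first} (resp.\ \eqref{second}). The second equalities \eqref{firstb} and \eqref{secondb} follow from the autoequivalence of $\cF'$ induced by the automorphism $\sigma$ of $G'$, which sends $I^\mg(\nu)$ to $I^\mg(\sigma(\nu))$, combined with $\sigma(\la)=\la$. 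Part~(3) (reading the left-hand side as $\Hom_\cF(I(\la,+){\oplus}I(\la,-),\,I(\mu,+){\oplus}I(\mu,-))$) is settled analogously by biadjunction and $\op{Res}\,I(\la,\pm)\cong I^\mg(\la)$, producing
\[
\Hom_\cF(I(\la,+){\oplus}I(\la,-),\,I(\mu,+){\oplus}I(\mu,-))\;\cong\; \Hom_{\cF'}(I^\mg(\la)^{\oplus 2},\,I^\mg(\mu))
\]
of dimension $2\dim\Hom_{\cF'}(I^\mg(\la),I^\mg(\mu))$.

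Part~(2) is the main obstacle. Biadjunction and Lemma~\ref{res} give
\[
\Hom_\cF(I(\la^G),I(\mu^G))\;\cong\; \Hom_{\cF'}\bigl(I^\mg(\la),\,I^\mg(\mu)\oplus I^\mg(\sigma(\mu))\bigr),
\]
so to isolate a single summand one must show that the cross term $\Hom_{\cF'}(I^\mg(\la),I^\mg(\sigma(\mu)))$ vanishes under the sign hypothesis $a_m\cdot a_m'>0$. This is precisely where the block structure of $\cF'$ enters: two $\sigma$-related tailless weights $\nu_+$ and $\nu_-$ always lie in \emph{different} blocks of $\cF(\SOSPrn)$, a type $\rm D$ phenomenon going back to Gruson--Serganova~\cite{GS1}. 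Once this block separation is invoked, the cross term vanishes and \eqref{eins} follows, and \eqref{einsb} is then another application of the $\sigma$-autoequivalence. The analogous formulas for indecomposable projective modules are obtained via the identifications $P(\la^G)\cong I(\Phi(\la^G))$ and $P(\la,\pm)\cong I(\Phi(\la,\pm))$ from the proof of Proposition~\ref{erste}, where $\Phi$ preserves the condition $a_m\neq 0$, so the case distinctions transport directly from the injective to the projective setting.
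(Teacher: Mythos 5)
Your proof is correct and follows essentially the same route as the paper: identify the induced injectives via Proposition~\ref{erste}, unfold by adjunction using Lemma~\ref{res}, and invoke the Gruson--Serganova vanishing (Proposition~\ref{prop:alternating}(1)) for the cross term in part~(2). The paper obtains \eqref{second} and \eqref{secondb} from \eqref{first} and \eqref{firstb} via the simple-preserving duality $\boldsymbol{\op{d}}$ rather than a second adjunction, but your biadjunction route works equally well and makes explicit what the paper uses silently in its computation for part~(3). One small imprecision worth noting: your parenthetical claim that $\op{Ind}\,N\cong N\oplus N$ as a $G'$-module should be $\op{Res}\,\op{Ind}\,N\cong N\oplus N^\sigma$ with $N^\sigma$ the $\sigma$-twist of $N$, and $\sigma$ cannot \emph{permute summands of a $G'$-module} since $\sigma\notin G'$; nothing in the argument breaks --- biadjunction of $\op{Ind}$ and $\op{Res}$ holds for any finite-index inclusion --- but the justification as stated is not quite right.
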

\begin{proof}
For the first statement \eqref{first} we calculate using Proposition~\ref{erste}, adjunction of restriction and induction, and Lemma~\ref{res}
\begin{eqnarray*}
\op{dim}\op{Hom}_{\cF}(I(\la,\pm),I(\mu^G))&=&\op{dim}\op{Hom}_{\cF}(I(\la,\pm),\op{Ind}^{\mg, G}_{\mg, G'} I^\mg(\mu))\\
&=&\op{dim}\op{Hom}_{\cF'}(\op{Res}^{\mg, G'}_{\mg, G}I(\la,\pm),I^\mg(\mu))\\
&=&\op{dim}\op{Hom}_{\cF'}(I^\mg(\la),I^\mg(\mu)).
\end{eqnarray*}
Similarly, \eqref{firstb} holds.
Again, the same formulas hold for projective objects. On the categories $\cF$ and $\cF'$ there is the usual duality $\boldsymbol{ \op{d}}$, \cite[13.7.1]{Musson}, given by taking the sum of the vector space dual of the weight spaces with the action of $\mg$, $G$, $G'$ twisted by the Chevalley automorphism. This duality sends simple objects to simple objects and their injective hulls to the projective covers. Applying $\boldsymbol{\op{d}}$ to \eqref{first} resp.\eqref{firstb} gives \eqref{second} and  \eqref{secondb}.

For the statement  \eqref{eins} we calculate
\begin{eqnarray*}
\op{dim}\op{Hom}_{\cF}(I(\la^G),I(\mu^G))&=&\op{dim}\op{Hom}_{\cF}(I(\la^G),\op{Ind}^{\mg, G}_{\mg, G'} I^\mg(\mu))\\
&=&\op{dim}\op{Hom}_{\cF'}(\op{Res}^{\mg, G'}_{\mg, G}I(\la^G),I^\mg(\mu))\\
&=&\op{dim}\op{Hom}_{\cF'}(I^\mg(\la)\oplus I^\mg(\sigma(\la)),I^\mg(\mu))\\
&=&\op{dim}\op{Hom}_{\cF'}(I^\mg(\la),I^\mg(\mu)),
\end{eqnarray*}
again using Proposition~\ref{erste}, adjunction and Lemma~\ref{res} for the first to  third equalities. The last one follows from the Gruson-Serganova combinatorics, \cite{GS2}, see Proposition~\ref{prop:alternating} (1). Hence, \eqref{eins} and similarly \eqref{einsb} follow. 


The equality in the third statement follows from
\begin{eqnarray*}
&&\op{dim}\op{Hom}_{\cF}(I(\la,+) \oplus I(\la,-),I(\mu,+) \oplus I(\mu,-))\\
&=&\op{dim}\op{Hom}_{\cF}( \op{Ind}_{\mg, G'}^{\mg, G}I(\la),I(\mu,+) \oplus I(\mu,-))\\
&=&\op{dim}\op{Hom}_{\cF}I(\la),  \op{Res}^{\mg, G'}_{\mg, G}(I(\mu,+) \oplus I(\mu,-)))\\
&=&2\op{dim}\op{Hom}_{\cF'}(I^\mg(\la),I^\mg(\mu)).
\end{eqnarray*}
where we used again Lemma~\ref{res}, adjunction, and Proposition~\ref{erste}. The analogous formulas for the projectives hold as well.
\end{proof}

The following refines the last part of Proposition~\ref{blocksO}.
\begin{lemma}
\label{lemplusminus}
Let $G=\op{OSp}(2m|2n)$. In the notation from \eqref{oje} consider the set $X^+(\mg)_{\operatorname{sign}}=\{\la\in X^+(\mg)\mid a_m = 0\}$. 
The sign in the labelling  of the irreducible modules from \eqref{Lplus} can be chosen in such a way such that for any $\la,\mu\in X^+(\mg)_{\op{sign}}$ one of the following holds.
\begin{enumerate}
\item \label{plusminusmixed} Either $\op{dim}\op{Hom}_\cF(P(\la,\epsilon),P(\mu,\epsilon'))$ is independent of  $\epsilon, \epsilon'\in\{+,-\}$ and equal to $\tfrac{1}{2}\op{dim}\op{Hom}_{\cF'}(P^\mg(\la),P^\mg(\mu))$,
\item or there exists $\epsilon, \epsilon'\in\{+,-\}$ such that 
\begin{eqnarray}
\label{above}
 \op{dim}\op{Hom}_\cF(P(\la,\epsilon),P(\mu,\epsilon'))&=&\op{dim}\op{Hom}_{\cF'}(P^\mg(\la),P^\mg(\mu))\not=\{0\},\quad\quad
 \end{eqnarray}
in which case the same holds if we change both signs $\epsilon$ and $\epsilon'$ in \eqref{above}, whereas the left hand side vanishes if only one of the two is changed. 
\end{enumerate}
The analogous statement holds for the indecomposable injectives as well. In both situations the dimensions of the morphism spaces are invariant under interchanging the two objects.
\end{lemma}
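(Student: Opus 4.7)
The plan is to reduce the statement to an eigenvalue computation for a natural involution on $\op{Hom}_{\cF'}(P^\mg(\la), P^\mg(\mu))$, and then to analyze that involution by embedding everything into tensor spaces and invoking Jucys--Murphy theory for the Brauer algebra. My first step is to exploit the fact that in the even case~\eqref{semidirect} the induction functor $\op{Ind}^{\mg, G}_{\mg, G'}$ is both the left and the right adjoint to restriction, since $G/G' \cong \mZ/2\mZ$ is finite. Combining these two adjunctions with the projective version of Proposition~\ref{erste}(1)(b), which gives $\op{Ind}^{\mg, G}_{\mg, G'} P^\mg(\la) \cong P(\la,+) \oplus P(\la,-)$, together with the restriction formula $\op{Res}^{\mg, G'}_{\mg, G} P(\mu, \pm) \cong P^\mg(\mu)$ of Lemma~\ref{res}, I compute $\op{Hom}_\cF(P(\la,\epsilon), P(\mu,+) \oplus P(\mu,-))$ and $\op{Hom}_\cF(P(\la,+) \oplus P(\la,-), P(\mu,\epsilon'))$ in two ways. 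Setting $d_{\epsilon\epsilon'} := \dim\op{Hom}_\cF(P(\la,\epsilon), P(\mu,\epsilon'))$ and $D := \dim\op{Hom}_{\cF'}(P^\mg(\la), P^\mg(\mu))$, the four resulting equations force $d_{++} = d_{--}$, $d_{+-} = d_{-+}$ and $d_{++} + d_{+-} = D$, so that only the pair $(d_{++}, d_{+-})$ remains free.

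Next, I interpret $d_{\epsilon\epsilon'}$ as an eigenspace dimension. The sign ambiguity in Proposition~\ref{erste}(1)(b) amounts to a choice of involution $\psi_\la$ on $P^\mg(\la)$ realizing the $\sigma$-action on $P(\la,+)$; the opposite choice $-\psi_\la$ realizes $P(\la,-)$. A $G'$-equivariant morphism $f : P^\mg(\la) \to P^\mg(\mu)$ promotes to a $G$-equivariant morphism $P(\la,\epsilon) \to P(\mu,\epsilon')$ precisely when $\psi_\mu \circ f = (\epsilon\epsilon')\, f \circ \psi_\la$. Therefore $d_{\epsilon\epsilon'}$ equals the dimension of the $(\epsilon\epsilon')$-eigenspace of the involution $T_{\la,\mu} : f \mapsto \psi_\mu^{-1} \circ f \circ \psi_\la$ acting on $\op{Hom}_{\cF'}(P^\mg(\la), P^\mg(\mu))$. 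The lemma then reduces to showing that, after a coherent choice of signs, $T_{\la,\mu}$ either acts by a single scalar $\pm 1$ (case~(2)) or splits its domain into two equidimensional $\pm 1$-eigenspaces (case~(1)).

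To establish this dichotomy and fix the sign convention, I will realize $P^\mg(\la)$ and $P^\mg(\mu)$ as summands of a common tensor power $V^{\otimes d}$, using Lemma~\ref{B}, and lift $\psi_\la$ and $\psi_\mu$ to restrictions of the global involution $\sigma^{\otimes d}$, which by Proposition~\ref{prop:Brauersurj} commutes simultaneously with the $G'$-action and with the Brauer algebra $\op{Br}_d(\delta)$. The key step is then to invoke the Jucys--Murphy theory for $\op{Br}_d(\delta)$: a commuting family whose joint generalized eigenspaces on $V^{\otimes d}$ are indexed by up-down tableaux, and on each of which $\sigma^{\otimes d}$ acts by a scalar $\pm 1$ that can be read off from the tableau combinatorics. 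Declaring $P(\la,+)$ to be the summand where this sign is $+1$ on a distinguished tableau representing the head of $P^\mg(\la)$ pins down the canonical sign convention, and the Brauer-algebra basis of $\op{Hom}_{\cF'}(P^\mg(\la), P^\mg(\mu))$ indexed by oriented circle diagrams as in Theorem~\ref{A} then becomes a basis of $T_{\la,\mu}$-eigenvectors whose eigenvalues are products of Jucys--Murphy signs, yielding the two cases of the lemma exactly. The symmetry under swapping $\la$ and $\mu$ comes from the symmetry of circle diagrams, and the analogous statement for injectives transfers via $P(\la,\pm) \cong I(\Phi(\la),\pm)$ from the proof of Proposition~\ref{erste}, since $\Phi$ preserves $X^+(\mg)_{\op{sign}}$. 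The main obstacle here is precisely the last ingredient: proving that $\sigma^{\otimes d}$ acts by $\pm 1$ on each Jucys--Murphy joint eigenspace and identifying the signs with the cup diagram data of Proposition~\ref{PropE}. This requires a Jucys--Murphy calculus for Brauer algebras that is not developed in Part~I, which is precisely why the proof is deferred to Part~II.
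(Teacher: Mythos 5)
The paper itself does not prove this lemma in Part~I; its ``proof'' merely records that the argument is deferred to Part~II and indicates the ingredients (Jucys--Murphy elements of the Brauer algebra, the classification of indecomposable summands of $V^{\otimes d}$ from \cite{CH}, and an inductive argument). Your proposal therefore cannot be checked against a written-out proof, but it can be checked for consistency with the hints given and for internal soundness, and on both counts it does well. The Frobenius-reciprocity reduction is correct: since $G/G'$ has order two, $\op{Ind}^{\mg,G}_{\mg,G'}$ is simultaneously left and right adjoint to restriction, and combining Proposition~\ref{erste}(1)(b) with the restriction formula of Lemma~\ref{res} does force $d_{++}+d_{+-}=d_{-+}+d_{--}=d_{++}+d_{-+}=d_{+-}+d_{--}=D$, hence $d_{++}=d_{--}$ and $d_{+-}=d_{-+}$. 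This already establishes the symmetry assertions in the lemma (invariance under swapping $\la,\mu$ and under flipping both signs) before any Brauer-algebra input, and is exactly the style of computation the paper itself uses in Proposition~\ref{blocksO}. Your reinterpretation of $d_{\epsilon\epsilon'}$ as the dimension of the $(\epsilon\epsilon')$-eigenspace of the involution $T_{\la,\mu}(f)=\psi_\mu\circ f\circ\psi_\la$ on $\op{Hom}_{\cF'}(P^\mg(\la),P^\mg(\mu))$ is also correct and cleanly isolates what remains: show that $T_{\la,\mu}$ either is scalar or has two equidimensional eigenspaces, with the scalar case identified as the non-nuclear line case (matching Lemma~\ref{lem:orientableandlines}, which is likewise deferred).

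Where the proposal is incomplete is precisely where you say it is: lifting $\psi_\la,\psi_\mu$ to $\sigma^{\otimes d}$ and reading off the eigenvalue of $T_{\la,\mu}$ on each circle-diagram basis vector via Jucys--Murphy data requires the calculus from Part~II. One point you might flag more explicitly: the paper indicates the Part~II proof is \emph{inductive}, presumably on $d$ or on the number of boxes, whereas your sketch is framed as a direct eigenvalue computation; a uniform ``read the eigenvalue off the tableau'' statement is likely to be established only by such an induction, so these are probably the same argument seen from two angles. A second caution: you implicitly assume that the restriction of $\sigma^{\otimes d}$ to a primitive idempotent summand of $V^{\otimes d}$ realizing $P^\mg(\la)$ actually stabilizes that summand (rather than swapping it with a twin), which is exactly the distinction between the two cases of the lemma; this must be argued rather than assumed, and is the real content hiding behind the phrase ``Jucys--Murphy calculus.'' Modulo that deferral, your plan matches the paper's intended strategy.
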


\begin{proof}
The proof of this Lemma will be given in Part~II of this series. It is a consequence of the action of the Jucys-Murphy elements of the Brauer algebra and the  classification theorem of indecomposable summands in $V^{\otimes d}$ from \cite{CH}. The proof is an inductive argument.
\end{proof}

\subsection{The Cartan matrix}
We apply the results so far to deduce the symmetry of the Cartan matrix.

\begin{prop}
\label{Cartanmatrix}
Consider $G=\OSPrn$  for fixed $m,n$. The Cartan matrix of $\cF$ is symmetric, i.e. for any $\la,\mu\in X^+(G)$ we have an equality of multiplicities of irreducible modules in a Jordan-H\"older series
\begin{eqnarray}
\label{PL}
[P(\la):L(\mu)]=[P(\mu):L(\la)],
\end{eqnarray}
and therefore
    $\op{dim}\op{Hom}_\cF(P(\la),P(\mu))
    =\op{dim}\op{Hom}_\cF(P(\mu),P(\la))$.
\end{prop}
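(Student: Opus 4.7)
The two equalities in the proposition are equivalent via the standard identity $\op{dim}\op{Hom}_\cF(P(\nu),M)=[M:L(\nu)]$, which holds for any finite-length module $M$ because $P(\nu)$ is the projective cover of $L(\nu)$. Taking $M=P(\la),\nu=\mu$ and $M=P(\mu),\nu=\la$ converts the Jordan--H\"older identity \eqref{PL} into the asserted Hom-symmetry. My plan is to prove the Hom-symmetry directly, by first handling $\cF'$ and then transferring the result to $\cF$.

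The first step is to establish the analogous symmetry in $\cF'$, namely $\op{dim}\op{Hom}_{\cF'}(P^\mg(\la),P^\mg(\mu)) = \op{dim}\op{Hom}_{\cF'}(P^\mg(\mu),P^\mg(\la))$. The duality $\boldsymbol{\op{d}}$ on $\cF'$ (invoked already in the proof of Proposition~\ref{blocksO}) preserves simple modules and sends the projective cover $P^\mg(\la)$ to the injective hull $I^\mg(\la)$, so
\[
\op{dim}\op{Hom}_{\cF'}(P^\mg(\la),P^\mg(\mu)) \;=\; \op{dim}\op{Hom}_{\cF'}(I^\mg(\mu),I^\mg(\la)).
\]
Combined with the identification $P^\mg(\la)\cong I^\mg(\la)$ of $G'$-modules, the right-hand side becomes $\op{dim}\op{Hom}_{\cF'}(P^\mg(\mu),P^\mg(\la))$, as needed. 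To justify $P^\mg(\la)\cong I^\mg(\la)$, I invoke \cite[Proposition~2.2.1]{BKN}, which gives $P^\mg(\la)\cong I^\mg(\la+\nu_0)$ for $\nu_0$ the weight of the one-dimensional $\mg_0$-module $\bigwedge^{\op{dim}\mg_1}\mg_1$, i.e.~the sum of all odd roots of $\osprn$. Since the root data \eqref{roots} show that $\alpha$ is an odd root if and only if $-\alpha$ is, the odd roots pair up and their total sum vanishes, hence $\nu_0=0$.

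The second step transfers this symmetry to $\cF$. When $r=2m+1$ is odd, identities \eqref{plusminusPodd}--\eqref{olddim} from Remark~\ref{rem:plusminusodd} express $\op{dim}\op{Hom}_\cF(P(\la,\epsilon),P(\mu,\epsilon'))$ as $0$ if $\epsilon\neq\epsilon'$ and as $\op{dim}\op{Hom}_{\cF'}(P^\mg(\la),P^\mg(\mu))$ if $\epsilon=\epsilon'$; in both cases the expression is invariant under swapping the two $\cF$-labels, by the first step. When $r=2m$ is even, Proposition~\ref{blocksO} covers all pairs $(\la,\epsilon),(\mu,\epsilon')$ in which at least one weight has trivial $\sigma$-stabilizer, writing the $\cF$-Hom dimension as a positive integer multiple of a $\cF'$-Hom dimension, so symmetry is again inherited from the first step. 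The remaining case, in which both weights lie in $X^+(\mg)_{\op{sign}}$, is exactly the content of Lemma~\ref{lemplusminus}, whose final sentence asserts precisely the invariance of the Hom dimensions under interchanging the two objects.

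The principal obstacle is Lemma~\ref{lemplusminus} itself, whose proof is deferred to Part~II and rests on an analysis of the action of Jucys--Murphy elements in the Brauer algebra together with the classification of indecomposable summands of $V^{\otimes d}$ from \cite{CH}. Beyond that input, the argument is formal, relying on the $\boldsymbol{\op{d}}$-duality on $\cF'$, the standard Hom--multiplicity identity, and the elementary observation that $\nu_0=0$ for the orthosymplectic root system.
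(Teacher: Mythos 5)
Your proof is correct, and for the $\cF'$-symmetry step you take the route that the paper only mentions parenthetically as ``alternatively one could use that $P^\mg(\la)\cong I^\mg(\la)$ and apply the usual simple preserving duality on $\cF'$.'' The paper's main argument instead runs through Gruson--Serganova's BGG-reciprocity: it expands $[P^\mg(\la):L^\mg(\mu)]$ over the Euler classes $\cE^\mg(\nu)$ and invokes $(P^\mg(\la):\cE^\mg(\nu))=[\cE^\mg(\nu):L^\mg(\la)]$ from \cite[Theorem 1]{GS2} to get a manifestly symmetric double sum, then converts to Hom-dimensions via $\op{dim}\op{Hom}_{\cF'}(P^\mg(\la),L^\mg(\la))=1$. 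Your duality argument is more self-contained for this particular statement (no Euler characteristics needed), though the paper relies on \cite{GS2} heavily elsewhere, so nothing is saved globally. A small bonus in your write-up is the explicit justification that $\nu_0=0$: since the odd roots of $\osprn$ in \eqref{roots} come in $\pm\alpha$ pairs and the Cartan is even, the weight of $\bigwedge^{\dim\mg_1}\mg_1$ vanishes, which upgrades \cite[Prop.~2.2.1]{BKN} to $P^\mg(\la)\cong I^\mg(\la)$; the paper uses this isomorphism but leaves the reason implicit (in the proof of Proposition~\ref{erste} it only remarks that the $\varepsilon_m$-coefficient of $\nu_0$ vanishes). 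The transfer from $\cF'$ to $\cF$ — via Remark~\ref{rem:plusminusodd} in the odd case, and via Proposition~\ref{blocksO} together with Lemma~\ref{lemplusminus} in the even case — is the same in both proofs, including the shared dependence on the yet-unproved Lemma~\ref{lemplusminus} for the sign--sign case when $r$ is even. One small phrasing point: Proposition~\ref{blocksO}(3) only controls the dimension of the direct sum of all four Hom spaces, so it does not by itself ``write the $\cF$-Hom dimension as a positive integer multiple of a $\cF'$-Hom dimension'' for an individual pair $(\la,\epsilon),(\mu,\epsilon')$; disentangling the four summands is exactly the content of Lemma~\ref{lemplusminus}, which you do invoke, so the proof is not affected.
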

\begin{proof}
We first claim the analogous formulas for $\cF'$. So given $\la,\mu\in X^+(\mg)$, the multiplicity $[P^\mg(\la):L^\mg(\mu)]$ is the coefficient of the class of $L^\mg(\mu)$ when we express the class of $[P^\mg(\la)]$ in terms of the classes of the irreducible modules of $\cF'$ in the Grothendieck group of $\cF'$. Now by \cite{GS2} we have another class of linearly independent elements in the Grothendieck group, namely the Euler-characteristics $\cE^\mg(\nu)$, where $\nu$ runs through all tailless elements in $X^+(\mg)$ and the classes $[P^\mg(\la)]$ are all in the $\mZ$-lattice spanned by these, with coefficients denoted by $(P^\mg(\la):\cE^\mg(\nu))$, see \cite[Theorem 1]{GS2}. Hence 
\begin{eqnarray*}
[P^\mg(\la): L^\mg(\mu)]&=&\sum_{\nu}(P^\mg(\la):\cE^\mg(\nu))[\cE^\mg(\nu):L^\mg(\mu)]\\
&=&\sum_{\nu}[\cE^\mg(\nu):L^\mg(\la)][\cE^\mg(\nu):L^\mg(\mu)]\\
&=&[P^\mg(\mu):L^\mg(\la)],
\end{eqnarray*}
where the second equality is the BGG-reciprocity, \cite[Theorem 1]{GS2}, and the third equality holds then by symmetry. Hence the analogue of \eqref{PL} for $\cF'$ holds. Now $\op{dim}\op{Hom}_{\cF'}(P^\mg(\la),L^\mg(\la))=1$, since $L^\mg(\la)$ is a highest weight module, and therefore 
$\op{dim}\op{Hom}_{\cF'}(P^\mg(\la),P^\mg(\mu))=[P^\mg(\mu):L^\mg(\la)]$. Hence the proposition holds for $\cF'$. (Alternatively one could use that $P^\mg(\la)\cong I^\mg(\la)$ and apply the usual simple preserving duality on $\cF'$).  Proposition~\ref{erste} implies that $\op{dim}\op{End}_{\cF}(L)=1$ for any irreducible object in $\cF$. Then the statement from the proposition follows directly from the statement for $\cF'$ and the formulas for the dimensions of homomorphism spaces (Lemma~\ref{lemplusminus} and Proposition~\ref{blocksO}). 
\end{proof}

\subsection{Hook partitions}
Let still $G=\OSP(r|2n)$ for $r=2m+1$ or $r=2m$ and recall (from Propositions~\ref{def:labelssimples1} and~\ref{def:labelssimples2} and~\eqref{dominance}) the labelling sets $X^+(G)$ respectively $X^+(\mg)$ for the isomorphism classes of irreducible objects in $\cF$ and $\cF'$.

A different commonly used labelling of the simple modules in $\cF'$ is given by hook partitions,  see e.g. \cite{CW}. To make the connection,  recall that a {\it partition}, denoted\footnote{We chose this notation to distinguish partitions from integral weights.} by $\pga$,  is a weakly decreasing sequence of non-negative integers, $\pga=(\pga_1\geq\pga_2\geq\cdots)$. We denote by $\pga^t$ its transpose partition, i.e. $\pga^t_i=|\{k\mid \la_k\geq i\}|$.  A partition $\pga$ is called {\it $(n,m)$-hook} if $\pga_{n+1}\leq m$. The partition $\pga=(8,7,6,3,3,1)$ is for instance $(5,7)$-hook and $(5,5)$-hook, but not $(2,5)$-hook, see Figure~\ref{shifteddiag}. Note that the empty partition $\varnothing$ is $(n,m)$-hook for any $n,m\geq 0$.  and corresponds to the zero weight via the following dictionary. 

\begin{definition}
\label{nmhook}
Given an $(n,m)$-hook partition $\pga$ we associate {\it weights} 
\begin{eqnarray*}
\op{wt}(\pga) \in X^+(\osp(2m+1|2n)), &\text{respectively}& \op{wt}(\pga)\in X^+(\osp(2m|2n))
\end{eqnarray*}
defined, via \eqref{oje}, as follows, (with $1\leq i\leq m$, $1\leq j\leq n$):
\begin{itemize}
\item in the odd case $\op{wt}(\pga)=(a_1,a_2,\ldots, a_m\mid b_1,b_2,\ldots, b_n)-\rho$, where 
\begin{eqnarray*}
b_j=\op{max}\left\{\pga_j-j-\frac{\de}{2}+1,\frac{1}{2}\right\}&\text{and}&
a_i=\op{max}\left\{\pga^t_i-i+\frac{\de}{2},-\frac{1}{2}\right\}, 
\end{eqnarray*}
\item in the even case $\op{wt}(\pga)=(a_1,a_2,\ldots, a_m\mid b_1,b_2,\ldots, b_n)-\rho$, where 
\begin{eqnarray*}
b_j=\op{max}\left\{\pga_j-j-\frac{\de}{2}+1,0\right\}&\text{and}&
a_i=\op{max}\left\{\pga^t_i-i+\frac{\de}{2},0\right\}.
\end{eqnarray*}
\end{itemize}
\end{definition}

The $a_i$ and $b_j$ give a different way to describe $(n,m)$-hook partitions by encoding the number of boxes below and to the right of the $\lfloor\frac{\de}{2}\rfloor$-shifted diagonal (which we just call {\it diagonal}). For example let $\la=(8,7,6,3,3,1)$. Consider it as a hook partition, for instance as $(5,7)$-hook respectively $(5,5)$-hook, and mark the diagonal (it intersects the inflexion point of the hook and the boxes on the diagonal have content $\frac{\de}{2}+\frac{1}{2}$ respectively $\frac{\de}{2}+1$, where the content is the row minus the column number of the box).
\intextsep0.25em
\begin{figure}[h]
\centering
\begin{tikzpicture}[thick,scale=0.3]
\node at (5,1) {$\frac{\de}{2}=m-n=2$};
\draw[very thick] (0,-10) -- (0,0) -- (12,0);
\draw[very thick] (7,-9) -- (7,-5) -- (11,-5);
\draw[<->] (0,-8) -- node[above]{m=7} (7,-8);
\draw[<->] (10,0) -- node[left]{n=5} (10,-5);

\draw[step=1] (0,-3) grid +(6,3);
\draw (6,0) rectangle +(1,-1);
\draw (7,0) rectangle +(1,-1);
\draw (6,-1) rectangle +(1,-1);
\draw[step=1] (0,-5) grid +(3,5);
\draw (0,-5) rectangle +(1,-1);

\begin{scope}[xshift=2cm]
\draw[blue,very thick,dotted] (0,0) -- (5,-5);
\end{scope}
\end{tikzpicture}
\;\;
\begin{tikzpicture}[thick,scale=0.3]

\node at (4,1) {$\frac{\de}{2}=0$};
\draw[very thick] (0,-10) -- (0,0) -- (12,0);
\draw[very thick] (5,-9) -- (5,-5) -- (11,-5);
\draw[<->] (0,-8) -- node[above]{m=5} (5,-8);
\draw[<->] (10,0) -- node[left]{n=5} (10,-5);

\draw[step=1] (0,-3) grid +(6,3);
\draw (6,0) rectangle +(1,-1);
\draw (7,0) rectangle +(1,-1);
\draw (6,-1) rectangle +(1,-1);
\draw[step=1] (0,-5) grid +(3,5);
\draw (0,-5) rectangle +(1,-1);

\draw[blue,very thick, dotted] (0,0) -- (5,-5);
\end{tikzpicture}
\;\;
\begin{tikzpicture}[thick,scale=0.3]
\node at (6,1) {$\frac{\de}{2}=m-n+\frac{1}{2}=\frac{5}{2}$};
\draw[very thick] (0,-10) -- (0,0) -- (12,0);
\draw[very thick] (7,-9) -- (7,-5) -- (11,-5);
\draw[<->] (0,-8) -- node[above]{m=7} (7,-8);
\draw[<->] (10,0) -- node[left]{n=5} (10,-5);

\draw[step=1] (0,-3) grid +(6,3);
\draw (6,0) rectangle +(1,-1);
\draw (7,0) rectangle +(1,-1);
\draw (6,-1) rectangle +(1,-1);
\draw[step=1] (0,-5) grid +(3,5);
\draw (0,-5) rectangle +(1,-1);

\begin{scope}[xshift=2cm]
\draw[blue,very thick,dotted] (0,0) -- (5,-5);
\end{scope}
\end{tikzpicture}
\caption{The translation between weights and hook partitions.}
\label{shifteddiag}
\end{figure}
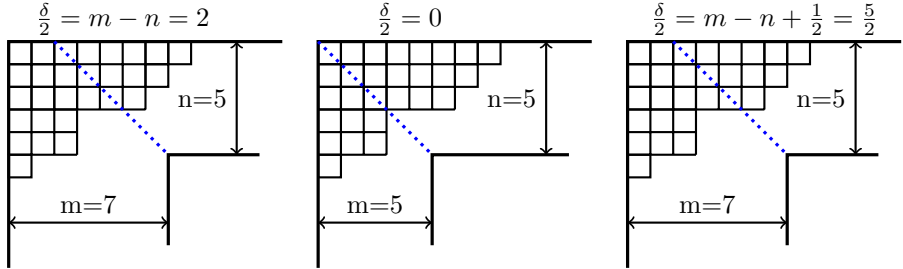

\begin{itemize}
\item In the even case this implies that $a_i$ counts the number of boxes in column $i$ strictly below the diagonal, while $b_j$ counts the number of boxes in row $j$ on and to the right of this diagonal. In the first two cases of Figure~\ref{shifteddiag} we get $\mathbf{a}=(7,5,4,1,0,0,0)$, respectively $\mathbf{a}=(5,3,2,0,0)$, and on the other hand $\mathbf{b}=(6,4,2,0,0)$, respectively $\mathbf{b}=(8,6,4,0,0)$.
\item In the odd case this implies that $a_i$ counts the number of boxes in column $i$ on and below the diagonal minus $\tfrac{1}{2}$. On the other hand $b_j$ counts the number of boxes in row $j$ strictly to the right of this diagonal minus $\tfrac{1}{2}$ and takes the absolute value of this expression. In the third case above in Figure~\ref{shifteddiag} this gives $\mathbf{a}=(\tfrac{15}{2},\tfrac{11}{2},\tfrac{9}{2},\tfrac{3}{2},\tfrac{1}{2},-\tfrac{1}{2},-\tfrac{1}{2})$ and $\mathbf{b}=(\tfrac{11}{2},\tfrac{7}{2},\tfrac{3}{2},\tfrac{1}{2},\tfrac{1}{2})$.
\end{itemize}

(Note that we also count the numbers of boxes which can be put in the region between the marked diagonal and the partition, i.e. above or to the left of the diagram depending on the given diagonal.)

\begin{definition}
A {\it signed $(n,m)$-hook partition} is an $(n,m)$-hook partition $\pga$ with $\pga_{n+1}\geq m$ or a pair $(\pga,\epsilon)$ of an $(n,m)$-hook partition with $\pga_{n+1}<m$ and a sign $\epsilon\in\{\pm\}.$ 
\end{definition}
The following easy identification allows us to work with hook partitions plus signs (in the odd case) respectively with signed hook partitions (in the even case) instead of dominant weights. 

\begin{lemma}
\label{AB}
The assignments $\pga\mapsto \op{wt}(\pga) $ defines a bijection 
\begin{eqnarray}
\label{AB1}
\Psi=\Psi_{2m+1,2n}:\;\left\{(n,m)-\text{hook partitions}\right\}\times\mZ/2\mZ
&\stackrel{1:1}{\leftrightarrow}&
X^+(G)\\
(\pga,\pm)&\mapsto&( \op{wt}(\pga),\pm)\quad\nonumber
\end{eqnarray}
in case $G=\OSP(2m+1|2n)$, and, in case $G=\OSP(2m|2n)$, a bijection
\begin{eqnarray*}
\Psi=\Psi_{2m,2n}:\; \left\{(n,m)-\text{signed hook partitions} \right\}
&\stackrel{1:1}{\leftrightarrow}&
X^+(G)\\
\pga&\mapsto& \op{wt}(\pga)\\
(\pga,\pm)&\mapsto&(\op{wt}(\pga),\pm)).\quad
\end{eqnarray*}
\end{lemma}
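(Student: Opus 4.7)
The main task is to verify that $\pga\mapsto\op{wt}(\pga)$ lands in $X^+(G)$, to construct an explicit inverse, and to check the signs align. For dominance: weakly decreasing $\pga_j$ and $\pga^t_i$ together with the shifts $-j$ and $-i$ make $\pga_j-j$ and $\pga^t_i-i$ \emph{strictly} decreasing, so before being truncated by the floors ($\pm\tfrac12$ in the odd case, $0$ in the even case) the sequences $a_1,\ldots,a_m$ and $b_1,\ldots,b_n$ of Definition~\ref{nmhook} are strictly decreasing; the floor values match precisely the stable values appearing in Definition~\ref{dominance}. The crucial compatibility is the \emph{tail-length}: since $\pga$ is $(n,m)$-hook, the number of column indices $i$ with $a_i$ at the floor matches (up to a $\pm1$ correction coming from the parity of $r$) the number of row indices $j$ with $b_j$ at the floor, because both count the excess of the partition over the $\lfloor\de/2\rfloor$-shifted staircase.

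Next I would exhibit the inverse: given $\la\in X^+(\mg)$ with data $(a_i\mid b_j)$, one sets $\pga^t_i := a_i + i - \tfrac{\de}{2}$ for those $a_i$ strictly above the floor (reading off column lengths on one side of the shifted diagonal) and $\pga_j := b_j + j + \tfrac{\de}{2} - 1$ for those $b_j$ strictly above the floor (reading off row lengths on the other side), with the obvious half-integer adjustment in the odd case. The two families describe complementary regions of the Young diagram of $\pga$ on either side of the diagonal, and consistency along the diagonal follows from the conjugation relation between rows and columns of a partition. The resulting $\pga$ is automatically $(n,m)$-hook since only $m$ columns and $n$ rows are recorded nontrivially, matching Figure~\ref{shifteddiag}.

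For the sign matching: in the odd case one has $X^+(G)=X^+(\mg)\times\mZ/2\mZ$ by Proposition~\ref{def:labelssimples1}, so the $\mZ/2\mZ$ factors on both sides of \eqref{AB1} correspond tautologically. In the even case Definition~\ref{nmhook} yields $a_m = \max\{\pga^t_m - n,\,0\}$, so $a_m=0$ iff $\pga^t_m\leq n$ iff $\pga_{n+1}<m$; by Proposition~\ref{erste}(1)(b) together with Definition~\ref{def:labelssimples2}, this is exactly the locus where $\la$ is $\sigma$-fixed and an extra sign $\epsilon\in\{\pm\}$ is required, while when $\pga_{n+1}\geq m$ one has $a_m>0$ and the orbit $\{\la,\sigma(\la)\}$ has two elements, and Notation~\ref{notationweights} selects the representative with $a_m>0$ as the unique preimage of $\pga$. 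The only obstacle is purely combinatorial bookkeeping, tracking the $\lfloor\de/2\rfloor$-shift of the diagonal together with the parity-dependent floors across the four subcases of Definition~\ref{nmhook}; this is essentially already visualised in Figure~\ref{shifteddiag}.
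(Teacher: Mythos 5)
The high-level picture in your proposal (reading the $a_i$, $b_j$ off the hook partition relative to the $\lfloor\de/2\rfloor$-shifted staircase, and inverting by reconstructing the Young diagram in two complementary regions) is the right one and matches the geometry in Figure~\ref{shifteddiag}. However, the proof as written has a genuine gap precisely at what you call the ``crucial compatibility.'' Asserting that the number of $a_i$'s at the floor and the number of $b_j$'s at the floor ``match because both count the excess of the partition over the shifted staircase'' is not a proof; it is exactly the claim that needs to be established. In the even case the dominance condition (ii) in Definition~\ref{dominance} requires the chain $a_1>\cdots>a_{m-l-1}\geq a_{m-l}=\cdots=a_m=0$ with $\geq$ at one spot, but $b_1>\cdots>b_{n-l-1}> b_{n-l}=\cdots=b_n=0$ with strict $>$ throughout, so what one must actually show is that the number of zero $a$'s equals or is exactly one more than the number of zero $b$'s. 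This is not a ``$\pm1$ correction coming from the parity of $r$'': the asymmetry comes from the $\geq$ versus $>$ in the chain, not from $r$'s parity, and the odd case has a separate half-integer version. The paper proves it via two short contradiction arguments: for $s\leq\min\{m,n\}$ one shows $a_{m-s}>0\Rightarrow b_{n-s}>0$ (if $b_{n-s}=0$ then $\pga_{n-s}\leq m-s-1$, hence $\pga^t_{m-s}\leq n-s-1$, giving $a_{m-s}\leq -1$), and conversely $a_{m-r}=0\Rightarrow b_{n-r+1}=0$; together these pin down the count. Your proposal never carries out this arithmetic.

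The same gap recurs in your inverse: you define $\pga^t_i$ from the $a_i$'s above the floor and $\pga_j$ from the $b_j$'s above the floor, and then assert that ``consistency along the diagonal follows from the conjugation relation.'' That consistency is again exactly the inequality you would have to prove; the paper handles it by observing that the same implication $a_{m-s}\neq 0\Rightarrow b_{n-s}\geq 1$, rewritten via $k=i-\de/2=n-s$, is what guarantees the reconstructed data actually form a partition. So the missing ingredient on both directions of the bijection is the same explicit inequality, and your proposal cannot be completed without supplying it. The sign-matching part of your argument (odd case tautological, even case $a_m=0\Leftrightarrow\pga_{n+1}<m\Leftrightarrow\sigma$-stabilizer nontrivial) is fine and essentially what the paper does.
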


\begin{notation} \label{not:part}
In either case: given $\la\in X^+(G)$, we denote by $\pla$ the unique hook partition such that $\pla$ respectively  $(\pla,\pm)$ is the preimage of $\la$ under $\Psi$ and call it the {\it underlying hook partition}.
\end{notation}

\begin{proof}[Proof of Lemma~\ref{AB}]
Take an $(n,m)$-hook partition $\pga$. To see that the maps are well-defined it suffices to show that $\op{wt}(\pga) $ us a dominant weight for $\mg$ (since we can clearly ignore the signs). 

Let us first consider the case $\Psi_{2m,2n}$. Since $\pga$ is a partition we have $a_{i+1}<a_i$ and $b_{j+1}<b_j$ whenever they are defined and non-zero.
For the map to be well-defined it remains to show that the number of zero $a$'s is equal or one larger than the number of zero $b$'s.

{\it Claim:} For $s\leq \op{min}\{m,n\}$ we have $a_{m-s}>0$ implies $b_{n-s}>0$.  
If $b_{n-s}= 0$ then $\pga_{n-s}-m+n-n+s+1\leq 0$, hence $\pga_{n-s}\leq m-s-1$ and so $\pga$ has at most $n-s-1$ rows of length $m-s$. This means $\pga^t_{m-s}\leq n-s-1$ and thus $a_{m-s}=\pga^t_{m-s}-n+m-m+s\leq n-s-1-n+s=-1$ which is a contradiction and the claim follows.
This shows that there are at least as many zero $a$'s as $b$'s.  It suffices now to show that  $a_{m-r}=0$ forces $b_{n-r+1}=0$. So assume $a_{m-r}=0$. Since $a_{m-n-1}=\pga^t_{m-n-1}-n+m-m+n-1=\pga^t_{m-n-1}+1>0$ we see that $a_{m-r}=0$ implies $r\leq n$ and so $b_{n-r+1}$ must exist. If $b_{n-r+1}=\pga_{n-r+1}-m+n-n+r-1+1>0$,then $\pga_{n-r+1}>m-r$ which implies $\pga^t_{m-r}\leq n-r+1$ and therefore $a_{m-r}=\pga^t_{m-r}-n+m-m+r\geq n-r+1-n+r\geq 1$ which is a contradiction. Hence the map is well-defined and obviously injective.  

 Clearly the weights in the image satisfy $a_m\geq 0$. For the description of the image it suffices to show that if $({\bf a}\,|\,{\bf b})\in X^+(\mg)$ satisfies the dominance condition from Definition~\ref{dominance} with $a_m>0$ in case (i) then  it comes from a hook partition. It is enough to see it defines a partition, since $b_j$ is only defined for $1\leq j\leq n$ and $a_i$ for $1\leq i\leq m$, hence if it is a partition it must be $(n,m)$-hook. For that it suffices to see that $a_i\not=0$ with $i=\tfrac{\de}{2}+k$ for some $k$ implies $b_{k}\geq 1$. Write $i=m-s$ then this is equivalent to ($a_{m-s}\not=0$ implies $b_{n-s}\geq 1$), since $k=i-\tfrac{d}{2}=i-m+n=n-s$. But this was exactly the claim above. The arguments for $\Psi_{2m+1,2n}$ are analogous, but the last step is even easier here.
\end{proof}

\begin{definition}
\label{def:tail}
The tail length $\op{tail}(\la)$ of $\la\in X^+(G)$ or equivalently of the underlying hook partition, is equal to $\op{min}\{m,n\}-d$, where $d$ is the number of boxes on the diagonal of the hook partition.
\end{definition}

\begin{remark}
It is easy to check that this notion agrees with the notion of tail length from Definition~\ref{tailg}. Note that $\op{tail}(\la)$ counts the number of missing boxes on the diagonal of the hook partition, in particular, it is maximal possible for the empty partition, i.e. the zero weight.
\end{remark}

We will present now a new (and more convenient) way of encoding dominant weights and the labeling set of irreducible finite-dimensional representations of $G$ in terms of {\it diagrammatic weights}. This is  in the spirit of \cite{BS4} built on the combinatorics introduced in \cite{ESperv}.

\section{Diagrammatics: Generalities}
We attach now a certain diagrammatic weight to each simple object in $\cF(G)$. This will allow us to develop a diagrammatic description of the morphism spaces between indecomposable projective objects in the corresponding categories $\cF(G)$. 

\subsection{Diagrammatic weights attached to \texorpdfstring{$X^+(G)$}{X(G)}}
\label{L}
To establish the combinatorics consider the non-negative number line $\cL$ and call its integral points {\it vertices}. 
\begin{definition} \label{def:diagrammaticweight}
An {\it (infinite) diagrammatic weight} or just a {\it diagrammatic weight} $\la$ is a diagram obtained by labelling each of the vertices by exactly one of the symbols $\times$ (cross), $\circ$ (nought), $\down$ (down), $\up$ (up); for the position zero we do not distinguish the labels $\up$ and $\down$ and use instead the label $\Diamond$. The vertices labelled $\circ$ or $\times$ are called {\it core symbols} and the diagram obtained from $\la$ by removing all symbols $\up$, $\down$ and $\diamond$ is called its {\it core diagram}. 
\end{definition}
For a diagrammatic weight $\la$ we denote by $\# \times(\la)$, $\# \circ(\la)$, $\# \up(\la)$, $\# \down(\la)$ the number of crosses, noughts, downs and ups respectively occurring in $\la$. 
\begin{definition}
A diagrammatic weight $\la$ is called 
\begin{itemize}
\item {\it finite} if $\# \down(\la)+\# \up(\la)+\# \times(\la)<\infty$, and 
\item {\it of hook partition type} if  $\# \down(\la)+\# \circ(\la)+\# \times(\la)<\infty$, and
\item {\it of super type}  if   $\# \up(\la)+\# \circ(\la)+\# \times(\la)<\infty$.
\end{itemize}
\end{definition}
Hence a finite weight has only noughts far to the right, a weight of super type has only $\down$'s far to the right, and a weight of hook partition type has only $\up$'s far to the right. For instance, consider the diagrammatic weights 
\begin{equation}
\label{questionmarks}
\down\;\circ\;\times \;\up\;\times\;\up\;\up\;\up \;\down \;\up\;\up\;\up\;\down\;\down\;\up\;\down \;\up\; ?\:\,?\,\:?\,\:?\cdots
\end{equation}
where the ?'s and the dots indicate either only $\circ$'s, only $\up$'s or only $\down$'s respectively. Then the resulting three weights $\la_{\op{fin}}$, $\la_{\op{hook}}$,  and $\la_{\op{super}}$ are finite, hook partition type or super type respectively. 

\begin{definition}
\label{diagramblocks}
Two diagrammatic weights $\la$ and $\mu$  with a finite total number of $\up$'s are {\it linked} or {\it in the same block} if their core diagrams coincide, and in addition, in case there is no $\Diamond$ the parity of $\up$'s agree, in formulas
\begin{eqnarray*}
\# \up(\la)\equiv \# \up(\mu)\;\op{mod} 2.
\end{eqnarray*}
\end{definition}

We now assign to each  $(n,m)$-hook partition $\pga$ a diagrammatic weight. 

\begin{definition}
For any partition $\pga$ and $\delta=r-2n$ set
\begin{eqnarray}
\label{Sla} 
\cS(\pga)&=&\left(\frac{\de}{2}+i-\pga_i-1\right)_{i\geq 1}.
\end{eqnarray}
\end{definition}

This is a strictly increasing sequence of half-integers (i.e. from  $\mZ+\tfrac{1}{2}$) if $r$ is odd, and of integers in case $r$ is even. In case $r$ is odd we identify the vertices of $\cL$ order-preserving with $\mZ_{\geq0}+\tfrac{1}{2}$. That means we have then vertices $\tfrac{1}{2},\tfrac{3}{2},\tfrac{5}{2},\ldots$ etc. In case $r$ is even, we identify the vertices of $\cL$ order-preserving with $\mZ_{\geq0}$.

\begin{definition}
\label{lainfty}
To the sequence $\cS(\pga)$ we then assign an infinite diagrammatic weight ${\ulcorner{\!\gamma}}^\infty$ 
by attaching to the vertex $p$ the label
\begin{eqnarray}\label{dict}
\left\{
\begin{array}{cl}
\circ&\text{if neither $p$ nor $-p$ occurs in $\cS(\pga)$},\\
{\scriptstyle\down} &\text{if $-p$, but not $p$, occurs in $\cS(\pga)$,}\\
{\scriptstyle\up} &\text{if $p$, but not $-p$, occurs in $\cS(\pga)$,}\\
\times&\text{if both, $-p\not=p$ occur in $\cS(\pga)$,}\\
\Diamond&\text{if $p=0$ occurs in $\cS(\pga)$.}
\end{array}\right.
\end{eqnarray}
\end{definition}
Note that there are only finitely many labels different from $\up$, hence these resulting diagrammatic weights are all of hook partition type. Moreover, the zero position can only have labels $\circ$ or $\Diamond$.

\begin{example}
The empty partition gives in case of odd $r$ the following diagrammatic weights
\begin{eqnarray}
\label{empty1}
\phantom{xxx}
\begin{cases}
\begin{tikzpicture}[anchorbase,thick,scale=1.6]
\begin{scope}
\node at (0,0) {$\circ$};
\node at (0,.3) {$\scriptstyle \frac{1}{2}$};
\node at (.5,0) {$\cdots$};
\node at (1,0) {$\circ$};
\draw[thin] (-.1,-.1) to +(0,-.1) to +(1.2,-.1) to +(1.2,0);
\node at (.5,-.4) {$\scriptstyle m-n$};
\node at (1.5,.3) {$\scriptstyle \frac{\de}{2}$};
\node at (1.5,0) {$\up$};
\node at (2,0) {$\up$};
\node at (2.5,0) {$\cdots$};
\node at (3,0) {$\up$};
\node at (3.5,0) {$\up$};
\draw[thin] (1.4,-.1) to +(0,-.1) to +(2.2,-.1) to +(2.2,0);
\node at (2.5,-.4) {$\scriptstyle 2n$};
\node at (4,0) {$\owedge$};
\node at (4.5,0) {$\owedge$};
\node at (5,0) {$\cdots$};
\end{scope}
\end{tikzpicture}
&{\text{if $\de>0$,}}\\
\begin{tikzpicture}[anchorbase,thick,scale=1.6]
\begin{scope}
\node at (0,0) {$\times$};
\node at (0,.3) {$\scriptstyle \frac{1}{2}$};
\node at (.5,0) {$\cdots$};
\node at (1,0) {$\times$};
\draw[thin] (-.1,-.1) to +(0,-.1) to +(1.2,-.1) to +(1.2,0);
\node at (.5,-.4) {$\scriptstyle n-m$};
\node at (0.95,.3) {$\scriptstyle -\frac{\de}{2}$};
\node at (1.5,0) {$\up$};
\node at (2,0) {$\up$};
\node at (2.5,0) {$\cdots$};
\node at (3,0) {$\up$};
\node at (3.5,0) {$\up$};
\draw[thin] (1.4,-.1) to +(0,-.1) to +(2.2,-.1) to +(2.2,0);
\node at (2.5,-.4) {$\scriptstyle 2m$};
\node at (4,0) {$\owedge$};
\node at (4.5,0) {$\owedge$};
\node at (5,0) {$\cdots$};
\end{scope}
\end{tikzpicture}
&{\text{if $\de<0$.}}
\end{cases}
\end{eqnarray}
(The circles around the $\up$ in $\owedge$ should be ignored for the moment. They will play an important role later).

In case $r$ is even, the empty partition gives the following diagrammatic weights
\begin{eqnarray}
\label{empty2}
\phantom{xxx}
\begin{cases}
\begin{tikzpicture}[anchorbase,thick,scale=1.6]
\begin{scope}
\node at (0,0) {$\circ$};
\node at (0,.3) {$\scriptstyle 0$};
\node at (.5,0) {$\cdots$};
\node at (1,0) {$\circ$};
\draw[thin] (-.1,-.1) to +(0,-.1) to +(1.2,-.1) to +(1.2,0);
\node at (.5,-.4) {$\scriptstyle m-n$};
\node at (1.5,.3) {$\scriptstyle \frac{\de}{2}$};
\node at (1.5,0) {$\up$};
\node at (2,0) {$\up$};
\node at (2.5,0) {$\cdots$};
\node at (3,0) {$\up$};
\draw[thin] (1.4,-.1) to +(0,-.1) to +(1.7,-.1) to +(1.7,0);
\node at (2.25,-.4) {$\scriptstyle 2n$};
\node at (3.5,0) {$\owedge$};
\node at (4,0) {$\cdots$};
\end{scope}
\end{tikzpicture}
&{\text{if $\de \geq 0$,}}\quad\\
\begin{tikzpicture}[anchorbase,thick,scale=1.6]
\begin{scope}
\node at (0,0) {$\Diamond$};
\node at (0,.3) {$\scriptstyle 0$};
\node at (.5,0) {$\times$};
\node at (1,0) {$\cdots$};
\node at (1.5,0) {$\times$};
\draw[thin] (.4,-.1) to +(0,-.1) to +(1.2,-.1) to +(1.2,0);
\node at (1,-.4) {$\scriptstyle n-m$};
\node at (1.4,.3) {$\scriptstyle -\frac{\de}{2}$};
\node at (2,0) {$\up$};
\node at (2.5,0) {$\cdots$};
\node at (3,0) {$\up$};
\draw[thin] (1.9,-.1) to +(0,-.1) to +(1.2,-.1) to +(1.2,0);
\node at (2.25,-.4) {$\scriptstyle 2m-1$};
\node at (3.5,0) {$\owedge$};
\node at (4,0) {$\cdots$};
\end{scope}
\end{tikzpicture}
&{\text{if $\de<0$,}}\quad
\end{cases}
\end{eqnarray}
Again, the circle around the $\up$ in $\owedge$ should be ignored for the moment.  
\end{example}

We refer to Section~\ref{sec:osp32} for more examples.

\begin{lemma}
\label{posneg}
Let $\la\in X^+(G)$. We have $\cS(\pla)_i<0$ (respectively $\cS(\pla)_i\geq0$) in \eqref{Sla} iff the $i$-th row  in the underlying hook partition, in the sense of Notation \ref{not:part}, ends above or on (respectively strictly below) the $\tfrac{\de}{2}$-shifted diagonal. 
\end{lemma}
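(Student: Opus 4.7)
The plan is to unwind both sides of the claimed equivalence into the same arithmetic inequality, with a short parity check to handle the odd and even cases uniformly. Starting from the definition \eqref{Sla},
\[
\cS(\pla)_i = \tfrac{\de}{2} + i - \pla_i - 1,
\]
so $\cS(\pla)_i < 0$ is equivalent to $\pla_i > i + \tfrac{\de}{2} - 1$. Since $\pla_i, i \in \mZ$ and $\tfrac{\de}{2} \in \tfrac{1}{2}\mZ$, this rearranges in both parities to the single condition $\pla_i \geq i + \lfloor \tfrac{\de}{2} \rfloor$, while the reverse inequality $\cS(\pla)_i \geq 0$ is equivalent to $\pla_i \leq i + \lfloor \tfrac{\de}{2} \rfloor - 1$.

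On the geometric side, I recall from Definition~\ref{nmhook} and the discussion following Figure~\ref{shifteddiag} that the diagonal in question is the $\lfloor \tfrac{\de}{2} \rfloor$-shifted diagonal: in the even case its boxes in row $i$ sit precisely in column $i + \tfrac{\de}{2}$, while in the odd case it passes between columns $i + \lfloor\tfrac{\de}{2}\rfloor - 1$ and $i + \lfloor\tfrac{\de}{2}\rfloor$. The latter can be read off from the formula $b_j = \max\{\pga_j - j - \tfrac{\de}{2}+1, \tfrac{1}{2}\}$ by unwinding the prescription that $b_j + \tfrac{1}{2}$ counts the boxes in row $j$ strictly to the right of the diagonal. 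In either parity, the last box $(i, \pla_i)$ of row $i$ therefore lies above or on the diagonal iff $\pla_i \geq i + \lfloor\tfrac{\de}{2}\rfloor$, and strictly below iff $\pla_i \leq i + \lfloor\tfrac{\de}{2}\rfloor - 1$.

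Matching the two calculations yields both directions of the lemma simultaneously. There is essentially no obstacle: the lemma is a bookkeeping statement identifying $\cS(\pla)_i$ with the signed distance (measured in columns, shifted by one) from the endpoint of the $i$-th row to the shifted diagonal. The only care needed is in the odd case, where the diagonal runs between columns rather than through box centers, so that the strict inequality $\pla_i > \tfrac{\de}{2}+i-1$ must be lifted to the integer condition $\pla_i \geq i + \lfloor\tfrac{\de}{2}\rfloor$ by invoking $\pla_i \in \mZ$.
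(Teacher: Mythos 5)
Your proof is correct and takes essentially the same route as the paper's, which consists of exactly the algebraic rearrangement $\cS(\pla)_i<0 \Leftrightarrow \pla_i > i+\tfrac{\de}{2}-1$; you additionally spell out the geometric identification of that inequality with ``ending above or on the shifted diagonal,'' including the even/odd floor bookkeeping, which the paper leaves implicit.
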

\begin{proof}
Note that $\cS(\pla)_i< 0$ iff $\frac{\de}{2}+i-\pla_i-1< 0$ or equivalently $\pla_i > i+\frac{\de}{2}-1$. 
\end{proof}

The tail length of $\la\in X^+(G)$ can be expressed again combinatorially.
\begin{corollary}
\label{tail}
Let $\lambda \in X^+(G)$. The tail length of $\la$ equals $\op{tail}(\la)=n-s$ where $s=\# \down(\la^\infty)+\#\times(\la^\infty)$. 
\end{corollary}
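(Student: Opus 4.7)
The plan is to translate the quantity $s=\#\!\down(\la^\infty)+\#\!\times(\la^\infty)$ into a combinatorial count on the underlying hook partition $\pla$ and then compare it with the definition of $\op{tail}(\la)$ via Definition~\ref{def:tail}.

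First I would observe that, by Definition~\ref{lainfty}, a vertex $p>0$ of $\cL$ carries the label $\down$ or $\times$ precisely when $-p$ occurs in the sequence $\cS(\pla)=(\tfrac{\de}{2}+i-\pla_i-1)_{i\geq 1}$. Since $\cS(\pla)$ is strictly increasing and (in the odd case) never takes the value $0$, while in the even case the value $0$ corresponds to the label $\Diamond$ at vertex $0$, this gives the identification
\[
s \;=\; \#\{\,i\geq 1 \,:\, \cS(\pla)_i<0\,\}.
\]
Thus $s$ is simply the number of negative entries of $\cS(\pla)$.

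Next, I would invoke Lemma~\ref{posneg}, which identifies $\{i : \cS(\pla)_i < 0\}$ with the set of rows of $\pla$ ending on or above the $\lfloor\tfrac{\de}{2}\rfloor$-shifted diagonal, equivalently (since $\pla_i\in\mZ_{\geq 0}$) the rows with $\pla_i\geq i+\lceil\tfrac{\de}{2}\rceil$. From here I would split into two cases. If $m\geq n$, the diagonal meets row $i$ at column $m-n+i$ for $1\leq i\leq n$, and for $i>n$ the $(n,m)$-hook condition forces $\pla_i\leq m<m-n+i$, so $\cS(\pla)_i\geq 0$. Hence $s$ equals the number $d$ of diagonal boxes, and $n-s=n-d=\min\{m,n\}-d=\op{tail}(\la)$ by Definition~\ref{def:tail}. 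If $m<n$, then for $i\leq n-m$ one has $m-n+i-1<0$, so $\cS(\pla)_i<0$ automatically (even when $\pla_i=0$), contributing $n-m$ rows; the remaining rows with $\cS(\pla)_i<0$ correspond bijectively to diagonal boxes (now located at $(n-m+j,j)$ for $1\leq j\leq m$), contributing $d$. Therefore $s=(n-m)+d$ and $n-s=m-d=\min\{m,n\}-d=\op{tail}(\la)$.

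The main (very mild) obstacle is the case split between $m\geq n$ and $m<n$: one has to verify carefully that in the first case no row with index $i>n$ contributes to $s$, and that in the second case the automatic negative contribution of the first $n-m$ rows exactly accounts for the discrepancy between $n$ and $\min\{m,n\}$. The odd and even cases are handled uniformly once one notes that the rounded shift $\lceil\de/2\rceil$ coincides with $m-n+1$ resp.\ $m-n$ appropriately, since $\pla_i$ is always a nonnegative integer. With these observations in hand the proof reduces to the two displays above.
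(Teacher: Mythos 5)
Your proof is correct and follows essentially the same approach as the paper: you invoke Lemma~\ref{posneg} to identify $s$ with the number of negative entries of $\cS(\pla)$, then split into the cases $m\geq n$ and $m<n$ and compare with Definition~\ref{def:tail}. You merely supply a few details the paper's terse argument leaves implicit (the bijection giving $s=\#\{i:\cS(\pla)_i<0\}$, and the check that rows with index $i>n$ never contribute).
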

\begin{proof}
If $m\geq n$ then there is a box on the diagonal in row $i$ iff $\cS(\pla)_i<0$. This implies that there are exactly $s$ boxes on the shifted diagonal, hence $\op{tail}(\la)=n-s$. If on the other hand $m<n$ then again $s$ is the number of rows that end above or on the shifter diagonal, but we have to subtract the first $n-m$ rows, thus there are $s-(n-m)$ boxes on the diagonal, hence $\op{tail}(\la)=m-s+(n-m)=n-s$. 
\end{proof}

The following characterizes the weights with non-zero tail in the even case.
\begin{corollary}
\label{cornotail}
Assume $r=2m$ and let $\la\in X^+(\mg)$ in the notation from~\eqref{oje}.  Consider the underlying $(n,m)$-hook partition ${\pla}$ and the diagrammatic weight $\pla^\infty $ given by $\cS(\pla)$. Then the following are equivalent:
\begin{eqnarray*}
a_m>0\; \Leftrightarrow \; {\pla}_{n+1}=m \; \Leftrightarrow \; \op{Ind}^{\mg, G}_{\mg, G'}L^\mg(\la) \text{ is irreducible} \; \Leftrightarrow \; {\cS(\pla)}_{n+1}=0. 
\end{eqnarray*}
Moreover, in this case the associated diagrammatic weight $\pla^\infty$ has label $\Diamond$ at position zero, and $\op{tail}(\la)=0$.
\end{corollary}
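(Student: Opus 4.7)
The four equivalences all encode the single combinatorial condition $\pla_{n+1}=m$; the plan is to unpack each one in turn and then read off the two auxiliary assertions from property~(4). Since $\la$ and $\sigma(\la)$ lie in the same $G$-orbit, share the same underlying hook partition $\pla$, and give the same induced module by Proposition~\ref{erste}(1)(a), I may assume $a_m\geq 0$, so that ``$a_m>0$'' becomes equivalent to ``$a_m\ne 0$''. For (1)$\Leftrightarrow$(2): Definition~\ref{nmhook} (even case) together with $\tfrac{\de}{2}=m-n$ gives
\[
a_m\;=\;\max\bigl\{\pla^t_m-m+\tfrac{\de}{2},\,0\bigr\}\;=\;\max\bigl\{\pla^t_m-n,\,0\bigr\},
\]
so $a_m>0$ iff $\pla^t_m\geq n+1$, i.e.\ column $m$ of $\pla$ has at least $n+1$ boxes, equivalently row $n+1$ has length $\geq m$; combined with the $(n,m)$-hook constraint $\pla_{n+1}\leq m$ this forces $\pla_{n+1}=m$.

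For (2)$\Leftrightarrow$(4), expand the formula in \eqref{Sla}:
\[
\cS(\pla)_{n+1}\;=\;\tfrac{\de}{2}+(n+1)-\pla_{n+1}-1\;=\;m-\pla_{n+1},
\]
which vanishes precisely when $\pla_{n+1}=m$. Finally, (1)$\Leftrightarrow$(3) is an immediate restatement of Proposition~\ref{erste}(1)(a)/(b), which says that $\op{Ind}^{\mg, G}_{\mg, G'}L^\mg(\la)$ is irreducible iff $a_m\ne 0$.

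For the two remaining assertions, assume~(4), so that $0\in\cS(\pla)$; then by the dictionary~\eqref{dict} position $0$ of $\pla^\infty$ carries the label $\Diamond$. Moreover, $\cS(\pla)$ is strictly increasing (from \eqref{Sla} the consecutive differences are $1+\pla_i-\pla_{i+1}\geq 1$), so $\cS(\pla)_{n+1}=0$ forces the entries $\cS(\pla)_1,\dots,\cS(\pla)_n$ to be strictly negative. Each such negative entry $-p$ with $p>0$ contributes, via \eqref{dict}, either a $\down$ or a $\times$ at position $p$ of $\pla^\infty$, so in the notation of Corollary~\ref{tail} we get $s=\#\down(\pla^\infty)+\#\times(\pla^\infty)=n$, whence $\op{tail}(\la)=n-s=0$. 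The whole argument is pure bookkeeping against the dictionaries in Definition~\ref{nmhook}, \eqref{Sla} and~\eqref{dict}; no substantive obstacle arises, and the one point requiring minor care, the sign convention on $a_m$, is handled by the $\sigma$-orbit reduction at the start.
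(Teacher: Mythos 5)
Your proof is correct and follows essentially the same route as the paper's: both chain the four statements through $\pla_{n+1}=m$ using Definition~\ref{nmhook}, Proposition~\ref{erste}, and the explicit formula $\cS(\pla)_{n+1}=m-\pla_{n+1}$. The only cosmetic difference is at the end: the paper concludes $\op{tail}(\la)=0$ ``by definition'' (meaning condition (i) of Definition~\ref{dominance} holds precisely when $a_m\neq 0$), whereas you rederive it by counting negative entries of $\cS(\pla)$ via Corollary~\ref{tail}; both work, and your explicit $\sigma$-orbit reduction to $a_m\geq 0$ and the expansion of $a_m=\max\{\pla^t_m-n,0\}$ are helpful clarifications of steps the paper compresses into ``obviously''.
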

\begin{proof}
Obviously $a_m>0$ is equivalent to $\pla_{n+1}=m$  by Definition~\ref{nmhook}, and hence to $\op{tail}(\la)=0$ by definition. It is moreover equivalent to $\op{Ind}^{\mg, G}_{\mg, G'}L^\mg(\la) $ being irreducible by Proposition~\ref{erste}.  On the other hand ${\pla}_{n+1}=m$ if and only if $\cS(\pla)_{n+1}=m-n+n+1-\pla_{n+1}-1=m-\pla_{n+1}=0$ (which then obviously causes a $\Diamond$ at position zero).
\end{proof}

\subsection{Cup diagrams}

Given a diagrammatic weight $\la$ which is finite, of hook partition type or of super type, we like to assign a unique cup diagram. For this we say that two vertices in a diagrammatic weight are {\it neighboured} if they are only separated by vertices with labels $\circ$'s and $\times$'s. 
\begin{definition}
\label{decoratedcups}
The \emph{infinite decorated cup diagram} or just {\it cup diagram} $\underline{\la}$ associated with a diagrammatic weight $\la$ (finite, hook partition type or super type) is obtained by applying the following steps in order.
\begin{enumerate}[(Cup-1)]
\item \label{cup1} If the diagrammatic weight $\lambda$ contains a $\Diamond$ we change it into an $\up$ or $\down$ in such a way that the resulting number of $\up$'s is odd or infinite.
\item \label{cup2} First connect neighboured vertices labelled $\down\up$ successively by a cup, i.e. an arc forming a cup blow the labels, (ignoring already joint vertices) as long as possible. (The result is independent of the order in which the connections are made).
\item \label{cup3} Attach to each remaining $\down$ a vertical ray.
\item \label{cup4} Connect from left to right pairs of two neighboured $\up$'s by cups (viewing $\Diamond$ as $\up$).
\item \label{cup5} If a single $\up$ remains, attach a vertical ray.
\item \label{cup6} Put a decoration $\bullet$ on each cup created in \ref{cup4} and each ray created in \ref{cup5}.
\item \label{cup7} Finally delete all labels at vertices.
\end{enumerate}
The arcs for the connections should always be drawn without intersections. Moreover two cup diagrams are considered the same if there is a bijection between the set of arcs respecting the connected vertices.
\end{definition}

\begin{remark}
Observe that the conditions finite, of hook partition type and of super type make sure that the algorithm producing the cup diagram is well-defined. In case of hook partition types the steps (Cup-\ref{cup3}) and (Cup-\ref{cup5}) can be removed and the diagram will never have dotted or undotted rays, but infinitely many dotted cups. In case the diagram is of super type it will have only rays far to the right.  In case it is of finite type it has only finitely many cups and rays.
\end{remark}

\begin{exs}
The  three diagrammatic weights $\la_{\op{fin}}$, $\la_{\op{hook}}$, and $\la_{\op{super}}$ from \eqref{questionmarks} provide the following three cup diagrams. 
\begin{equation}
\label{thecup}
\scriptstyle
\begin{tikzpicture}[thick,scale=0.8]
\node at (0,0) {$\down$};
\node at (.5,0) {$\circ$};
\node at (1,0) {$\times$};
\node at (1.5,0) {$\up$};
\node at (2,0) {$\times$};
\node at (2.5,0) {$\up$};
\node at (3,0) {$\up$};
\node at (3.5,0) {$\up$};
\node at (4,0) {$\down$};
\node at (4.5,0) {$\up$};
\node at (5,0) {$\up$};
\node at (5.5,0) {$\up$};
\node at (6,0) {$\down$};
\node at (6.5,0) {$\down$};
\node at (7,0) {$\up$};
\node at (7.5,0) {$\down$};
\node at (8,0) {$\up$};
\node at (8.5,0) {$?$};
\node at (9,0) {$?$};
\node at (9.5,0) {$?$};
\node at (10,0) {$?$};
\node at (10.75,-0.25) {$\cdots$};

\begin{scope}[yshift=-1cm]
\node at (-1,0) {$\underline\la_{\op{fin}}:$};
\draw (0,0) .. controls +(0,-.75) and +(0,-.75) .. +(1.5,0);
\node at (.5,0) {$\circ$};
\node at (1,0) {$\times$};
\node at (2,0) {$\times$};
\draw (2.5,0) .. controls +(0,-.5) and +(0,-.5) .. +(.5,0);
\fill (2.75,-.365) circle(2pt);
\draw (3.5,0) .. controls +(0,-.75) and +(0,-.75) .. +(1.5,0);
\fill (4.25,-.55) circle(2pt);
\draw (4,0) .. controls +(0,-.5) and +(0,-.5) .. +(.5,0);
\fill (5.5,-.5) circle(2pt);
\draw (5.5,0) -- +(0,-1);
\draw (6,0) -- +(0,-1);
\draw (6.5,0) .. controls +(0,-.5) and +(0,-.5) .. +(.5,0);
\draw (7.5,0) .. controls +(0,-.5) and +(0,-.5) .. +(.5,0);
\node at (8.5,0) {$\circ$};
\node at (9,0) {$\circ$};
\node at (9.5,0) {$\circ$};
\node at (10,0) {$\circ$};
\node at (10.75,-0.25) {$\cdots$};
\end{scope}

\begin{scope}[yshift=-4cm]
\node at (-1,0) {$\underline\la_{\op{super}}:$};
\draw (0,0) .. controls +(0,-.75) and +(0,-.75) .. +(1.5,0);
\node at (.5,0) {$\circ$};
\node at (1,0) {$\times$};
\node at (2,0) {$\times$};
\draw (2.5,0) .. controls +(0,-.5) and +(0,-.5) .. +(.5,0);
\fill (2.75,-.365) circle(2pt);
\draw (3.5,0) .. controls +(0,-.75) and +(0,-.75) .. +(1.5,0);
\fill (4.25,-.55) circle(2pt);
\draw (4,0) .. controls +(0,-.5) and +(0,-.5) .. +(.5,0);
\fill (5.5,-.5) circle(2pt);
\draw (5.5,0) -- +(0,-1);
\draw (6,0) -- +(0,-1);
\draw (6.5,0) .. controls +(0,-.5) and +(0,-.5) .. +(.5,0);
\draw (7.5,0) .. controls +(0,-.5) and +(0,-.5) .. +(.5,0);
\draw (8.5,0) -- +(0,-1);
\draw (9,0) -- +(0,-1);
\draw (9.5,0) -- +(0,-1);
\draw (10,0) -- +(0,-1);
\node at (10.75,-0.25) {$\cdots$};
\end{scope}

\begin{scope}[yshift=-2.5cm]
\node at (-1,0) {$\underline\la_{\op{hook}}:$};
\draw (0,0) .. controls +(0,-.75) and +(0,-.75) .. +(1.5,0);
\node at (.5,0) {$\circ$};
\node at (1,0) {$\times$};
\node at (2,0) {$\times$};
\draw (2.5,0) .. controls +(0,-.5) and +(0,-.5) .. +(.5,0);
\fill (2.75,-.365) circle(2pt);
\draw (3.5,0) .. controls +(0,-.75) and +(0,-.75) .. +(1.5,0);
\fill (4.25,-.55) circle(2pt);
\draw (4,0) .. controls +(0,-.5) and +(0,-.5) .. +(.5,0);
\draw (5.5,0) .. controls +(0,-1.5) and +(0,-1.5) .. +(3.5,0);
\fill (7.25,-1.15) circle(2pt);
\draw (6,0) .. controls +(0,-1) and +(0,-1) .. +(2.5,0);
\draw (6.5,0) .. controls +(0,-.5) and +(0,-.5) .. +(.5,0);
\draw (7.5,0) .. controls +(0,-.5) and +(0,-.5) .. +(.5,0);
\draw (9.5,0) .. controls +(0,-.5) and +(0,-.5) .. +(.5,0);
\fill (9.75,-.365) circle(2pt);
\node at (10.75,-0.25) {$\cdots$};
\end{scope}
\end{tikzpicture}
\end{equation}
\end{exs}

The empty partition gives always an infinite cup diagram (for the diagrammatic weight see \eqref{empty1} and \eqref{empty2}): In the case of $G=\OSP(2m+1|2n)$ we have
\begin{eqnarray}
\label{emptycupoddcase}
&&\begin{cases}
\begin{array}[t]{lll}
\begin{tikzpicture}[thick]
\begin{scope}
\node at (0,0) {$\circ$};
\node at (0,.4) {$\scriptstyle \frac{1}{2}$};
\node at (.5,0) {$\cdots$};
\node at (1,0) {$\circ$};
\node at (1.5,0) {$\circ$};
\node at (2,.4) {$\scriptstyle \frac{\de}{2}$};
\draw (2,0) .. controls +(0,-.5) and +(0,-.5) .. +(.5,0);
\fill (2.25,-.365) circle(2pt);
\draw (3,0) .. controls +(0,-.5) and +(0,-.5) .. +(.5,0);
\fill (3.25,-.365) circle(2pt);
\draw (4,0) .. controls +(0,-.5) and +(0,-.5) .. +(.5,0);
\fill (4.25,-.365) circle(2pt);
\draw (5,0) .. controls +(0,-.5) and +(0,-.5) .. +(.5,0);
\fill (5.25,-.365) circle(2pt);
\node at (6.25,0) {$\cdots$};
\end{scope}
\end{tikzpicture}
&&{\text{if $\de>0$,}}\\
\begin{tikzpicture}[thick]
\begin{scope}
\node at (0,0) {$\times$};
\node at (0,.4) {$\scriptstyle \frac{1}{2}$};
\node at (.5,0) {$\cdots$};
\node at (1,0) {$\times$};
\node at (1.5,0) {$\times$};
\node at (2,.4) {$\scriptstyle -\frac{\de}{2}+1$};
\draw (2,0) .. controls +(0,-.5) and +(0,-.5) .. +(.5,0);
\fill (2.25,-.365) circle(2pt);
\draw (3,0) .. controls +(0,-.5) and +(0,-.5) .. +(.5,0);
\fill (3.25,-.365) circle(2pt);
\draw (4,0) .. controls +(0,-.5) and +(0,-.5) .. +(.5,0);
\fill (4.25,-.365) circle(2pt);
\draw (5,0) .. controls +(0,-.5) and +(0,-.5) .. +(.5,0);
\fill (5.25,-.365) circle(2pt);
\node at (6.25,0) {$\cdots$};
\end{scope}
\end{tikzpicture}
&&{\text{if $\de<0$.}}
\end{array}
\end{cases}
\end{eqnarray}

whereas in the case of $G=\OSP(2m|2n)$ we have
\begin{eqnarray}
\label{emptycupevencase}
&&\begin{cases}
\begin{array}[t]{lll}
\begin{tikzpicture}[thick]
\begin{scope}
\node at (0,0) {$\circ$};
\node at (0,.4) {$\scriptstyle 0$};
\node at (.5,0) {$\cdots$};
\node at (1,0) {$\circ$};
\node at (1.5,0) {$\circ$};
\node at (2,.4) {$\scriptstyle \frac{\de}{2}$};
\draw (2,0) .. controls +(0,-.5) and +(0,-.5) .. +(.5,0);
\fill (2.25,-.365) circle(2pt);
\draw (3,0) .. controls +(0,-.5) and +(0,-.5) .. +(.5,0);
\fill (3.25,-.365) circle(2pt);
\draw (4,0) .. controls +(0,-.5) and +(0,-.5) .. +(.5,0);
\fill (4.25,-.365) circle(2pt);
\draw (5,0) .. controls +(0,-.5) and +(0,-.5) .. +(.5,0);
\fill (5.25,-.365) circle(2pt);
\node at (6.25,0) {$\cdots$};
\end{scope}
\end{tikzpicture}
&&{\text{if $\de \geq 0$}}\\
\begin{tikzpicture}[thick]
\begin{scope}
\node at (0,.4) {$\scriptstyle 0$};
\node at (.5,0) {$\times$};
\node at (1,0) {$\cdots$};
\node at (1.5,0) {$\times$};
\node at (2,0) {$\times$};
\node at (2.5,.4) {$\scriptstyle -\frac{\de}{2}+1$};
\draw (0,0) .. controls +(0,-.5) and +(0,-.5) .. +(2.5,0);
\fill (1.25,-.365) circle(2pt);
\draw (3,0) .. controls +(0,-.5) and +(0,-.5) .. +(.5,0);
\fill (3.25,-.365) circle(2pt);
\draw (4,0) .. controls +(0,-.5) and +(0,-.5) .. +(.5,0);
\fill (4.25,-.365) circle(2pt);
\draw (5,0) .. controls +(0,-.5) and +(0,-.5) .. +(.5,0);
\fill (5.25,-.365) circle(2pt);
\node at (6.25,0) {$\cdots$};
\end{scope}
\end{tikzpicture}
&&{\text{if $\de\leq 0$,}}
\end{array}
\end{cases}
\end{eqnarray}

\begin{remark}\label{weightforc}
Note that, by construction, there might be cups nested inside each other, but such cups cannot be dotted. By construction, there is also never a $\bullet$ to the right of a ray. 
Given any such cup diagram $c$ there is a unique diagrammatic weight $\la$ such that $\underline{\la}=c$. Namely $\la$ is the unique diagrammatic weight such that, when put on top of $c$, the core symbols match and all cups and rays are oriented in the unique degree zero way as displayed in Figure~\ref{oriented}.
\end{remark}

\begin{definition}
\label{defdef}
We call cups or rays with a decoration $\bullet$ {\it dotted} and those without decorations ${\it undotted}$. The total number (possibly infinite) of undotted plus dotted cups in a cup diagram $c$ is called its {\it defect} or {\it atypicality} and denoted $\op{def}(c)$ and define the defect $\op{def}(\la)$ of a diagrammatic weight to be  the defect $\op{def}(\underline{\la})$ of its associated cup diagram. In particular $\op{def}(\pla^\infty) = \infty$ for all $\la \in X^+(G)$.
\end{definition}

\subsection{(Nuclear) circle diagrams}
A pair of compatible cup diagrams can be combined to a circle diagram:

\begin{definition}
Given $\la,\mu,\nu$ diagrammatic weights. We call the ordered pair $(\la,\mu)$ a {\it circle diagram} if $\la$ and $\mu$ have the same core diagrams. We usually denote this circle diagrams by $\underline{\la}\overline{\mu}$ and think of it as a diagram obtained from putting the cup diagram $\underline{\mu}$ upside down on top of the cup diagram $\underline{\la}$. The upside down cups in $\overline{\mu}$ are called \textit{caps} in the following.
\end{definition}

For examples we refer to Figure~\ref{basis23}, where the diagrammatic weight in the middle of each circle diagram should be ignored. The connected components in a circle diagram are (ignoring dots) either {\it lines} or {\it circles}. 


We now introduce the following important set of nuclear circle diagrams
\begin{definition}
\label{nuclear}
Given two diagrammatic weights $\la,\mu$ we call the circle diagram $\un\la\ov\mu$ {\it nuclear} if it contains at least one line which is not propagating. 
\end{definition}

In Figure~\ref{basis23}, the last two circle diagrams (again ignoring the diagrammatic weight in the middle) are nuclear, the others not.

\subsection{Orientations and degree}
Assume $\la$ is a diagrammatic weight and $\underline{\la}$  its associated decorated cup diagram. An {\it  orientation} of $\underline{\la}$ is a diagrammatic weight $\nu$ such that  $\la$ and $\nu$ have the same core diagram and 
if we put $\nu$ on top of $\underline{\la}$ (identifying along the corresponding vertices), then all cups and rays in the resulting diagram are `oriented' in one of the ways displayed in Figure~\ref{oriented}. An oriented infinite decorated cup diagram is such a pair $(\underline{\la},\nu)$, often denoted $\underline{\la}\nu$. 

We usually just draw the cup diagram with the orientation on top and think of it in a topological way.
\smallskip
\begin{figure}[h]
\begin{tikzpicture}[thick,>=angle 60,scale=0.6]
\draw [-] (-4,0) .. controls +(0,-1) and +(0,-1) .. +(1,0) node at
+(0.5,-1.5) {0};
\node at (-4,.2) {$\scriptstyle \boldsymbol{\down}$};
\node at (-3,.2) {$\scriptstyle \boldsymbol{\up}$};
\draw [-] (-2,0) .. controls +(0,-1) and +(0,-1) .. +(1,0) node at
+(0.5,-1.5) {1};
\node at (-2,.2) {$\scriptstyle \boldsymbol{\up}$};
\node at (-1,.2) {$\scriptstyle \boldsymbol{\down}$};
\draw [-] (0,0) .. controls +(0,-1) and +(0,-1) .. +(1,0) node at
+(0.5,-1.5) {0};
\node at (0,.2) {$\scriptstyle \boldsymbol{\Diamond}$};
\node at (1,.2) {$\scriptstyle \boldsymbol{\up}$};
\draw [-] (2,0) .. controls +(0,-1) and +(0,-1) .. +(1,0) node at
+(0.5,-1.5) {1};
\node at (2,.2) {$\scriptstyle \boldsymbol{\Diamond}$};
\node at (3,.2) {$\scriptstyle \boldsymbol{\down}$};
\draw [-] (4,-0.7) .. controls +(0,1) and +(0,1) .. +(1,0) node at
+(0.5,-0.8) {0};
\node at (4,-.9) {$\scriptstyle \boldsymbol{\down}$};
\node at (5,-.9) {$\scriptstyle \boldsymbol{\up}$};
\draw [-] (6,-0.7) .. controls +(0,1) and +(0,1) .. +(1,0)  node at
+(0.5,-0.8) {1};
\node at (6,-.9) {$\scriptstyle \boldsymbol{\up}$};
\node at (7,-.9) {$\scriptstyle \boldsymbol{\down}$};
\draw [-] (8,-0.7) .. controls +(0,1) and +(0,1) .. +(1,0)  node at
+(0.5,-0.8) {0};
\node at (8,-.9) {$\scriptstyle \boldsymbol{\Diamond}$};
\node at (9,-.9) {$\scriptstyle \boldsymbol{\up}$};
\draw [-] (10,-0.7) .. controls +(0,1) and +(0,1) .. +(1,0)  node at
+(0.5,-0.8) {1};
\node at (10,-.9) {$\scriptstyle \boldsymbol{\Diamond}$};
\node at (11,-.9) {$\scriptstyle \boldsymbol{\down}$};
\draw [-] (12,0) -- +(0,-0.7) node at +(0,-1.5) {0};
\node at (12,.2) {$\scriptstyle \boldsymbol{\down}$};
\draw [-] (13,0) -- +(0,-0.7) node at +(0,-1.5) {0};
\node at (13,.2) {$\scriptstyle \boldsymbol{\Diamond}$};
\draw [-] (14,0) -- +(0,-0.7) node at +(0,-1.5) {0};
\node at (14,-.9) {$\scriptstyle \boldsymbol{\down}$};
\draw [-] (15,0) -- +(0,-0.7) node at +(0,-1.5) {0};
\node at (15,-.9) {$\scriptstyle \boldsymbol{\Diamond}$};

\begin{scope}[yshift=-3cm]
\draw [-] (-4,0) .. controls +(0,-1) and +(0,-1) .. +(1,0) node at
+(0.5,-1.5) {0};
\fill (-3.5,-.75) circle(2.5pt);
\node at (-4,.2) {$\scriptstyle \boldsymbol{\up}$};
\node at (-3,.2) {$\scriptstyle \boldsymbol{\up}$};
\draw [-] (-2,0) .. controls +(0,-1) and +(0,-1) .. +(1,0) node at
+(0.5,-1.5) {1};
\fill (-1.5,-.75) circle(2.5pt);
\node at (-2,.2) {$\scriptstyle \boldsymbol{\down}$};
\node at (-1,.2) {$\scriptstyle \boldsymbol{\down}$};
\draw [-] (0,0) .. controls +(0,-1) and +(0,-1) .. +(1,0) node at
+(0.5,-1.5) {0};
\fill (0.5,-.75) circle(2.5pt);
\node at (0,.2) {$\scriptstyle \boldsymbol{\Diamond}$};
\node at (1,.2) {$\scriptstyle \boldsymbol{\up}$};
\draw [-] (2,0) .. controls +(0,-1) and +(0,-1) .. +(1,0) node at
+(0.5,-1.5) {1};
\fill (2.5,-.75) circle(2.5pt);
\node at (2,.2) {$\scriptstyle \boldsymbol{\Diamond}$};
\node at (3,.2) {$\scriptstyle \boldsymbol{\down}$};
\draw [-] (4,-0.7) .. controls +(0,1) and +(0,1) .. +(1,0) node at
+(0.5,-0.8) {0};
\fill (4.5,.05) circle(2.5pt);
\node at (4,-.9) {$\scriptstyle \boldsymbol{\up}$};
\node at (5,-.9) {$\scriptstyle \boldsymbol{\up}$};
\draw [-] (6,-0.7) .. controls +(0,1) and +(0,1) .. +(1,0)  node at
+(0.5,-0.8) {1};
\fill (6.5,.05) circle(2.5pt);
\node at (6,-.9) {$\scriptstyle \boldsymbol{\down}$};
\node at (7,-.9) {$\scriptstyle \boldsymbol{\down}$};
\draw [-] (8,-0.7) .. controls +(0,1) and +(0,1) .. +(1,0)  node at
+(0.5,-0.8) {0};
\fill (8.5,.05) circle(2.5pt);
\node at (8,-.9) {$\scriptstyle \boldsymbol{\Diamond}$};
\node at (9,-.9) {$\scriptstyle \boldsymbol{\up}$};
\draw [-] (10,-0.7) .. controls +(0,1) and +(0,1) .. +(1,0)  node at
+(0.5,-0.8) {1};
\fill (10.5,.05) circle(2.5pt);
\node at (10,-.9) {$\scriptstyle \boldsymbol{\Diamond}$};
\node at (11,-.9) {$\scriptstyle \boldsymbol{\down}$};
\draw [-] (12,0) -- +(0,-0.7) node at +(0,-1.5) {0};
\fill (12,-.35) circle(2.5pt);
\node at (12,.2) {$\scriptstyle \boldsymbol{\up}$};
\draw [-] (13,0) -- +(0,-0.7) node at +(0,-1.5) {0};
\fill (13,-.35) circle(2.5pt);
\node at (13,.2) {$\scriptstyle \boldsymbol{\Diamond}$};
\draw [-] (14,0) -- +(0,-0.7) node at +(0,-1.5) {0};
\fill (14,-.35) circle(2.5pt);
\node at (14,-.9) {$\scriptstyle \boldsymbol{\up}$};
\draw [-] (15,0) -- +(0,-0.7) node at +(0,-1.5) {0};
\fill (15,-.35) circle(2.5pt);
\node at (15,-.9) {$\scriptstyle \boldsymbol{\Diamond}$};
\end{scope}
\end{tikzpicture}
\vspace{-0.2cm}
\caption{Orientations (local picture) and their degrees}
\label{oriented}
\end{figure}
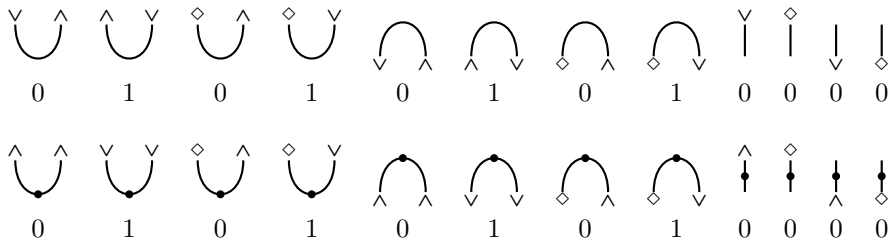
\smallskip
\smallskip

For instance, the cup diagram in \eqref{thecup} together with the weight from \eqref{thecup} is an oriented cup diagram. In fact $\underline{\la}\la$ is always an oriented cup diagram for any diagrammatic weight $\la$. Note that $\underline\la_{\op{super}}$ in \eqref{thecup} has $2^6$ possible orientations, namely precisely given by those weights $\nu$ which we obtain by choosing any subset of the cups in $\underline\la_{\op{super}}$ and changing the corresponding labels in $\la$ from $\down$ to $\up$ respectively $\up$ to $\down$ at each cup. In general a cup diagram $c$ has precisely $2^{\op{def}(c)}$ number of orientations.

\begin{definition}
\label{orientedcircle}
A triple $(\la,\nu,\mu)$ of diagrammatic weights is an {\it oriented circle diagram} if $\underline{\la}\overline{\mu}$  is a circle diagram and $\nu$ is an orientation of both $\underline{\la}$ and $\underline{\mu}$. We usually write such a triple as $\underline{\la}\nu\ov{\mu}$ and display it as the diagram $\underline{\la}\overline{\mu}$ with some labelling in the middle turning it into an oriented diagram in the sense that locally every arc looks like one of the form displayed in Figure~\ref{oriented}.

The dots should be thought of as orientation reversing points justifying the displayed local situations in Figure~\ref{oriented}.
\end{definition}

We refer to Figure~\ref{basis23} for all possible orientations on circle diagrams obtained from the cup diagrams in Figure~\ref{PIMS}. Obviously the following holds.

\begin{lemma}
If a circle diagram can be oriented, then there are precisely $2^x$ possible orientations, where $x$ is the number of circles in the diagram.  
\end{lemma}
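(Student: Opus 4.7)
The plan is to analyze each connected component of the circle diagram independently. Every component is either a \emph{closed circle} (built only from cups and caps) or a \emph{line} (a chain of cups and caps terminating at two rays). I will show that an orientable line admits a unique orientation while an orientable circle admits exactly two, and multiplying these contributions over all components then yields $2^x$.

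First I would record two elementary facts by inspection of Figure~\ref{oriented}. \emph{Propagation:} for any cup or cap, once the label $\up$ or $\down$ is fixed at one endpoint the label at the other endpoint is uniquely determined---undotted arcs reverse the label while dotted arcs preserve it. The symbol $\Diamond$ at position zero is instead compatible with either $\up$ or $\down$ on its partner, so propagation across a $\Diamond$ is not forced. \emph{Flip symmetry:} the valid orientations of any single cup or cap are invariant under simultaneously exchanging $\up \leftrightarrow \down$ at both endpoints (with $\Diamond$ held fixed).

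Next I would treat lines. Each line terminates at two rays, and Figure~\ref{oriented} shows that the label at a ray's vertex is forced by the dot structure: undotted rays demand $\down$ (or $\Diamond$ at position zero) and dotted rays demand $\up$ (or $\Diamond$). Propagating from one forced endpoint determines the labels along the line toward the other endpoint; should a $\Diamond$ lie on the path, the resulting binary ambiguity is in turn resolved by the forced label at the second ray. Since an orientable line has a consistent solution by hypothesis, it has exactly one.

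For a circle there are no rays, so no label is forced by the boundary. Pick any vertex $v$ of the circle carrying an $\up$ or $\down$ label (such a $v$ exists because position zero supports at most one vertex of the circle). The two choices $v = \up$ and $v = \down$ propagate deterministically around the circle, crossing any $\Diamond$ without obstruction. By the orientability hypothesis one of these two global labellings is consistent; the flip symmetry then shows the other is also consistent. Each orientable circle therefore contributes a factor of $2$, and the product over components gives exactly $2^x$ orientations. No substantial obstacle is expected: the argument reduces to the two inspection facts about Figure~\ref{oriented} and straightforward bookkeeping along connected components.
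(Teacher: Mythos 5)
Your proof is correct; the paper itself gives no proof for this lemma, dismissing it with the remark that it ``obviously'' holds, so the component-by-component decomposition you supply is exactly the intended argument. The contribution count (lines force a unique orientation, each closed circle contributes a factor of two by the $\up\leftrightarrow\down$ flip symmetry of cups and caps, rays are forced) is the standard bookkeeping implicit in the paper and in \cite[Lemma~4.8]{ES2} to which the surrounding remark and Corollary~\ref{signsign} refer. One small point of precision: a $\Diamond$ at the position-zero vertex does not actually \emph{propagate} a label to its neighbour (both $\up$ and $\down$ are compatible across an arc ending at $\Diamond$), so on a circle through position zero the deterministic propagation from your chosen vertex $v$ terminates at the $\Diamond$ from each side rather than continuing ``around''; since position zero is a single vertex, both sides of the propagation nevertheless cover the entire circle, and the conclusion that there are exactly two orientations survives. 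Similarly, on a line whose interior contains position zero there is no residual binary ambiguity to resolve: each of the two arcs separated by the $\Diamond$ is forced independently by its own ray. These are phrasing quibbles, not gaps; the argument is sound.
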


\begin{remark}
\label{ornuclear}
Not every circle diagram can be oriented. As shown in \cite[Lemma 4.8]{ES2} to be orientable one needs at least that each circle in $\underline{\la}\overline{\mu}$ has an even number of $\bullet$'s. 
By \cite[Lemma 4.8]{ES2} a circle diagram which is not nuclear can be oriented if and only if  each component (circle or line) has an even number of dots.
\end{remark}

\begin{corollary}
\label{signsign}
Assume $\la,\mu \in X^+(G)$ and $\underline{\la}\overline{\mu}$ is not nuclear. Then $\underline{\la}\overline{\mu}$ is orientable if and only if each component contains an even number of dots. Moreover the number of possible orientations equals $2^c$, where $c$ is the number of closed components in $\underline{\la}\overline{\mu}$.
\end{corollary}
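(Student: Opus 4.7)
The plan is to deduce this directly from the general cup diagram results of \cite{ES2} recalled in Remark~\ref{ornuclear}, combined with a local analysis of how orientations propagate along the components of $\underline{\la}\overline{\mu}$. The orientability ``if and only if'' part is already the content of Remark~\ref{ornuclear}: a non-nuclear circle diagram admits an orientation precisely when each of its components carries an even number of $\bullet$-decorations. Since this applies verbatim to $\la,\mu\in X^+(G)$ with $\underline{\la}\overline{\mu}$ non-nuclear, no further argument is needed for the equivalence.

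For the count, I would proceed component by component. An orientation of $\underline{\la}\overline{\mu}$ is, by Definition~\ref{orientedcircle}, a labelling so that every local piece matches one of the pictures in Figure~\ref{oriented}. From that figure one reads off two sets of rules. First, an \emph{endpoint-forcing rule for rays}: an undotted ray forces the label at its (open) endpoint to be $\down$, while a dotted ray forces it to be $\up$; at position zero the label $\Diamond$ is imposed in place of either. Second, a \emph{propagation rule along arcs}: an undotted cup/cap forces opposite labels at its two endpoints, a dotted cup/cap forces equal labels. Thus, given the dotting pattern of a component, fixing the label at any one vertex determines the labels at all other vertices along the component.

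Applying these rules to each component of $\underline{\la}\overline{\mu}$:
\begin{itemize}
\item A propagating line, which by the non-nuclear hypothesis is the only kind of line present, has rays at both ends. The endpoint-forcing rule pins down the label at each of these two ends, and the propagation rule then determines every interior label uniquely. The evenness of the dot count along the line is exactly the condition that the two endpoint forcings agree with the propagation, so an orientable propagating line admits exactly one orientation.
\item A closed component carries no such boundary condition: one may freely choose between $\up$ and $\down$ at some initial vertex and then propagate around, with evenness of the dot count guaranteeing consistency on return. Each circle thus contributes a factor of $2$.
\end{itemize}
Multiplying these contributions across the $c$ closed components gives $2^c$ orientations in total.

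The main technical point is the bookkeeping of the endpoint forcings and the dot-parity condition when propagating around a circle or along a line; once the dictionary in Figure~\ref{oriented} is in hand this is a routine parity check. The hypothesis $\la,\mu\in X^+(G)$ enters only in so far as it guarantees that $\underline{\la},\overline{\mu}$ are constructed via Definition~\ref{decoratedcups} (in particular the $\Diamond$ appears at most at position zero, which appears in at most one component and therefore does not interfere with either forcing rule).
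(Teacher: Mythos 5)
Your proof is essentially the same as the paper's for the ``if and only if'' part: both of you simply cite Remark~\ref{ornuclear}, which is itself just a pointer to \cite[Lemma 4.8]{ES2}, and there is nothing more to say there. For the counting claim, the paper also defers to the ambient unnamed lemma preceding Remark~\ref{ornuclear} (``precisely $2^x$ orientations, where $x$ is the number of circles''), whereas you re-derive it from scratch by the component-by-component propagation argument. That is a perfectly legitimate and more self-contained route, and your local rules (undotted arcs force opposite labels, dotted arcs force equal labels, rays pin down their base) are read off correctly from Figure~\ref{oriented}, as is the parity bookkeeping along lines and circles away from the $\Diamond$ vertex.

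Where I would push back is the final parenthetical remark that the $\Diamond$ at position zero ``does not interfere with either forcing rule.'' That is not accurate as stated: Figure~\ref{oriented} explicitly lists \emph{both} $\Diamond\up$ and $\Diamond\down$ as valid orientations for every type of cup or cap attached to the zero vertex (dotted or undotted), so the arc at position zero imposes no constraint on its other endpoint. Thus the propagation rule you rely on genuinely breaks at this vertex, and a component passing through a cup or cap at zero needs its own analysis: the propagation from the two sides must be patched together across the $\Diamond$, and it is not automatic from your local rules alone that the even-dot criterion governs orientability (or that the orientation count is unaffected) for such a component. The paper avoids engaging with this by leaning on \cite[Lemma 4.8]{ES2} outright; if you want the from-scratch argument to be complete you should separate out the component containing the zero vertex, use Proposition~\ref{Voila} to constrain which local pictures (ray versus cup, dotting pattern) can actually occur there for weights in $X^+(G)$, and then verify the parity and count for that single component directly, rather than asserting that the $\Diamond$ is harmless.
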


\begin{proof}
This follows immediately from \cite[Lemma 4.8]{ES2}, see Remark~\ref{ornuclear}.
\end{proof}

\begin{definition}
The {\it degree} of an oriented cup diagram $\underline{\la}\nu$ or an oriented circle diagram  $\underline{\la}\nu\ov{\mu}$ is the sum of the degrees of its components of the form as in Figure~\ref{oriented}, where the degree of each component is listed below each picture.
\end{definition}

It follows from the definitions that $\underline{\la}\la$ is the unique orientation of $\underline{\la}$ of degree zero; all other orientations have positive degrees. In \cite{ES2} we called cups or caps of degree $0$ {\it anticlockwise} and those of degree $1$ {\it clockwise}. Then the degree is just the number of clockwise cups plus clockwise caps. For examples see Figure~\ref{basis23}.

\section{Diagrammatics: \texorpdfstring{$\OSPrn$}{OSP(r|2n))} }
The goal of this section is the assignment of a certain cup diagram to each irreducible finite-dimensional $\OSPrn$-module in $\cF$. This allows us to make the connection with the  Khovanov algebra $\D$ and to formulate and prove the main theorem (Theorem~\ref{mainprop}). 

\subsection{Fake cups and frozen vertices}
Our infinite diagrammatic weights $\pla^\infty$ attached to $\la\in X^+(\mg)$ via \eqref{lainfty} and Notation \ref{not:part} and their cup diagrams $\underline{\pla^\infty}$ are slightly more general than those allowed in \cite{ES2} in the sense that they might have infinite defect. Diagrammatic weights with infinite defect were carefully avoided however in \cite{BS1} and in \cite{ES2}, since the associated Khovanov algebra would not be well-defined. Note moreover that $\underline{\pla}^\infty$ only depends on $\la$ and $\tfrac{\de}{2}$, but not on $m,n$ itself. We will next introduce a dependence on $m,n$ which also has the effect of giving a certain finiteness condition which allows us to avoid working with infinite defects. This will finally put us into the framework from \cite{ES2} and enable us to talk about the Khovanov algebra associated to a block of $\OSPrn$. The defect will correspond to the usual notion of atypicality of weights in the context of Lie superalgebras. We start by incorporating the dependence on $m$ and $n$.
\begin{definition}
\label{fakecups}
{\rm
Given $\la\in X^+(\mg)$ with associated infinite cup diagram $\underline{\pla^\infty}$, a cup $C$ is a \emph{fake cup} if $C$ is dotted and there are at least $\op{tail}(\pla)$ dotted cups to the left of $C$.
The vertices attached to fake cups are called \emph{frozen vertices}. We indicate the frozen vertices by $\owedge$.}
\end{definition}

\begin{remark}
By definition,  fake cups are never nested inside another cup, since dotted cups are never nested. Moreover, all dotted cups to the right of a fake cup are obviously also fake cups.
\end{remark}

To determine the fake cups, note that Corollary~\ref{tail} gives a formula to compute the tail length $\op{tail}(\pla)$. For instance, $\op{tail}(\varnothing)$ equals $\op{min}\{m,n\}$. For the empty partition the frozen vertices are indicated in \eqref{empty1} and \eqref{empty2}, where the dependence on $m$ and $n$ is also illustrated; see also Section~\ref{sec:osp32} for more examples.

\begin{definition}
Given $\la\in X^+(\mg)$ we define the super diagrammatic weight $\pla^\owedge$ as the one obtained from $\pla^\infty$ by replacing all the frozen labels by $\down$'s. 
\end{definition}

\begin{example} For instance consider $G=\op{OSp}(6|4)$, that is $m=3$, $n=2$. First consider $\la\in X^+(\mg)$ with corresponding  hook partition $\pla=(4,2,1)$. Then  
\begin{equation}
\label{upatzero}
\begin{tikzpicture}[thick,scale=0.65]
\node at (-1.2,0) {$\pla^\infty: $};
\node at (0,0) {$\Diamond$};
\node at (.5,0) {$\circ$};
\node at (1,0) {$\up$};
\node at (1.5,0) {$\down$};
\node at (2,0) {$\up$};
\draw[thin] (2.25,-.15) to +(0,-.2) to +(2.25,-.2);
\draw[thin,dotted] (4.5,-.35) to +(.75,0);
\node at (2.5,0) {$\owedge$};
\node at (3,0) {$\owedge$};
\node at (3.5,0) {$\owedge$};
\node at (4,0) {$\owedge$};
\node at (4.8,0) {$\cdots$};

\node at (6,0) {$\rightsquigarrow$};

\begin{scope}[xshift=9cm]
\node at (-1.2,0) {$\pla^\owedge: $};
\node at (0,0) {$\Diamond$};
\node at (.5,0) {$\circ$};
\node at (1,0) {$\up$};
\node at (1.5,0) {$\down$};
\node at (2,0) {$\up$};
\draw[thin] (2.25,-.15) to +(0,-.2) to +(2.25,-.2);
\draw[thin,dotted] (4.5,-.35) to +(.75,0);
\node at (2.5,0) {$\down$};
\node at (3,0) {$\down$};
\node at (3.5,0) {$\down$};
\node at (4,0) {$\down$};
\node at (4.8,0) {$\cdots$};
\end{scope}
\end{tikzpicture}
\end{equation}
where we indicated the relevant positions by a horizontal line. For the hook partition $\pla=(4,1,1)$ we obtain 
\begin{equation}
\label{circatzero}
\begin{tikzpicture}[thick,scale=0.65]
\node at (-1.2,0) {$\pla^\infty: $};
\node at (0,0) {$\circ$};
\node at (.5,0) {$\up$};
\node at (1,0) {$\up$};
\node at (1.5,0) {$\down$};
\node at (2,0) {$\up$};
\draw[thin] (2.25,-.15) to +(0,-.2) to +(2.25,-.2);
\draw[thin,dotted] (4.5,-.35) to +(.75,0);
\node at (2.5,0) {$\owedge$};
\node at (3,0) {$\owedge$};
\node at (3.5,0) {$\owedge$};
\node at (4,0) {$\owedge$};
\node at (4.8,0) {$\cdots$};

\node at (6,0) {$\rightsquigarrow$};

\begin{scope}[xshift=9cm]
\node at (-1.2,0) {$\pla^\owedge: $};
\node at (0,0) {$\circ$};
\node at (.5,0) {$\up$};
\node at (1,0) {$\up$};
\node at (1.5,0) {$\down$};
\node at (2,0) {$\up$};
\draw[thin] (2.25,-.15) to +(0,-.2) to +(2.25,-.2);
\draw[thin,dotted] (4.5,-.35) to +(.75,0);
\node at (2.5,0) {$\down$};
\node at (3,0) {$\down$};
\node at (3.5,0) {$\down$};
\node at (4,0) {$\down$};
\node at (4.8,0) {$\cdots$};
\end{scope}
\end{tikzpicture}
\end{equation}
\end{example}
\begin{example}
Note that in case $G=\op{OSp}(7|4)$, the weights $\la\in X^+(\mg)$ with hook partitions $\pla=(5,2,1)$ respectively $(5,1,1)$  give rise to the same four diagrams as in \eqref{upatzero} and \eqref{circatzero} above (except that $\Diamond$ is replaced by $\up$), but placed on the positive half-integer line instead of the positive integer line. 
\end{example}

\subsection{Diagrammatics associated to irreducible modules} 
We now can assign to each dominant weight $\lambda \in X^+(G)$ a diagrammatic weight. 

Assume we are given a weight diagram $\la$ of hook partition type. For each label $x$ of the form $\circ$ or $\times$ appearing in $\la$ we let $d(x)$ be the total number of $\up$'s and $\down$'s to the left of $x$
and let $d(\la)=\sum_{x}d(x)$ be the sum of all these numbers. 
\begin{definition}
\label{superweight}
Consider $G=\SOSPrn$. Given $\la\in X^+(G)$ with underlying hook partition $\pla$, we define the \emph{(super) diagrammatic weight attached to $\la$}, and also denoted by $\la$, as follows 
\begin{subequations}
\begin{numcases}{\la=}
\pla^{\owedge} & if $\la=\Psi(\pla)$, \label{nosign}\\
(\pla^\owedge,+) & if $\la=\Psi((\pla,+))$,\label{plus}\\
(\pla^\owedge,-) & if $\la=\Psi((\pla,-))$,\label{minus}
\end{numcases}
\end{subequations}
where we use the identifications from Lemma~\ref{AB}. Here for $r$ even 
$(\pla^\owedge,-)$ is the same as $\pla^\owedge$, while $(\pla^\owedge,+)$ is obtained from $\pla^\owedge$ by changing the label at the first occurring ray, in the corresponding cup diagram $\underline{\pla}^\owedge$, from $\down$ to $\up$. In case $r$ is odd we first determine the parity of $d(\pla^\owedge)$ plus the number of $\up$'s in $\pla^\owedge$. In case it is even then we apply the same rule as for odd $r$, in case the parity is odd, the role of $(\pla^\owedge,-)$ and $(\pla^\owedge,+)$ are swapped. (Note that the parity of $d(\pla^\owedge)$ is the parity of the number of boxes in $\pla$.)
\end{definition}

\begin{example}
As above let $G=\op{OSp}(6|4)$. In situation \eqref{circatzero} we have
\begin{equation*}
\begin{tikzpicture}[thick,scale=0.65]
\node at (-1,0) {$\pla^\infty: $};
\node at (0,0) {$\circ$};
\node at (.5,0) {$\up$};
\node at (1,0) {$\up$};
\node at (1.5,0) {$\down$};
\node at (2,0) {$\up$};
\draw[thin] (2.25,-.15) to +(0,-.2) to +(2.25,-.2);
\draw[thin,dotted] (4.5,-.35) to +(.75,0);
\node at (2.5,0) {$\owedge$};
\node at (3,0) {$\owedge$};
\node at (3.5,0) {$\owedge$};
\node at (4,0) {$\owedge$};
\node at (4.8,0) {$\cdots$};

\node at (6,0) {$\rightsquigarrow$};

\begin{scope}[xshift=10cm,yshift=.5cm]
\node at (-1.5,0) {$(\pla^\owedge,+): $};
\node at (0,0) {$\circ$};
\node at (.5,0) {$\up$};
\node at (1,0) {$\up$};
\node at (1.5,0) {$\down$};
\node at (2,0) {$\up$};
\draw[thin] (2.25,-.15) to +(0,-.2) to +(2.25,-.2);
\draw[thin,dotted] (4.5,-.35) to +(.75,0);
\node at (2.5,0) {$\up$};
\node at (3,0) {$\down$};
\node at (3.5,0) {$\down$};
\node at (4,0) {$\down$};
\node at (4.8,0) {$\cdots$};
\end{scope}
\begin{scope}[xshift=10cm,yshift=-.5cm]
\node at (-1.5,0) {$(\pla^\owedge,-): $};
\node at (0,0) {$\circ$};
\node at (.5,0) {$\up$};
\node at (1,0) {$\up$};
\node at (1.5,0) {$\down$};
\node at (2,0) {$\up$};
\draw[thin] (2.25,-.15) to +(0,-.2) to +(2.25,-.2);
\draw[thin,dotted] (4.5,-.35) to +(.75,0);
\node at (2.5,0) {$\down$};
\node at (3,0) {$\down$};
\node at (3.5,0) {$\down$};
\node at (4,0) {$\down$};
\node at (4.8,0) {$\cdots$};
\end{scope}
\end{tikzpicture}
\end{equation*}
and in situation \eqref{upatzero}, we obtain
\begin{equation*}
\begin{tikzpicture}[thick,scale=0.65]
\node at (-1,0) {$\pla^\infty: $};
\node at (0,0) {$\Diamond$};
\node at (.5,0) {$\circ$};
\node at (1,0) {$\up$};
\node at (1.5,0) {$\down$};
\node at (2,0) {$\up$};
\draw[thin] (2.25,-.15) to +(0,-.2) to +(2.25,-.2);
\draw[thin,dotted] (4.5,-.35) to +(.75,0);
\node at (2.5,0) {$\owedge$};
\node at (3,0) {$\owedge$};
\node at (3.5,0) {$\owedge$};
\node at (4,0) {$\owedge$};
\node at (4.8,0) {$\cdots$};

\node at (6,0) {$\rightsquigarrow$};

\begin{scope}[xshift=10cm,yshift=.5cm]
\node at (-1.5,0) {$(\pla^\owedge,+): $};
\node at (0,0) {$\Diamond$};
\node at (.5,0) {$\circ$};
\node at (1,0) {$\up$};
\node at (1.5,0) {$\down$};
\node at (2,0) {$\up$};
\draw[thin] (2.25,-.15) to +(0,-.2) to +(2.25,-.2);
\draw[thin,dotted] (4.5,-.35) to +(.75,0);
\node at (2.5,0) {$\up$};
\node at (3,0) {$\down$};
\node at (3.5,0) {$\down$};
\node at (4,0) {$\down$};
\node at (4.8,0) {$\cdots$};
\end{scope}

\begin{scope}[xshift=10cm,yshift=-.5cm]
\node at (-1.5,0) {$(\pla^\owedge,-): $};
\node at (0,0) {$\Diamond$};
\node at (.5,0) {$\circ$};
\node at (1,0) {$\up$};
\node at (1.5,0) {$\down$};
\node at (2,0) {$\up$};
\draw[thin] (2.25,-.15) to +(0,-.2) to +(2.25,-.2);
\draw[thin,dotted] (4.5,-.35) to +(.75,0);
\node at (2.5,0) {$\down$};
\node at (3,0) {$\down$};
\node at (3.5,0) {$\down$};
\node at (4,0) {$\down$};
\node at (4.8,0) {$\cdots$};
\end{scope}
\end{tikzpicture}
\end{equation*}
\end{example}

\begin{definition} \label{def:supercupdiagram}
The cup diagram $\underline{\lambda}$ attached to $\lambda \in  X^+(G)$ is the cup diagram obtained from \eqref{nosign}, \eqref{plus}, and \eqref{minus} via Definition~\ref{decoratedcups}.
\end{definition}
\noindent For explicit examples we refer to Section~\ref{sec:examples} or Figure~\ref{huge}.
\smallskip

\begin{figure}[h]
\begin{center}
\begin{tikzpicture}[thick]
\node at (-1,-1) {$\pla$};
\node at (2.5,-1) {$\underline{\pla^\owedge}$};
\node at (6,-1) {$\op{tail}(\pla)$};
\node at (8,-1) {$\op{def}(\pla^\owedge)$};
\begin{scope}
\node at (-1,0) {\scalebox{.4}{\yng(3,1,1)}};
\node at (6,0) {$2$};
\node at (8,0) {$3$};
\node at (1,.1) {$\scriptstyle \mathbf{\up}$};
\node at (1.5,.1) {$\scriptstyle \mathbf{\up}$};
\node at (2,.1) {$\scriptstyle \mathbf{\down}$};
\node at (2.5,.1) {$\scriptstyle \mathbf{\up}$};
\node at (3,.1) {$\scriptstyle \mathbf{\up}$};
\node at (3.5,.1) {$\scriptstyle \mathbf{\up}$};
\node at (4,.1) {$\scriptstyle \mathbf{\owedge}$};
\draw (1,0) .. controls +(0,-.5) and +(0,-.5) .. +(.5,0);
\fill (1.25,-.365) circle(2pt);
\draw (2,0) .. controls +(0,-.5) and +(0,-.5) .. +(.5,0);
\draw (3,0) .. controls +(0,-.5) and +(0,-.5) .. +(.5,0);
\fill (3.25,-.365) circle(2pt);
\draw (4,0) -- +(0,-.5);
\node at (4.5,0) {$\cdots$};
\end{scope}
\begin{scope}[yshift=1cm]
\node at (-1,0) {\scalebox{.4}{\yng(3,2,1)}};
\node at (6,0) {$1$};
\node at (8,0) {$3$};
\node at (1,.1) {$\scriptstyle \mathbf{\down}$};
\node at (1.5,.1) {$\scriptstyle \mathbf{\up}$};
\node at (2,.1) {$\scriptstyle \mathbf{\down}$};
\node at (2.5,.1) {$\scriptstyle \mathbf{\up}$};
\node at (3,.1) {$\scriptstyle \mathbf{\up}$};
\node at (3.5,.1) {$\scriptstyle \mathbf{\up}$};
\node at (4,.1) {$\scriptstyle \mathbf{\owedge}$};
\draw (1,0) .. controls +(0,-.5) and +(0,-.5) .. +(.5,0);
\draw (2,0) .. controls +(0,-.5) and +(0,-.5) .. +(.5,0);
\draw (3,0) .. controls +(0,-.5) and +(0,-.5) .. +(.5,0);
\fill (3.25,-.365) circle(2pt);
\draw (4,0) -- +(0,-.5);
\node at (4.5,0) {$\cdots$};
\end{scope}
\begin{scope}[yshift=2cm]
\node at (-1,0) {\scalebox{.4}{\yng(3,2,2)}};
\node at (6,0) {$1$};
\node at (8,0) {$2$};
\node at (1,.1) {$\scriptstyle \mathbf{\times}$};
\node at (1.5,.1) {$\scriptstyle \mathbf{\circ}$};
\node at (2,.1) {$\scriptstyle \mathbf{\down}$};
\node at (2.5,.1) {$\scriptstyle \mathbf{\up}$};
\node at (3,.1) {$\scriptstyle \mathbf{\up}$};
\node at (3.5,.1) {$\scriptstyle \mathbf{\up}$};
\node at (4,.1) {$\scriptstyle \mathbf{\owedge}$};
\draw (2,0) .. controls +(0,-.5) and +(0,-.5) .. +(.5,0);
\draw (3,0) .. controls +(0,-.5) and +(0,-.5) .. +(.5,0);
\fill (3.25,-.365) circle(2pt);
\draw (4,0) -- +(0,-.5);
\node at (4.5,0) {$\cdots$};
\end{scope}
\begin{scope}[yshift=3cm]
\node at (-1,0) {\scalebox{.4}{\yng(3,3,2)}};
\node at (6,0) {$1$};
\node at (8,0) {$3$};
\node at (1,.1) {$\scriptstyle \mathbf{\up}$};
\node at (1.5,.1) {$\scriptstyle \mathbf{\down}$};
\node at (2,.1) {$\scriptstyle \mathbf{\down}$};
\node at (2.5,.1) {$\scriptstyle \mathbf{\up}$};
\node at (3,.1) {$\scriptstyle \mathbf{\up}$};
\node at (3.5,.1) {$\scriptstyle \mathbf{\up}$};
\node at (4,.1) {$\scriptstyle \mathbf{\owedge}$};
\draw (1,0) .. controls +(0,-.75) and +(0,-.75) .. +(2.5,0);
\draw (1.5,0) .. controls +(0,-.5) and +(0,-.5) .. +(1.5,0);
\draw (2,0) .. controls +(0,-.25) and +(0,-.25) .. +(.5,0);
\fill (2.25,-.565) circle(2pt);
\draw (4,0) -- +(0,-.5);
\node at (4.5,0) {$\cdots$};
\end{scope}
\begin{scope}[yshift=4cm]
\node at (-1,0) {\scalebox{.4}{\yng(3,3,3)}};
\node at (6,0) {$0$};
\node at (8,0) {$3$};
\node at (1,.1) {$\scriptstyle \mathbf{\down}$};
\node at (1.5,.1) {$\scriptstyle \mathbf{\down}$};
\node at (2,.1) {$\scriptstyle \mathbf{\down}$};
\node at (2.5,.1) {$\scriptstyle \mathbf{\up}$};
\node at (3,.1) {$\scriptstyle \mathbf{\up}$};
\node at (3.5,.1) {$\scriptstyle \mathbf{\up}$};
\node at (4,.1) {$\scriptstyle \mathbf{\owedge}$};
\draw (1,0) .. controls +(0,-.75) and +(0,-.75) .. +(2.5,0);
\draw (1.5,0) .. controls +(0,-.5) and +(0,-.5) .. +(1.5,0);
\draw (2,0) .. controls +(0,-.25) and +(0,-.25) .. +(.5,0);
\draw (4,0) -- +(0,-.5);
\node at (4.5,0) {$\cdots$};
\end{scope}
\end{tikzpicture}
\end{center}
\abovecaptionskip-5pt
\belowcaptionskip0pt
\caption{Cup diagrams $\underline{\pla}^\owedge$ associated to hook partitions in the case $\SOSP(7|6)$. With the corresponding weights ${\pla}^\owedge$ displayed above the cup diagrams.}
\label{huge}
\end{figure}
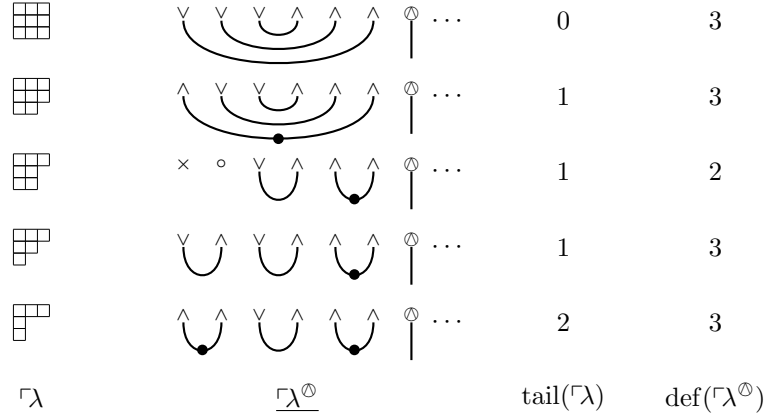

\begin{remark}
As a result we have attached to any $\la\in X^+(G)$ a cup diagram $\underline{\la}$ which has an infinite number of undotted rays,  tail length many dotted cups, and at most one dotted ray. Observe that $\underline {\la}$  coincides with $\underline{\pla^\infty}$, except that each fake cup is replaced by two vertical rays (with the leftmost ray and cup possibly decorated differently). In other words, we keep the undotted cups, but force the diagram to have exactly as many dotted cups as the length of the tail by taking the first $\op{tail}(\la)$ dotted cups. Note also that the core diagram of the diagrammatic weight $\la$ is the same as the core diagram of $\pla^\infty$ from Definition~\ref{lainfty}. 
\end{remark}

\begin{remark}
\label{Weyl}
The weight diagrams attached to the pair $(\la,\pm)$ can be viewed as super analogues of the notion of {\it associated partitions}, see \cite[$\S$ 19.5]{FH}, which was used by Weyl to label pairs of irreducible representations for $\op{O}(r)$ which restrict to isomorphic representations for $\op{SO}(r)$, see Section~\ref{sec:classical} for more details.
\end{remark}


\begin{prop}
\label{Voila}
Assume $r=2m$ and let $\la\in X^+(\mg)$. Then there are the following two cases:
\begin{enumerate}
\item $\op{Ind}^{\mg, G}_{\mg, G'}L^\mg(\la)=L(\la^G)$ is irreducible. 
Then, at  position zero, the attached diagrammatic weight $\la^G$ has a $\Diamond$ and the cup diagram $\underline{\la^G}$ has a ray at position zero. This ray is dotted if $\op{min}\{m,n\}$ is even and undotted if it is odd.
\item $\op{Ind}^{\mg, G}_{\mg, G'}L^\mg(\la)\cong L(\la,+)\oplus L(\la,-)$. Then both of the  diagrammatic weights $(\la,+)$ and $(\la,-)$ have a $\circ$ or a $\Diamond$ at position zero. In case of a $\Diamond$, the cup diagrams $\underline{(\lambda,+)}$ and $\underline{(\lambda,-)}$ have a cup attached to position zero, one of them dotted, the other undotted.
\end{enumerate} 
\end{prop}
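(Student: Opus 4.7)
The two cases of the proposition sit on opposite sides of the dichotomy $a_m=0$ versus $a_m\neq 0$ in the notation of~\eqref{oje}, which, by Proposition~\ref{erste}(1), precisely distinguishes whether $\mathrm{Ind}^{\mg,G}_{\mg,G'}L^\mg(\la)$ stays irreducible or splits. So the first step is to pass to the $\sigma$-orbit representative with $a_m\geq 0$, in which case all combinatorial hypotheses below are accessible through the already-developed dictionaries.

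For case~(1) we have $a_m>0$, and Corollary~\ref{cornotail} gives three usable facts: $\pla_{n+1}=m$, $\cS(\pla)_{n+1}=0$, and $\op{tail}(\la)=0$. The middle one combined with~\eqref{dict} immediately places a $\Diamond$ at position zero of $\pla^\infty$. The plan is then to argue that this $\Diamond$ survives in $\pla^\owedge=\la^G$ (Definition~\ref{superweight}\eqref{nosign}), by showing that in the cup algorithm on $\pla^\infty$ the step~(Cup-\ref{cup1}) must convert $\Diamond$ so that position zero gets absorbed into an undotted cup via step~(Cup-\ref{cup2}), hence into a real (not fake) cup, and is therefore not frozen by Definition~\ref{fakecups}. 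For the ray statement, I will apply the cup algorithm directly to $\pla^\owedge$: since $\op{tail}(\la)=0$, every $\up$ of $\pla^\infty$ not already paired with a $\down$ in step~(Cup-\ref{cup2}) becomes a $\down$ in $\pla^\owedge$, so the surviving $\up$'s of $\pla^\owedge$ are exactly those paired, and after re-running steps (Cup-\ref{cup2})--(Cup-\ref{cup3}) all of them pair off with the $\down$'s inherited from $\pla^\infty$, leaving position zero isolated. Depending on whether step~(Cup-\ref{cup1}) sends $\Diamond\mapsto\up$ or $\Diamond\mapsto\down$, this isolated vertex becomes a dotted ray via step~(Cup-\ref{cup5}) or an undotted ray via step~(Cup-\ref{cup3}). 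The dottedness condition is then a parity calculation: the number $\#\up(\pla^\owedge\setminus\{0\})$, which controls step~(Cup-\ref{cup1}), must be matched modulo $2$ with $\op{tail}$ of the zero weight, i.e.\ with $\min\{m,n\}$, using Corollary~\ref{tail} and the identity $\#\down(\pla^\infty)+\#\times(\pla^\infty)=n$ forced by $\op{tail}(\la)=0$.

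For case~(2) we have $a_m=0$, and I split on whether $0\in\cS(\pla)$ or not. If $0\notin\cS(\pla)$, then by~\eqref{dict} position zero of $\pla^\infty$ (and hence of $\la^G$) carries $\circ$, and since $\circ$ vertices carry neither a cup nor a ray there is nothing more to check. If $0\in\cS(\pla)$, strict monotonicity of $\cS(\pla)$ together with $\cS(\pla)_{n+1}>0$ forces $0$ to occur at some index $k\leq n$; feeding this through Corollary~\ref{tail} gives $\op{tail}(\la)\geq 1$, so at least one real dotted cup appears in $\underline{\pla^\infty}$ and the first such cup attaches to position zero, giving a $\Diamond$ that survives to $\la^G$ with position zero joined by a cup. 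It remains to compare the two labels $(\pla^\owedge,\pm)$: by Definition~\ref{superweight}, they differ by flipping the $\up/\down$ at the first occurring ray, which through step~(Cup-\ref{cup1}) flips the parity constraint on $\Diamond$; this precisely toggles the cup at position zero between belonging to step~(Cup-\ref{cup4}) (dotted) and to step~(Cup-\ref{cup2}) (undotted), as claimed.

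\textbf{Main obstacle.}  The delicate ingredient is the parity count in case~(1): converting the combinatorial quantity $\#\up(\pla^\owedge)\bmod 2$ into $\min\{m,n\}\bmod 2$ is not visible at the level of the hook partition alone, since $\#\times(\pla^\infty)$ depends on how positive and negative entries of $\cS(\pla)$ collide. The argument must exploit the structural constraint $\cS(\pla)_{n+1}=0$ to pin down this parity independently of the specific $\pla$, and it is here that the case analysis is most sensitive to small $m,n$.
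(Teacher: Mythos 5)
The treatment of case (2) follows the paper's route (split on $0\in\cS(\pla)$; use strict monotonicity of $\cS(\pla)$ and Corollary~\ref{tail} to deduce $\op{tail}(\la)\geq1$ when $\Diamond$ appears, so the leftmost dotted cup is not fake, and then flip the decoration at zero via the first-ray trick). Case (1), however, rests on a factual error that runs against the very statement you are trying to prove. You claim that (Cup-\ref{cup1}) must send $\Diamond\mapsto\down$ so that position zero is absorbed into an \emph{undotted} cup via (Cup-\ref{cup2}), hence into a non-fake cup, hence is not frozen. But in the construction of $\underline{\pla^\infty}$ the $\Diamond$ at the zero vertex is treated as an $\up$ (this is the paper's convention, visible for instance in \eqref{emptycupevencase}, where the zero vertex sits under a dotted cup); there is no vertex to its left for (Cup-\ref{cup2}) to pair with, so it is matched in (Cup-\ref{cup4}) into a \emph{dotted} cup. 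Since $\op{tail}(\la)=0$ in case (1), every dotted cup is fake by Definition~\ref{fakecups}, and it is precisely this fake cup that is replaced by two rays when passing from $\underline{\pla^\infty}$ to $\underline{\la^G}$ --- that is how the ray at zero arises. Your version is the reverse of this: if the zero vertex really sat in a non-fake cup of $\underline{\pla^\infty}$, it would remain a cup in $\underline{\la^G}$ rather than becoming a ray, contradicting the proposition; and your next sentence ("leaving position zero isolated") then directly contradicts the first. The paper's proof reads off the ray exactly from the fakeness of the dotted cup at zero.

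On the decoration that you flag as the main obstacle: with $\op{tail}(\la)=0$, the count $\#\up(\pla^\owedge)$ away from position zero that governs (Cup-\ref{cup1}) equals $\#\down(\pla^\infty)=n-\#\times(\pla^\infty)=\op{def}(\la)$ by Lemma~\ref{lem:defect}. This agrees with $\min\{m,n\}$ only in the block of maximal defect, so the reduction you sketch pins down the parity in terms of the defect of the block rather than directly in terms of $\min\{m,n\}$; the paper's own proof does not spell out a parity computation here either, so you are right that this is the thin spot, but the route you propose does not close it.
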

\begin{proof}
In the situation (1) Corollary~\ref{cornotail} implies that there is a $\Diamond$ at  position zero and the tail is zero. Hence the dotted cup attached to the zero position in $\la^\infty$ is fake, and thus gives a ray in $\underline{\la}$. Depending on the parity of $\op{min}\{m,n\}$, $\underline{\lambda}$ has the indicated type of leftmost ray (i.e. dotted or undotted). Now situation (2) is equivalent to $\pla_{n+1}<m$ and $\circ$ or $\Diamond$ can occur at position zero.  Assume first $\op{tail}(\pla)=0$, this means $\pla_{n}\geq m$. Then $\cS(\pla)_n=\frac{\delta}{2}+n-1-\pla_n=m-n+n-1-\pla_n<0$, and  $\cS(\pla)_{n+1}=\frac{\delta}{2}+n+1-1-\pla_{n+1}=m-n+n-\pla_{n+1}>0$. Since the sequence $\cS(\pla)$ is strictly increasing, the value zero does not occur and thus we have $\circ$ at position zero. In particular,  if $\Diamond$ occurs then we must have $\op{tail}(\pla)>0$, in which case the dotted cup attached to zero in $\pla^\infty$ is not fake and so there is a dotted or undotted cup at position zero depending on the parity. 
\end{proof}

\subsection{Blocks and diagrammatic linkage}
\begin{definition}
\label{dlinkage}
{\rm We say that two elements $\la,\mu\in X^+(G)$ are {\it diagrammatically linked} if their attached super diagrammatic weights $\la$ and $\mu$, in the sense of Definition~\ref{superweight}, are in the same block, in the sense of Definition~\ref{diagramblocks}}.
\end{definition}
\begin{lemma}      
\label{lem:defect}
Given $\la\in X^+(G)$ then  $\op{def}(\la)=n-\#\times(\la)$ with the notation from Definition~\ref{defdef}.  In particular, if $\la$ and $\mu$ give rise to the same core diagram then  $\op{def}(\la)= \op{def}(\mu)$.
\end{lemma}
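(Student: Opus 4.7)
The plan is to compute $\op{def}(\la)$, the number of cups in $\underline{\la}$, by comparing $\underline{\la}$ with the infinite cup diagram $\underline{\pla^\infty}$ attached to the underlying hook partition via \eqref{Sla}, and then invoking Corollary~\ref{tail}. First I would observe that the sign twist in Definition~\ref{superweight} only toggles the decoration of a single ray, and the possible $\Diamond$ at position $0$ only toggles whether one cup touching that vertex is dotted or undotted; neither changes the total number of cups. Hence it is enough to count the cups of $\underline{\pla^\owedge}$.

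Next I would split the cups of $\underline{\pla^\owedge}$ into undotted and dotted ones. Since $\pla^\infty$ is of hook partition type (only $\up$'s far to the right), reading step~(Cup-\ref{cup2}) as iterated bracket matching of $\down\up$ pairs shows that every $\down$ of $\pla^\infty$ is matched with some $\up$, and none of these undotted matchings is destroyed when the frozen far-right $\up$'s are recoloured as $\down$'s by Definition~\ref{fakecups} (the recoloured vertices all lie to the right of every $\up$ of $\pla^\owedge$). Thus the number of undotted cups of $\underline{\pla^\owedge}$ equals $\# \down(\pla^\infty)$, which by a direct inspection of the dictionary \eqref{dict} equals $N_- - \# \times(\la)$, where $N_-:=\#\{i\mid \cS(\pla)_i<0\}$. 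The dotted cups of $\underline{\pla^\owedge}$ are, by the very definition of a fake cup, in bijection with the first $\op{tail}(\la)$ dotted cups of $\underline{\pla^\infty}$, and $\op{tail}(\la)=n-N_-$ by Corollary~\ref{tail}.

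Summing the two contributions yields
\[
\op{def}(\la)\;=\;(N_- - \# \times(\la))+(n-N_-)\;=\;n-\# \times(\la),
\]
which is the first assertion. The second is then immediate, since $\# \times(\la)$ depends only on the core diagram. The main technical point requiring care is the $\Diamond$--bookkeeping: step~(Cup-\ref{cup1}) may force a different conversion of $\Diamond$ in $\pla^\infty$ (where the $\up$-count is infinite, so both choices are allowed) and in $\pla^\owedge$ (where the $\up$-count is finite and the parity condition forces a unique choice). Such a flip can replace a dotted cup through the vertex~$0$ by an undotted one or vice versa, but preserves the total cup count, so the two-part count above is not affected. The hardest ingredient is thus the bracket-matching step for undotted cups; once that is set up, the remainder of the argument is mere arithmetic using Corollary~\ref{tail}.
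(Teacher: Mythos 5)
Your proposal is correct and takes essentially the same route as the paper: you decompose the cups of $\underline{\pla^\owedge}$ into undotted cups (counted by $\#\down(\pla^\infty)$) and dotted cups (counted by $\op{tail}(\la)$ via Corollary~\ref{tail}), observe that the passage to $\la$ leaves the cup total unchanged, and add. Your intermediate quantity $N_-$ is exactly the $s=\#\down(\pla^\infty)+\#\times(\pla^\infty)$ of Corollary~\ref{tail}, so the bookkeeping is the same; you simply supply more detail than the paper on the bracket-matching and $\Diamond$-conversion steps, which the paper treats as immediate from the construction.
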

\begin{proof}
Note that passing from ${\pla^\owedge}$ to ${\la}$ does not change the total number of cups in the corresponding cup diagram. Now, the number of undotted cups in $\underline{\pla^\owedge}$ equals $\#\down(\la^\infty)$, whereas the number of dotted cups is by construction equal to $\op{tail}(\la)=n-s$, where $s=\# \down(\la^\infty)+\#\times(\la^\infty)$. The claim follows.
 \end{proof}

\begin{corollary} \label{lem:samedefect}
Two diagrammatically linked elements $\la,\mu\in X^+(G)$ have the same defect. 
\end{corollary}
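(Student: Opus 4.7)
The plan is to deduce this directly from Lemma~\ref{lem:defect}, since the key content is already contained there. By Definition~\ref{dlinkage}, $\la$ and $\mu$ being diagrammatically linked means their associated super diagrammatic weights lie in the same block in the sense of Definition~\ref{diagramblocks}. By that latter definition, this forces the core diagrams of $\la$ and $\mu$ to coincide. In particular, the number of $\times$ symbols agrees: $\#\times(\la)=\#\times(\mu)$, since $\times$ is a core symbol.

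Now I would simply invoke Lemma~\ref{lem:defect}, which asserts $\op{def}(\la)=n-\#\times(\la)$ and $\op{def}(\mu)=n-\#\times(\mu)$. Since $n$ is a fixed parameter of $G=\OSPrn$ and the $\#\times$-counts agree, we conclude $\op{def}(\la)=\op{def}(\mu)$. This is literally the final clause of Lemma~\ref{lem:defect} applied to the hypothesis that $\la$ and $\mu$ have the same core diagram.

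There is no real obstacle here: the corollary is a one-line consequence of the preceding lemma once one unwinds Definition~\ref{diagramblocks}. The only subtlety worth noting is that the definition of diagrammatic linkage imposes more than just equality of core diagrams (it also involves a parity condition on $\up$'s when no $\Diamond$ appears), but this extra condition is irrelevant for the defect count, so the implication only uses the core-diagram part. Hence the proof is just a short deduction and no nontrivial combinatorial argument is needed.
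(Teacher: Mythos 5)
Your proof is correct and matches the paper's own argument: the corollary is deduced directly from Lemma~\ref{lem:defect} by observing that diagrammatic linkage forces equality of core diagrams (hence of $\#\times$-counts), and the parity condition in Definition~\ref{diagramblocks} plays no role. The paper's proof is the same one-liner.
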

\begin{proof}
Since they have by definition the same core diagram this follows directly from Lemma~\ref{lem:defect}.
\end{proof}

\begin{prop}
\label{Homzero}
Let $G=\OSPrn$. Assume $\la=(\la^\prime,\epsilon),\mu=(\mu^\prime,\epsilon^\prime) \in X^+(G)$ for $\epsilon,\epsilon^\prime \in \{ \pm \}$ such that the circle diagram $\un\la\ov\mu$ is not nuclear. 
\begin{enumerate}[(i)]
\item If the core diagrams of both $\la$ and $\mu$ do not contain the zero vertex, then $\underline{\la}\overline{\mu}$ is not orientable for $\epsilon \neq \epsilon^\prime$. For $\epsilon=\epsilon^\prime$ the number of orientations is independent of the choice of $\epsilon$.
If the core diagrams of both $\la$ and $\mu$ contain the zero vertex, we distinguish the following.
\item If $\underline{\la}\overline{\mu}$ contains a line passing through the zero vertex for some choice (and thus for all) of $\epsilon$ and $\epsilon^\prime$, then the number of orientations of the diagram is independent of the choice.
\item If $\underline{\la}\overline{\mu}$ contains a circle passing through the zero vertex for some choice of $\epsilon$ and $\epsilon^\prime$, then either it is not orientable for any choice of $\epsilon$ and $\epsilon^\prime$ or it is orientable for precisely two choices $(\epsilon,\epsilon^\prime)$ and $(-\epsilon,-\epsilon^\prime)$ and again the number of orientations agrees for both of these choices.
\end{enumerate}
\end{prop}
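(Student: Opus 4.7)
By Corollary~\ref{signsign}, a non-nuclear circle diagram $\un\la\ov\mu$ is orientable if and only if every component contains an even number of dots, in which case the number of orientations equals $2^c$, where $c$ is the number of closed components. The plan is therefore to track how the signs $\epsilon,\epsilon'$ modify the dot decoration of $\un\la\ov\mu$ while leaving the underlying component structure fixed.

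The key preliminary is a structural lemma comparing $\underline{(\la',+)}$ and $\underline{(\la',-)}$. By Definition~\ref{superweight}, these two cup diagrams are produced from weight diagrams differing by a single $\down\leftrightarrow\up$ swap at the first-ray position of $\underline{\pla^\owedge}$. Tracking this single label change through the algorithm of Definition~\ref{decoratedcups}, with attention to the parity-sensitive step (Cup-\ref{cup1}) when a $\Diamond$ is present, I would show that the set of cups and rays of $\underline{\la}$ (as undecorated arcs) is independent of $\epsilon$ and that the dot decoration changes in precisely two places: the leftmost ray of $\underline{\la}$ always has its dot toggled, and if position zero is labeled $\Diamond$ in $\la'$, then the cup of $\underline{\la}$ incident to position zero also has its dot toggled. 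The analogous statement holds for $\overline{\mu}$. An immediate consequence is that the number and type of components of $\un\la\ov\mu$ are independent of $(\epsilon,\epsilon')$, justifying the phrase ``for some (and hence for all) choice'' in (ii)--(iii).

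With this lemma in hand each case becomes a parity computation. In case (i) both weights have $\Diamond$ at zero, so the cup of $\un\la$ and the cap of $\ov\mu$ at position zero lie on a common component $C_0$ and the signs toggle their dots directly. Mismatched signs $\epsilon\neq\epsilon'$ flip only one of them, producing odd parity on $C_0$ and destroying orientability; matched signs flip both and preserve the $C_0$-parity. The accompanying leftmost-ray dot toggles in $\un\la$ and $\ov\mu$ affect other components, but using that $\la,\mu$ share core and tail data inside their common block one identifies these two leftmost rays as lying on a single component of $\un\la\ov\mu$, so that their two toggles cancel and the orientation count is independent of the common value of $\epsilon$. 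In cases (ii)--(iii) both cores contain zero, so no arc is incident to position zero and the signs only toggle leftmost-ray dots. If the distinguished zero-component is a line (case (ii)), both leftmost-ray toggles enter this line in a cancelling pair and the orientation count is invariant under all four sign choices. If the distinguished zero-component is a circle (case (iii)), the two leftmost-ray toggles both contribute to the parity of this one circle, so only the product $\epsilon\epsilon'$ affects orientability, yielding orientability for exactly the two sign choices $(\epsilon,\epsilon')$ and $(-\epsilon,-\epsilon')$ --- or for none, depending on the base parity.

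The principal technical obstacle is the structural lemma, and in particular the arc-invariance claim: verifying that the cups and rays of $\un\la$ are preserved under the sign swap requires inspecting how the parity change in (Cup-\ref{cup1}) propagates through (Cup-\ref{cup2})--(Cup-\ref{cup5}) and checking that the re-pairings regenerate the same arcs, with only the provenance of the leftmost ray switching between (Cup-\ref{cup3}) and (Cup-\ref{cup5}), which accounts for the isolated dot toggle on that ray. A secondary technical point, used in cases (i) and (iii), is the identification of the component containing the leftmost-ray dots of $\un\la$ and of $\ov\mu$; this relies on a direct combinatorial propagation argument along the lines and circles of the circle diagram, exploiting the shared core structure of $\la$ and $\mu$.
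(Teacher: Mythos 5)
Your overall strategy --- tracking the dot decorations under the sign change and applying the component-parity criterion of Corollary~\ref{signsign} --- matches the paper's, and your proposed structural lemma (arcs invariant under the sign, dots toggled only at the leftmost ray and, if a $\Diamond$ is present, also at the arc through zero) is correct in the worked examples. Nevertheless the argument does not establish the statement, and it is already contradicted by the paper's own data: you identify case~(i) with ``$\Diamond$ at position zero,'' but in Section~\ref{sec:osp44} both $\lambda_0^+$ and $\lambda_3^-$ have $\Diamond$ at zero with mismatched signs, and Figure~\ref{bigtable} lists $\op{Hom}(P(\lambda_0^+),P(\lambda_3^-))=q^2[2]\neq 0$, so $\underline{\lambda_0^+}\,\overline{\lambda_3^-}$ is orientable. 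The non-orientability assertion of case~(i) belongs to the $\Diamond$-absent situation (odd~$r$, or $\circ$ at zero); this is what the paper's proof handles in~(i) --- it invokes the parity clause of Definition~\ref{diagramblocks}, which is only in force when no $\Diamond$ occurs --- while the cup-and-cap at zero are located in cases~(ii) and~(iii), not in~(i).

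Independently of how the cases are labelled, your toggling argument cannot deliver the stated conclusion. A single sign change toggles two dots, one on $C_0$ and one on the component carrying the affected leftmost ray; this flips both of those parities but says nothing absolute about orientability, since you never pin down the base parity (in the $\lambda_0^\pm,\lambda_3^\mp$ example the flip takes two odd-parity components to two even ones, the opposite of what you claim). Worse, when the component through zero is a propagating line (the paper's case~(ii)) it carries the cup at zero, the cap at zero, and both leftmost rays simultaneously, so a lone sign change deposits two toggles on one component and preserves its parity; the orientation count is then independent of all four sign choices, contradicting your case~(i) claim. Your handling of (ii)/(iii) under the $\circ$-at-zero reading is also not well posed: no arc is incident to the zero vertex, so there is no ``distinguished zero-component,'' and a closed circle cannot contain the leftmost rays you feed into it. The paper's argument for the genuine case~(i) avoids localizing the toggles entirely: every orientation $\nu$ of $\underline{\la}$ satisfies $\#\up(\nu)\equiv\#\up(\la)\pmod{2}$ by Figure~\ref{oriented}, while the sign swap changes $\#\up$ by exactly one, so $\epsilon\neq\epsilon'$ excludes any common orientation of $\underline{\la}$ and $\overline{\mu}$; for $\epsilon=\epsilon'$ only the two leftmost-ray dots toggle, and both lie on a single propagating line.
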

\begin{proof} 
In case (i), $\la$ and $\mu$ are not diagrammatically linked if $\epsilon \neq \epsilon^\prime$. Indeed the parities of $\# \up(\la)$ and $\# \up(\mu)$ differ by construction. On the other hand,  by Figure~\ref{oriented}, every orientation $\nu$ of $\underline{\la}$ satisfies  $\# \up(\nu)\equiv \# \up(\la)\op{mod} 2$. Hence any orientation $\un\la\nu\ov\mu$ of $\un\la \ov\mu$ implies $\# \up(\la)\equiv \# \up(\mu)\op{mod} 2$. If $\epsilon=\epsilon^\prime$ then for a non nuclear diagram the number of orientations agrees for both possible choices, since the leftmost rays in $\underline{\la}$ and in $\overline{\mu}$ are on the same propagating line.

Assume now that both core diagrams contain zero. In case (ii) the (propagating) line through zero contains a cup and a cap at zero, as well as the leftmost ray in both $\underline{\la}$ and $\overline{\mu}$. Thus if is orientable, it is orientable for any choice of $\epsilon$ and $\epsilon^\prime$ and the number is independent by Remark~\ref{ornuclear}.

Assume now case (iii), that means the component containing the zero vertex is a circle. Here, the leftmost rays are contained in the same propagating line not passing through zero. Hence if $\underline{\la}\overline{\mu}$ is orientable, then negating both $\epsilon$ and $\epsilon^\prime$ also produces an orientable diagram, while all other choices are not orientable due to Remark~\ref{ornuclear}.
\end{proof}

The following is therefore only applicable in case $G=\op{OSp}(2m|2n)$, since otherwise $(\la,+)$, and $(\la,-)$ are in different diagrammatic blocks.
\begin{prop}
\label{nosignsign}
Consider elements $\mu, (\la,+), (\la,-)\in X^+(G)$ and the corresponding diagrammatic weights, which we denote by the same notation, covering exactly the three cases in Definition~\ref{superweight}. Assume that these weights are in the same diagrammatic block. 
\begin{enumerate}
\item
Then $\underline{\mu}\overline{(\la,+)}$ is not nuclear if and only if $\underline{\mu}\overline{(\la,-)}$ is not nuclear. 
\item If $\underline{\mu}\overline{(\la,+)}$ is not nuclear, we have moreover that the number of possible orientations of $\underline{\mu}\overline{(\la,+)}$ equals the number of possible orientations of   $\underline{\mu}\overline{(\la,-)}$.
\item This number of possible orientations is non-zero if and only if every component in $\underline{\mu}\overline{(\la,\pm)}$ contains an even number of dots in which case it equals $2^c$ where $c$ is the number of closed components. 
\end{enumerate}
\end{prop}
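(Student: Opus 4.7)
The plan is to reduce all three claims to a structural comparison of the cup diagrams $\underline{(\la,+)}$ and $\underline{(\la,-)}$. By Corollary~\ref{cornotail}, since $\mu$ is of form~\eqref{nosign}, it carries a $\Diamond$ at position $0$; the requirement of matching core diagrams (Definition~\ref{diagramblocks}) then forces both $(\la,+)$ and $(\la,-)$ to carry $\Diamond$ at $0$ as well, and in particular the leftmost dotted cup of $\underline{\pla^\infty}$ is non-fake so the $\Diamond$ survives the freezing procedure of Definition~\ref{fakecups}. The first step will be a structural lemma: $\underline{(\la,+)}$ and $\underline{(\la,-)}$ coincide as undecorated cup diagrams (same cup and ray endpoints), and they differ only in that in $\underline{(\la,-)}$ both the cup incident to $0$ and the ray at the first-ray position $p$ are undotted, whereas in $\underline{(\la,+)}$ they are both dotted. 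This will be obtained by tracing Definition~\ref{decoratedcups}: flipping the label at $p$ from $\down$ to $\up$ reverses the parity of $\up$'s in $\pla^\owedge$, so Cup-1 resolves the $\Diamond$ at $0$ to opposite values in the two cases. A case analysis through Cup-2 to Cup-6 then shows that the same position gets paired with $0$, the cup at $0$ arising via Cup-2 in $(\la,-)$ (hence undotted) and via Cup-4 in $(\la,+)$ (hence dotted), while the ray at $p$ arises via Cup-3 in $(\la,-)$ and via Cup-5 in $(\la,+)$; all other cups and rays, including their dots, coincide.

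From this structural lemma, claim~(1) is immediate, since nuclearity depends only on the shape of the circle diagram: the presence of a non-propagating line is determined by the endpoint data of cups, caps and rays alone, not by dots. Claim~(3) will then be a direct application of Corollary~\ref{signsign} in the non-nuclear case ensured by~(1).

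For claim~(2), the number $c$ of closed components is a shape invariant and hence agrees for both sign choices. The two dot differences between $\underline{\mu}\overline{(\la,+)}$ and $\underline{\mu}\overline{(\la,-)}$ occur on the cap at $0$ and on the ray at $p$, both within $\overline{(\la,\pm)}$. The heart of the proof, and the main obstacle, will be to establish the co-component property: these two arcs always lie in the same connected component of $\underline{\mu}\overline{(\la,\pm)}$. Once this is granted, the two dot flips jointly contribute an even change to that single component's dot count and leave all other components untouched, so the parity of dots is preserved in every component between the two sign choices, and Corollary~\ref{signsign} yields equal numbers of orientations.

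To establish the co-component property, I will use that $\mu$ of form~\eqref{nosign} has $\op{tail}(\mu)=0$, so by Definition~\ref{fakecups} every dotted cup of $\underline{\mu^\infty}$ is fake; consequently $\underline{\mu}$ consists precisely of the undotted cups inherited from Cup-2 of the $\mu^\infty$-algorithm together with undotted rays at all remaining positions. Starting at vertex $0$ and following the cap in $\overline{(\la,\pm)}$ lands at the vertex $q$ paired with $0$ in $\underline{(\la,\pm)}$; from $q$ the component continues through $\underline{\mu}$. An induction on the length of the alternating chain of caps in $\overline{(\la,\pm)}$ and cups in $\underline{\mu}$, exploiting the identity of core diagrams between $\mu$ and $(\la,\pm)$ and the tail-$0$ pairing structure of $\mu$, shows that this chain must eventually exit along a ray, and that this ray is precisely the one at $p$, placing the cap at $0$ and the ray at $p$ in the same component and completing the proof.
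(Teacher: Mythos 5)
Your overall strategy mirrors the paper's proof: first establish that the two dot differences between $\underline{(\la,+)}$ and $\underline{(\la,-)}$ (on the cup attached to $0$ and on the leftmost ray) lie on a single component of $\underline{\mu}\overline{(\la,\pm)}$, so that the dot-parity of every component is unchanged by the sign change, and then deduce parts (1) and (3) from the shape-invariance and from Corollary~\ref{signsign} exactly as the paper does. There are, however, two points that need attention.

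First, the structural lemma is stated too specifically and is false as written. You claim the cup at $0$ and the ray at $p$ are both undotted in $\underline{(\la,-)}$ and both dotted in $\underline{(\la,+)}$, because Cup-1 resolves $\Diamond\to\down$ in $(\la,-)$ and $\Diamond\to\up$ in $(\la,+)$. Which way Cup-1 resolves depends on the parity of $\#\up(\pla^\owedge)$: when this count is \emph{even}, Cup-1 sets $\Diamond\to\up$ in $(\la,-)$ and $\Diamond\to\down$ in $(\la,+)$, so the cup at $0$ is dotted (Cup-4) in $(\la,-)$ and undotted (Cup-2) in $(\la,+)$ -- the opposite of what you assert. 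The $\OSP(4|4)$ weight $\la_1$ of Section~\ref{sec:osp44} ($\pla^\owedge=\Diamond\down\up\up\down\cdots$, with $\#\up=2$) realizes this: $\underline{\la_1^-}$ has a dotted cup $0$--$3$ and an undotted ray at $4$, whereas $\underline{\la_1^+}$ has an undotted cup $0$--$3$ and a dotted ray at $4$. The conclusion you actually need -- that \emph{both} arcs change decoration between the two signs, so the parity of dots on a component containing them both is preserved -- is still correct, and your subsequent argument uses only this; but the lemma should be rephrased accordingly.

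Second, and more importantly, the co-component property is the crux of part (2), and your proposed induction does not visibly identify the structural fact that makes it work. The clean argument, which is what the paper is silently invoking with "By construction, this propagating line contains \ldots the (leftmost) ray", is planarity: a circle diagram is a non-crossing collection of arcs, so when the diagram is not nuclear (all lines propagating), the bottom rays and the top rays are matched in an order-preserving fashion. Since $\underline{\mu}$ has its ray at position $0$ (Proposition~\ref{Voila}(1)), it is the leftmost bottom ray, hence its propagating line exits at the leftmost ray of $\overline{(\la,\pm)}$, which is the ray at $p$. Your induction "exploiting the identity of core diagrams \ldots and the tail-$0$ pairing structure of $\mu$" does not appeal to this non-crossing order, and without it there is no obvious reason why the alternating chain could not leave at some other ray of $\overline{(\la,\pm)}$. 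Replace the induction by the planarity observation and the argument closes cleanly.
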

\begin{proof}
The first statement is obvious, since $(\la,+)$ differs from $(\la,-)$ only by some dots. To see (2), note that by Proposition~\ref{Voila}, the cup diagrams $\underline{(\la,+)}$ and $\underline{(\la,-)}$ have both a cup at the position zero, in one case dotted and in the other undotted, whereas $\underline{\mu}$ has a ray at position zero (dotted or not depending on the parity of $\op{min}\{n,m\}$). 
This implies that both $\underline{\mu}\overline{(\la,+)}$ and $\underline{\mu}\overline{(\la,-)}$ have a propagating line through zero in case they are not nuclear. By construction, this propagating line contains cup at position zero and the (leftmost) ray in both $\underline{(\la,+)}$ and $\underline{(\la,-)}$. Since by Definitions~\ref{superweight} and~\ref{def:supercupdiagram} these are exactly the two components that differ in $\underline{(\la,+)}$ and $\underline{(\la,-)}$ by a dot, thus part (2) follows. For the third part observe that in the non-nuclear case components are orientable precisely if and only if they have an even number of dots, see Corollary~\ref{signsign}.
\end{proof}

\section{The main theorem,  duality, and the nuclear ideal}
Our main theorem gives now a description of the underlying vector space of   $\Hom_\cF(P(\la),P(\mu))$ for any $\la,\mu\in X^+(G)$, which in particular includes an explicit counting formula for the dimension of the spaces of morphisms between two indecomposable projective objects. In the special case $G=\OSP(2m+1|2n)$ this gives Theorem~\ref{A} from the introduction.

\subsection{The main theorem}
Recall the vector space $\mathbb{I}$ from Definition~\ref{nuclear}.
\begin{theorem}
\label{mainprop}
Consider $G=\OSPrn$  for fixed $m,n$. For any $\la,\mu\in X^+(G)$ there is an isomorphisms of vector spaces
\begin{eqnarray}
\label{BI}
\op{Hom}_\cF(P(\la),P(\mu))&\cong&
\mathbb{B}(\la,\mu)/\mathbb{I}_{\la,\mu}.
\end{eqnarray}
Here, $\mathbb{B}(\la,\mu)$ is the vector space with basis all oriented circle diagrams of the form $\underline{\la}\nu\overline{\mu}$ for some diagrammatic weights $\nu$, and $\mathbb{I}_{\la,\mu}$ is the vector subspace spanned by its set of nuclear diagrams. Hence
\begin{eqnarray}
\mathbb{B}(\la,\mu)/\mathbb{I}_{\la,\mu}=\left\langle\underline{\la}\nu\overline{\mu}\mid
\underline{\la}\nu\overline{\mu}\in\mathbb{B}\text{ and }\underline{\la}\overline{\mu}\not\in \mathbb{I}
 \right\rangle_\mathbb{C}.
\end{eqnarray}
\end{theorem}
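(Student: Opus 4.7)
The plan follows the strategy sketched in the introduction and Section~\ref{shadow}. First, given $\la,\mu\in X^+(G)$ lying in a block $\cB$, I would use Lemma~\ref{B} to realize $P(\la)\oplus P(\mu)$ as a direct summand of a suitable tensor power $V^{\otimes d}$. Proposition~\ref{D} then provides an idempotent $e=e_{d,\delta}\in\Brd$ together with a surjection $\Phi=\Phi_{d,\delta}: e\Brd e\surj\op{End}_\cF(P(\la)\oplus P(\mu))$ that lifts primitive idempotents. Writing $e_\la,e_\mu$ for lifts of the primitive idempotents cutting out $P(\la)$ and $P(\mu)$, one has $\op{Hom}_\cF(P(\la),P(\mu))\cong e_\mu(e\Brd e)e_\la/\ker\Phi|_{e_\mu(e\Brd e)e_\la}$, so the whole task splits into (a) describing the source concretely and (b) identifying the kernel.

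For (a), I would pass to the basic algebra. By the theorem of \cite{ESKoszul} the basic Brauer algebra $\Bd$ admits a graded lift $\Bgraded$ with basis given by oriented circle diagrams $\underline{\alpha}\eta\overline{\beta}$ built from the cup-diagram combinatorics of \cite{ES2}. The idempotent truncation $e\Bgraded e$ is then spanned exactly by those oriented circle diagrams whose outer cup and cap diagrams are the ones attached to the indecomposable projective summands of $V^{\otimes d}$ picked out by $e$. To match this basis with oriented circle diagrams of the form $\underline{\la}\nu\overline{\mu}$ in the sense of Definitions~\ref{superweight} and~\ref{def:supercupdiagram}, one invokes Proposition~\ref{PropE}: the Comes-Heidersdorf classification \cite{CH} labels each projective summand $P(\la)$ of $V^{\otimes d}$ by a Brauer cup diagram with the maximal possible number of cups, and the dagger trick $\nu\mapsto\nu^\dagger$ turns the associated diagrammatic weight into $\la^\infty$. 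Under this identification the basis of $e_\mu(e\Bgraded e)e_\la$ is in bijection with the basis of $\mathbb{B}(\la,\mu)$.

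The main obstacle is (b), the explicit description of the kernel. The claim, which will be carried out in detail in Part~II along the lines of \cite[Theorem~8.1]{BS5}, is that under the identification above the kernel of $\Phi$ restricted to $e_\mu(e\Brd e)e_\la$ is precisely the span $\mathbb{I}_{\la,\mu}$ of nuclear oriented circle diagrams. The inclusion $\mathbb{I}_{\la,\mu}\subseteq\ker\Phi$ should come from a weight-space/Jucys-Murphy argument using Lemma~\ref{lemplusminus}: a non-propagating line forces the intertwiner to factor through a tensor power of strictly smaller degree lying in a different block, so the image in $\op{Hom}_\cF(P(\la),P(\mu))$ vanishes. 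The reverse inclusion is the subtle part: by Proposition~\ref{Cartanmatrix} and the positive counting formula recorded in Theorem~\ref{A} (whose combinatorial proof leans on the Gruson--Serganova alternating formulas of \cite{GS1}, \cite{GS2} together with the signed-hook translation of Lemma~\ref{AB} and Proposition~\ref{blocksO}), the dimension of $\op{Hom}_\cF(P(\la),P(\mu))$ equals the number of non-nuclear oriented circle diagrams $\underline{\la}\nu\overline{\mu}$. Combined with the surjectivity of $\Phi$ and the upper bound just established, this forces $\ker\Phi|_{e_\mu(e\Brd e)e_\la}=\mathbb{I}_{\la,\mu}$ by a dimension count, yielding the asserted isomorphism \eqref{BI}.

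Finally, since $\Lambda(\cB)$ is infinite one has to check that the construction is independent of the auxiliary choice of finite $J\supseteq\{\la,\mu\}$ and of $d$: enlarging $J$ corresponds to taking a larger $d$ and a compatible larger idempotent $e$, and Proposition~\ref{PropE} together with the stability properties of the Khovanov algebras $\mathbb{D}_\Lambda$ under adding frozen vertices ensures the identifications glue. This stabilisation step is routine once Proposition~\ref{PropE} and the kernel description are in place, so the real content of the proof is concentrated in the identification of $\ker\Phi$ with the nuclear ideal.
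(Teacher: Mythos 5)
Your proposal is circular at the crucial step. To close the dimension count you invoke ``the positive counting formula recorded in Theorem~\ref{A}'', noting in passing that its combinatorial proof ``leans on the Gruson--Serganova alternating formulas''. But Theorem~\ref{A} in the introduction \emph{is} Theorem~\ref{mainprop}: the paper states explicitly that Theorem~\ref{A} is Theorem~\ref{mainprop} rephrased. So your argument assumes the very dimension equality it is trying to prove, and the entire Brauer-algebra apparatus you set up (Lemma~\ref{B}, Proposition~\ref{D}, Proposition~\ref{PropE}, the $e\Brd e$ truncation, the $\nu\mapsto\nu^\dagger$ dictionary) does no work for the vector-space statement: once you grant yourself the dimension formula, the isomorphism~\eqref{BI} is immediate without any of it.

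The paper's actual proof of Theorem~\ref{mainprop} is the direct combinatorial one you gesture at but do not carry out. It reduces to Theorem~\ref{maintheorem}, whose proof runs through Proposition~\ref{cancel}: starting from the Gruson--Serganova alternating sum $\sum_\nu a(\la,\nu)a(\mu,\nu)$ of Proposition~\ref{prop:alternating}, one shows that the contributions cancel pairwise precisely when a component of the circle diagram $\underline{\la}\overline{\mu}$ has an odd number of dots (via the orientation-swap argument $\nu\leftrightarrow\nu'$), that nuclear diagrams contribute zero, and that in the surviving non-nuclear case every product $a(\la,\nu)a(\mu,\nu)$ equals $1$ and the total matches $2^c$ (or $2^{c+1}$ when position zero is on a closed component). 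Propositions~\ref{blocksO}, \ref{Homzero}, \ref{nosignsign}, Lemma~\ref{lemplusminus}, Lemma~\ref{lem:orientableandlines} and Proposition~\ref{Cartanmatrix} are then used to translate this $\SOSP$-statement into the various sign cases for $\OSP$. Your Brauer-algebra route is the Part~II strategy aimed at the stronger \emph{algebra} isomorphism of Theorem~\ref{main}, where identifying $\ker\Phi$ with the nuclear ideal is genuinely the point; for the vector-space isomorphism of Theorem~\ref{mainprop} it is both unnecessary and, as written, question-begging. To repair your proposal you would need to replace the appeal to Theorem~\ref{A} by the actual cancellation analysis of Proposition~\ref{cancel}, at which point the Brauer-algebra scaffolding can be discarded.
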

\begin{proof}
Theorem~\ref{mainprop} will follow from the Dimension Formula (Theorem~\ref{maintheorem}).
\end{proof}

The following is  a shadow of the duality explained in \cite[5.5]{MW}:
\begin{corollary}[Duality]
Let $G=\OSP(2m+1|2n)$ and $G^t=\OSP(2n+1|2m)$. Let $\la,\mu\in X^+(G)$ and $\la^t,\mu^t\in X^+(G^t)$ be the corresponding element with the same sign, but transposed partition. Then 
\abovedisplayskip0.25em
\belowdisplayskip0.5em
\begin{eqnarray*}
\op{Hom}_{\cF(G)}(P(\la),P(\mu))&\cong&\op{Hom}_{\cF(G^t)}(P(\la^t),P(\mu^t)).
\end{eqnarray*}
\end{corollary}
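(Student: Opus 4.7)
The plan is to transport the desired isomorphism through Theorem~\ref{mainprop} by exhibiting a sign-preserving bijection between the oriented circle diagrams indexing a basis of $\op{Hom}_{\cF(G)}(P(\la),P(\mu))$ and those for $\op{Hom}_{\cF(G^t)}(P(\la^t),P(\mu^t))$, sending nuclear diagrams to nuclear ones; the Hom-space isomorphism then follows at once from Theorem~\ref{mainprop}.

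The key combinatorial fact is that, in the odd case where all vertices sit at positive half-integers and no $\Diamond$ arises, the infinite diagrammatic weight attached to a transposed hook partition is obtained from the original one simply by swapping the core symbols $\circ\leftrightarrow\times$ at every vertex, leaving $\up$ and $\down$ unchanged. To prove this, recall from \eqref{Sla} that $\cS(\pla)=\{\delta/2-1-s:s\in S(\pla)\}$ with $S(\pla)=\{\pla_i-i:i\ge 1\}\subset\mZ$, and note that with $r^t=2n+1$ and $n^t=m$ one has $\delta^t/2=1-\delta/2$. Combined with the standard Maya-diagram duality $S(\pla^t)=\{-1-y:y\in\mZ\setminus S(\pla)\}$, a direct computation yields, for each half-integer $p>0$,
\[
p\in\cS(\pla)\iff -p\notin\cS(\pla^t),\qquad -p\in\cS(\pla)\iff p\notin\cS(\pla^t).
\]
Reading off the dictionary \eqref{dict} of Definition~\ref{lainfty} now produces the claimed symbol swap.

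The point is that both $\circ$ and $\times$ are core symbols and are therefore equally transparent for the notion of neighboured vertices in Definition~\ref{decoratedcups}. Consequently the cup-construction algorithm applied to $\pla^\infty$ and to its transposed counterpart produces identical configurations of cups, rays, dots, and nesting. A short count using the symbol swap yields the charge identity $n-m=\#\!\times\!(\pla^\infty)-\#\!\circ\!(\pla^\infty)$, so Corollary~\ref{tail} gives $\op{tail}(\pla)=\op{tail}(\pla^t)$; hence the fake cups of Definition~\ref{fakecups} occupy the same positions on both sides, and the weights $\pla^\owedge$ and $\pla^{t,\owedge}$ still differ only by the core-symbol swap. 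Since $d(\pla^\owedge)$ and the number of $\up$'s in $\pla^\owedge$ depend only on vertex positions (not on which core symbol sits at a core vertex), the parity test of Definition~\ref{superweight} is invariant, so the sign-decoration depending on $\epsilon$ behaves coherently on both sides and the undecorated cup diagrams $\un\la$ and $\un{\la^t}$ literally coincide (and similarly for $\mu$ and $\mu^t$).

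With these preparations, the involution that exchanges all core symbols $\circ\leftrightarrow\times$ in every diagrammatic weight induces a map $\un\la\nu\ov\mu\mapsto \un{\la^t}\nu^t\ov{\mu^t}$ from $\mathbb{B}(\la,\mu)$ to $\mathbb{B}(\la^t,\mu^t)$, with $\nu^t$ obtained from $\nu$ by the same swap. The orientation rules of Figure~\ref{oriented} involve only the non-core labels and the dot patterns on cups and rays, both of which are preserved, and the degree is preserved as well; this yields a bijection $\mathbb{B}(\la,\mu)\liso \mathbb{B}(\la^t,\mu^t)$. Nuclearity---the presence of a non-propagating line---is a property of the undecorated circle diagram, which is identical on the two sides, so the bijection restricts to an isomorphism $\mathbb{I}_{\la,\mu}\liso \mathbb{I}_{\la^t,\mu^t}$. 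Passing to the quotient and applying Theorem~\ref{mainprop} on each side gives the claimed isomorphism. The only substantive step in this plan is the Maya-diagram identity establishing the core-symbol swap; everything else is bookkeeping.
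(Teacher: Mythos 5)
Your proposal is correct and follows the same route as the paper's proof: both reduce the duality to Theorem~\ref{mainprop} via the observation that transposing the hook partition swaps the core symbols $\circ\leftrightarrow\times$ in the associated diagrammatic weight while leaving everything else unchanged, which is invisible to the cup-diagram construction and the orientation count. The difference is one of detail: the paper simply \emph{asserts} the core-symbol swap in a single sentence, whereas you actually prove it via the Maya-diagram identity $S(\pla^t)=\{-1-y:y\in\mZ\setminus S(\pla)\}$ together with $\delta^t/2=1-\delta/2$, and you moreover verify several points the paper leaves implicit: the charge identity $n-m=\#\!\times-\#\circ$ giving $\op{tail}(\pla)=\op{tail}(\pla^t)$, the consequent coincidence of fake cups and frozen vertices, and the invariance of the parity test in Definition~\ref{superweight} so that the sign decoration is transported coherently. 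This is a more self-contained account of what the paper compresses into a remark, and is welcome precisely because the paper never records the Maya computation anywhere.
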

\begin{proof}
This follows directly from Theorem~\ref{mainprop}, since the associated diagrammatic weight for $\la^t$ is obtained from that of $\la$ by swapping $\circ $ with $\times$ and $\times$ with $\circ$. This swapping is however irrelevant for the dimension counting, since it does (up to core symbols) not change the corresponding cup diagram, and also not the possible orientations.  
\end{proof}

Before we prove Theorem~\ref{maintheorem},  and thus Theorem~\ref{mainprop}, we explain how to put an algebra structure on $\bigoplus_{\la,\mu}\mathbb{B}(\la,\mu)/\mathbb{I}_{\la,\mu}$ as required in Theorem~\ref{main}.

\subsection{The algebra structure and the nuclear ideal}
Let $G=\OSPrn$ and consider a fixed block $\cB$ of $\cF$. Let $P=\oplus_\la P(\la)$ be a minimal projective generator, that is the direct sum runs over all $\la\in X^+(G)$ such that $P(\la)\in\cB$.  By Propositions~\ref{blocksindiagramsodd} and~\ref{blocksindiagramseven} below, the corresponding set $\Lambda(\cB)$ of diagrammatic weights is contained in a block $\Lambda$ in the sense of Definition~\ref{diagramblocks}. 
Let  $\mathbb{D}_\Lambda$ be the Khovanov algebra of type $\rm{D}$ attached to $\Lambda$ as defined in \cite{ESperv}. Let $\mathbbm{1}_\cB=\sum_{\la\in\Lambda(\cB)}\mathbbm{1}_\la$ be the idempotent in $\mathbb{D}_\Lambda$ corresponding to $\Lambda(\cB)$, see \cite[Theorem 6.2]{ESperv}. We consider now the algebra  $\mathbbm{1}_\cB\mathbb{D}_\Lambda \mathbbm{1}_\cB$. By definition it has a basis given by all oriented circle diagrams  $\underline{\la}\nu\overline{\mu}$, where $\la,\mu\in\Lambda(\cB)$.
We first observe the following crucial fact: 

\begin{prop}
\label{isideal}
The subspace $\mathbb{I}_\cB$ of $\mathbbm{1}_\cB\mathbb{D}_\Lambda \mathbbm{1}_\cB$ spanned by all basis vectors whose underlying circle diagram is nuclear is an ideal.
\end{prop}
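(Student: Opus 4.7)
The multiplication in $\mathbb{D}_\Lambda$ of $x=\underline{\la}\nu\overline{\mu}$ and $y=\underline{\la'}\nu'\overline{\mu'}$, as defined in \cite[Theorem~6.2]{ESperv}, vanishes unless $\mu=\la'$ and the orientations agree on the shared middle line; in the non-zero case the product is a linear combination of basis diagrams of common shape $\underline{\la}\overline{\mu'}$. Since nuclearity of a basis element depends only on its underlying shape, to prove that $\mathbb{I}_\cB$ is a two-sided ideal it suffices to establish: if $\underline{\la}\overline{\mu}$ is nuclear and $xy\neq 0$, then $\underline{\la}\overline{\mu'}$ is also nuclear; and symmetrically for $yx$.

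The key combinatorial input is a ray-balance identity. Because the defect is constant on the block by Corollary~\ref{lem:samedefect}, every cup diagram $\underline{\alpha}$ with $\alpha\in\Lambda(\cB)$ has the same number of cups and hence the same number $N$ of rays (the set of non-core positions being fixed by the block). In any circle diagram $\underline{\la}\overline{\mu}$ with $\la,\mu\in\Lambda(\cB)$ the number of down-ray and of up-ray endpoints thus both equal $N$, and classifying the lines of $\underline{\la}\overline{\mu}$ by endpoint type with counts $n_{\downarrow\downarrow},n_{\uparrow\uparrow},n_{\mathrm{prop}}$ yields
\[
2n_{\downarrow\downarrow}+n_{\mathrm{prop}} \;=\; N \;=\; 2n_{\uparrow\uparrow}+n_{\mathrm{prop}},
\]
forcing $n_{\downarrow\downarrow}=n_{\uparrow\uparrow}$. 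A nuclear $\underline{\la}\overline{\mu}$ therefore contains at least one $(\downarrow\downarrow)$-line \emph{and} at least one $(\uparrow\uparrow)$-line.

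Pick a $(\downarrow\downarrow)$-line $L$ of $\underline{\la}\overline{\mu}$: its endpoints are two down-rays of $\underline{\la}$ at positions $p_1,p_2$, and its internal arcs consist only of cups of $\underline{\la}$ and caps of $\overline{\mu}$ (it uses no ray of $\overline{\mu}$). Consider the doubly stacked diagram $D$ formed by placing $\underline{\la'}\overline{\mu'}=\underline{\mu}\overline{\mu'}$ on top of $\underline{\la}\overline{\mu}$: the TQFT-style surgeries of \cite[\S6.2]{ESperv} that compute $x\cdot y$ act on the matched cup-cap pairs of $\underline{\mu}$ and $\overline{\mu}$ in the middle region of $D$. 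Being local $2$-dimensional cobordisms supported away from the outer boundary, these saddles preserve the boundary connectivity of $D$ --- i.e.\ which top rays and bottom rays are joined through the diagram. The two down-rays at $p_1,p_2$ are joined via $L$ in $D$, hence they remain joined in the resulting shape $\underline{\la}\overline{\mu'}$, contributing a $(\downarrow\downarrow)$-line there. Thus $\underline{\la}\overline{\mu'}$ is nuclear and $xy\in\mathbb{I}_\cB$. The right-ideal property follows from the mirror argument: the $(\uparrow\uparrow)$-line furnished by the balance has, in the doubly stacked diagram for $y\cdot x$, both endpoints at the top boundary, and it persists as an $(\uparrow\uparrow)$-line of the common shape $\underline{\la'}\overline{\mu}$.

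The main obstacle is the assertion that the boundary connectivity of the stacked diagram is invariant under the surgery procedure; this is morally clear from the local cobordism nature of each saddle move (which is supported in a small disc containing a single cup-cap pair and avoids the rays of $L$ altogether), but a careful verification in the type-D setting requires tracking the additional dot decorations and parallels, with minor bookkeeping, the analogous step in the BS-style arguments of \cite{BS1}.
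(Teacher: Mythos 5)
Your overall strategy matches the paper's: both proofs use Corollary~\ref{lem:samedefect} to establish the balance between $\downarrow\downarrow$- and $\uparrow\uparrow$-lines, and both then reduce to an analysis of the surgery procedure defining the product in $\mathbb{D}_\Lambda$. The only presentational difference up to this point is that the paper invokes the anti-automorphism of $\mathbb{D}_\Lambda$ (from \cite[Corollary~6.4]{ESperv}) to reduce to one-sided multiplication, whereas you use the $n_{\downarrow\downarrow}=n_{\uparrow\uparrow}$ balance directly to handle left and right multiplication symmetrically. Either works.

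There is, however, a genuine gap at the decisive step. You assert that the saddles ``preserve the boundary connectivity of $D$,'' but this is false in general: a surgery applied to a cup-cap pair whose two arcs lie on \emph{two different lines} of the stacked diagram does not merge them (a connected planar $1$-manifold cannot have four boundary points); rather it re-pairs the four endpoints. In particular, a surgery joining your $\downarrow\downarrow$-line $L$ to a $\uparrow\uparrow$-line $L'$ would produce two \emph{propagating} lines, destroying nuclearity of the shape and breaking boundary connectivity. The way the paper closes this hole is not topological but algebraic: it cites the explicit multiplication rules of \cite{ESperv} (Remark~5.13, Remark~5.15, and the Reconnect rule in 5.2.3) to observe that precisely these dangerous surgeries \emph{produce zero} in $\mathbb{D}_\Lambda$. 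Once those contributions are known to vanish, the remaining non-zero surgeries involve $L$ and a circle and do preserve the existence of a non-propagating line ending at the appropriate boundary. Your proposal mentions needing ``careful verification'' but misdiagnoses the issue as being about dot decorations; the real content is the reconnect-vanishing rule, and without it the ``morally clear'' cobordism argument asserts a claim that is simply not true of saddle moves.
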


\begin{proof}
Let $x\in\mathbb{I}_\cB$ be a basis vector. Hence we can find $\la,\mu\in\Lambda(\cB)$ such that $x\in\mathbb{B}(\la,\mu)\cap\mathbb{I}_\cB$ and $x$ contains at least one non-propagating line. It is enough to show that $cx, xc\in \mathbb{I}$ for any basis element $c$ of  $\mathbbm{1}_\cB\mathbb{D}_\Lambda\mathbbm{1}_\cB$. The algebra $\mathbb{D}_\Lambda$ has an anti-automorphism which sends a basis element $a\nu b$ to $b^*\nu a^*$ in the notation from Definition~\ref{orientedcircle}, see \cite[Corollary~6.4]{ESperv}. Obviously this descends to an anti-automorphism of $\mathbbm{1}_\cB\mathbb{D}_\Lambda \mathbbm{1}_\cB$ which preserve $\mathbb{I}_\cB$. Therefore, it is enough to show $bc\in \mathbb{I}_\cB$. 

Consider the non-propagating lines in $b$. Then the number of those ending at the top equals the number of those ending at the bottom since the weights in $\Lambda(\cB)$ are linked and have the same defect by Corollary~\ref{lem:samedefect}. Hence assume there is at least one such line $L$ ending at the bottom. 

From the surgery procedure defining the algebra structure we see directly that any surgery involving such a line and a circle either preserves this property (\cite[first two cases in Remark~5.13]{ESperv} and \cite[Remark~5.15]{ESperv}), or produces zero (\cite[last two cases in Remark~5.13]{ESperv} and \cite[Reconnect in 5.2.3]{ESperv}). Hence the claim follows.
\end{proof}
    
Now thanks to Theorem~\ref{mainprop}, there is a canonical isomorphism of vector spaces 
\abovedisplayskip0.5em
\belowdisplayskip0.5em
\[
\op{End}^{\fin}_\cF(P)\cong\mathbbm{1}_{\cB} \mathbb{D}_\Lambda \mathbbm{1}_{\cB}/\mathbb{I}_\cB,\]
sending a basis vector to the corresponding basis vector of $\mathbbm{1}_{\cB} \mathbb{D}_\Lambda \mathbbm{1}_{\cB}$ denoted in the same way. In particular,  $\op{End}^{\fin}_\cF(P)$ inherits by Proposition~\ref{isideal} an algebra structure from the Khovanov algebra  $\mathbb{D}_\Lambda$ via this identification. In part II of this series we show (a more general version of Theorem~\ref{main}) that the two algebras are isomorphic.

\section{Connection to the Gruson-Serganova combinatorics} \label{dictionary}
To prove Theorem~\ref{mainprop} we have to connect the diagram calculus developed in \cite{GS2} to our calculus. For later reference and to make a precise connection to \cite{GS2} we give an explicit dictionary, although we could prove the result more directly. The GS-diagrammatic weight $\op{GS}(\la)$ associated with $\la\in X^+(\mg)$ is
a certain labelling $\cL$ with the symbols $<$, $>$, $\times$, $\circ$, $\color{myred}{\boldsymbol \otimes}$ with almost all vertices labelled $\circ$. Gruson and Serganova obtain this labelling as the image of a composite of two maps
\begin{eqnarray}
\op{GS}:\;\; X^+(\mg)&\longrightarrow& \left\{\op{GS}-\text{diagrams with tail} \right\}\nonumber\\
&\longrightarrow &\left\{\text{coloured }\op{GS}-\text{diagrams without tail} \right\}\label{GSmap}
\end{eqnarray}
We refer to  \cite{GS2} for details, but will briefly recall the construction in Section~\ref{sec:GS_combinatorics} below. (The additional signs appearing in \cite{GS2} and in the weights for $X^+(\mg)$ do not play any role for us thanks to \eqref{zumGlueck} and therefore we can ignore them.)

For convenience we provide the explicit map $\op{T}$ which translates from  $\op{GS}$-weights $\op{GS}(\la)$ to our diagrammatic weights $\la^\infty=\op{T}(\op{GS}(\la))$ and vice versa.
The dictionary is as follows, where the first line shows the label in $\op{GS}(\la)$ and the second line the corresponding label in the diagrammatic weight $\op{T}(\op{GS}(\la))$:
\begin{eqnarray}
\label{T2}
\begin{array}{c||c|c|c|c|c||cc|c|c}
\op{GS}(\la)&<&>&\times&\circ&\color{myred}{\boldsymbol \otimes}&
\text{at $0$:}&\color{myred}{\boldsymbol \otimes}&>&
\text{}{\circ}\\
\hline
\op{T}(\op{GS}(\la))&\times&\circ&\down&\up&\up&&\Diamond&\circ&\Diamond
\end{array}
\end{eqnarray}
Even though the vertex $\frac{1}{2}$ will play a special role in the proofs to come, only the vertex $0$ in the even case has a special assignment rule.

\subsection{Comparison of the two cup diagram combinatorics}
In \cite{GS2}, Gruson and Serganova assigned to any $\op{GS}(\la)$-weight also some cup diagram (without any decorations). We claim that our combinatorics refines their combinatorics in the following sense (with the felicitous consequence that the assignment from $X^+(G)$ to cup diagrams is injective):

\begin{prop}
\label{prop:GSES}
Let $\la\in X^+(\mg)$ with associated hook diagram $\pla$. 
\begin{enumerate}
\item The assignment $\op{T}$, from \eqref{T2} satisfies
\begin{eqnarray}
\label{match}
\op{T}(\op{GS}(\la))&=&\pla^\infty.
\end{eqnarray}
\item Moreover, the cup diagram attached to $\op{GS}(\la)$ in the sense of \cite{GS2} agrees with our cup diagram $\underline{\pla^\infty}$ when forgetting the decorations and fake cups, and with $\underline{\pla^\owedge}$ when forgetting the decoration and all rays.
\item Under this correspondence the cups attached to $\color{myred}{\boldsymbol \otimes}$'s correspond precisely to the dotted, non-fake cups in $\underline{\pla^\infty}$,  and to the dotted cups in $\underline{\pla^\owedge}$.
\end{enumerate}
\end{prop}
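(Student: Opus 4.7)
The proof is essentially a dictionary check: one unfolds both constructions in parallel and verifies that they produce the same data up to relabelling. For part (1) I would first recall the GS-construction of $\op{GS}(\la)$: starting from the multiset of coefficients of $\la+\rho$ (which, up to reindexing, encodes the same information as $\cS(\pla)$), each integer or half-integer position is labelled by one of $<,>,\times,\circ$ according to which of $\pm p$ occurs, and then the "tail" of $\la$ is converted into the coloured symbols $\color{myred}{\boldsymbol \otimes}$. On our side, $\pla^\infty$ is defined directly from $\cS(\pla)$ via \eqref{dict}. A vertex-by-vertex comparison using the table \eqref{T2} then yields \eqref{match}; the only case requiring extra care is the vertex $0$ in the even case, where both constructions single out the zero position (producing $\Diamond$ on our side, $\color{myred}{\boldsymbol \otimes}$ or $>$ on the GS side), but the table \eqref{T2} is set up precisely to accommodate this.

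For part (2) I would then compare the two cup-diagram algorithms step by step. The GS-algorithm first matches each $\times$ (in their sense) with the nearest available $\circ$ (in their sense) in an inside-out fashion. Under the dictionary this is exactly our step (Cup-\ref{cup2}), which connects neighboured $\down\up$-pairs. After this step, the remaining unpaired $\up$'s in our diagram are, by \eqref{T2}, in bijection with the $\color{myred}{\boldsymbol \otimes}$'s of $\op{GS}(\la)$ (together with at most one extra unmatched $\up$ or $\Diamond$). The GS-procedure then pairs adjacent $\color{myred}{\boldsymbol \otimes}$'s from left to right by cups, which is exactly our step (Cup-\ref{cup4}). Hence, forgetting the decorations and the fake cups from $\underline{\pla^\infty}$ yields precisely GS's cup diagram; similarly, in $\underline{\pla^\owedge}$ the frozen labels have been replaced by $\down$'s, so the only dotted cups that survive are the non-fake ones, matching GS's cup diagram after ignoring rays.

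Part (3) is then a direct consequence of the correspondence built in part (2): the dotted cups produced by step (Cup-\ref{cup4}) are exactly pairings of $\up$'s, and by the construction of $\pla^\owedge$ together with Definition~\ref{fakecups}, the first $\op{tail}(\pla)$ such dotted cups are non-fake while all further ones are fake. Under the dictionary \eqref{T2} the $\color{myred}{\boldsymbol \otimes}$'s sit on precisely those positions which determine the non-fake dotted cups, giving the claimed bijection. The main technical obstacle is combinatorial bookkeeping: one has to verify that the GS-pairing and our left-to-right pairing in (Cup-\ref{cup4}) produce the \emph{same} matching of vertices (not merely pairings of the same cardinality), and one must carefully handle the rôle of the $\Diamond$-symbol at position $0$ via the parity rule in step (Cup-\ref{cup1}). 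This is conceptually straightforward but requires a case analysis distinguishing the parity of $\op{min}\{m,n\}$ and the sign of $\de$, and recalling the explicit description of $\rho$ given earlier.
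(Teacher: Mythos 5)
Your proposal takes a genuinely different route from the paper. You propose a \emph{direct} global comparison of the two constructions, whereas the paper proceeds by \emph{induction on the number of boxes} in the hook partition (base case the empty partition, handled in Lemma~\ref{lem:special_empty}, and then a case analysis for adding one box far above/far below/directly above/directly below/on the diagonal, distinguishing odd and even $r$). The inductive strategy has the advantage that adding a single box produces only a \emph{local} change --- one coefficient $a_i$ or $b_j$ shifts by $1$, one entry of $\cS(\pla)$ changes by $1$, one symbol moves one position --- so the verification of~\eqref{match} and of the cup-diagram agreement reduces at each step to a short table of local configurations.

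Your direct approach is plausible in outline, and you do correctly locate where the work lies, but as written it leaves two substantial gaps. First, the opening claim that the multiset of coefficients of $\la+\rho$ ``up to reindexing encodes the same information as $\cS(\pla)$'' glosses over a real combinatorial issue: $\cS(\pla)_i=\tfrac{\de}{2}+i-\pla_i-1$ is built purely from the \emph{rows} of $\pla$, while the coefficients $a_i=\max\{\pla^t_i-i+\tfrac{\de}{2},\,\cdot\}$ involve the \emph{transpose} partition. Establishing the vertex-by-vertex identity in part~(1) therefore requires the (standard but nontrivial) complementarity between the Maya sequence of $\pla$ and that of $\pla^t$, and you never invoke it. Second, the $\color{myred}{\boldsymbol\otimes}$-recolouring in the GS construction is a \emph{global} step: one removes the tail, performs the $\times$--$\circ$ pairing, \emph{then} numbers the surviving unmarked vertices and relabels positions $1,3,\ldots,2l-1$ by $\color{myred}{\boldsymbol\otimes}$. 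Because this numbering depends on the outcome of the first pairing, it does not admit a local description, so your claim that the leftover $\up$'s after (Cup-\ref{cup2}) ``are in bijection with the $\color{myred}{\boldsymbol\otimes}$'s'' is not accurate as stated --- the leftover $\up$'s correspond to $\color{myred}{\boldsymbol\otimes}$'s \emph{and} remaining $\circ$'s, and the content of part~(3) is precisely that our Cup-\ref{cup4} pairing of consecutive $\up$'s reproduces the GS pairing of $\color{myred}{\boldsymbol\otimes}$ with the adjacent $\circ$, with the first $\op{tail}(\pla)$ of our dotted cups being non-fake. You flag this matching problem as the ``main technical obstacle,'' but that is exactly the heart of the proposition, and the paper avoids having to resolve it globally by instead checking that adding one box perturbs both sides compatibly. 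So: the idea could be carried through, but the combinatorial bookkeeping you defer is not routine, and the inductive route is both cleaner and closer to the structure of the definitions.
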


\begin{proof}
It suffices to prove the statements involving $\pla^\infty$, since the others follow then directly from the definition of $\underline{\pla^\owedge}$. The proof is given in Section~\ref{sec:proofcomb}.
\end{proof}

\subsection{Blocks in terms of diagrammatic blocks}
Before we prove Proposition~\ref{prop:GSES}, we deduce some important consequence:

\begin{prop}
\label{blocksindiagramsodd}
Assume that $G=\OSPrn$ with $r$ odd. Let $\la,\mu\in X^+(G)$. Then $P(\la)$ and $P(\mu)$ (and hence then also $L(\la)$ and $L(\mu)$) are in the  same  block if and only if $\pla^\infty$ and $\pmu^\infty$ have the same core diagrams in the sense of  Definition~\ref{def:diagrammaticweight} and additionally $\#\up({\pla^\owedge})\equiv\#\up({\pmu^\owedge})\op{mod}2$. 
\end{prop}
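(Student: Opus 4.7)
The plan is to split the claim along Corollary~\ref{corblockodd}, matching its two conditions with the two diagrammatic conditions of the proposition. By the direct product decomposition $\OSP(2m+1|2n) \cong \SOSP(2m+1|2n) \times \mathbb{Z}/2\mathbb{Z}$ of \eqref{easyodd}, two simples $L(\la',\epsilon), L(\mu',\epsilon')$ sit in the same block of $\cF$ if and only if (a) $L^\mg(\la')$ and $L^\mg(\mu')$ lie in the same block of $\cF'$ and (b) $\epsilon = \epsilon'$. The plan is to match (a) with the core-equality condition and (b) with the parity condition on $\#\up$.

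For (a), I would invoke the Gruson-Serganova block classification of $\cF'$ from \cite{GS1}: two $\mg$-dominant weights lie in the same block precisely when their GS-diagrammatic weights agree on the positions of the ``typical'' symbols $<, >$. By Proposition~\ref{prop:GSES}(1), the translation map $\op{T}$ from \eqref{T2} identifies $<, >$ with the core labels $\times, \circ$ of $\pla^\infty$. Hence (a) is equivalent to $\pla^\infty$ and $\pmu^\infty$ having the same core diagrams in the sense of Definition~\ref{def:diagrammaticweight}.

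For (b), the plan is to recover $\epsilon$ from the parity of the $\up$-count of the super diagrammatic weight attached to $\la$ via Definition~\ref{superweight} (which is what is tacitly denoted $\pla^\owedge$ in the proposition; the bare $\pla^\owedge$ of Definition~\ref{superweight} contains no sign data and so on its own cannot distinguish blocks). The first step: for a fixed underlying partition $\pla$ the super weights of $(\la',+)$ and $(\la',-)$ differ only by a single label switch $\down \leftrightarrow \up$ at the first ray of $\underline{\pla^\owedge}$, so their $\up$-counts differ by exactly one. To propagate this across the whole block I would prove the elementary lemma that $d(\pla^\owedge)$ depends only on the core diagram: in the odd case every non-core position of $\pla^\owedge$ carries the label $\up$ or $\down$ (no $\Diamond$ appears, since positions are half-integers), so $d(x)$ for each core symbol $x$ equals the number of non-core positions strictly to its left, a quantity determined by the positions of the core symbols alone. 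Combined with the convention in Definition~\ref{superweight}, in which the parity of $d(\pla^\owedge) + \#\up(\pla^\owedge)$ decides which sign triggers the ray flip, a short case check then shows that $\#\up(\la)$ and $\#\up(\mu)$ have the same parity if and only if $\epsilon_\la = \epsilon_\mu$, whenever the cores match.

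The main obstacle is the bookkeeping for (b): one must carefully track the case analysis of Definition~\ref{superweight} through the parity identity and verify that $d(\pla^\owedge)$ is genuinely core-determined. Once that is in hand, the remaining steps are a direct translation via Proposition~\ref{prop:GSES} combined with the block classification from \cite{GS1}, and assembling (a) and (b) yields the proposition.
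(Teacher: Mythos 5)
Your argument is correct and follows essentially the same route as the paper's proof: reduce via Corollary~\ref{corblockodd} to blocks of $\cF'$ together with the sign $\epsilon$, translate the Gruson--Serganova core-diagram condition through Proposition~\ref{prop:GSES}, and match $\epsilon$ with the parity of $\#\up$. The paper merely asserts that this parity is determined by $\epsilon$ via Definition~\ref{superweight}; your supporting lemma that $d(\pla^\owedge)$ depends only on the core diagram (since no $\Diamond$ occurs at half-integer positions in the odd case) is a correct verification of that assertion.
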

\begin{proof}
Observe that the assignment $\op{T}$ identifies core symbols in the sense of \cite{GS2} with core symbols in the sense of  Definition~\ref{def:diagrammaticweight}.

By Definition~\ref{def:labelssimples1} we have $\la=(\la^\prime,\epsilon)$ and $\mu=(\mu^\prime,\epsilon^\prime)$ for some $\la^\prime,\mu^\prime\in X^+(\mg)$ and $\epsilon,\epsilon^\prime \in \{\pm\}$. Now by Corollary~\ref{corblockodd}, $P(\la)$ and $P(\mu)$ are in the same block if and only if  $\epsilon=\epsilon^\prime$ (that means $\sigma$ acts by the same scalar) and additionally $P^\mathfrak{g}(\la^\prime)$ and $P^\mathfrak{g}(\mu^\prime)$ are in the same block for $\cF'$. By \cite{GS2}, the latter holds precisely if the associated weight diagrams $\op{GS}(\la^\prime)$ and $\op{GS}(\mu^\prime)$ have the same core diagram in the sense of \cite{GS2}, and hence by Proposition~\ref{prop:GSES}  $\pla^\infty$ and $\pmu^\infty$  have the same core diagrams in the sense of Definition~\ref{def:diagrammaticweight}. Therefore $P(\la)$ and $P(\mu)$ are in the  same  block if and only if $\pla^\infty$ and $\pla^\infty$ have the same core diagrams and  $\#\up({\pla^\owedge})\equiv\#\up({\pmu^\owedge})\op{mod}2$, since this parity is given by $\epsilon$, by Definition~\ref{superweight}.
\end{proof}

\begin{prop}
\label{blocksindiagramseven}
Assume that $G=\OSPrn$ with $r$ even. Let $\la,\mu\in X^+(G)$. Then $P(\la)$ and $P(\mu)$ (and hence also $L(\la)$ and $L(\mu)$) are in the  same  block if and only if $\pla^\infty$ and $\pmu^\infty$ have the same core diagrams, see Definition~\ref{def:diagrammaticweight}, as well as $\#\up({\pla^\owedge})\equiv\#\up({\pmu^\owedge})\op{mod}2$ in case no $\Diamond$ occurs.
\end{prop}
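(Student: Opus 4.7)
The plan is to reduce to the analogous statement for $\cF'$ established in \cite{GS2} and then transfer to $\cF$ via the Clifford-type relationship between $G$ and $G'$. First, by \cite{GS2}, two simples $L^\mg(\la')$ and $L^\mg(\mu')$ of $\cF'$ lie in the same block if and only if their GS-weights have identical GS-cores. Proposition~\ref{prop:GSES}(1) together with the dictionary \eqref{T2} identifies the GS-core with the core diagram of $\pla^\infty$ (respectively $\pmu^\infty$) in our sense. Hence $\cF'$-linkage is detected exactly by agreement of core diagrams, independent of any $\Diamond$.

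To pass from $\cF'$ to $\cF$, I split into three cases according to the stabilizers $\mathrm{Stab}_\sigma(\la')$ and $\mathrm{Stab}_\sigma(\mu')$. If both are trivial (so $a_m, a_m' \neq 0$), then $\la = \la^G$, $\mu = \mu^G$; Proposition~\ref{blocksO}(2) yields $\dim\op{Hom}_\cF(P(\la^G),P(\mu^G)) = \dim\op{Hom}_{\cF'}(P^\mg(\la'),P^\mg(\mu'))$, so $\cF$-linkage is equivalent to agreement of cores. By Corollary~\ref{cornotail} both $\pla^\infty$ and $\pmu^\infty$ carry a $\Diamond$ at position zero, so the ``no $\Diamond$'' hypothesis of the parity clause is violated and no parity condition is required. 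The mixed case of exactly one trivial stabilizer is handled analogously via Proposition~\ref{blocksO}(1); the side with $a_m \neq 0$ forces a $\Diamond$ at position zero, and agreement of cores forces the other side to carry $\Diamond$ there as well.

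The genuinely new case is when $a_m = a_m' = 0$, so $\la = (\la',\epsilon)$ and $\mu = (\mu',\epsilon')$. Here the diagrammatic weights $\pla^\infty, \pmu^\infty$ have either $\circ$ or $\Diamond$ at position zero by Proposition~\ref{Voila}(2), and since $\Diamond$ is not a core symbol, agreement of cores forces both to be of the same type. Lemma~\ref{lemplusminus} yields a dichotomy: either (1) $\dim\op{Hom}_\cF$ is independent of the signs, in which case all four sign combinations link together whenever the $\cF'$-blocks agree; or (2) only $(\epsilon,\epsilon')$ and $(-\epsilon,-\epsilon')$ give nonzero $\op{Hom}$, so $(\la',\epsilon)$ and $(\mu',\epsilon')$ are linked only for appropriately matched signs. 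The key combinatorial claim is that Case (1) occurs precisely in the $\Diamond$-situation at position zero, while Case (2) corresponds to the $\circ$-situation (i.e., ``no $\Diamond$ occurs''). Geometrically this reflects Proposition~\ref{Voila}(2): in the $\Diamond$-case the cup diagrams have a cup at position zero whose dottedness is swapped by the sign, so a sign flip is compensable within a single $\cF$-block; in the ``no $\Diamond$'' case the distinguishing feature is the leftmost ray, whose dottedness is not compensable, and the block splits in two.

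Finally, the sign compatibility in Case (2) is exactly the parity condition $\#\up(\pla^\owedge) \equiv \#\up(\pmu^\owedge) \;\op{mod}\;2$: by Definition~\ref{superweight} the passage from $\epsilon=-$ to $\epsilon=+$ converts a $\down$ at the first ray into an $\up$ and hence changes $\#\up$ by one, so matching signs translate to matching parities and vice versa. The main obstacle is the combinatorial identification of the two cases of Lemma~\ref{lemplusminus} with the $\Diamond$/no-$\Diamond$ dichotomy, which pinpoints exactly when a single $\cF'$-block resolves into one versus two distinct $\cF$-blocks; the hard input for this is the Jucys--Murphy analysis on the Brauer algebra side which is deferred to Part~II.
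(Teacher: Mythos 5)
The overall architecture of your argument parallels the paper's: reduce to the $\cF'$-block structure from Gruson--Serganova (via Proposition~\ref{prop:GSES} and the dictionary \eqref{T2}), then transfer to $\cF$ via Proposition~\ref{blocksO} and Lemma~\ref{lemplusminus}, and finally interpret sign compatibility as the $\#\up$-parity condition. The treatment of the cases with at least one trivial stabilizer and the identification of the sign flip with a parity flip via Definition~\ref{superweight} are all sound.

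However, your ``key combinatorial claim'' --- that Case~(1) of Lemma~\ref{lemplusminus} (sign-independent Hom) occurs precisely when $\Diamond$ sits at position zero, while Case~(2) (sign-sensitive Hom) occurs precisely in the $\circ$-situation --- is false, and the gap is genuine. Within the $\Diamond$-at-zero regime, whether a pair $(\lambda,\mu)$ falls into Case~(1) or Case~(2) depends on whether the zero vertex of the circle diagram $\underline{\la}\,\overline{\mu}$ lies on a propagating line or on a circle; both occur. This is exactly the dichotomy of Proposition~\ref{Homzero}(ii) versus (iii) and of Lemma~\ref{lem:orientableandlines}. A concrete counterexample is visible in Figure~\ref{bigtable} for $\OSP(4|4)$: the pair $\lambda_0^+$, $\lambda_3^-$ both come from the $\Diamond$-at-zero principal block, yet $\op{Hom}(P(\lambda_0^+),P(\lambda_3^+))=0$ while $\op{Hom}(P(\lambda_0^+),P(\lambda_3^-))\neq 0$, so this pair is in Case~(2) despite the $\Diamond$. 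Your argument therefore does not prove that all four sign combinations link directly in the $\Diamond$-situation, and in fact they do not.

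What saves the proposition in the $\Diamond$ case is a \emph{pivot argument}, which is the route the paper actually takes and which you would need to add. If $\Diamond$ occurs at position zero, then the block also contains a weight $\nu$ with trivial $\sigma$-stabilizer (equivalently $a_m^\nu \neq 0$; its cup diagram has a ray at position zero). Proposition~\ref{blocksO}(1) then gives $\op{Hom}_\cF(P(\la',\pm),P(\nu))\neq 0$ whenever $\op{Hom}_{\cF'}(P^\mg(\la'),P^\mg(\nu'))\neq 0$, independently of the sign. In the $\OSP(4|4)$ example the role of $\nu$ is played by $\lambda_6$ (or $\lambda_5$): $\lambda_0^+ \leftrightarrow \lambda_6 \leftrightarrow \lambda_0^-$ links the two signs even though the direct pair $\lambda_0^+$, $\lambda_3^+$ is not linked. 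Without this pivot, your ``geometric'' heuristic (``the cup at zero has its dot swapped, so the sign flip is compensable'') quietly assumes the cup at zero sits on a line rather than a circle, which is precisely what fails in the counterexample. So the claim as you state it cannot be proved and the proof needs restructuring around the existence of a signless pivot in every $\Diamond$-block.
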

\begin{remark}
In particular $P(\la)$ and $P(\mu)$ are in the same block if and only if $\pla^\owedge$ and $\pmu^\owedge$ are diagrammatically linked in the sense of Definition~\ref{dlinkage}.
\end{remark}

\begin{proof}
Assume $\op{Hom}_\cF(P(\la),P(\mu))\not=\{0\}$ then
\abovedisplayskip0.5em
\belowdisplayskip0.5em
\[
\op{Hom}_{\cF'}(\op{Res}^{\mg, G'}_{\mg, G}P(\la),\op{Res}^{\mg, G'}_{\mg, G}P(\mu))\not=\{0\}.
\] 
By Lemma~\ref{res} $\op{Res}^{\mg, G'}_{\mg, G}P(\la)$ and $\op{Res}^{\mg, G'}_{\mg, G}P(\mu)$ give rise to weight diagrams in the sense of \cite{GS2} which have the some core diagrams, \cite[Lemma 7]{GS1}, hence $\pla^\infty$ and $\pmu^\infty$ have the same core diagrams thanks to Proposition~\ref{prop:GSES}. Note that in case the restricted module contains more than one summand, the two summands give rise to the same core diagram.

In case that both $\la$ and $\mu$ do not contain a $\Diamond$ in their respective weight diagrams, it follows from Proposition~\ref{Voila} that the weights are of the form $(\lambda^\prime,\epsilon_\la)$ and $(\mu^\prime,\epsilon_\mu)$ for some $\la^\prime, \mu^\prime \in X^+(\mathfrak{g})$ and signs $\epsilon_\la$ and $\epsilon_\mu$. Furthermore one is in case (1) of Lemma~\ref{lemplusminus} and thus $\epsilon_\la = \epsilon_\mu$, which by Definition~\ref{superweight} implies that $\#\up({\pla^\owedge})\equiv\#\up({\pmu^\owedge})\op{mod}2$. This shows the "only if" direction of the claim.

For the "if" direction of the claim assume that both equality of core diagrams as well as the parity condition are fulfilled.

In case the core diagram contains a symbol at position zero, necessarily a $\circ$, it holds by case (1) of Lemma~\ref{lemplusminus} that
\abovedisplayskip0.5em
\belowdisplayskip0.5em
\[
{\rm Hom}_\mathcal{F}(P(\nu,\epsilon)),P(\eta,\epsilon^\prime)) \neq \{ 0\} \Longleftrightarrow 
{\rm Hom}_\mathcal{F^\prime}(P^\mathfrak{g}(\nu)),P^\mathfrak{g}(\eta)) \neq \{ 0\} \text{ and } \epsilon = \epsilon^\prime,
\]
for $\nu,\eta \in X^+(\mathfrak{g})$ with the same core diagram as $\lambda$ and $\mu$. Therefore the claim follows from \cite{GS2}, since the sets of weights with a fixed sign give rise to a block.

Assume now that the core diagram does not contain a symbol at zero, i.e. $\la$ and $\mu$ both have $\Diamond$ at position zero in their respective diagrammatic weights. Write $\lambda=(\lambda^\prime,\epsilon)$ and $\mu=(\mu^\prime,\epsilon^\prime)$ for $\la^\prime,\mu^\prime \in X^+(\mathfrak{g})$ and $\epsilon,\epsilon^\prime$ elements in the respective stabilisers. Since the core diagrams are the same, $P^\mg(\la^\prime)$ and $P^\mg(\mu^\prime)$ are in the same block by \cite[Lemma 7]{GS1} and hence are connected by a sequence of homomorphisms between projective modules. By Proposition~\ref{blocksO} this can be lifted to a sequence of morphisms between the modules $P(\la^\prime,\iota)$ and $P(\mu^\prime,\iota^\prime)$. Here $\iota$ and $\iota^\prime$ can differ from $\epsilon$ and $\epsilon^\prime$. In case any of them do have a sign, one notes that there is a module $P(\nu)$ with the same core diagram such that $\nu$ does not have a sign (this can be $\la$ or $\mu$). By Proposition~\ref{blocksO} and Lemma~\ref{lemplusminus} all the occurring modules $P(\la^\prime,\epsilon)$, $P(\la^\prime,\iota)$, $P(\mu^\prime,\epsilon^\prime)$, and $P(\mu^\prime,\iota^\prime)$ can be connected to $P(\nu)$ by a sequence of morphisms and hence all of them are in the same block. The claim follows.
\end{proof}

\begin{corollary}
Let $\la,\mu\in X^+(G)$. If $P(\la)$ and $P(\mu)$  are in the  same  block $\cB$ of $\cF$ then $\op{def}(\la)=\op{def}(\mu)$. In particular, one can talk about the {\it defect} $\op{def}(\cB)$ of a block $\cB$ of $\cF$.  
\end{corollary}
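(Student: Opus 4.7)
The plan is to combine the block classification (Propositions~\ref{blocksindiagramsodd} and~\ref{blocksindiagramseven}) with the defect formula from Lemma~\ref{lem:defect}; all the real work has already been done.

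First I would apply Proposition~\ref{blocksindiagramsodd} (in the odd case $r=2m+1$) or Proposition~\ref{blocksindiagramseven} (in the even case $r=2m$) to conclude that $P(\la)$ and $P(\mu)$ belonging to the same block $\cB$ forces the diagrammatic weights $\la$ and $\mu$ to have the same core diagram, that is the same positions of the core symbols $\times$ and $\circ$. In particular $\#\times(\la)=\#\times(\mu)$. By Lemma~\ref{lem:defect} we have $\op{def}(\la)=n-\#\times(\la)$ and $\op{def}(\mu)=n-\#\times(\mu)$, and thus $\op{def}(\la)=\op{def}(\mu)$. Equivalently this is already recorded as Corollary~\ref{lem:samedefect}, which applies since "same block" implies "diagrammatically linked" in the sense of Definition~\ref{dlinkage}.

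Since the defect is therefore constant on the set of weights labelling the simples (equivalently the indecomposable projectives) in $\cB$, we may unambiguously define $\op{def}(\cB)$ to be this common value. There is no real obstacle here; the only mild subtlety is the even case, where the block description distinguishes whether a $\Diamond$ occurs, but in either subcase Proposition~\ref{blocksindiagramseven} still yields equality of core diagrams, which is all that enters the defect formula.
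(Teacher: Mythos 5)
Your proposal is correct and follows precisely the paper's route: invoke Propositions~\ref{blocksindiagramsodd}/\ref{blocksindiagramseven} to get equality of core diagrams, then apply Lemma~\ref{lem:defect} (packaged as Corollary~\ref{lem:samedefect}) to conclude the defects agree. The paper's own proof is simply the one-line citation of these same three results.
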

\begin{proof}
This follows directly from Propositions~\ref{blocksindiagramsodd} and~\ref{blocksindiagramseven}, and Corollary~\ref{lem:samedefect}.
\end{proof}

\begin{remark}
\label{remarkatyp}
Using the dictionary to \cite{GS2} which we have developed in \eqref{T2}, one can show that the defect is precisely the {\it atypicality} of the block in the sense of Lie superalgebras. We expect that, in contrast to the $\SOSP$-case treated in \cite[Theorem ~2]{GS1}, the blocks depend up to equivalence of categories only on the atypicality, see Section~\ref{sec:examples} for examples.
\end{remark}

To prove Proposition~\ref{prop:GSES} we recall some of the constructions from \cite{GS2}.

\subsection{The Gruson-Serganova combinatorics} \label{sec:GS_combinatorics}
We start by recalling the construction of the map $\op{GS}$ from \cite{GS2}. Recall the notion of vertices on $\cL$ as in Section~\ref{L}. The first map in \eqref{GSmap} takes a weight $\eta \in X^+(\mg)$ writes $\eta+\rho$ in the form \eqref{laab} and puts at the vertex $p$ of $\cL$ then $\alpha_p$ symbols $>$ and $\beta_p$ symbols $<$, where 
\abovedisplayskip0.25em
\belowdisplayskip0.75em
\[ \alpha_p=|\{1\leq j\leq m\mid a_j=\pm p\}| \text{ and } \beta_p=|\{1\leq i\leq n\mid b_i=\pm p\}|\]
and a symbol $\circ$ if $\alpha_p=\beta_p=0$. We use the abbreviation $\times$ for a pair $>$ and $<$ at a common vertex. We call the resulting diagram a $\op{GS}$-{\it diagram with tail}.

\textit{Case: $\osp(2m+1|2n)$:} In this case the dominance condition is equivalent to the statement that there is at most one symbol, $>$, $<$, $\times$ or $\circ$ at each vertex $p > \frac{1}{2}$ and at $\frac{1}{2}$ at most one $<$ or $>$, but possibly many $\times$. If there are only $\times$'s at $\frac{1}{2}$ we have to put an indicator which is $\mathbf{(+)}$ if $a_j=\frac{1}{2}$ for some $j$ and $\mathbf{(-)}$ otherwise. For instance, the diagram for the trivial weight are the following for $n>m$, $m=n$, $m>n$ respectively.
\belowdisplayskip0.5em
\begin{eqnarray}
\label{withtail}
\begin{tikzpicture}[anchorbase,thick,scale=0.8]
\begin{scope}
\node at (0,2) {$\scriptstyle \times$};
\node at (0,1.6) {$\scriptstyle \vdots$};
\node at (0,1) {$\scriptstyle \times$};
\node at (0,.5) {$\scriptstyle \times$};
\node at (0,0) {$\scriptstyle >$};
\node at (.5,0) {$\scriptstyle >$};
\node at (1,0) {$\scriptstyle \cdots$};
\node at (1.5,0) {$\scriptstyle >$};
\node at (2,0) {$\scriptstyle >$};
\node at (2.5,0) {$\scriptstyle \circ$};
\node at (3,0) {$\scriptstyle \circ$};
\node at (3.5,0) {$\scriptstyle \cdots$};
\draw[thin] (-.1,-.15) to +(0,-.1) to +(2.2,-.1) to +(2.2,0);
\node at (1,-.6) {$\scriptstyle m-n$};
\draw[thin] (-.15,2.1) to +(-.1,0) to +(-.1,-1.7) to +(0,-1.7);
\node at (-.6,1.3) {$\scriptstyle n$};
\end{scope}
\end{tikzpicture}
\quad
\begin{tikzpicture}[anchorbase,thick,scale=0.8]
\begin{scope}
\node at (0,2) {$\scriptstyle \times$};
\node at (0,1.5) {$\scriptstyle \times$};
\node at (0,1.1) {$\scriptstyle \vdots$};
\node at (0,.5) {$\scriptstyle \times$};
\node at (0,0) {$\scriptstyle \times$};
\node at (-.6,0) {$\scriptscriptstyle (-)$};
\node at (.5,0) {$\scriptstyle \circ$};
\node at (1,0) {$\scriptstyle \circ$};
\node at (1.5,0) {$\scriptstyle \circ$};
\node at (2,0) {$\scriptstyle \cdots$};
\draw[thin] (-.15,2.1) to +(-.1,0) to +(-.1,-2.2) to +(0,-2.2);
\node at (-.6,.8) {$\scriptstyle n$};
\node at (1,-.6) {$\scriptstyle \text{\phantom{m-n}}$};
\end{scope}
\end{tikzpicture}
\quad
\begin{tikzpicture}[anchorbase,thick,scale=0.8]
\begin{scope}
\node at (0,2) {$\scriptstyle \times$};
\node at (0,1.6) {$\scriptstyle \vdots$};
\node at (0,1) {$\scriptstyle \times$};
\node at (0,.5) {$\scriptstyle \times$};
\node at (0,0) {$\scriptstyle <$};
\node at (.5,0) {$\scriptstyle <$};
\node at (1,0) {$\scriptstyle \cdots$};
\node at (1.5,0) {$\scriptstyle <$};
\node at (2,0) {$\scriptstyle <$};
\node at (2.5,0) {$\scriptstyle \circ$};
\node at (3,0) {$\scriptstyle \circ$};
\node at (3.5,0) {$\scriptstyle \cdots$};
\draw[thin] (-.1,-.15) to +(0,-.1) to +(2.2,-.1) to +(2.2,0);
\node at (1,-.6) {$\scriptstyle n-m$};
\draw[thin] (-.15,2.1) to +(-.1,0) to +(-.1,-1.7) to +(0,-1.7);
\node at (-.6,1.3) {$\scriptstyle m$};
\end{scope}
\end{tikzpicture}
\end{eqnarray} 
The tail length is the number of $\times$ at the leftmost vertex, subtracting one if the indicator is $(+)$, and similarly the tail are all symbols $\times$ at position $\frac{1}{2}$ except for one if the indicator is $(+)$. 

\textit{Case: $\osp(2m|2n)$:} The dominance condition in this case is equivalent to the statement that there is at most one symbol, $>$, $<$, $\times$ or $\circ$ at each vertex $p>0$ and at $0$ either $\circ$ or at most one $>$ but possibly many $\times$. If there is a $\circ$ at $0$ one has to remember a sign to distinguish $a_m>0$ and $a_m<0$ (denoted by $[\pm]$ in \cite{GS2}). 
The trivial weight corresponds to the following (for $m > n$, $n\geq m$ respectively).
\belowdisplayskip0.5em
\begin{eqnarray}
\label{withtail2}
\begin{tikzpicture}[anchorbase,thick,scale=0.8]
\begin{scope}
\node at (0,2) {$\scriptstyle \times$};
\node at (0,1.6) {$\scriptstyle \vdots$};
\node at (0,1) {$\scriptstyle \times$};
\node at (0,.5) {$\scriptstyle \times$};
\node at (0,0) {$\scriptstyle >$};
\node at (.5,0) {$\scriptstyle >$};
\node at (1,0) {$\scriptstyle \cdots$};
\node at (1.5,0) {$\scriptstyle >$};
\node at (2,0) {$\scriptstyle >$};
\node at (2.5,0) {$\scriptstyle \circ$};
\node at (3,0) {$\scriptstyle \circ$};
\node at (3.5,0) {$\scriptstyle \cdots$};
\draw[thin] (-.1,-.15) to +(0,-.1) to +(2.2,-.1) to +(2.2,0);
\node at (1,-.6) {$\scriptstyle m-n$};
\draw[thin] (-.15,2.1) to +(-.1,0) to +(-.1,-1.7) to +(0,-1.7);
\node at (-.6,1.3) {$\scriptstyle n$};
\end{scope}
\end{tikzpicture}
\quad
\begin{tikzpicture}[anchorbase,thick,scale=0.8]
\begin{scope}
\node at (-.5,2) {$\scriptstyle \times$};
\node at (-.5,1.5) {$\scriptstyle \times$};
\node at (-.5,1.1) {$\scriptstyle \vdots$};
\node at (-.5,.5) {$\scriptstyle \times$};
\node at (-.5,0) {$\scriptstyle \times$};
\node at (0,0) {$\scriptstyle <$};
\node at (.5,0) {$\scriptstyle <$};
\node at (1,0) {$\scriptstyle \cdots$};
\node at (1.5,0) {$\scriptstyle <$};
\node at (2,0) {$\scriptstyle <$};
\node at (2.5,0) {$\scriptstyle \circ$};
\node at (3,0) {$\scriptstyle \circ$};
\node at (3.5,0) {$\scriptstyle \cdots$};
\draw[thin] (-.1,-.15) to +(0,-.1) to +(2.2,-.1) to +(2.2,0);
\node at (1,-.6) {$\scriptstyle n-m$};
\draw[thin] (-.65,2.1) to +(-.1,0) to +(-.1,-2.2) to +(0,-2.2);
\node at (-1.1,.8) {$\scriptstyle m$};
\end{scope}
\end{tikzpicture}
\end{eqnarray}
The tail length is the number of $\times$ at the leftmost vertex. 

For the second map \eqref{GSmap} we have to turn the diagram with tail into a coloured weight diagram.  
In case of $\osp(2m+1|2n)$ proceed as follows: First remove the tail of the diagram, but remember the number $l=\op{tail}(\eta)$, of symbols removed (note that in case of an indicator this can mean that one symbol $\times$ at position $\frac{1}{2}$ is kept). Ignoring the core symbols $<$ and $>$, connect neighboured pairs $\times$ $\circ$ (in this order) successively by a cup. Then number the vertices not connected to a cup and not containing $<$ or $>$ from the left by $1,2,3,\ldots$. Then relabel those positions with number $1,3,5, \ldots, 2l-1$ etc. by $\color{myred}{\boldsymbol \otimes}$. (The symbol $\color{myred}{\boldsymbol \otimes}$ indicates that at least apart from the special case of the leftmost vertex a $\times$ was actually moved and placed on top of a $\circ$). Finally connect neighboured pairs $\color{myred}{\boldsymbol \otimes}$ and $\circ$ successively by a cup. 

The resulting diagram with all labels at cups removed is the $\op{GS}$-{\it cup diagram} attached to $\eta$. In \cite{GS2} these new labels $\color{myred}{\boldsymbol \otimes}$ are called {\it coloured} and we call the attached cups {\it coloured}; note they are by construction never nested inside other cups.  
The resulting labelling of $\cL$ (after all cups are removed) is the {\it coloured $\op{GS}$-diagram without tail} attached to $\eta$.

In case of $\osp(2m+1|2n)$ proceed in the same way  but viewing the vertex $0$ as the vertex $\frac{1}{2}$ and always using the rule that if there are only $\times$ at position zero the indicator is $(+)$. Note that whether $a_m$ is strictly larger or smaller than $0$ does not play a role in the construction of the diagram.
\begin{lemma}
\label{lem:special_empty}
With the assignment $\op{T}$, from \eqref{T2}, we have $\op{T}(\op{GS}(0))=\ulcorner{\!0}^\infty$, and Proposition~\ref{prop:GSES} holds for $\eta=0$.
\end{lemma}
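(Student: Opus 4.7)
The plan is to verify Lemma~\ref{lem:special_empty} by direct case-by-case computation, comparing both constructions applied to $\eta = 0$. Although routine, the case analysis is nontrivial because the behavior at the leftmost vertex differs between the odd and even cases, and because the sign of $\de$ governs the appearance of crosses and the $\Diamond$ at position $0$.

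First, I would compute $\ulcorner{\!0}^\infty$ directly from \eqref{Sla}. Since $\pla = \varnothing$, we have $\cS(\varnothing) = (\frac{\de}{2}+i-1)_{i\geq 1}$. Applying \eqref{dict} gives the four diagrams already recorded in \eqref{empty1} and \eqref{empty2}: for $\de>0$ we obtain $\circ$'s at positions $<\frac{\de}{2}$ and $\up$'s at positions $\geq \frac{\de}{2}$ (with a $\Diamond$ at $0$ in the even case $\de=0$); for $\de<0$ we obtain a string of $\times$'s between $|\frac{\de}{2}|$ and $1$ (plus a $\Diamond$ or $\times$ at position $0$ according to parity) and $\up$'s at all positions $>|\frac{\de}{2}|$.

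Next I would apply the two-step Gruson--Serganova map \eqref{GSmap} to $\eta=0$, using the explicit formula for $\rho$ recorded in Section~\ref{super}. In each case, $\alpha_p$ and $\beta_p$ can be read off immediately, and one checks that the resulting $\op{GS}$-diagram with tail is exactly the one displayed in \eqref{withtail} (odd case) or \eqref{withtail2} (even case), with tail length $\op{tail}(0)=\min\{m,n\}$ concentrated at the leftmost vertex. In particular, outside the tail one sees a block of $>$'s (or $<$'s, according to the sign of $\de$) between the leftmost vertex and vertex $|\frac{\de}{2}|$, and only $\circ$'s further right. Removing the tail, no $\times \, \circ$ pair is present, so the numbering scheme from left to right of the $\circ$-vertices relabels positions $1,3,\ldots,2\op{tail}(0)-1$ as $\color{myred}{\boldsymbol\otimes}$; these are precisely the vertices $|\frac{\de}{2}|,\,|\frac{\de}{2}|+2,\ldots,|\frac{\de}{2}|+2\op{tail}(0)-2$, producing $\op{tail}(0)$ coloured cups paired with their right $\circ$-neighbour.

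Applying $\op{T}$ according to \eqref{T2} then converts $<\!\mapsto\!\times$, $>\!\mapsto\!\circ$, $\circ\!\mapsto\!\up$ and $\color{myred}{\boldsymbol\otimes}\!\mapsto\!\up$, together with the special rule at vertex $0$. A case-by-case comparison with the diagrams in \eqref{empty1} and \eqref{empty2} matches each label at each vertex: the block of $>$'s (resp.~$<$'s) becomes the initial string of $\circ$'s (resp.~$\times$'s), while the alternating $\color{myred}{\boldsymbol\otimes},\circ$ pattern, together with the tail of $\circ$'s that follows, becomes a solid string of $\up$'s starting at vertex $|\frac{\de}{2}|$. The special rule at $0$ produces the $\Diamond$ precisely when $\cS(\varnothing)$ contains $0$, i.e.\ exactly when $r$ is even and either the $\color{myred}{\boldsymbol\otimes}$ or the leftover $\circ$ lands on vertex $0$. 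This establishes \eqref{match} in all cases.

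For the remaining parts of Proposition~\ref{prop:GSES} restricted to $\eta=0$, I would run the cup-diagram algorithm (Definition~\ref{decoratedcups}) on $\ulcorner{\!0}^\infty$: there are no $\down$'s, so steps (Cup-2), (Cup-3) are trivial, step (Cup-4) pairs the $\up$'s from left to right into infinitely many cups starting at vertex $|\frac{\de}{2}|$, and step (Cup-6) dots them all. By Definition~\ref{fakecups}, the first $\op{tail}(0)=\min\{m,n\}$ of these dotted cups are genuine and the remainder are fake. Comparing with the Gruson--Serganova cup diagram, whose $\op{tail}(0)$ coloured cups sit on exactly the same vertex pairs, gives statements (2) and (3) for $\pla^\infty$ and identifies the coloured cups with the dotted non-fake cups. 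The statement for $\pla^\owedge$ follows because freezing converts fake cups into pairs of rays, leaving the $\op{tail}(0)$ dotted cups intact and matching the coloured cups of the GS picture under the bijection ignoring rays and decorations.

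The only subtlety, and the main point requiring care, is the behaviour at the vertex $0$ in the even case: one must check that the parity convention in Definition~\ref{superweight}\eqref{nosign} for fixing the $\Diamond$-label is consistent with the special rules in the last three columns of \eqref{T2}, and that this agrees with which of the two cups at position $0$ (dotted or undotted) is produced by the algorithm. This is the main (and essentially only) nontrivial check, and it can be handled by a direct parity argument using that $\op{tail}(0)=\min\{m,n\}$ and that $\Diamond$ arises in $\ulcorner{\!0}^\infty$ precisely when $0\in\cS(\varnothing)$, which in turn is equivalent to $m=n$ in the even case.
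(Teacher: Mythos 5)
Your plan takes exactly the same route as the paper's proof: apply both constructions to the trivial weight and compare position by position, using the displayed diagrams \eqref{withtail}, \eqref{withtail2}, \eqref{empty1}, \eqref{empty2}, \eqref{emptycupoddcase}, \eqref{emptycupevencase}. You also correctly single out the behaviour at vertex $0$ in the even case as the one delicate point, which is indeed where the paper's proof spends its extra words. That part of the assessment is right.

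There are, however, two concrete slips that you would need to repair before this becomes a correct proof, both in the even case with $\de\leq 0$. First, after removing the tail from $\op{GS}(0)$ it is \emph{not} true that ``no $\times\,\circ$ pair is present'': in the even case the tail sits at position $0$ and consists only of $\times$'s, so the $(+)$-indicator rule keeps one $\times$ there. That surviving $\times$ connects (ignoring the $<$'s) to the first $\circ$ at position $-\de/2+1$, producing one \emph{uncoloured} cup; only afterwards do the $\otimes$-cups appear. This is precisely what the paper emphasizes (``In the latter case it also contains one uncoloured cup connecting position zero and $-\de/2+1$'') and it means the number of coloured cups is $m-1$, not $\op{tail}(0)=\op{min}\{m,n\}=m$ as your middle paragraph claims. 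That extra uncoloured cup has to match the cup through position $0$ produced by step (Cup-1)/(Cup-2) after the $\Diamond$ is resolved, which is the genuine content of your ``parity check''. Second, $0\in\cS(\varnothing)$, equivalently a $\Diamond$ in $\ulcorner{\!0}^\infty$, happens in the even case iff $\de\leq 0$, i.e.\ iff $m\leq n$, not only when $m=n$; your last sentence misstates this. Neither slip changes the strategy, which is identical to the paper's, but as written the even-case part of your computation does not yet go through.
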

\begin{proof}
\noindent \textit{$\triangleright$ \textbf{Case} $\osp(2m+1|2n)$:} The weights from the diagrams  \eqref{withtail} with tail are transferred into the cup diagram with $m$, respectively $n$ in the last case, coloured cups placed next to each other starting at position $-\frac{\de}{2}+1$, $0$, and $\frac{\de}{2}$ respectively. On the other hand, our diagrammatics assigns to the empty partition the diagrammatic weights \eqref{empty1} and hence produce a cup diagrams with $n$, respectively $m$ in the last case, dotted cups placed next to each other starting at position $-\frac{\de}{2}+1$, $0$ and $\frac{\de}{2}$ respectively, see \eqref{emptycupoddcase}. The corresponding coloured weight diagram contains the $>$'s and $<$'s at the correct positions and only $\circ$ and $\color{myred}{\boldsymbol \otimes}$ at the positions of the cups.  Applying $\op{T}$ this translates into the diagrammatic weights \eqref{empty1}. Hence the claim is true in case $\osp(2m+1|2n)$.\\[0.1cm]
\noindent \textit{$\triangleright$ \textbf{Case} $\osp(2m |2n)$:} In this case the diagrams \eqref{withtail2} with tail are transformed into a cup diagram with $n$, respectively $m-1$ in the second case, coloured cups placed next to each other starting at positions $\frac{\de}{2}$, respectively $-\frac{\de}{2}+2$. In the latter case it also contains one uncoloured cup connecting position zero and $-\frac{\de}{2}+1$. Using our diagrammatics will produce the weight diagrams in \eqref{empty2}, which in turn produce cup diagrams with $n$, respectively $m-1$ dotted cups placed next to each other starting at positions $\frac{\de}{2}$, respectively $-\frac{\de}{2}+2$, see \eqref{emptycupevencase}. 
\end{proof}

\subsection{The proof of Proposition~\ref{prop:GSES}} \label{sec:proofcomb}
The proof proceeds by induction on the number of boxes in the corresponding hook partition (where we are allowed to ignore the sign).
\begin{proof}[Proof of Proposition~\ref{prop:GSES}]
In case of the empty partition the claim follows from Lemma~\ref{lem:special_empty} above.\\[0.1cm]
\noindent \textit{Adding a box:} We assume that the hook partition $\pla$ for $\la$ is obtained from a hook partition $\pmu$, for some $\mu$, by adding a box. Then for $\pmu$ the following holds
\begin{itemize}
\item If $b_i > 0$ and we can add a box in row $i$, then $b_j > b_i + 1$ for all $j<i$. This implies that in the $\op{GS}$ weight there is no symbol $<$ or $\times$ at positions $b_i + 1$, i.e. immediately to the right of $b_i$.
\item If $a_i > 0$ and we can add a box in column $i$, then $a_j > a_i + 1$ for all $j<i$. This implies that in the $\op{GS}$ weight there is no symbol $>$ or $\times$ at positions $a_i + 1$.
\item There can be a symbol $\color{myred}{\boldsymbol \otimes}$ at the position $a_i+1$ respectively $b_i+1$.
\end{itemize}

Assume first that the box is added {\it far away}, which means not on or directly next to the diagonal. In these cases we need not distinguish between odd and even.

\subsubsection*{The additional box is added far above the diagonal} We add the box in position $(j_0,i_0)$ above the diagonal, but not directly above the diagonal. In both the even and odd case $b_{j_0} > \frac{1}{2}$, and it will be increased by $1$ whereas all other $a$'s and $b$'s are preserved. This means a symbol $<$ is moved to the right from position $b_{j_0}$ to $b_{j_0}+1$. Note that if the symbol $<$ is part of a $\times$ there cannot be a symbol $\color{myred}{\boldsymbol \otimes}$ at position $b_{j_0}+1$ by construction of the coloured $\op{GS}$-diagram.

Hence we are in exactly one of the situations listed in the row \eqref{GST1a} in the table below where the symbols not in brackets are placed at positions $b_{j_0}$ and $b_{j_0}+1$.  Applying $\op{T}$ to  \eqref{GST1a} gives  \eqref{GST1b}.
\begin{eqnarray}
\label{GST1a}
&{\arraycolsep=2pt\begin{array}{r|c|c|c|c|c|c}
\begin{tikzpicture}[anchorbase,scale=.5]
\node at (-1.5,-.15) {$\mu$};
\node at (-1.5,1.6) {$\lambda$};
\end{tikzpicture} & 
\begin{tikzpicture}[anchorbase,scale=.5]
\node at (0,0) {$<$};
\node at (1,0){$\circ$};
\draw (1,.3) to [out=90,in=-90] +(-1,.9);
\node at (0,1.5) {$\circ$};
\node at (1,1.5){$<$};
\end{tikzpicture} & 
\begin{tikzpicture}[anchorbase,scale=.5]
\node at (0,0) {$<$};
\node at (1,0){$>$};
\node at (2,0){$ (\circ)$};
\draw (2,.3) to [out=90,in=-90] +(-2,.9);
\draw (1,1.25) .. controls +(0,-.5) and +(0,-.5) .. +(1,0);
\node at (0,1.5) {$\circ$};
\node at (1,1.5){$\times$};
\node at (2,1.5){$ (\circ)$};
\end{tikzpicture}&
\begin{tikzpicture}[anchorbase,scale=.5]
\node at (0,0) {$<$};
\node at (1,0){$>$};
\node at (2,0){$(\color{myred}{\boldsymbol \otimes}$\phantom{)}};
\node at (3,0){\phantom{(}$\circ)$};
\draw (2,.3) to [out=90,in=-90] +(-2,.9);
\draw (1,1.25) .. controls +(0,-.5) and +(0,-.5) .. +(1,0);
\draw (2,-.3) .. controls +(0,-.5) and +(0,-.5) .. +(1,0);
\draw (3,.3) -- +(0,.9);
\node at (0,1.5) {$\color{myred}{\boldsymbol \otimes}$};
\node at (1,1.5){$\times$};
\node at (2,1.5){$(\circ$\phantom{)}};
\node at (3,1.5){\phantom{(}$\circ)$};
\end{tikzpicture}&
\begin{tikzpicture}[anchorbase,scale=.5]
\node at (0,0) {$<$};
\node at (1,0){$\color{myred}{\boldsymbol \otimes}$};
\node at (2,0){$ (\circ)$};
\draw (1,.3) to [out=90,in=-90] +(-1,.9);
\draw (1,-.3) .. controls +(0,-.5) and +(0,-.5) .. +(1,0);
\draw (2,.3) -- +(0,.9);
\node at (0,1.5) {$\color{myred}{\boldsymbol \otimes}$};
\node at (1,1.5){$<$};
\node at (2,1.5){$ (\circ)$};
\end{tikzpicture}&
\begin{tikzpicture}[anchorbase,scale=.5]
\node at (0,0) {$\times$};
\node at (1,0){$\circ$};
\draw (0,.3) .. controls +(0,.5) and +(0,.5) .. +(1,0);
\node at (0,1.5) {$>$};
\node at (1,1.5){$<$};
\end{tikzpicture}&
\begin{tikzpicture}[anchorbase,scale=.5]
\node at (0,0) {$\times$};
\node at (1,0){$>$};
\node at (2,0){$(\circ)$};
\draw (0,.3) to [out=90,in=-90] +(1,.9);
\draw (0,-.3) .. controls +(0,-.5) and +(0,-.5) .. +(2,0);
\draw (2,.3) -- +(0,.9);
\node at (0,1.5) {$>$};
\node at (1,1.5){$\times$};
\node at (2,1.5){$(\circ)$};
\end{tikzpicture}\\
\end{array}}
\\
\label{GST1b}
&{\arraycolsep=2pt
\begin{array}{r|c|c|c|c|c|c}
\begin{tikzpicture}[anchorbase,scale=.5]
\node at (-1.5,-.15) {$\mu$};
\node at (-1.5,1.6) {$\lambda$};
\end{tikzpicture} & 
\begin{tikzpicture}[anchorbase,scale=.5]
\node at (0,0) {$\times$};
\node at (1,0){$\up$};
\draw (1,.3) to [out=90,in=-90] +(-1,.9);
\node at (0,1.5) {$\up$};
\node at (1,1.5){$\times$};
\end{tikzpicture} & 
\begin{tikzpicture}[anchorbase,scale=.5]
\node at (0,0) {$\times$};
\node at (1,0){$\circ$};
\node at (2,0){$ (\up)$};
\draw (2,.3) to [out=90,in=-90] +(-2,.9);
\draw (1,1.25) .. controls +(0,-.5) and +(0,-.5) .. +(1,0);
\node at (0,1.5) {$\up$};
\node at (1,1.5){$\down$};
\node at (2,1.5){$ (\up)$};
\end{tikzpicture}&
\begin{tikzpicture}[anchorbase,scale=.5]
\node at (0,0) {$\times$};
\node at (1,0){$\circ$};
\node at (2,0){$(\up$\phantom{)}};
\node at (3,0){\phantom{(}$\up)$};
\draw (2,.3) to [out=90,in=-90] +(-2,.9);
\draw (1,1.25) .. controls +(0,-.5) and +(0,-.5) .. +(1,0);
\draw (2,-.3) .. controls +(0,-.5) and +(0,-.5) .. +(1,0);
\draw (3,.3) -- +(0,.9);
\fill (2.5,-.67) circle(3.5pt);
\node at (0,1.5) {$\up$};
\node at (1,1.5){$\down$};
\node at (2,1.5){$(\up$\phantom{)}};
\node at (3,1.5){\phantom{(}$\up)$};
\end{tikzpicture}&
\begin{tikzpicture}[anchorbase,scale=.5]
\node at (0,0) {$\times$};
\node at (1,0){$\up$};
\node at (2,0){$ (\up)$};
\fill (1.5,-.67) circle(3.5pt);
\draw (1,.3) to [out=90,in=-90] +(-1,.9);
\draw (1,-.3) .. controls +(0,-.5) and +(0,-.5) .. +(1,0);
\draw (2,.3) -- +(0,.9);
\node at (0,1.5) {${\up}$};
\node at (1,1.5){$\times$};
\node at (2,1.5){$ (\up)$};
\end{tikzpicture}&
\begin{tikzpicture}[anchorbase,scale=.5]
\node at (0,0) {$\down$};
\node at (1,0){$\up$};
\draw (0,.3) .. controls +(0,.5) and +(0,.5) .. +(1,0);
\node at (0,1.5) {$\circ$};
\node at (1,1.5){$\times$};
\end{tikzpicture}&
\begin{tikzpicture}[anchorbase,scale=.5]
\node at (0,0) {$\down$};
\node at (1,0){$\circ$};
\node at (2,0){$(\up)$};
\draw (0,.3) to [out=90,in=-90] +(1,.9);
\draw (0,-.3) .. controls +(0,-.5) and +(0,-.5) .. +(2,0);
\draw (2,.3) -- +(0,.9);
\node at (0,1.5) {$\circ$};
\node at (1,1.5){$\down$};
\node at (2,1.5){$(\up)$};
\end{tikzpicture}\\
\end{array}}
\end{eqnarray}

On the other hand,  since $b_{j_0}>0$ with $b_{j_0}=\mu_{j_0}-j_0-\frac{\de}{2}+1$ we have   $\cS(\pmu)_{j_0}=-b_{j_0}$. By assumption this will be decreased by $1$ when passing to $\pla$. Hence we have either the symbol $\down$ or $\times$ at position $b_{j_0}$ and the symbol $\down$ gets moved to the right. Furthermore $\cS(\pmu)_{j} < \cS(\pmu)_{j_0} - 1$, since $\mu_j > \mu_{j_0}$ for $j<j_0$, which in turn implies that at position $b_{j_0}+1$ there is the symbol $\up$ or $\circ$.

In all of the listed cases neither the tail length nor the number of dotted cups is changed, thus all fake cups are unchanged, and if an $\up$ in $\pmu^\infty$ is frozen and moved, it is still frozen in $\pla^\infty$. Hence the claim follows in this case.

\subsubsection*{The box is added far below the diagonal} We add the box in position $(j_0,i_0)$ below the diagonal and not adjacent to the diagonal. 
In this case $a_{i_0}>\frac{1}{2}$. It will be increased by one whereas all other $a_i$'s and $b_i$'s are left unchanged. Thus we move a symbol $>$ to the right. As before, there cannot be the symbol $\color{myred}{\boldsymbol \otimes}$ at position $a_{i_0}+1$ if there is the symbol $\times$ at position $a_{i_0}$. In total this gives us the configurations in the first row \eqref{GST2a} below (showing the positions $a_{i_0}$ and $a_{i_0}+1$ without brackets). The second row \eqref{GST2b} shows then the image under $\op{T}$. 
\abovedisplayskip0.5em
\belowdisplayskip0.5em
\begin{eqnarray}
\label{GST2a}
&{\arraycolsep=2pt
\begin{array}{r|c|c|c|c|c|c}
\begin{tikzpicture}[anchorbase,scale=.5]
\node at (-1.5,-.15) {$\mu$};
\node at (-1.5,1.6) {$\lambda$};
\end{tikzpicture} & 
\begin{tikzpicture}[anchorbase,scale=.5]
\node at (0,0) {$>$};
\node at (1,0){$\circ$};
\draw (1,.3) to [out=90,in=-90] +(-1,.9);
\node at (0,1.5) {$\circ$};
\node at (1,1.5){$>$};
\end{tikzpicture} & 
\begin{tikzpicture}[anchorbase,scale=.5]
\node at (0,0) {$>$};
\node at (1,0){$<$};
\node at (2,0){$ (\circ)$};
\draw (2,.3) to [out=90,in=-90] +(-2,.9);
\draw (1,1.25) .. controls +(0,-.5) and +(0,-.5) .. +(1,0);
\node at (0,1.5) {$\circ$};
\node at (1,1.5){$\times$};
\node at (2,1.5){$ (\circ)$};
\end{tikzpicture}&
\begin{tikzpicture}[anchorbase,scale=.5]
\node at (0,0) {$>$};
\node at (1,0){$<$};
\node at (2,0){$(\color{myred}{\boldsymbol \otimes}$\phantom{)}};
\node at (3,0){\phantom{(}$\circ)$};
\draw (2,.3) to [out=90,in=-90] +(-2,.9);
\draw (1,1.25) .. controls +(0,-.5) and +(0,-.5) .. +(1,0);
\draw (2,-.3) .. controls +(0,-.5) and +(0,-.5) .. +(1,0);
\draw (3,.3) -- +(0,.9);
\node at (0,1.5) {$\color{myred}{\boldsymbol \otimes}$};
\node at (1,1.5){$\times$};
\node at (2,1.5){$(\circ$\phantom{)}};
\node at (3,1.5){\phantom{(}$\circ)$};
\end{tikzpicture}&
\begin{tikzpicture}[anchorbase,scale=.5]
\node at (0,0) {$>$};
\node at (1,0){$\color{myred}{\boldsymbol \otimes}$};
\node at (2,0){$ (\circ)$};
\draw (1,.3) to [out=90,in=-90] +(-1,.9);
\draw (1,-.3) .. controls +(0,-.5) and +(0,-.5) .. +(1,0);
\draw (2,.3) -- +(0,.9);
\node at (0,1.5) {$\color{myred}{\boldsymbol \otimes}$};
\node at (1,1.5){$>$};
\node at (2,1.5){$ (\circ)$};
\end{tikzpicture}&
\begin{tikzpicture}[anchorbase,scale=.5]
\node at (0,0) {$\times$};
\node at (1,0){$\circ$};
\draw (0,.3) .. controls +(0,.5) and +(0,.5) .. +(1,0);
\node at (0,1.5) {$<$};
\node at (1,1.5){$>$};
\end{tikzpicture}&
\begin{tikzpicture}[anchorbase,scale=.5]
\node at (0,0) {$\times$};
\node at (1,0){$<$};
\node at (2,0){$(\circ)$};
\draw (0,.3) to [out=90,in=-90] +(1,.9);
\draw (0,-.3) .. controls +(0,-.5) and +(0,-.5) .. +(2,0);
\draw (2,.3) -- +(0,.9);
\node at (0,1.5) {$<$};
\node at (1,1.5){$\times$};
\node at (2,1.5){$(\circ)$};
\end{tikzpicture}\\
\end{array}}
\\
\label{GST2b}
&{\arraycolsep=2pt
\begin{array}{r|c|c|c|c|c|c}
\begin{tikzpicture}[anchorbase,scale=.5]
\node at (-1.5,-.15) {$\mu$};
\node at (-1.5,1.6) {$\lambda$};
\end{tikzpicture} & 
\begin{tikzpicture}[anchorbase,scale=.5]
\node at (0,0) {$\circ$};
\node at (1,0){$\up$};
\draw (1,.3) to [out=90,in=-90] +(-1,.9);
\node at (0,1.5) {$\up$};
\node at (1,1.5){$\circ$};
\end{tikzpicture} & 
\begin{tikzpicture}[anchorbase,scale=.5]
\node at (0,0) {$\circ$};
\node at (1,0){$\times$};
\node at (2,0){$ (\up)$};
\draw (2,.3) to [out=90,in=-90] +(-2,.9);
\draw (1,1.25) .. controls +(0,-.5) and +(0,-.5) .. +(1,0);
\node at (0,1.5) {$\up$};
\node at (1,1.5){$\down$};
\node at (2,1.5){$ (\up)$};
\end{tikzpicture}&
\begin{tikzpicture}[anchorbase,scale=.5]
\node at (0,0) {$\circ$};
\node at (1,0){$\times$};
\node at (2,0){$(\up$\phantom{)}};
\node at (3,0){\phantom{(}$\up)$};
\draw (2,.3) to [out=90,in=-90] +(-2,.9);
\draw (1,1.25) .. controls +(0,-.5) and +(0,-.5) .. +(1,0);
\draw (2,-.3) .. controls +(0,-.5) and +(0,-.5) .. +(1,0);
\fill (2.5,-.67) circle(3.5pt);
\draw (3,.3) -- +(0,.9);
\node at (0,1.5) {$\up$};
\node at (1,1.5){$\down$};
\node at (2,1.5){$(\up$\phantom{)}};
\node at (3,1.5){\phantom{(}$\up)$};
\end{tikzpicture}&
\begin{tikzpicture}[anchorbase,scale=.5]
\node at (0,0) {$\circ$};
\node at (1,0){$\up$};
\node at (2,0){$ (\up)$};
\fill (1.5,-.67) circle(3.5pt);\draw (1,.3) to [out=90,in=-90] +(-1,.9);
\draw (1,-.3) .. controls +(0,-.5) and +(0,-.5) .. +(1,0);
\draw (2,.3) -- +(0,.9);
\node at (0,1.5) {${\up}$};
\node at (1,1.5){$\circ$};
\node at (2,1.5){$ (\up)$};
\end{tikzpicture}&
\begin{tikzpicture}[anchorbase,scale=.5]
\node at (0,0) {$\down$};
\node at (1,0){$\up$};
\draw (0,.3) .. controls +(0,.5) and +(0,.5) .. +(1,0);
\node at (0,1.5) {$\times$};
\node at (1,1.5){$\circ$};
\end{tikzpicture}&
\begin{tikzpicture}[anchorbase,scale=.5]
\node at (0,0) {$\down$};
\node at (1,0){$\times$};
\node at (2,0){$(\up)$};
\draw (0,.3) to [out=90,in=-90] +(1,.9);
\draw (0,-.3) .. controls +(0,-.5) and +(0,-.5) .. +(2,0);
\draw (2,.3) -- +(0,.9);
\node at (0,1.5) {$\times$};
\node at (1,1.5){$\down$};
\node at (2,1.5){$(\up)$};
\end{tikzpicture}\\
\end{array}}
\end{eqnarray}
On the other hand, note that since $a_{i_0}>0$ we have $a_{i_0} = \mu_{i_0}^t - i_0 + \frac{\de}{2} = j_0 - 1 - i_0 + \frac{\de}{2}$. This implies $\cS(\pmu)_{j_0} = \frac{\de}{2}+j_0-\mu_{i_0}-1 = a_{i_0} + 1 > 0$. Adding the box will decreased this by $1$ (since $\mu_{i_0}$ is increased by $1$). Thus we have the symbol $\up$ or $\times$ at position $a_{i_0}+1$ with $\up$ moved to the left. Furthermore $\cS(\pmu)_j < \cS(\pmu)_{j_0} - 1$ since $\mu_j > \mu_{j_0}$ for $j<j_0$, which in turn implies that at position $a_{i_0}$ there is either the symbol $\down$ or $\circ$. Again, in all cases neither the tail length nor the number of dotted cups changes, thus all fake cups are unchanged. Additionally if an $\up$ in $\pmu^\infty$ is frozen and moved in $\pla^\infty$ it will be frozen in $\pla^\infty$ as well.\\

For the remaining cases we have to distinguish between $r$ being odd or even.\\

\noindent \textit{$\triangleright$ \textbf{Case} $\osp(2m+1|2n)$:} We distinguish three possibilities: adding the box exactly above the shifted diagonal, adding the box exactly below the shifted diagonal, and adding the box on the diagonal.

\subsubsection*{The additional box is added directly above the diagonal} We add the box in position $(j_0,i_0)$ directly above the diagonal. Thus, $i_0-j_0 = \frac{\delta}{2}+\frac{1}{2}$. In addition $b_{j_0} = \frac{1}{2}$ and it will be increased by $1$, while all other $a$'s and $b$'s are left unchanged. Thus a symbol $>$ is moved from position $\frac{1}{2}$ to position $\frac{3}{2}$. If this symbol is not part of a symbol $\times$ then the arguments are the same as for adding a box far above the diagonal and we refer to that case. If on the other hand it is part of a $\times$ this implies that the indicator is $(+)$ since $a_{i_0-1}=\frac{1}{2}$. Thus the $\times$ at position $\frac{1}{2}$ is not coloured and then the two possible situations are displayed in \eqref{GST4} on the left and image under $\op{T}$ is displayed on the right. 
\abovedisplayskip0.5em
\belowdisplayskip0.5em
\begin{equation}
\begin{array}{r|c|c}
\begin{tikzpicture}[anchorbase,scale=.5]
\node at (-1.5,-.15) {$\mu$};
\node at (-1.5,1.6) {$\lambda$};
\end{tikzpicture} & 
\begin{tikzpicture}[anchorbase,scale=.5]
\node at (0,0) {$\times$};
\node at (1,0){$\circ$};
\draw (0,.3) .. controls +(0,.5) and +(0,.5) .. +(1,0);
\draw (0,-.3) .. controls +(0,-.5) and +(0,-.5) .. +(1,0);
\node at (0,1.5) {$<$};
\node at (1,1.5){$>$};
\end{tikzpicture} & 
\begin{tikzpicture}[anchorbase,scale=.5]
\node at (0,0) {$\times$};
\node at (1,0){$<$};
\node at (2,0){$(\circ)$};
\draw (0,.3) to [out=90,in=-90] +(1,.9);
\draw (0,-.3) .. controls +(0,-.5) and +(0,-.5) .. +(2,0);
\draw (2,.3) -- +(0,.9);
\node at (0,1.5) {$<$};
\node at (1,1.5){$\times$};
\node at (2,1.5){$(\circ)$};
\end{tikzpicture}\\
\end{array}
\qquad \qquad 
\begin{array}{r|c|c}
\begin{tikzpicture}[anchorbase,scale=.5]
\node at (-1.5,-.15) {$\mu$};
\node at (-1.5,1.6) {$\lambda$};
\end{tikzpicture} & 
\begin{tikzpicture}[anchorbase,scale=.5]
\node at (0,0) {$\down$};
\node at (1,0){$\up$};
\draw (0,.3) .. controls +(0,.5) and +(0,.5) .. +(1,0);
\draw (0,-.3) .. controls +(0,-.5) and +(0,-.5) .. +(1,0);
\node at (0,1.5) {$\circ$};
\node at (1,1.5){$\times$};
\end{tikzpicture} & 
\begin{tikzpicture}[anchorbase,scale=.5]
\node at (0,0) {$\down$};
\node at (1,0){$\circ$};
\node at (2,0){$(\up)$};
\draw (0,.3) to [out=90,in=-90] +(1,.9);
\draw (0,-.3) .. controls +(0,-.5) and +(0,-.5) .. +(2,0);
\draw (2,.3) -- +(0,.9);
\node at (0,1.5) {$\circ$};
\node at (1,1.5){$\down$};
\node at (2,1.5){$(\up)$};
\end{tikzpicture}\\
\end{array}
\label{GST4}
\end{equation}
On the other hand note that $\cS(\pmu)_{j_0}=-\frac{1}{2}$ which will be decreased to $-\frac{3}{2}$ by adding the box. Again neither tail length nor number of dotted cups changes. The claim follows in this case.

\subsubsection*{The additional box is added directly below the diagonal} We add the box in position $(j_0,i_0)$ directly below the diagonal, thus $i_0-j_0 = \frac{\delta}{2}-\frac{3}{2}$. In addition $a_{i_0} = \frac{1}{2}$ and it will be increased by $1$, while all other $a$'s and $b$'s are left unchanged. Thus a symbol $<$ is moved from position $\frac{1}{2}$ to position $\frac{3}{2}$. If this symbol is not part of a symbol $\times$ then the arguments are the same as for adding a box far below the diagonal and we refer to that case. As in that case the indicator is $(+)$, since $a_{i_0}=\frac{1}{2}$, and we obtain the two possibilities displayed on the left below:
\abovedisplayskip0.5em
\belowdisplayskip0.5em
\begin{equation*}
\begin{array}{r|c|c}
\begin{tikzpicture}[anchorbase,scale=.5]
\node at (-1.5,-.15) {$\mu$};
\node at (-1.5,1.6) {$\lambda$};
\end{tikzpicture} & 
\begin{tikzpicture}[anchorbase,scale=.5]
\node at (0,0) {$\times$};
\node at (1,0){$\circ$};
\draw (0,.3) .. controls +(0,.5) and +(0,.5) .. +(1,0);
\draw (0,-.3) .. controls +(0,-.5) and +(0,-.5) .. +(1,0);
\node at (0,1.5) {$>$};
\node at (1,1.5){$<$};
\end{tikzpicture} & 
\begin{tikzpicture}[anchorbase,scale=.5]
\node at (0,0) {$\times$};
\node at (1,0){$>$};
\node at (2,0){$(\circ)$};
\draw (0,.3) to [out=90,in=-90] +(1,.9);
\draw (0,-.3) .. controls +(0,-.5) and +(0,-.5) .. +(2,0);
\draw (2,.3) -- +(0,.9);
\node at (0,1.5) {$>$};
\node at (1,1.5){$\times$};
\node at (2,1.5){$(\circ)$};
\end{tikzpicture}\\
\end{array}
\qquad \qquad 
\begin{array}{r|c|c}
\begin{tikzpicture}[anchorbase,scale=.5]
\node at (-1.5,-.15) {$\mu$};
\node at (-1.5,1.6) {$\lambda$};
\end{tikzpicture} & 
\begin{tikzpicture}[anchorbase,scale=.5]
\node at (0,0) {$\down$};
\node at (1,0){$\up$};
\draw (0,.3) .. controls +(0,.5) and +(0,.5) .. +(1,0);
\draw (0,-.3) .. controls +(0,-.5) and +(0,-.5) .. +(1,0);
\node at (0,1.5) {$\times$};
\node at (1,1.5){$\circ$};
\end{tikzpicture} & 
\begin{tikzpicture}[anchorbase,scale=.5]
\node at (0,0) {$\down$};
\node at (1,0){$\times$};
\node at (2,0){$(\up)$};
\draw (0,.3) to [out=90,in=-90] +(1,.9);
\draw (0,-.3) .. controls +(0,-.5) and +(0,-.5) .. +(2,0);
\draw (2,.3) -- +(0,.9);
\node at (0,1.5) {$\times$};
\node at (1,1.5){$\down$};
\node at (2,1.5){$(\up)$};
\end{tikzpicture}\\
\end{array}
\end{equation*}
Applying $\op{T}$ givves the weight diagrams displayed on the right. On the other hand, note that $\cS(\pmu)_{j_0}=\frac{3}{2}$ which will be decreased to $\frac{1}{2}$. The rest of the argument is the same as before.

\subsubsection*{The additional box is added on the diagonal} We add the box in position $(j_0,i_0)$ on the diagonal. Thus $i_0-j_0 = \frac{\delta}{2}-\frac{1}{2}$. In addition $a_{i_0} = -\frac{1}{2}$ which will be increased by $1$, while all other $a$'s and $b$'s are left unchanged.
The $\frac{1}{2}$ position for $\mu$ contains only the symbol $\times$ and the indicator is $(-)$ since $a_{i_0} = -\frac{1}{2}$. Thus in the GS-combinatorics adding the box on the diagonal does not change the diagrammatic weight itself but the indicator from $(-)$ to $(+)$. Which decreases the tail length by $1$. On the other hand in this case $\cS(\pmu)_{j_0}=\frac{1}{2}$. It will be decreased to $-\frac{1}{2}$ and thus the first cup gets changed from a dotted cup to an undotted cup (while preserving all frozen variables).\\

\noindent \textit{$\triangleright$ \textbf{Case} $\osp(2m|2n)$:} Again we distinguish three scenarios as above.

\subsubsection*{The additional box is added directly above the diagonal} We add the box in position $(j_0,i_0)$ directly above the diagonal. Thus, $i_0-j_0 = \frac{\delta}{2}+1$. In addition $b_{j_0} = 1$ and it will be increased by $1$, while all other $a$'s and $b$'s are left unchanged. Then we can argue as in the case of adding a box far above the diagonal.

\subsubsection*{The additional box is added directly below the diagonal} We add the box in position $(j_0,i_0)$ directly below the diagonal. Thus, $i_0-j_0 = \frac{\delta}{2}-1$. In addition $a_{i_0} = 0$ and it will be increased by $1$, while all other $a$'s and $b$'s are left unchanged.

Note that $b_{j_0-1} > 0$. Furthermore the rest of the diagonal to the lower right is empty, implying $a_i=0$ for $i > i_0$ and $b_j = 0$ for $j > j_0-1$, which implies that in the tail we have exactly once the symbol $>$ and possibly some $\times$. The $\times$ are distributed to obtain the coloured diagram, leaving $>$ at position zero. This leads to the following configurations (at positions zero and $1$, the rest is unchanged) displayed in \eqref{GST3a} with the image under $\op{T}$ displayed in \eqref{GST3b}:
\begin{eqnarray}
\label{GST3a}
&\begin{array}{r|c|c|c|c}
\begin{tikzpicture}[anchorbase,scale=.5]
\node at (-1.5,-.15) {$\mu$};
\node at (-1.5,1.6) {$\lambda$};
\end{tikzpicture} & 
\begin{tikzpicture}[anchorbase,scale=.5]
\node at (0,0) {$>$};
\node at (1,0){$\circ$};
\draw (1,.3) to [out=90,in=-90] +(-1,.9);
\node at (0,1.5) {$\circ$};
\node at (1,1.5){$>$};
\end{tikzpicture} & 
\begin{tikzpicture}[anchorbase,scale=.5]
\node at (0,0) {$>$};
\node at (1,0){$<$};
\node at (2,0){$ (\circ)$};
\draw (2,.3) to [out=90,in=-90] +(-2,.9);
\draw (1,1.25) .. controls +(0,-.5) and +(0,-.5) .. +(1,0);
\node at (0,1.5) {$\circ$};
\node at (1,1.5){$\times$};
\node at (2,1.5){$ (\circ)$};
\end{tikzpicture}&
\begin{tikzpicture}[anchorbase,scale=.5]
\node at (0,0) {$>$};
\node at (1,0){$<$};
\node at (2,0){$(\color{myred}{\boldsymbol \otimes}$\phantom{)}};
\node at (3,0){\phantom{(}$\circ)$};
\draw (2,.3) to [out=90,in=-90] +(-2,.9);
\draw (1,1.25) .. controls +(0,-.5) and +(0,-.5) .. +(1,0);
\draw (2,-.3) .. controls +(0,-.5) and +(0,-.5) .. +(1,0);
\draw (3,.3) -- +(0,.9);
\node at (0,1.5) {$\color{myred}{\boldsymbol \otimes}$};
\node at (1,1.5){$\times$};
\node at (2,1.5){$(\circ$\phantom{)}};
\node at (3,1.5){\phantom{(}$\circ)$};
\end{tikzpicture}&
\begin{tikzpicture}[anchorbase,scale=.5]
\node at (0,0) {$>$};
\node at (1,0){$\color{myred}{\boldsymbol \otimes}$};
\node at (2,0){$ (\circ)$};
\draw (1,.3) to [out=90,in=-90] +(-1,.9);
\draw (1,-.3) .. controls +(0,-.5) and +(0,-.5) .. +(1,0);
\draw (2,.3) -- +(0,.9);
\node at (0,1.5) {$\color{myred}{\boldsymbol \otimes}$};
\node at (1,1.5){$>$};
\node at (2,1.5){$ (\circ)$};
\end{tikzpicture}\\
\end{array}
\\
\label{GST3b}
&\begin{array}{r|c|c|c|c}
\begin{tikzpicture}[anchorbase,scale=.5]
\node at (-1.5,-.15) {$\mu$};
\node at (-1.5,1.6) {$\lambda$};
\end{tikzpicture} & 
\begin{tikzpicture}[anchorbase,scale=.5]
\node at (0,0) {$\circ$};
\node at (1,0){$\up$};
\draw (1,.3) to [out=90,in=-90] +(-1,.9);
\node at (0,1.5) {$\Diamond$};
\node at (1,1.5){$\circ$};
\end{tikzpicture} & 
\begin{tikzpicture}[anchorbase,scale=.5]
\node at (0,0) {$\circ$};
\node at (1,0){$\times$};
\node at (2,0){$ (\up)$};
\draw (2,.3) to [out=90,in=-90] +(-2,.9);
\draw (1,1.25) .. controls +(0,-.5) and +(0,-.5) .. +(1,0);
\node at (0,1.5) {$\Diamond$};
\node at (1,1.5){$\down$};
\node at (2,1.5){$ (\up)$};
\end{tikzpicture}&
\begin{tikzpicture}[anchorbase,scale=.5]
\node at (0,0) {$\circ$};
\node at (1,0){$\times$};
\node at (2,0){$(\up$\phantom{)}};
\node at (3,0){\phantom{(}$\up)$};
\draw (2,.3) to [out=90,in=-90] +(-2,.9);
\draw (1,1.25) .. controls +(0,-.5) and +(0,-.5) .. +(1,0);
\draw (2,-.3) .. controls +(0,-.5) and +(0,-.5) .. +(1,0);
\draw (3,.3) -- +(0,.9);
\fill (2.5,-.67) circle(3.5pt);
\node at (0,1.5) {$\Diamond$};
\node at (1,1.5){$\down$};
\node at (2,1.5){$(\up$\phantom{)}};
\node at (3,1.5){\phantom{(}$\up)$};
\end{tikzpicture}&
\begin{tikzpicture}[anchorbase,scale=.5]
\node at (0,0) {$\circ$};
\node at (1,0){$\up$};
\node at (2,0){$ (\up)$};
\fill (1.5,-.67) circle(3.5pt);
\draw (1,.3) to [out=90,in=-90] +(-1,.9);
\draw (1,-.3) .. controls +(0,-.5) and +(0,-.5) .. +(1,0);
\draw (2,.3) -- +(0,.9);
\node at (0,1.5) {${\Diamond}$};
\node at (1,1.5){$\circ$};
\node at (2,1.5){$ (\up)$};
\end{tikzpicture}\\
\end{array}
\end{eqnarray}

On the other hand $\cS(\pmu)_{j_0} = 1$ which implies that at position $1$ there is either an $\up$ or a $\times$. In addition, since $\mu_j > \mu_{j_0}$ for $j < j_0$ it holds that $\cS(\pmu)_j \leq -1$ for $j < j_0$ (the case $=-1$ giving us the symbol $\times$ at position $1$) and since $\mu_j \leq \mu_{j_0}$ for $j > j_0$ it holds $\cS(\pmu)_j \geq 2$ for $j > j_0$.  Again, neither tail length nor number of dotted cups changes. 

\subsubsection*{The additional box is added on the diagonal} We add the box in position $(j_0,i_0)$ on the diagonal and thus $i_0-j_0 = \frac{\delta}{2}$. In addition $b_{j_0} = 0$ which will be increased by $1$, while all other $a$'s and $b$'s are left unchanged. It holds $a_{i_0}=0$ as well as $a_i=0$ for $i > i_0$ and $b_j=0$ for $j>j_0$.  This means that the diagram with a tail has only the symbol $\times$ at position zero (possibly multiple times), with all but one being distributed when forming the coloured diagram. Since we add a box on the diagonal all $b_j > 1$ for $j<j_0$, and all $a_i \geq 1$ for $i < i_0$. This implies that we have the symbol $\circ$ or $>$ at position $1$, giving us the following configurations displayed on the left (showing positions zero and $1$) with the image under $\op{T}$ displayed on the right hand side.
\begin{equation*}
\begin{array}{r|c|c}
\begin{tikzpicture}[anchorbase,scale=.5]
\node at (-1.5,-.15) {$\mu$};
\node at (-1.5,1.6) {$\lambda$};
\end{tikzpicture} & 
\begin{tikzpicture}[anchorbase,scale=.5]
\node at (0,0) {$\color{myred}{\boldsymbol \otimes}$};
\node at (1,0){$\circ$};
\draw (0,.3) .. controls +(0,.5) and +(0,.5) .. +(1,0);
\draw (0,-.3) .. controls +(0,-.5) and +(0,-.5) .. +(1,0);
\node at (0,1.5) {$>$};
\node at (1,1.5){$<$};
\end{tikzpicture} & 
\begin{tikzpicture}[anchorbase,scale=.5]
\node at (0,0) {$\color{myred}{\boldsymbol \otimes}$};
\node at (1,0){$>$};
\node at (2,0){$(\circ)$};
\draw (0,.3) to [out=90,in=-90] +(1,.9);
\draw (0,-.3) .. controls +(0,-.5) and +(0,-.5) .. +(2,0);
\draw (2,.3) -- +(0,.9);
\node at (0,1.5) {$>$};
\node at (1,1.5){$\times$};
\node at (2,1.5){$(\circ)$};
\end{tikzpicture}\\
\end{array}
\qquad \qquad 
\begin{array}{r|c|c}
\begin{tikzpicture}[anchorbase,scale=.5]
\node at (-1.5,-.15) {$\mu$};
\node at (-1.5,1.6) {$\lambda$};
\end{tikzpicture} & 
\begin{tikzpicture}[anchorbase,scale=.5]
\node at (0,-.1) {$\Diamond$};
\node at (1,0){$\up$};
\fill (.5,.67) circle(3.5pt);
\draw (0,.3) .. controls +(0,.5) and +(0,.5) .. +(1,0);
\fill (.5,-.67) circle(3.5pt);
\draw (0,-.3) .. controls +(0,-.5) and +(0,-.5) .. +(1,0);
\node at (0,1.5) {$\circ$};
\node at (1,1.5){$\times$};
\end{tikzpicture} & 
\begin{tikzpicture}[anchorbase,scale=.5]
\node at (0,-.1) {$\Diamond$};
\node at (1,0){$\circ$};
\node at (2,0){$(\up)$};
\fill (.5,.72) circle(3.5pt);
\draw (0,.3) to [out=90,in=-90] +(1,.9);
\draw (0,-.3) .. controls +(0,-.5) and +(0,-.5) .. +(2,0);
\draw (2,.3) -- +(0,.9);
\fill (1,-.67) circle(3.5pt);
\node at (0,1.5) {$\circ$};
\node at (1,1.5){$\down$};
\node at (2,1.5){$(\up)$};
\end{tikzpicture}\\
\end{array}
\end{equation*}
On the other hand, $\mu_{j_0}= j_0 + \frac{\de}{2}-1$, thus $\cS(\pmu)_{j_0} = 0$, which gets decreased by $1$. Furthermore $\cS(\pmu)_j < -1$ for $j<j_0$ since $\mu_j > \mu_{j_0}$ for $j<j_0$ and $\cS(\pmu)_j \geq 1$ for $j>j_0$. This gives the symbol $\Diamond$ at position zero and either $\up$ or $\circ$ at position $1$.

The tail length decreases by $1$, but we also loose the decoration on the first dotted cup from the left or the dotted cup altogether; all fake cups and corresponding frozen vertices remain unchanged. Since the cup diagrams agree, their leftmost label determines whether they are coloured (in the sense of \cite{GS2}) or dotted in our sense, hence the statement follows from \eqref{T2}.
The proposition follows.
\end{proof}

\begin{definition}  \label{def:consistlabels}
Assume $\eta\in X^+(\mg)$.
Let $D$ be the {\rm GS}-cup diagram associated with $\eta$. Then a {\it consistent labelling} of $D$ is a labelling of the vertices with $>$, $<$, $\times$, $\circ$ such that the core symbols match with the core symbols of $\mathrm{GS}(\eta)$ and each cup is labelled by precisely one $\times$ and one $\circ$. 
For a tailless weight $\nu\in X^+(\mg)$ let $A(\eta,\nu)=1$ if $\op{GS}(\nu)$ is a consistent labelling of $D$ and  $A(\eta,\nu)=0$ otherwise. In case of a consistent labelling, let $x(\eta,\nu)$ be the number of coloured cups, and $y(\eta,\nu)$ the number of coloured cups labelled $\circ$ and $\times$ in this order. Set $z(\eta,\nu)=1$ if there exists a cup with left vertex at position zero labelled with $\times$ via $\nu$ in the even case, respectively a cups with left vertex at position $\tfrac{1}{2}$ labelled with $\times$ via $\nu$ in the odd case and indicator $(+)$ and let $z(\eta,\nu)=0$ otherwise.
\end{definition}

\section{Counting formulas}
We present now several dimension formulas for homomorphism spaces.

\subsection{Dimensions of morphism spaces: alternating formula}
We commence with the following dimension formula deduced from the results of Gruson and Serganova, see \cite[Theorems 1 to 4]{GS2}\footnote{It might help to say that the first condition is  only implicitly contained in \cite{GS2}.}.
\begin{prop}
\label{prop:alternating}
Let $\la,\mu\in X^+(\mg)$. 
\begin{enumerate}
\item If $r=2m$ and $a_m>0>a_{m}'$ or $a_m<0<a_{m}'$, as in \eqref{oje}, \eqref{muinab}, then 
\abovedisplayskip0.25em
\belowdisplayskip0.25em
\[
\op{Hom}_{\cF'}(P^\mg(\la),P^\mg(\mu))=\{0\},
\]
\item otherwise
\begin{eqnarray}
\label{aa}
\op{dim}\op{Hom}_{\cF'}(P^\mg(\la),P^\mg(\mu))=\sum a(\la,\nu)a(\mu,\nu)
\end{eqnarray}
where the sum  runs through all tailless dominant weights $\nu$ and 
\begin{eqnarray}
\label{a}
a(\eta,\nu)&=&(-1)^{x(\eta,\nu)+y(\eta,\nu)+z(\eta,\nu)}A(\eta,\nu) 
\end{eqnarray}
  for any $\eta\in X^+(\mg)$.
  \end{enumerate}
\end{prop}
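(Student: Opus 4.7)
The plan is to deduce both statements from the Gruson--Serganova results for $\cF'$ combined with BGG reciprocity, using the dictionary between the two combinatorial set-ups developed in Proposition~\ref{prop:GSES}.

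For part~(1), I would argue that the sign of the coordinate $a_m$ (when nonzero) is a block invariant for $\cF'$ in the case $r=2m$. Indeed, in the special orthosymplectic setting the sign of $a_m$ distinguishes the two orbits under the Weyl group of $\SOSP(2m|2n)$ and hence corresponds to inequivalent central characters; equivalently, by \cite[Lemma 7]{GS1}, weights with opposite signs of $a_m$ give rise to $\op{GS}$-weight diagrams that lie in different core diagrams once the signs are recorded. Hence $L^\mg(\la)$ and $L^\mg(\mu)$ lie in different blocks of $\cF'$, and the claimed vanishing follows.

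For part~(2), I would first repeat the BGG-reciprocity argument from the proof of Proposition~\ref{Cartanmatrix}. Since $L^\mg(\la)$ has simple head in its projective cover and projectives are self-dual in $\cF'$, $\dim\op{Hom}_{\cF'}(P^\mg(\la),P^\mg(\mu))=[P^\mg(\mu):L^\mg(\la)]$. Expanding the class of $P^\mg(\mu)$ in the Grothendieck group in the basis of Euler characteristics $\cE^\mg(\nu)$, which by \cite[Theorem~1]{GS2} are indexed by tailless dominant $\nu$, and then invoking BGG reciprocity $(P^\mg(\mu):\cE^\mg(\nu)) = [\cE^\mg(\nu):L^\mg(\mu)]$, we obtain
\begin{equation*}
\dim\op{Hom}_{\cF'}(P^\mg(\la),P^\mg(\mu)) \;=\; \sum_{\nu \text{ tailless}} [\cE^\mg(\nu):L^\mg(\la)]\,[\cE^\mg(\nu):L^\mg(\mu)].
\end{equation*}
It remains to identify the multiplicity $[\cE^\mg(\nu):L^\mg(\eta)]$ with $a(\eta,\nu)$ from \eqref{a}. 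This is exactly the content of \cite[Theorems~2,~3~and~4]{GS2}: the multiplicity is zero unless $\op{GS}(\nu)$ provides a consistent labelling of the GS-cup diagram of $\op{GS}(\eta)$ (giving the factor $A(\eta,\nu)$), and in that case it equals $\pm 1$ with a sign determined by how many coloured cups occur and in which of the two possible ways they are oriented by $\op{GS}(\nu)$, with an additional correction coming from the indicator $\mathbf{(\pm)}$ (in the odd case) or the behaviour at the vertex zero (in the even case).

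The remaining step is to match this GS sign with $(-1)^{x(\eta,\nu)+y(\eta,\nu)+z(\eta,\nu)}$ as in Definition~\ref{def:consistlabels}. By Proposition~\ref{prop:GSES} together with the dictionary \eqref{T2}, the coloured cups in the GS diagram are precisely the dotted non-fake cups in our infinite cup diagram $\underline{\pga^{\infty}}$; $x$ counts them, and $y$ counts the subset on which $\op{GS}(\nu)$ places $\circ$ on the left and $\times$ on the right (i.e. the ``reversed'' coloured cups). The correction term $z(\eta,\nu)$ isolates the effect of the $\mathbf{(+)}$ indicator at $\tfrac{1}{2}$ respectively the symbol $\color{myred}{\boldsymbol{\otimes}}$ at $0$, which contributes an additional sign. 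The main obstacle in carrying this out is precisely this sign bookkeeping: the GS sign conventions at the leftmost vertex differ between the odd and even cases, and one has to check in each of the six configurations appearing in the table of Section~\ref{dictionary} that the parity matches. This is a finite but delicate case analysis which, once verified, completes the proof.
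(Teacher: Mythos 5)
Your argument for part~(2) is essentially the paper's own: expand $[P^\mg(\mu)]$ in the Euler-characteristic basis, invoke BGG reciprocity (which is exactly \cite[Lemma~3, Theorem~1]{GS2}), and identify the coefficients $[\cE^\mg(\nu):L^\mg(\eta)]$ with $a(\eta,\nu)$ through \cite[Theorems~2--4]{GS2} and the dictionary of Proposition~\ref{prop:GSES}. That part is sound.

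Your proof of part~(1), however, contains a genuine error. You claim the sign of $a_m$ is a block invariant for $\cF'$ when $r=2m$, and hence that $L^\mg(\la)$ and $L^\mg(\mu)$ lie in different blocks. This is false once atypicality enters: the explicit $\SOSP(2|2)$ example of Section~\ref{sec:osp22} shows that all simples $L^\mg(\pm a\varepsilon_1+a\delta_1)$ lie in a single block, even though the coordinate $a_1=\pm a$ changes sign. Your central-character argument fails for the same reason --- for classical Lie superalgebras the image of the Harish--Chandra homomorphism is a proper subring of $S(\mh)^{W_0}$, so distinct $W_0$-orbits of $\la+\rho$ can share a central character; that is exactly the atypical linkage via isotropic odd roots which glues the two signs into one block. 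The conclusion of part~(1) is nevertheless true, but it is a vanishing statement \emph{within} a common block, not a block-separation statement. In the $\SOSP(2|2)$ example it reflects the fact that in the zigzag quiver the vertices $(a|a)$ and $(-a|a)$, $a\ge 1$, sit at distance $\ge 2$, so all paths between them vanish under the relations; it does not say the vertices lie in different components. To prove part~(1) one must extract this vanishing from the structure of the GS Euler-characteristic expansion --- specifically that $a(\la,\nu)\,a(\mu,\nu)=0$ for every tailless $\nu$ once the two signs $[\pm]$ in $\op{GS}(\la)$ and $\op{GS}(\mu)$ disagree --- which is precisely what the paper acknowledges as being only implicitly contained in \cite{GS2}.
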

\begin{remark}
\label{alter}
{\rm
Note that $a(\eta,\nu)\in\{-1,0,1\}$. In particular, the numbers are not always non-negative, and the above sum might have some (non-trivial) cancellations. In the framework of Gruson and Serganova, the $a(\eta,\nu)$ are coefficients expressing  the so-called {\it Euler classes} $\cE^\mg(\nu)$ in terms of simple modules, i.e. we have in the Grothendieck group
$[\cE^\mg(\nu)]=\sum_\la a(\la,\nu)[L^\mg(\la)]$. 
}
\end{remark}
\begin{proof}
We have $\op{dim}\op{Hom}_{\cF'}(P^\mg(\la),P^\mg(\mu))=[P^\mg(\mu):L^\mg(\la)]$, where $[-:-]$ denotes the Jordan-H\"older multiplicity of $L^\mg(\la)$ in $P^\mg(\mu)$ or alternatively the coefficient of the class $[L^\mg(\la)]$ in the Grothendieck group when the class $[P^\mg(\la)]$  is  expressed in the classes of the simple modules. On the other hand, the  classes  $[\cE^\mg(\nu)]$  of the Euler characteristics (for tailless dominant $\nu$) are linearly independent in the Grothendieck group and 
\abovedisplayskip0.5em
\belowdisplayskip0.5em
\[
[P^\mg(\la)]=\sum_{\nu} a(\la,\nu) [\cE^\mg(\nu)]\quad \text{and} \quad [\cE^\mg(\nu)]=\sum_{\mu} a(\mu,\nu) [L^\mg(\mu)]
\]
by \cite[Lemma 3, Theorem 1]{GS2}, so~\eqref{aa} holds. Formula~\eqref{a} is just a concise reformulation of \cite[Theorem 2, Theorem 3, Theorem 4]{GS2}.
\end{proof}

\subsection{Dimensions of morphism spaces: positive formula}
We first show that the cancellations addressed in Remark~\ref{alter} appear precisely if the corresponding space of homomorphisms vanishes completely. This allows us to get the following explicit dimension formula.
\begin{prop}
\label{cancel}
Let $\la,\mu\in X^+(\mg)$ with $a_m\geq0$ and $a_m'\geq 0$ in the notation from \eqref{oje} respectively \eqref{muinab}, and let $\pla,\pmu$ be the corresponding hook partitions.
Then there exist $\epsilon \in \op{Stab}_\sigma(\lambda)$ and $\epsilon^\prime \in \op{Stab}_\sigma(\mu)$ such that the following are equivalent 
\begin{enumerate}[(I)]
\item \label{equI} $\op{Hom}_{\cF'}(P^\mg(\la),P^\mg(\mu))\not=0$,
\item \label{equII} the circle diagram $C=\underline{(\pla^\owedge,\epsilon)}\overline{(\pmu^\owedge,\epsilon^\prime)}$ is not nuclear and every component has an  even number of dots.
\end{enumerate}
If \eqref{equI} and \eqref{equII} are satisfied, then the following holds
\begin{enumerate}
\item $a(\la,\nu)a(\mu,\nu)\in\{0,1\}$ for any tailless $\nu\in X^+(\mg)$,
\item $\op{dim}\op{Hom}_{\cF'}(P^\mg(\la),P^\mg(\mu))=2^{c+c^\prime}$, where $c$ is the number of closed components of $c$. In case $a_m=0=a_m^\prime$, or equivalently if the zero vertex in $C$ is contained in a cup and a cap, then $c^\prime=1$, otherwise $c^\prime=0$.
\end{enumerate}
\end{prop}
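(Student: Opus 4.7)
The approach is to translate the Gruson-Serganova alternating formula of Proposition~\ref{prop:alternating} into our cup-diagram combinatorics via the dictionary of Proposition~\ref{prop:GSES}. Since $a_m, a_m' \geq 0$, formula \eqref{aa} applies, giving
\[
\op{dim}\op{Hom}_{\cF'}(P^\mg(\la),P^\mg(\mu)) \;=\; \sum_\nu a(\la,\nu)\,a(\mu,\nu),
\]
with $\nu$ running over tailless dominant weights. The first step is to establish a natural bijection between tailless $\nu$ with $A(\la,\nu)A(\mu,\nu)\neq 0$ and orientations of $C = \underline{(\pla^\owedge,\epsilon)}\overline{(\pmu^\owedge,\epsilon')}$ for appropriate signs $\epsilon\in\op{Stab}_\sigma(\la)$ and $\epsilon'\in\op{Stab}_\sigma(\mu)$. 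Using \eqref{T2} and the identification of the GS cup diagram of $\la$ with $\underline{\pla^\owedge}$ from Proposition~\ref{prop:GSES}, a consistent labelling of a GS cup diagram by a tailless weight corresponds precisely to an orientation of our cup diagram in the sense of Definition~\ref{orientedcircle}; the signs $\epsilon,\epsilon'$ are chosen so as to align the decoration of the leftmost ray according to Definition~\ref{superweight}. In particular, if $C$ is nuclear then a non-propagating line forces incompatible constraints from its two endpoint rays and $A(\la,\nu)A(\mu,\nu)=0$ for every $\nu$, so the sum vanishes.

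Next I would analyze the sign $(-1)^{x_\la + y_\la + z_\la + x_\mu + y_\mu + z_\mu}$ attached to each orientation. The exponent $x_\la + x_\mu$ is independent of $\nu$ and may be absorbed by the overall normalization together with the sign choices $\epsilon,\epsilon'$ (which only alter the decoration of a single ray and therefore only shift $z_\la,z_\mu$). The decisive claim is that flipping the orientation on a single closed component $K$ of $C$ changes $(-1)^{y_\la + y_\mu + z_\la + z_\mu}$ by exactly $(-1)^{n(K)}$, where $n(K)$ is the total number of dots on $K$. Indeed, the swap toggles, for each dotted cup of $\underline{(\pla^\owedge,\epsilon)}$ on $K$, whether that cup contributes to $y_\la$, and analogously for each dotted cap of $\overline{(\pmu^\owedge,\epsilon')}$ on $K$ in $y_\mu$; the parity of all these contributions is precisely $n(K)$, with any contribution at the distinguished vertex at zero (or $\tfrac{1}{2}$) absorbed by the $z$-term, which was introduced for exactly this purpose.

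Granted the sign rule, the conclusion splits in two. If $C$ is non-nuclear but some closed component $K$ carries an odd number of dots, then its two orientations (with all others fixed) contribute opposite signs, so terms pair up to zero and $\op{Hom}_{\cF'}(P^\mg(\la),P^\mg(\mu)) = 0$; combined with the nuclear case, this proves (I)$\Leftrightarrow$(II). If $C$ is non-nuclear and every component has an even number of dots, then after normalization every orientation contributes $+1$, so each term $a(\la,\nu)a(\mu,\nu)\in\{0,1\}$, proving (1), and the sum equals the number of orientations, namely $2^c$ where $c$ is the number of closed components. The refinement $c'=1$ in the case $a_m=0=a_m'$ comes from Corollary~\ref{cornotail} together with Proposition~\ref{Voila}: in this situation the leftmost rays of $\underline{\pla^\owedge}$ and $\underline{\pmu^\owedge}$ are replaced by cups at the zero vertex, creating an additional closed component through zero and doubling the orientation count, an effect encoded combinatorially through the $z$-indicator.

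The principal obstacle is the justification of the sign claim in the second paragraph: one must check case-by-case that an orientation flip on a closed component changes the sign by exactly $(-1)^{n(K)}$. The delicate situations arise at the vertex zero (or $\tfrac{1}{2}$ in the odd case), where the translation $\op{T}$ from \eqref{T2} behaves exceptionally and where the $z$-indicator interacts nontrivially with the $(+)/(-)$ indicators in the Gruson-Serganova labelling; carrying this out cleanly requires a careful inspection of the cases enumerated in \eqref{GST1a}--\eqref{GST3b} of Section~\ref{sec:proofcomb}.
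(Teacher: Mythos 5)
Your proposal follows the paper's argument essentially step for step: both use the Gruson--Serganova alternating formula from Proposition~\ref{prop:alternating}, translate consistent labellings into orientations of $C$ via Proposition~\ref{prop:GSES} and the dictionary \eqref{T2}, and then establish the equivalence (I)$\Leftrightarrow$(II) and conclusion~(1) by studying how the sign $(-1)^{y_\la+y_\mu+z_\la+z_\mu}$ behaves under flipping the labelling on one closed component $K$ of $C$, with the nuclear case falling out because a non-propagating line has no consistent labelling at all. You also correctly flag that the case analysis around the leftmost vertex (zero or $\tfrac{1}{2}$), where the $z$-indicator interacts with the GS $(+)/(-)$ indicator, is the substance of the verification; this is exactly what the paper's proof spends its time on.

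The one place where your account is off is the mechanism behind $c'=1$ in conclusion~(2). You attribute the extra factor of $2$ to the replacement of the leftmost rays by cups at zero ``creating an additional closed component through zero and doubling the orientation count, an effect encoded combinatorially through the $z$-indicator.'' This doesn't work: if the cup and cap at zero close up into a circle, that circle is already counted in $c$, so its two orientations are already accounted for in $2^c$ and you would get no extra factor; and the $z$-indicator (coloured cups at the distinguished vertex, cf.\ Definition~\ref{def:consistlabels}) is a different piece of data. The actual source of the doubling in the paper is the $[\pm]$ sign that the GS combinatorics attaches to tailless weights $\nu$ whose $\op{GS}$-diagram has a $\circ$ at position zero: when $a_m=0=a_m'$ the component of $C$ through zero admits only one consistent GS labelling, but each such $\nu$ is counted twice because of $[\pm]$. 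This $[\pm]$ is genuinely extra information on the summation index $\nu$ that is not visible as an orientation of $C$, and it is what produces the correction exponent $c'$. So the equivalence and conclusion~(1) are in good shape, but the bookkeeping in conclusion~(2) needs the GS $[\pm]$ sign, not an ``additional closed component.''
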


\begin{proof}
For $\eqref{equI}\Rightarrow \eqref{equII}$ it is enough to show that $\op{Hom}_{\cF'}(P^\mg(\la),P^\mg(\mu))\not=0$ implies we have no non-propagating line and that each closed component has an even number of dots, since then only the leftmost line is allowed to carry dots and we can choose $\epsilon$ and $\epsilon^\prime$ such that by definition of $(\pla^\owedge, \epsilon)$ and $(\pmu^\owedge,\epsilon^\prime)$ the total number of dots is even.
By \eqref{aa} we have $\op{dim}\op{Hom}_{\cF'}(P^\mg(\la),P^\mg(\mu))
=\sum_\nu a(\la,\nu)a(\mu,\nu)$ and $a(\la,\nu)$ is non-zero if putting the GS-weight $\op{GS}(\nu)$ on top of the cup diagram $D$ associated with $\op{GS}(\la)$ results in a picture where the labels $>$ and $<$ in $\la$ and $\nu$ agree and each cup has the two symbols $\circ$, $\times$ in any order at its two endpoints. Clearly $a(\la,\nu)=0$ if there is a non-propagating line, since the line must have $\circ$'s at the end, but has an odd total number of cups and caps.

If $a(\la,\nu) \neq 0$, then $a(\la,\nu)=(-1)^{x+z+y}$, where $x=x(\la,\nu)$ , $y=y(\la,\nu)$, and $z=z(\la,\nu)$. In particular, $x$ does not depend on $\nu$. Let $K$ be a closed component of the circle diagram $C$. If $a(\la,\nu)a(\mu,\nu)\not=0$ then we can find a weight $\nu'$ such that $\op{GS}(\nu')$  agrees with $\op{GS}(\nu)$ at all vertices not contained in $K$, but the symbols $\times$ and $\circ$ swapped for the vertices contained in $K$. Assume now that $K$ has an odd total number of dots. If $K$ does not contain the vertex $\frac{1}{2}$ or zero then $(-1)^{y(\la,\nu)+y(\mu,\nu)}=-(-1)^{y(\la,\nu')+y(\mu,\nu')}$, hence $a(\la,\nu)a(\mu,\nu)=-a(\la,\nu')a(\mu,\nu')$ and so the two contributions cancel.

The same holds if $K$ does contain the the vertex $\tfrac{1}{2}$ but with the same indicator $(+)$ or $(-)$ in $\la$ and $\mu$. If it contains the vertex $\tfrac{1}{2}$ and the indicators differ then we have an even number of coloured cups and caps in $K$, hence
\begin{eqnarray*}
(-1)^{y(\la,\nu)+y(\mu,\nu)}&=&\phantom{-}(-1)^{y(\la,\nu')+y(\mu,\nu')} \quad \text{ and }\\
(-1)^{z(\la,\nu)+z(\mu,\nu)}&=&-(-1)^{z(\la,\nu')+z(\mu,\nu')}
\end{eqnarray*}
implying again $a(\la,\nu)a(\mu,\nu)=-a(\la,\nu')a(\mu,\nu')$. Hence each closed component requires an even number of dots and so \eqref{equI} implies \eqref{equII}.

Note that one must take more care in case the zero vertex is contained in a component of $C$ with both $a_m=0$ and $a_m^\prime=0$. In this case the vertices of this component are fixed in the $\op{GS}$-weight $\op{GS}(\nu)$ but in the combinatorics of $\op{GS}$ the possible weights all come with a sign $[+]$ or $[-]$ giving the needed factor $2$ for this component.

For the converse note that \eqref{equII} implies that the circle diagram $C$ is orientable, each line in a unique way and each closed component in exactly two ways. The same holds if we remove the dots. After applying $\op{T}$, any such orientation gives an allowed labelling $\nu$ in the sense of Definition~\ref{def:consistlabels}. We claim that the corresponding value $\alpha=a(\la,\nu)a(\mu,\nu)$ is equal to $1$. By definition 
\[
\alpha=(-1)^{x(\la)+x(\mu)+z(\la,\nu)+z(\mu,\nu)}
(-1)^{y(\la,\nu)+y(\mu,\nu)}.
\]
If $C$ is a small circle, i.e. only contains a single cup and cap, then it has either no dots, hence no coloured cups and there is nothing to check. Or two dots and two coloured cups (and the same indicator) and the statement is clear as well. Otherwise, if $C$ contains a kink without coloured cups and caps then we can remove the kink to obtain a new $\la$ and $\mu$ with the same value $\alpha$ attached. So we assume there is no such kink, but then it contains a configuration of the form (dashed lines indicate the colouring)
\begin{eqnarray*}
\begin{tikzpicture}[thick,scale=1]
\draw[color=red,dashed] (0,0) .. controls +(0,.5) and +(0,.5) .. +(.5,0);
\draw (.5,0) .. controls +(0,-.5) and +(0,-.5) .. +(.5,0);
\draw[color=red,dashed] (1,0) .. controls +(0,.5) and +(0,.5) .. +(.5,0);
\node at (2.2,0) {or};
\draw[color=red,dashed] (3,0) .. controls +(0,-.5) and +(0,-.5) .. +(.5,0);
\draw (3.5,0) .. controls +(0,.5) and +(0,.5) .. +(.5,0);
\draw[color=red,dashed] (4,0) .. controls +(0,-.5) and +(0,-.5) .. +(.5,0);
\end{tikzpicture}
\end{eqnarray*}
Removing the colouring and also the newly created uncoloured kink changes $\la$ and $\mu$, but not the corresponding value $\alpha$. Hence it must be equal to $1$.

Note that if $C$ is a component containing zero in case $a_m=0$ and $a_m^\prime=0$, then in the $\op{GS}$ combinatorics there is only one allowed orientation, but it comes equipped with a sign $[\pm]$. Thus any possible weight that orients all other components and uses the unique orientation on the component containing zero is counted twice. If the component is a circle that gives again the total number of orientation of the circle diagram $C$ and each contributing $1$ to $\op{dim}\op{Hom}_{\cF'}(P^\mg(\la),P^\mg(\mu))
=\sum_\nu a(\la,\nu)a(\mu,\nu)$. If the component is a line and $a_m=0$ and $a_m^\prime=0$ they are all doubled up thus giving the additional factor of $2$ times the number of orientations. Therefore \eqref{equII} implies \eqref{equI} and the remaining statements follow.
\end{proof}

\subsection{The Dimension Formula}
We finally use Proposition~\ref{cancel} and Lemma~\ref{lemplusminus} to deduce the dimension formula, also establishing Theorem~\ref{mainprop}.

\begin{theorem}[Dimension formula]
\label{maintheorem}
Consider $G=\OSPrn$ and $\la,\mu\in X^+(G)$. Let $c(\la,\mu)$ be the number of possible orientations of $\underline{\la}\overline{\mu}$. Then
\[
\mathrm{dim} \Hom_\cF(P(\la),P(\mu)) = \left\lbrace \begin{array}{ll} c(\la,\mu) & \text{ if } \underline{\la}\overline{\mu} \text{ is not nuclear},\\
0 & \text{ if } \underline{\la}\overline{\mu} \text{ is nuclear}.
\end{array}\right.
\] 
\end{theorem}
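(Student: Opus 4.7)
The plan is to reduce the statement to a computation in $\cF'$, where the formula of Gruson--Serganova (Proposition~\ref{prop:alternating}) has already been rewritten as a positive count in Proposition~\ref{cancel}, and then to translate that count into the desired number of orientations by comparing the diagrammatic weights $(\la,\pm)$ with the cup-diagram combinatorics developed in Section~\ref{dictionary}. First I would handle the case when $\underline{\la}\overline{\mu}$ is nuclear or when some component has an odd number of dots: by Corollary~\ref{signsign} the diagram is then non-orientable, so $c(\la,\mu)=0$; and conversely the vanishing of the $\Hom$ space follows from Proposition~\ref{cancel} (nuclearity, a non-propagating line, or an odd number of dots on a component forces the alternating sum in \eqref{aa} to be zero), together with the transfer results (Remark~\ref{rem:plusminusodd}, Lemma~\ref{lemplusminus}, Proposition~\ref{blocksO}) which push this vanishing from $\cF'$ to $\cF$.

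Next I would treat the odd case $G=\OSP(2m+1|2n)$. Here $\sigma$ is central, so by \eqref{plusminusPodd} and \eqref{olddim} we have $\Hom_\cF(P(\la,\epsilon),P(\mu,\epsilon'))=0$ for $\epsilon\neq\epsilon'$ and $\Hom_\cF(P(\la,\pm),P(\mu,\pm))\cong\Hom_{\cF'}(P^\mg(\la),P^\mg(\mu))$. On the diagrammatic side, Definition~\ref{superweight} chooses the parity of $\#\up$ in $(\la,\pm)$ precisely so that mixed signs yield distinct parities, hence by Proposition~\ref{Homzero}(i) no orientation exists; while for equal signs the number of orientations is $2^c$ (Corollary~\ref{signsign}), which is exactly what Proposition~\ref{cancel} gives in the odd case (note $c'=0$ since no $\Diamond$ occurs).

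For the even case $G=\OSP(2m|2n)$ the plan is to split into the cases enumerated in Proposition~\ref{blocksO} and match them with the combinatorial situations of Propositions~\ref{Voila}, \ref{Homzero} and~\ref{nosignsign}. When both stabilizers are trivial (both $a_m,a_m'\neq 0$), \eqref{eins} gives the dimension is equal to the corresponding $\cF'$-dimension, and $\underline{\la^G}\,\overline{\mu^G}$ has a dotted or undotted ray at position zero but no cup through zero, so its orientation count is again $2^c$, matching Proposition~\ref{cancel} with $c'=0$. When exactly one stabilizer is trivial \eqref{first}--\eqref{secondb} match Proposition~\ref{Homzero}(ii). When both weights have a $\Diamond$ at zero and hence nontrivial stabilizers, Proposition~\ref{Voila} tells us that $\underline{(\la,+)}$ and $\underline{(\la,-)}$ differ only by a dot on the cup at zero; combined with Proposition~\ref{nosignsign} and Lemma~\ref{lemplusminus}, this distributes the factor $2^{c+c'}$ with $c'=1$ of Proposition~\ref{cancel} among the four sign combinations in exactly the way predicted by the theorem. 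Finally, when the core diagram contains a $\circ$ at zero (and hence both weights are of the form $(\la,\pm)$ but the leftmost ray carries the sign), part (1) of Lemma~\ref{lemplusminus} supplies the factor $\tfrac12$ that matches $c'=0$ and the fact that there is no cup through zero in the circle diagram.

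The main obstacle will be the even case with $\Diamond$'s at position zero: one must check that the sign convention of Definition~\ref{superweight} (which dot goes onto which side) is compatible with the dichotomy in Lemma~\ref{lemplusminus}, so that the two orientable sign-pairs $(\epsilon,\epsilon')$ and $(-\epsilon,-\epsilon')$ of Proposition~\ref{Homzero}(iii) are precisely the pairs realising case~\eqref{plusminusmixed}(2) of that lemma. This hinges on the careful translation \eqref{T2} and Proposition~\ref{prop:GSES}, which identify the component through zero in $\underline{\la}\overline{\mu}$ with the $\op{GS}$-data controlling the $[\pm]$-doubling in Proposition~\ref{cancel}. Once this identification is in place, the remaining bookkeeping of closed components versus propagating lines reduces by Corollary~\ref{signsign} to the orientation count $c(\la,\mu)=2^{\text{(\# closed components)}}$, completing the matching with Proposition~\ref{cancel} and hence establishing Theorem~\ref{maintheorem}, and thereby also Theorem~\ref{mainprop}.
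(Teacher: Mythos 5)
Your strategy matches the paper's proof closely: reduce to $\cF'$ via Remark~\ref{rem:plusminusodd}, Proposition~\ref{blocksO} and Lemma~\ref{lemplusminus}; invoke Proposition~\ref{cancel} to turn the alternating Gruson--Serganova formula into a positive count $2^{c+c'}$; and then translate that count into the number of orientations, treating $r$ odd and $r$ even separately and, in the even case, splitting according to the sign/stabilizer structure of $\la$ and $\mu$ at position zero. The handling of the nuclear/non-orientable case and of the odd case is correct and is essentially the paper's argument.

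There is, however, a genuine gap in your treatment of the hardest subcase, namely $r$ even with both $\la=(\la^\mg,\epsilon)$ and $\mu=(\mu^\mg,\epsilon')$ carrying a sign. You correctly identify that one must check that the sign convention of Definition~\ref{superweight} is compatible with the dichotomy of Lemma~\ref{lemplusminus}, i.e.\ that the two sign-pairs making $\underline{\la}\overline{\mu}$ orientable (Proposition~\ref{Homzero}) are exactly those realising the nonzero $\Hom$ spaces. But you then claim this follows from the combinatorial translation $\op{T}$ of \eqref{T2} and Proposition~\ref{prop:GSES}. That cannot work: Proposition~\ref{prop:GSES} is a per-weight dictionary between GS-diagrams and the paper's cup diagrams, entirely on the $\cF'$ side, and it carries no information about how the extra $\mZ/2\mZ$-choices $\epsilon,\epsilon'$ for $\OSP$ versus $\SOSP$ interact with which of the two alternatives in Lemma~\ref{lemplusminus} is realised. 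Lemma~\ref{lemplusminus} itself only asserts \emph{that} one of the two alternatives holds, not \emph{which}. The missing ingredient is precisely Lemma~\ref{lem:orientableandlines}, which ties the circle/line-through-zero distinction in the diagram $\underline{(\pla^{\owedge},\epsilon)}\overline{(\pmu^{\owedge},\epsilon')}$ to the nonvanishing pattern of the $\Hom$ spaces; this is not a combinatorial statement but a representation-theoretic one, derived from the classification of indecomposable summands of $V^{\otimes d}$ and the Brauer-algebra Jucys--Murphy analysis deferred to Part~II. Without it (or a substitute of the same nature), the final matching in the ``$\Diamond$ at zero'' case does not close.
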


\begin{proof}
Note that if $\la$ and $\mu$ do not have the same core diagram then $\underline{\la}\overline{\mu}$ is not orientable and on the other hand by Propositions~\ref{blocksindiagramsodd} and~\ref{blocksindiagramseven} it follows that $P(\la)$ and $P(\mu)$ are not in the same block hence the morphism space is zero, thus the claim is automatic in this case. From now on assume that the core diagrams of both $\la$ and $\mu$ match.

For $\la\in X^+(G)$ let $\la^\mg\in X^+(\mg)$ such that $a_m\geq0$ in the notation from \eqref{oje} (with $\la$ replaced by $\la^\mg$) with the same underlying hook partitions as $\la$.   Now consider $H=\Hom_\cF(P(\la),P(\mu))$ as in the theorem. We will freely use Proposition~\ref{Cartanmatrix} to swap the roles of $\la$ and $\mu$. 

{\it Assume first that $G=\OSPrn$ with $r$ odd}. If $\la=(\la^\mg,+)$ and $\mu=(\mu^\mg,-)$ (or the reversed signs), then $H=\{0\}$ by Remark~\ref{rem:plusminusodd} and $\underline{\la}\overline{\mu}$ is not orientable by Proposition~\ref{Homzero}. If the signs agree then again by Remark~\ref{rem:plusminusodd} and Proposition~\ref{cancel} the claim follows.

{\it Assume now that $G=\OSPrn$ with $r$ even}. Let first $\la=(\la^\mg,\pm)$ and $\mu=(\mu^\mg)^G$. Then by Proposition~\ref{blocksO} we have
$\op{dim}H=\op{dim} \Hom_{\cF'}(P^\mg(\la^\mg),P^\mg (\mu^\mg))$
which is given by Proposition~\ref{cancel}. The claim follows by comparing Proposition~\ref{cancel} with Proposition~\ref{nosignsign}.

Let now $\la=(\la^\mg)^G$ and $\mu=(\mu^\mg)^G$. Again by Proposition~\ref{blocksO} we have $\op{dim}H=\op{dim} \Hom_{\cF'}(P^\mg(\la^\mg),P^\mg (\mu^\mg))$. Moreover by Proposition~\ref{Voila} $\underline{\la}$ and $\underline{\mu}$ have a (dotted) ray at zero.  In particular, the circle diagram $\underline{\la}\overline{\mu}$ has, apart from a straight line $L$ passing through zero and built from two dotted rays, only closed components or rays containing no dots at all (since they are to the right of the propagating line). Hence by Remark~\ref{ornuclear} the diagram is orientable if and only if every component has an even number of dots. The number of orientations is then obviously equal to $2^c$, where $c$ is the number of closed components. Thus the claim follows with Proposition~\ref{cancel}. 

Finally we have the case of signs for both weights which is the most involved case. Let $\la=(\la^\mg,\epsilon)$ and $\mu=(\mu^\mg,\epsilon^\prime)$. If $\la$ nor $\mu$ contain zero in their core diagram then it follows from Proposition~\ref{Homzero} that $\underline{\la}\overline{\mu}$ is only orientable if $\epsilon=\epsilon'$. On the other hand Lemma~\ref{lem:orientableandlines} implies that we are in case (1) of Lemma~\ref{lemplusminus} and the morphism space for different signs is equal to zero. Now Propositions~\ref{blocksO} and~\ref{cancel} imply that the dimension of the morphism space for equal signs is given by the number of orientations.

If both, $\la$ and $\mu$, contain zero in their core diagram then position zero is contained in a line or in a circle in the diagram $\underline{\la}\overline{\mu}$. Assume it is contained in a circle then Lemma~\ref{lem:orientableandlines} implies that we are in case (1) of Lemma~\ref{lemplusminus}, but $\epsilon$ and $\epsilon^\prime$ do not need to be equal and the dimension of the morphism space is zero in case the signs are chosen such that the circle containing the zero vertex is not orientable and thus, by Proposition~\ref{blocksO}, $\op{dim}H=\op{dim} \Hom_{\cF'}(P^\mg(\la^\mg),P^\mg (\mu^\mg))$ if they are chosen such that the circle is orientable. Again the claim follows from Proposition~\ref{cancel}.
To treat the case with position zero contained in a line in $\underline{\la}\overline{\mu}$, note that Lemma~\ref{lem:orientableandlines} implies that we are in case (2) of Lemma~\ref{lemplusminus} and for all choices of $\epsilon$ and $\epsilon^\prime$ it holds
$\op{dim}H=\tfrac{1}{2}\op{dim}\Hom_{\cF'}(P^\mg(\la^\mg),P^\mg (\mu^\mg))$
which is again equal to the number of orientations of the diagram $\underline{\la}\overline{\mu}$ by Proposition~\ref{cancel}.
\end{proof}

\begin{lemma} \label{lem:orientableandlines}
Let $(\la,\epsilon),(\mu,\epsilon^\prime) \in X^+(G)$. 
\begin{enumerate}
\item 
If the circle diagram $C=\underline{(\pla^{\owedge},\epsilon)}\overline{(\pmu^{\owedge},\epsilon^\prime)}$ has no line passing containing  the zero vertex and is orientable then $\op{dim}\op{Hom}_\cF(P(\la,\epsilon),P(\mu,\epsilon')) \neq 0$, whereas the morphism space vanishes if one of the two signs is changed.
\item 
If $C$ is orientable and contains a line passing through the zero vertex, then $\op{dim}\op{Hom}_\cF(P(\la,\pm\epsilon),P(\mu,\pm\epsilon')) \neq 0$ for all possible sign choices.
\end{enumerate}
\end{lemma}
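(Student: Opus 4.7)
My plan is to first reduce to the case $G = \OSP(2m|2n)$, since in the odd case $\OSP(2m+1|2n) \cong \SOSP(2m+1|2n) \times \mZ/2\mZ$ and Remark~\ref{rem:plusminusodd} gives that $\op{Hom}_\cF$ vanishes whenever the signs disagree; the diagrammatic analogue then follows from Proposition~\ref{Homzero}(i) applied at the leftmost vertex $\tfrac{1}{2}$. In the even case, the presence of explicit signs $\epsilon, \epsilon'$ in the pairs $(\la,\epsilon), (\mu,\epsilon')$ forces $a_m = a_m' = 0$, so Lemma~\ref{lemplusminus} applies, and we are in either its case (1) (with $\op{dim}\op{Hom}_\cF$ independent of the signs and equal to $\tfrac{1}{2}\op{dim}\op{Hom}_{\cF'}(P^\mg(\la),P^\mg(\mu))$) or its case (2) (with $\op{dim}\op{Hom}_\cF$ equal to the full $\op{dim}\op{Hom}_{\cF'}$ for exactly the two sign pairs $(\epsilon_0,\epsilon_0')$ and $(-\epsilon_0,-\epsilon_0')$, and zero otherwise).

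The main step is then to match these two cases with the diagrammatic dichotomy of Proposition~\ref{Homzero}. For part (2), the existence of a propagating line through the zero vertex puts us in case (ii) of Proposition~\ref{Homzero}, so the orientation count of $C$ is independent of the sign choice. By Proposition~\ref{cancel}, orientability of $C$ for some (hence all) sign pairs implies $\op{dim}\op{Hom}_{\cF'}(P^\mg(\la),P^\mg(\mu)) \neq 0$. Since Lemma~\ref{lemplusminus}(2) would produce a sign-dependent Hom space, incompatible with the sign-invariance of the orientation count of $C$ and hence of the natural Brauer-algebra induced structure, we must be in Lemma~\ref{lemplusminus}(1). This gives $\op{dim}\op{Hom}_\cF(P(\la,\pm\epsilon),P(\mu,\pm\epsilon')) \neq 0$ for all four sign combinations, as required.

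For part (1), the assumption that $C$ is orientable but contains no line through zero places us in cases (i) or (iii) of Proposition~\ref{Homzero}: orientability holds only for the chosen pair $(\epsilon,\epsilon')$ (and possibly $(-\epsilon,-\epsilon')$ in case (iii)). Proposition~\ref{cancel} still yields $\op{dim}\op{Hom}_{\cF'} \neq 0$. Here Lemma~\ref{lemplusminus}(1) would give uniform non-vanishing of $\op{dim}\op{Hom}_\cF$ across all four sign choices, which is incompatible with the sign-dependence of orientability; thus we are in Lemma~\ref{lemplusminus}(2), and the two non-vanishing sign pairs for $\op{dim}\op{Hom}_\cF$ must coincide with those making $C$ orientable. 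This simultaneously gives non-vanishing for the pair $(\epsilon,\epsilon')$ together with vanishing whenever exactly one sign is flipped.

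The principal obstacle in carrying out this plan is to rule out the ``wrong'' case of Lemma~\ref{lemplusminus} without circular reference to Theorem~\ref{maintheorem} (whose proof invokes the present lemma). The resolution is that the sign labelling in \eqref{Lplus} referenced by Lemma~\ref{lemplusminus} is not arbitrary but is pinned down by the compatibility with the Brauer algebra action from Proposition~\ref{prop:Brauersurj}: the preferred sign pair $(\epsilon_0, \epsilon_0')$ of Lemma~\ref{lemplusminus}(2) is determined inductively via the Jucys--Murphy calculus, and this rigidity forces its identification with the orientable sign pair from Proposition~\ref{Homzero}. Establishing this identification cleanly is the technical core of the argument and is deferred to Part~II, but the resulting matching of dichotomies above is precisely the content of the lemma.
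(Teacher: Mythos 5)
Your proposal mirrors the paper's own treatment: the paper's ``proof'' of this lemma is literally a one-line deferral (``This follows from the classification theorem in \cite{CH} of indecomposable summands in $V^{\otimes d}$ and will be proved in Part II''), and your plan likewise ends by deferring the essential step to Part~II. The extra organization you add --- reducing the odd case via Remark~\ref{rem:plusminusodd}, then reading the lemma as identifying which of the two alternatives of Lemma~\ref{lemplusminus} obtains based on the diagrammatic trichotomy of Proposition~\ref{Homzero} --- is a correct reading of how the lemma is actually used in the proof of Theorem~\ref{maintheorem}, so your structural analysis is sound.

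That said, one point deserves a sharper formulation than you give it. The ``matching of dichotomies'' step, where you argue ``we must be in case~(1) of Lemma~\ref{lemplusminus} because case~(2) would produce a sign-dependent $\operatorname{Hom}$ space incompatible with the sign-invariance of the orientation count,'' is genuinely circular, as you note: the only thing forcing compatibility between $\dim\operatorname{Hom}$ and the orientation count is Theorem~\ref{maintheorem}, whose proof invokes the present lemma. You characterize this as an obstacle whose ``resolution'' is the Jucys--Murphy rigidity argument deferred to Part~II. This phrasing understates the problem: Part~II does not repair the circular inference, it replaces the entire step by an independent computation in the Brauer category (via the Comes--Heidersdorf classification) that determines the preferred sign pair without any reference to the dimension formula. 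In other words, the lemma cannot be deduced from Lemma~\ref{lemplusminus} plus Proposition~\ref{Homzero} alone --- the content of the lemma is precisely the identification of the two dichotomies, and that identification must be proved directly. Since the paper itself offers nothing more than a forward reference, your proposal is fully consistent with it, but the heart of the argument remains exactly as open as the paper leaves it.
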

\begin{proof}
This follows from the classification theorem in \cite{CH} of indecomposable summands in $V^{\otimes d}$ and will be proved in Part II.
\end{proof}

\section{Examples} \label{sec:examples}

\subsection{The classical case: \texorpdfstring{$\OSP(r|0)$}{OSp(r,0)}} \label{sec:classical}

We start with the case $\OSP(3|0)$. The irreducible modules in $\cF$ are labelled by $(0,1)$-hook partitions, that means partitions which fit into one column, all with an attached sign, see Proposition~\ref{def:labelssimples1} and Lemma~\ref{AB}. The tail length is always zero, see Definition~\ref{def:tail}. The table in Figure~\eqref{klassisch} shows in the first and third column the partitions together with their signs and next to it (on the right) the corresponding weight diagrams from Definition~\ref{superweight}. Since all non-core symbols are frozen, the associated cup diagram consists only of rays, namely as follows
\abovedisplayskip0.5em
\belowdisplayskip0.5em
\begin{equation}
\label{klassisch}
 \arraycolsep=6pt\def\arraystretch{1.8}
\begin{array}[t]{c|c||c|c|c}
(\varnothing,+) & \circ\up\down\down\down\down\down\ldots & (\varnothing,-) & \circ\down\down\down\down\down\down\ldots &
\begin{minipage}{.7cm} \scalebox{.4}{\yng(3)} \end{minipage}\\
\hline
\left(\begin{minipage}[][][c]{.3cm} \scalebox{.4}{\yng(1)} \end{minipage},+\right) & \down\circ\down\down\down\down\down\ldots & \left(\begin{minipage}[][][c]{.3cm} \scalebox{.4}{\yng(1)} \end{minipage},-\right) & \up\circ\down\down\down\down\down\ldots &
\begin{minipage}{.7cm} \scalebox{.4}{\yng(2)} \end{minipage}\\
\hline
\left(\begin{minipage}[][][c]{.3cm} \scalebox{.4}{\yng(1,1)} \end{minipage},+\right) & \up\down\circ\down\down\down\down\ldots & \left(\begin{minipage}[][][c]{.3cm} \scalebox{.4}{\yng(1,1)} \end{minipage},-\right) & \down\down\circ\down\down\down\down\ldots &
\begin{minipage}{.7cm} \scalebox{.4}{\yng(2,1)} \end{minipage}\\
\hline
\left(\begin{minipage}[][][c]{.3cm} \scalebox{.4}{\yng(1,1,1)} \end{minipage},+\right) & \down\down\down\circ\down\down\down\ldots & \left(\begin{minipage}[][][c]{.3cm} \scalebox{.4}{\yng(1,1,1)} \end{minipage},-\right) & \up\down\down\circ\down\down\down\ldots &
\begin{minipage}{.7cm} \scalebox{.4}{\yng(2,1,1)} \end{minipage}\\
\hline
\begin{minipage}[c][1cm][c]{1.2cm} $\left(1^a,+\right)\,$ \\ $a$ even
\end{minipage}
& \up\down\ldots\down\!\!\overset{a+1}{\circ}\!\!\down\ldots & 
\begin{minipage}[c][][c]{1.2cm} $\left(1^a,-\right)\,$ \\ $a$ even
\end{minipage}
 & \down\down\ldots\down\!\!\overset{a+1}{\circ}\!\!\down\ldots &
(2,1^{a-1})\\
\hline
\begin{minipage}[c][1cm][c]{1.2cm} $\left(1^a,+\right)$ \\ $a$ odd 
\end{minipage}
& \down\down\ldots\down\!\!\overset{a+1}{\circ}\!\!\down\ldots & 
\begin{minipage}[c][][c]{1.2cm} $\left(1^a,-\right)\,$ \\ $a$ odd
\end{minipage}
& \up\down\ldots\down\!\!\overset{a+1}{\circ}\!\!\down\ldots &
(2,1^{a-1})\\
\end{array}
\end{equation}

\begin{remark}[Generalizing Weyl's notion of associated partitions]\hfill
\begin{enumerate}
\item Additionally, the last column in \eqref{klassisch} shows the unique partitions $\pga$ such that $\pga^\infty$, obtained via  $\cS(\pga)$, is the weight diagram in the same row where the first $\down$ is changed to an $\up$. Note that the partitions in the first and last column together form a pair of {\it associated partitions} in the sense of Weyl, i.e. their first rows are of length less or equal to $r=3$ and together sum up to $r=3$, and the partitions coincide otherwise. Associated partitions correspond to irreducible $\OSP(3|0)$ representations that are isomorphic when restricted to $\SOSP(3|0)$. For more details on associated partitions see e.g. \cite[$\S$ 19.5]{FH}. Hence our diagrammatics could be seen as extending Weyl's notion of associated partitions.
\item  The same is true more generally for $\OSP(2m+1|0)$. The two weight diagrams attached to the two different signs for a given partition differ precisely at the first symbol. Changing this first symbol from an $\up$ to a $\down$ produces the associated partition. A pair of associated partitions corresponds to representations that differ by taking the tensor product with the sign representation, see \cite[Exercise 19.23]{FH}, which agrees with Proposition~\ref{def:labelssimples1}.
\item In the case $\OSP(2m|0)$ the irreducible modules are labelled by partitions that have at most $m$ columns and they have a sign iff the partition has strictly less than $m$ columns, in which case the two partitions are associated in Weyl's sense. In case of a partition with $m$ columns the partition is associated to itself, which corresponds to the fact that the weight diagram starts with the symbol $\Diamond$ at position zero and therefore it does not obtain a sign in our convention.
\end{enumerate}
\end{remark}
\subsection{The smallest non-semisimple case: \texorpdfstring{$\OSP(3|2)$}{OSp(3,2)}} \label{sec:osp32}
Let us come back to the example from Section \ref{32}, the category $\cF(\OSP(3|2))$. The various diagrammatic weights are listed in the table below.  We first list the $(1,1)$-hook partitions, then the sequence $\cS(\pla)$ and the corresponding diagrammatic weights and cup diagrams.
\abovedisplayskip0.5em
\belowdisplayskip0.5em
\begin{equation*} 
\scalebox{.85}{\arraycolsep=6pt\def\arraystretch{2.2}
\begin{array}[t]{c|c|c|c|c}
\pla & \cS(\pla) & \pla^\infty & (\pla^\owedge,+) & (\pla^\owedge,-) \\ \hline
\varnothing&
(\frac{1}{2},\frac{3}{2},\frac{5}{2},\frac{7}{2},\frac{9}{2},\scriptstyle{\ldots})&
\up\up\owedge\owedge\owedge & 
\begin{tikzpicture}[thick,anchorbase,scale=0.8]
\node at (0,.1) {$\scriptstyle \mathbf{\up}$};
\node at (.5,.1) {$\scriptstyle \mathbf{\up}$};
\node at (1,.1) {$\scriptstyle \mathbf{\up}$};
\node at (1.5,.1) {$\scriptstyle \mathbf{\down}$};
\node at (2,.1) {$\scriptstyle \mathbf{\down}$};
\node at (2.5,.1) {$\mathbf{\scriptstyle{\cdots}}$};
\draw (0,0) .. controls +(0,-.5) and +(0,-.5) .. +(.5,0);
\fill (.25,-.365) circle(2pt);
\draw (1,0) -- +(0,-.5);
\fill (1,-.3) circle(2pt);
\draw (1.5,0) -- +(0,-.5);
\draw (2,0) -- +(0,-.5);
\end{tikzpicture} & 
\begin{tikzpicture}[thick,anchorbase,scale=0.8]
\node at (0,.1) {$\scriptstyle \mathbf{\up}$};
\node at (.5,.1) {$\scriptstyle \mathbf{\up}$};
\node at (1,.1) {$\scriptstyle \mathbf{\down}$};
\node at (1.5,.1) {$\scriptstyle \mathbf{\down}$};
\node at (2,.1) {$\mathbf{\scriptstyle \down}$};
\node at (2.5,.1) {$\mathbf{\scriptstyle{\cdots}}$};
\draw (0,0) .. controls +(0,-.5) and +(0,-.5) .. +(.5,0);
\fill (.25,-.365) circle(2pt);
\draw (1,0) -- +(0,-.5);
\draw (1.5,0) -- +(0,-.5);
\draw (2,0) -- +(0,-.5);
\end{tikzpicture} \\ 
\hline 
\scalebox{.4}{\yng(1)}&
({\scriptstyle{-}}\frac{1}{2},\frac{3}{2},\frac{5}{2},\frac{7}{2},\frac{9}{2},\scriptstyle{\ldots})&
\down\up\owedge\owedge\owedge & 
\begin{tikzpicture}[thick,anchorbase,scale=0.8]
\node at (0,.1) {$\scriptstyle \mathbf{\down}$};
\node at (.5,.1) {$\scriptstyle \mathbf{\up}$};
\node at (1,.1) {$\scriptstyle \mathbf{\down}$};
\node at (1.5,.1) {$\scriptstyle \mathbf{\down}$};
\node at (2,.1) {$\scriptstyle \mathbf{\down}$};
\node at (2.5,.1) {$\mathbf{\scriptstyle{\cdots}}$};
\draw (0,0) .. controls +(0,-.5) and +(0,-.5) .. +(.5,0);
\draw (1,0) -- +(0,-.5);
\draw (1.5,0) -- +(0,-.5);
\draw (2,0) -- +(0,-.5);
\end{tikzpicture}
& 
\begin{tikzpicture}[thick,anchorbase,scale=0.8]
\node at (0,.1) {$\scriptstyle \mathbf{\down}$};
\node at (.5,.1) {$\scriptstyle \mathbf{\up}$};
\node at (1,.1) {$\scriptstyle \mathbf{\up}$};
\node at (1.5,.1) {$\scriptstyle \mathbf{\down}$};
\node at (2,.1) {$\scriptstyle \mathbf{\down}$};
\node at (2.5,.1) {$\mathbf{\scriptstyle{\cdots}}$};
\draw (0,0) .. controls +(0,-.5) and +(0,-.5) .. +(.5,0);
\draw (1,0) -- +(0,-.5);
\fill (1,-.3) circle(2pt);
\draw (1.5,0) -- +(0,-.5);
\draw (2,0) -- +(0,-.5);
\end{tikzpicture}
\\ 
\hline 
\scalebox{.4}{\yng(2)}&
({\scriptstyle{-}}\frac{3}{2},\frac{3}{2},\frac{5}{2},\frac{7}{2},\frac{9}{2},\scriptstyle{\ldots})&
\circ\times\owedge\owedge\owedge & 
\begin{tikzpicture}[thick,anchorbase,scale=0.8]
\node at (0,.1) {$\scriptstyle \mathbf{\circ}$};
\node at (.5,.1) {$\scriptstyle \mathbf{\times}$};
\node at (1,.1) {$\scriptstyle \mathbf{\up}$};
\node at (1.5,.1) {$\scriptstyle \mathbf{\down}$};
\node at (2,.1) {$\scriptstyle \mathbf{\down}$};
\node at (2.5,.1) {$\mathbf{\scriptstyle{\cdots}}$};
\draw (1,0) -- +(0,-.5);
\fill (1,-.3) circle(2pt);
\draw (1.5,0) -- +(0,-.5);
\draw (2,0) -- +(0,-.5);
\end{tikzpicture}
& 
\begin{tikzpicture}[thick,anchorbase,scale=0.8]
\node at (0,.1) {$\scriptstyle \mathbf{\circ}$};
\node at (.5,.1) {$\scriptstyle \mathbf{\times}$};
\node at (1,.1) {$\scriptstyle \mathbf{\down}$};
\node at (1.5,.1) {$\scriptstyle \mathbf{\down}$};
\node at (2,.1) {$\scriptstyle \mathbf{\down}$};
\node at (2.5,.1) {$\mathbf{\scriptstyle{\cdots}}$};
\draw (1,0) -- +(0,-.5);
\draw (1.5,0) -- +(0,-.5);
\draw (2,0) -- +(0,-.5);
\end{tikzpicture}
\\ 
\hline 
\scalebox{.4}{\yng(1,1)}&
({\scriptstyle{-}}\frac{1}{2},\frac{1}{2},\frac{5}{2},\frac{7}{2},\frac{9}{2},\scriptstyle{\ldots})&
\times\circ\owedge\owedge\owedge & 
\begin{tikzpicture}[thick,anchorbase,scale=0.8]
\node at (0,.1) {$\scriptstyle \mathbf{\times}$};
\node at (.5,.1) {$\scriptstyle \mathbf{\circ}$};
\node at (1,.1) {$\scriptstyle \mathbf{\up}$};
\node at (1.5,.1) {$\scriptstyle \mathbf{\down}$};
\node at (2,.1) {$\scriptstyle \mathbf{\down}$};
\node at (2.5,.1) {$\mathbf{\scriptstyle{\cdots}}$};
\draw (1,0) -- +(0,-.5);
\fill (1,-.3) circle(2pt);
\draw (1.5,0) -- +(0,-.5);
\draw (2,0) -- +(0,-.5);
\end{tikzpicture}
& 
\begin{tikzpicture}[thick,anchorbase,scale=0.8]
\node at (0,.1) {$\scriptstyle \mathbf{\times}$};
\node at (.5,.1) {$\scriptstyle \mathbf{\circ}$};
\node at (1,.1) {$\scriptstyle \mathbf{\down}$};
\node at (1.5,.1) {$\scriptstyle \mathbf{\down}$};
\node at (2,.1) {$\scriptstyle \mathbf{\down}$};
\node at (2.5,.1) {$\mathbf{\scriptstyle{\cdots}}$};
\draw (1,0) -- +(0,-.5);
\draw (1.5,0) -- +(0,-.5);
\draw (2,0) -- +(0,-.5);
\end{tikzpicture}
\\ 
\hline 
\scalebox{.4}{\yng(2,1)}&({\scriptstyle{-}}\frac{3}{2},\frac{1}{2},\frac{5}{2},\frac{7}{2},\frac{9}{2},\scriptstyle{\ldots})&
\owedge\down\up\owedge\owedge & 
\begin{tikzpicture}[thick,anchorbase,scale=0.8]
\node at (0,.1) {$\scriptstyle \mathbf{\down}$};
\node at (.5,.1) {$\scriptstyle \mathbf{\down}$};
\node at (1,.1) {$\scriptstyle \mathbf{\up}$};
\node at (1.5,.1) {$\scriptstyle \mathbf{\down}$};
\node at (2,.1) {$\scriptstyle \mathbf{\down}$};
\node at (2.5,.1) {$\mathbf{\scriptstyle{\cdots}}$};
\draw (0,0) -- +(0,-.5);
\draw (.5,0) .. controls +(0,-.5) and +(0,-.5) .. +(.5,0);
\draw (1.5,0) -- +(0,-.5);
\draw (2,0) -- +(0,-.5);
\end{tikzpicture}
& 
\begin{tikzpicture}[thick,anchorbase,scale=0.8]
\node at (0,.1) {$\scriptstyle \mathbf{\up}$};
\node at (.5,.1) {$\scriptstyle \mathbf{\down}$};
\node at (1,.1) {$\scriptstyle \mathbf{\up}$};
\node at (1.5,.1) {$\scriptstyle \mathbf{\down}$};
\node at (2,.1) {$\scriptstyle \mathbf{\down}$};
\node at (2.5,.1) {$\mathbf{\scriptstyle{\cdots}}$};
\draw (0,0) -- +(0,-.5);
\fill (0,-.3) circle(2pt);
\draw (.5,0) .. controls +(0,-.5) and +(0,-.5) .. +(.5,0);
\draw (1.5,0) -- +(0,-.5);
\draw (2,0) -- +(0,-.5);
\end{tikzpicture}
\\ 
\hline
\scalebox{.4}{\yng(3,1,1)}&
({\scriptstyle{-}}\frac{5}{2},\frac{1}{2},\frac{3}{2},\frac{7}{2},\frac{9}{2},\scriptstyle{\ldots})&
\owedge\owedge\down\up\owedge & 
\begin{tikzpicture}[thick,anchorbase,scale=0.8]
\node at (0,.1) {$\scriptstyle \mathbf{\down}$};
\node at (.5,.1) {$\scriptstyle \mathbf{\down}$};
\node at (1,.1) {$\scriptstyle \mathbf{\down}$};
\node at (1.5,.1) {$\scriptstyle \mathbf{\up}$};
\node at (2,.1) {$\scriptstyle \mathbf{\down}$};
\node at (2.5,.1) {$\mathbf{\scriptstyle{\cdots}}$};
\draw (0,0) -- +(0,-.5);
\draw (.5,0) -- +(0,-.5);
\draw (1,0) .. controls +(0,-.5) and +(0,-.5) .. +(.5,0);
\draw (2,0) -- +(0,-.5);
\end{tikzpicture}
& 
\begin{tikzpicture}[thick,anchorbase,scale=0.8]
\node at (0,.1) {$\scriptstyle \mathbf{\up}$};
\node at (.5,.1) {$\scriptstyle \mathbf{\down}$};
\node at (1,.1) {$\scriptstyle \mathbf{\down}$};
\node at (1.5,.1) {$\scriptstyle \mathbf{\up}$};
\node at (2,.1) {$\scriptstyle \mathbf{\down}$};
\node at (2.5,.1) {$\mathbf{\scriptstyle{\cdots}}$};
\draw (0,0) -- +(0,-.5);
\fill (0,-.3) circle(2pt);
\draw (.5,0) -- +(0,-.5);
\draw (1,0) .. controls +(0,-.5) and +(0,-.5) .. +(.5,0);
\draw (2,0) -- +(0,-.5);
\end{tikzpicture}
\\ 
\hline
\scalebox{.4}{\yng(4,1,1,1)}&
({\scriptstyle{-}}\frac{7}{2},\frac{1}{2},\frac{3}{2},\frac{5}{2},\frac{9}{2},\scriptstyle{\ldots})&
\owedge\owedge\owedge\down\up & 
\begin{tikzpicture}[thick,anchorbase,scale=0.8]
\node at (0,.1) {$\scriptstyle \mathbf{\down}$};
\node at (.5,.1) {$\scriptstyle \mathbf{\down}$};
\node at (1,.1) {$\scriptstyle \mathbf{\down}$};
\node at (1.5,.1) {$\scriptstyle \mathbf{\down}$};
\node at (2,.1) {$\scriptstyle \mathbf{\up}$};
\node at (2.5,.1) {$\mathbf{\scriptstyle{\cdots}}$};
\draw (0,0) -- +(0,-.5);
\draw (.5,0) -- +(0,-.5);
\draw (1,0) -- +(0,-.5);
\draw (1.5,0) .. controls +(0,-.5) and +(0,-.5) .. +(.5,0);
\end{tikzpicture}
& 
\begin{tikzpicture}[thick,anchorbase,scale=0.8]
\node at (0,.1) {$\scriptstyle \mathbf{\up}$};
\node at (.5,.1) {$\scriptstyle \mathbf{\down}$};
\node at (1,.1) {$\scriptstyle \mathbf{\down}$};
\node at (1.5,.1) {$\scriptstyle \mathbf{\down}$};
\node at (2,.1) {$\scriptstyle \mathbf{\up}$};
\node at (2.5,.1) {$\mathbf{\scriptstyle{\cdots}}$};
\draw (0,0) -- +(0,-.5);
\fill (0,-.3) circle(2pt);
\draw (.5,0) -- +(0,-.5);
\draw (1,0) -- +(0,-.5);
\draw (1.5,0) .. controls +(0,-.5) and +(0,-.5) .. +(.5,0);
\end{tikzpicture}
\end{array}
}
\end{equation*}
\smallskip

From the weight diagram one can read of (using Proposition~\ref{blocksindiagramsodd}) the blocks and the cup diagrams (including those from Section \ref{32}) for the indecomposable projective modules in a given block. Using now Theorem~\ref{mainprop}, Theorem~\ref{main} and the multiplication rule for circle diagrams from \cite{ESperv}, one deduces the shape and relations for the quiver from Theorem~\ref{exs}.  The block containing $L(\varnothing,-)$ is equivalent to the block $\mathcal{B}$ containing $L(\varnothing,+)$. All other blocks are obviously semisimple (and of atypicality $0$).

\begin{remark}
\label{obsVera}
Although the category $\cF=\cF(\OSP(3|2))$ decomposes as $\cF^+\oplus \cF^-$ with the summands equivalent to $\cF(\SOSP(3|2))$, we still prefer to work with the whole $\cF$ due to its connection to Deligne's category, see \cite{Deligne2}, \cite{CH} and to the Brauer algebras, in particular because \eqref{surj} is not surjective for $\SOSP(3|2)$. To see this observe that 
\abovedisplayskip0.25em
\belowdisplayskip0.5em
\begin{eqnarray*}
\scalebox{.8}{
\begin{xy}
  \xymatrix{
  (\varnothing,+)\ar@/^/[dr]&& & \left(\varnothing,-\right)\ar@/^/[dr]\\
  &\left(\scalebox{.4}{\yng(2,1)},+\right) \ar@/^/[ul]\ar@/^/[dl]&\text{\large and}&&\left(\scalebox{.4}{\yng(2,1)},-\right)\ar@/^/[ul]\ar@/^/[dl]\\
 \left(\scalebox{.4}{\yng(1)},+\right)\ar@/^/[ur]&&&\left(\scalebox{.4}{\yng(1)},-\right)\ar@/^/[ur]
}
\end{xy}}
\end{eqnarray*}
show pieces of the quiver corresponding to the two summands $\cF^\pm$. On the vertices one can see the labellings of the indecomposable projective modules $P(\la)$ and the corresponding associated partition.  The number of boxes in the partitions or (if it exists) the associated partition equals the tensor power $d$ such that $P(\la)$ appears as a summand in $V^{\otimes d}$. Observe that these numbers are always even for the quiver on the left and odd for the quiver on the right, in agreement with Remark~\ref{theds}. If one now restricts to $G'$, then $\op{res}P(\la,+)\cong\op{res}P(\la,-)$ and of course all non-trivial homomorphisms stay non-trivial.  Therefore there are non-trivial morphism from $V^{\otimes d}$ to $V^{\otimes d'}$ for some $d$, $d'$ such that $d\not\equiv d'\op{mod} 2$. These morphisms cannot be controlled by the Brauer or Deligne category.
\end{remark}

\subsection{The smallest even case: \texorpdfstring{$\OSP(2|2)$}{OSp(2,2)} and \texorpdfstring{$\OSP$}{OSp} vs \texorpdfstring{$\SOSP$}{SOSp}} \label{sec:osp22}
We chose now one of the most basic non-classical cases, to showcase the differences between the $\OSP$ and the $\SOSP$ situation.

In case of $\SOSP(2|2)$ the block containing the trivial representation $L^\mathfrak{g}(0)$ contains all irreducible representations of the form $L^\mathfrak{g}(\pm a\varepsilon_1 + a \delta_1)$. Abbreviating the $L^\mathfrak{g}(\pm a\varepsilon_1 + a \delta_1)$ by $(\pm a|a)$ we obtain for it the quiver
\small
\abovedisplayskip0.25em
\belowdisplayskip0.5em
\begin{eqnarray*}
\begin{xy}
  \xymatrix{
  \cdots \ar@/^/[r]^{f_{-3}}
  &(-2|2)\ar@/^/[l]^{g_{-3}}\ar@/^/[r]^{f_{-2}}
  &(-1|1)\ar@/^/[l]^{g_{-2}}\ar@/^/[r]^{f_{-1}}
  &(0|0)\ar@/^/[l]^{g_{-1}}\ar@/^/[r]^{f_0}
  &(1|1)\ar@/^/[l]^{g_0}\ar@/^/[r]^{f_1}
  &(2|2)\ar@/^/[l]^{g_1}\ar@/^/[r]^{f_2}
  &\cdots\ar@/^/[l]^{g_2}
  }
\end{xy}
\end{eqnarray*}
\normalsize
subject to the relations $f_{i+1} \circ f_i = 0 = g_i \circ g_{i-1}$ and $g_i \circ f_i = g_{i-1} \circ f_{i-1}$.

The shape of the quiver and the relations follow from Proposition~\ref{cancel}. Alternatively one can also use translation functors studied in \cite{GS2}.  

Switching to $\OSP(2|2)$ corresponds here to taking, in a suitable sense, the smash product of the original path algebra with the group $\mZ/2\mZ$ generated by the involution $\sigma$ and consider the corresponding category of modules, see e.g. \cite[Example 2.1]{RR} for an analogous situation. More precisely we obtain the following: 
the representation $L^\mathfrak{g}(0)$ is doubled up to $L(0,+)$ and $L(0,-)$ while $L^\mathfrak{g}(a\varepsilon_1 + a \delta_1)$ and $L^\mathfrak{g}(- a\varepsilon_1 + a \delta_1)$ give the same representation $L((a|a)^G)$, see Definition~\ref{def:labelssimples2}. The results is that the following quiver describes the principal block of $\cF$ (where we used the elements from $X^+(G)$ as labels for the vertices),
\small
\abovedisplayskip0.25em
\belowdisplayskip0.5em
\begin{eqnarray*}
\scalebox{.95}{
\begin{xy}
  \xymatrix{
  (0,+)\ar@/^/[dr]^{f_+}\\
  &(1|1)^G \ar@/^/[ul]^{g_+}\ar@/^/[dl]^{g_-}\ar@/^/[r]^{f_1}
  &(2|2)^G \ar@/^/[l]^{g_1}\ar@/^/[r]^{f_2}
  &(3|3)^G \ar@/^/[l]^{g_2}\ar@/^/[r]^{f_3}
  &(4|4)^G \ar@/^/[l]^{g_3}\cdots\\
 (0,-)\ar@/^/[ur]^{f_-}}
\end{xy}
}
\end{eqnarray*}
\normalsize
subject to the same kind of zero relations as above. Moreover, the induced grading via Corollary~\ref{grading} corresponds exactly to the grading given by the path lengths.
Observe that the trivial block of atypicality $1$ here is equivalent to the blocks of atypicality $1$ for $\op{OSp}(3|2)$ (in contrast to the case of $\op{SOSp}(3|2)$).

\subsection{Illustration of the Dimension Formula for \texorpdfstring{$\OSP(4|4)$}{OSp(4,4)}} \label{sec:osp44}
In this section we apply Theorem~\ref{mainprop} respectively the Dimension Formula (Theorem~\ref{maintheorem}) to calculate the (graded) dimensions of the morphism spaces between certain projective indecomposable modules in the principal block for $\OSP(4|4)$. In \eqref{cupdiag} one can find a list of cup diagrams, whose weight sequences are all diagrammatically linked and in the same block as the trivial representation with sign $+$.

\abovedisplayskip0.15cm
\belowdisplayskip0.15cm
\begin{equation} \label{cupdiag}
\begin{tikzpicture}[thick,anchorbase,scale=0.75]
\node at (-1,-.3) {$\lambda_0^+ = $};
\node at (0,.1) {$\scriptstyle \mathbf{\Diamond}$};
\node at (.5,.1) {$\scriptstyle \mathbf{\up}$};
\node at (1,.1) {$\scriptstyle \mathbf{\up}$};
\node at (1.5,.1) {$\scriptstyle \mathbf{\up}$};
\node at (2,.1) {$\scriptstyle \mathbf{\up}$};
\node at (2.5,.1) {$\scriptstyle \mathbf{\down}$};
\node at (3,.1) {$\mathbf{\scriptstyle{\cdots}}$};
\draw (0,0) .. controls +(0,-.5) and +(0,-.5) .. +(.5,0);
\fill (.25,-.365) circle(2pt);
\draw (1,0) .. controls +(0,-.5) and +(0,-.5) .. +(.5,0);
\fill (1.25,-.365) circle(2pt);
\draw (2,0) -- +(0,-.5);
\fill (2,-.3) circle(2pt);
\draw (2.5,0) -- +(0,-.5);

\begin{scope}[xshift=5cm]
\node at (-1,-.3) {$\lambda_1^+ = $};
\node at (0,.1) {$\scriptstyle \mathbf{\Diamond}$};
\node at (.5,.1) {$\scriptstyle \mathbf{\down}$};
\node at (1,.1) {$\scriptstyle \mathbf{\up}$};
\node at (1.5,.1) {$\scriptstyle \mathbf{\up}$};
\node at (2,.1) {$\scriptstyle \mathbf{\up}$};
\node at (2.5,.1) {$\scriptstyle \mathbf{\down}$};
\node at (3,.1) {$\mathbf{\scriptstyle{\cdots}}$};
\draw (0,0) .. controls +(0,-.75) and +(0,-.75) .. +(1.5,0);
\draw (.5,0) .. controls +(0,-.5) and +(0,-.5) .. +(.5,0);
\draw (2,0) -- +(0,-.5);
\fill (2,-.3) circle(2pt);
\draw (2.5,0) -- +(0,-.5);
\end{scope}

\begin{scope}[xshift=10cm]
\node at (-1,-.3) {$\lambda_2^+ = $};
\node at (0,.1) {$\scriptstyle \mathbf{\Diamond}$};
\node at (.5,.1) {$\scriptstyle \mathbf{\up}$};
\node at (1,.1) {$\scriptstyle \mathbf{\down}$};
\node at (1.5,.1) {$\scriptstyle \mathbf{\up}$};
\node at (2,.1) {$\scriptstyle \mathbf{\up}$};
\node at (2.5,.1) {$\scriptstyle \mathbf{\down}$};
\node at (3,.1) {$\mathbf{\scriptstyle{\cdots}}$};
\draw (0,0) .. controls +(0,-.5) and +(0,-.5) .. +(.5,0);
\draw (1,0) .. controls +(0,-.5) and +(0,-.5) .. +(.5,0);
\draw (2,0) -- +(0,-.5);
\fill (2,-.3) circle(2pt);
\draw (2.5,0) -- +(0,-.5);
\end{scope}

\begin{scope}[yshift=-1.5cm]
\node at (-1,-.3) {$\lambda_0^- = $};
\node at (0,.1) {$\scriptstyle \mathbf{\Diamond}$};
\node at (.5,.1) {$\scriptstyle \mathbf{\up}$};
\node at (1,.1) {$\scriptstyle \mathbf{\up}$};
\node at (1.5,.1) {$\scriptstyle \mathbf{\up}$};
\node at (2,.1) {$\scriptstyle \mathbf{\down}$};
\node at (2.5,.1) {$\scriptstyle \mathbf{\down}$};
\node at (3,.1) {$\mathbf{\scriptstyle{\cdots}}$};
\draw (0,0) .. controls +(0,-.5) and +(0,-.5) .. +(.5,0);
\draw (1,0) .. controls +(0,-.5) and +(0,-.5) .. +(.5,0);
\fill (1.25,-.365) circle(2pt);
\draw (2,0) -- +(0,-.5);
\draw (2.5,0) -- +(0,-.5);
\end{scope}

\begin{scope}[yshift=-1.5cm,xshift=5cm]
\node at (-1,-.3) {$\lambda_1^- = $};
\node at (0,.1) {$\scriptstyle \mathbf{\Diamond}$};
\node at (.5,.1) {$\scriptstyle \mathbf{\down}$};
\node at (1,.1) {$\scriptstyle \mathbf{\up}$};
\node at (1.5,.1) {$\scriptstyle \mathbf{\up}$};
\node at (2,.1) {$\scriptstyle \mathbf{\down}$};
\node at (2.5,.1) {$\scriptstyle \mathbf{\down}$};
\node at (3,.1) {$\mathbf{\scriptstyle{\cdots}}$};
\draw (0,0) .. controls +(0,-.75) and +(0,-.75) .. +(1.5,0);
\fill (.75,-.55) circle(2pt);
\draw (.5,0) .. controls +(0,-.5) and +(0,-.5) .. +(.5,0);
\draw (2,0) -- +(0,-.5);
\draw (2.5,0) -- +(0,-.5);
\end{scope}

\begin{scope}[yshift=-1.5cm,xshift=10cm]
\node at (-1,-.3) {$\lambda_2^- = $};
\node at (0,.1) {$\scriptstyle \mathbf{\Diamond}$};
\node at (.5,.1) {$\scriptstyle \mathbf{\up}$};
\node at (1,.1) {$\scriptstyle \mathbf{\down}$};
\node at (1.5,.1) {$\scriptstyle \mathbf{\up}$};
\node at (2,.1) {$\scriptstyle \mathbf{\down}$};
\node at (2.5,.1) {$\scriptstyle \mathbf{\down}$};
\node at (3,.1) {$\mathbf{\scriptstyle{\cdots}}$};
\draw (0,0) .. controls +(0,-.5) and +(0,-.5) .. +(.5,0);
\fill (.25,-.365) circle(2pt);
\draw (1,0) .. controls +(0,-.5) and +(0,-.5) .. +(.5,0);
\draw (2,0) -- +(0,-.5);
\draw (2.5,0) -- +(0,-.5);
\end{scope}

\begin{scope}[yshift=-3cm]
\node at (-1,-.3) {$\lambda_3^+ = $};
\node at (0,.1) {$\scriptstyle \mathbf{\Diamond}$};
\node at (.5,.1) {$\scriptstyle \mathbf{\up}$};
\node at (1,.1) {$\scriptstyle \mathbf{\up}$};
\node at (1.5,.1) {$\scriptstyle \mathbf{\down}$};
\node at (2,.1) {$\scriptstyle \mathbf{\up}$};
\node at (2.5,.1) {$\scriptstyle \mathbf{\down}$};
\node at (3,.1) {$\mathbf{\scriptstyle{\cdots}}$};
\draw (0,0) .. controls +(0,-.5) and +(0,-.5) .. +(.5,0);
\draw (1,0) -- +(0,-.5);
\fill (1,-.3) circle(2pt);
\draw (1.5,0) .. controls +(0,-.5) and +(0,-.5) .. +(.5,0);
\draw (2.5,0) -- +(0,-.5);
\end{scope}

\begin{scope}[yshift=-4.5cm]
\node at (-1,-.3) {$\lambda_3^- = $};
\node at (0,.1) {$\scriptstyle \mathbf{\Diamond}$};
\node at (.5,.1) {$\scriptstyle \mathbf{\up}$};
\node at (1,.1) {$\scriptstyle \mathbf{\down}$};
\node at (1.5,.1) {$\scriptstyle \mathbf{\down}$};
\node at (2,.1) {$\scriptstyle \mathbf{\up}$};
\node at (2.5,.1) {$\scriptstyle \mathbf{\down}$};
\node at (3,.1) {$\mathbf{\scriptstyle{\cdots}}$};
\draw (0,0) .. controls +(0,-.5) and +(0,-.5) .. +(.5,0);
\fill (.25,-.365) circle(2pt);
\draw (1,0) -- +(0,-.5);
\draw (1.5,0) .. controls +(0,-.5) and +(0,-.5) .. +(.5,0);
\draw (2.5,0) -- +(0,-.5);
\end{scope}

\begin{scope}[yshift=-3cm,xshift=5cm]
\node at (-1,-.3) {$\lambda_4^+ = $};
\node at (0,.1) {$\scriptstyle \mathbf{\Diamond}$};
\node at (.5,.1) {$\scriptstyle \mathbf{\up}$};
\node at (1,.1) {$\scriptstyle \mathbf{\up}$};
\node at (1.5,.1) {$\scriptstyle \mathbf{\down}$};
\node at (2,.1) {$\scriptstyle \mathbf{\down}$};
\node at (2.5,.1) {$\scriptstyle \mathbf{\up}$};
\node at (3,.1) {$\mathbf{\scriptstyle{\cdots}}$};
\draw (0,0) .. controls +(0,-.5) and +(0,-.5) .. +(.5,0);
\draw (1,0) -- +(0,-.5);
\fill (1,-.3) circle(2pt);
\draw (1.5,0) -- +(0,-.5);
\draw (2,0) .. controls +(0,-.5) and +(0,-.5) .. +(.5,0);
\end{scope}

\begin{scope}[yshift=-4.5cm,xshift=5cm]
\node at (-1,-.3) {$\lambda_4^- = $};
\node at (0,.1) {$\scriptstyle \mathbf{\Diamond}$};
\node at (.5,.1) {$\scriptstyle \mathbf{\up}$};
\node at (1,.1) {$\scriptstyle \mathbf{\down}$};
\node at (1.5,.1) {$\scriptstyle \mathbf{\down}$};
\node at (2,.1) {$\scriptstyle \mathbf{\down}$};
\node at (2.5,.1) {$\scriptstyle \mathbf{\up}$};
\node at (3,.1) {$\mathbf{\scriptstyle{\cdots}}$};
\draw (0,0) .. controls +(0,-.5) and +(0,-.5) .. +(.5,0);
\fill (.25,-.365) circle(2pt);
\draw (1,0) -- +(0,-.5);
\draw (1.5,0) -- +(0,-.5);
\draw (2,0) .. controls +(0,-.5) and +(0,-.5) .. +(.5,0);
\end{scope}

\begin{scope}[yshift=-3cm,xshift=10cm]
\node at (-1,-.3) {$\lambda_5 = $};
\node at (0,.1) {$\scriptstyle \mathbf{\Diamond}$};
\node at (.5,.1) {$\scriptstyle \mathbf{\down}$};
\node at (1,.1) {$\scriptstyle \mathbf{\down}$};
\node at (1.5,.1) {$\scriptstyle \mathbf{\up}$};
\node at (2,.1) {$\scriptstyle \mathbf{\up}$};
\node at (2.5,.1) {$\scriptstyle \mathbf{\down}$};
\node at (3,.1) {$\mathbf{\scriptstyle{\cdots}}$};
\draw (0,0) -- +(0,-.5);
\fill (0,-.3) circle(2pt);
\draw (.5,0) .. controls +(0,-.75) and +(0,-.75) .. +(1.5,0);
\draw (1,0) .. controls +(0,-.5) and +(0,-.5) .. +(.5,0);
\draw (2.5,0) -- +(0,-.5);
\end{scope}

\begin{scope}[yshift=-4.5cm,xshift=10cm]
\node at (-1,-.3) {$\lambda_6 = $};
\node at (0,.1) {$\scriptstyle \mathbf{\Diamond}$};
\node at (.5,.1) {$\scriptstyle \mathbf{\down}$};
\node at (1,.1) {$\scriptstyle \mathbf{\up}$};
\node at (1.5,.1) {$\scriptstyle \mathbf{\down}$};
\node at (2,.1) {$\scriptstyle \mathbf{\up}$};
\node at (2.5,.1) {$\scriptstyle \mathbf{\down}$};
\node at (3,.1) {$\mathbf{\scriptstyle{\cdots}}$};
\draw (0,0) -- +(0,-.5);
\fill (0,-.3) circle(2pt);
\draw (.5,0) .. controls +(0,-.5) and +(0,-.5) .. +(.5,0);
\draw (1.5,0) .. controls +(0,-.5) and +(0,-.5) .. +(.5,0);
\draw (2.5,0) -- +(0,-.5);
\end{scope}
\end{tikzpicture}
\end{equation}
For $\la_0^\pm,\ldots,\la_4^\pm$ above, the hook partition underlying $\la_i^\pm$ is $(i,1^i)$, while the hook partition underlying $\la_5^\pm$ is $(2,2,2)$ and for $\la_6^\pm$ it is $(3,2,2,1)$. \\

{
\begin{figure}[!ht]
\abovedisplayskip0.25em
\belowdisplayskip0.5em
\begin{equation*} \arraycolsep=3pt\def\arraystretch{2}
\begin{array}{c||c;{2pt/2pt}c|c;{2pt/2pt}c|c;{2pt/2pt}c|c;{2pt/2pt}c|c;{2pt/2pt}c|c|c}
& \lambda_0^+ & \lambda_0^- & \lambda_1^+ & \lambda_1^- & \lambda_2^+ & \lambda_2^- & \lambda_3^+ & \lambda_3^- & \lambda_4^+ & \lambda_4^- & \lambda_5 & \lambda_6 \\\hline \hline
\lambda_0^+ & E(q) & 0 & q^2[2] & 0 & 0 & 0 & 0 & q^2[2] & 0 & 0 & 0 & q^2 \\ \hdashline[2pt/2pt]
\lambda_0^- & 0 & E(q) & 0 & q^2[2] & 0 & 0 & q^2[2] & 0 & 0 & 0 & 0 & q^2 \\ \hline
\lambda_1^+ & q^2[2] & 0 & E(q) & 0 & q^2[2] & 0 & q^2 & q^2 & 0 & 0 & q^2 & q^2[2] \\ \hdashline[2pt/2pt]
\lambda_1^- & 0 & q^2[2] & 0 & E(q) & 0 & q^2[2] & q^2 & q^2 & 0 & 0 & q^2 & q^2[2] \\ \hline
\lambda_2^+ & 0 & 0 & q^2[2] & 0 & E(q) & 0 & q^2[2] & 0 & 0 & 0 & q^2[2] & q^2 \\ \hdashline[2pt/2pt]
\lambda_2^- & 0 & 0 & 0 & q^2[2] & 0 & E(q) & 0 & q^2[2] & 0 & 0 & q^2[2] & q^2 \\ \hline
\lambda_3^+ & 0 & q^2[2] & q^2 & q^2 & q^2[2] & 0 & E(q) & 0 & q^2[2] & 0 & q^2 & q^2[2] \\ \hdashline[2pt/2pt]
\lambda_3^- & q^2[2] & 0 & q^2 & q^2 & 0 & q^2[2] & 0 & E(q) & 0 & q^2[2] & q^2 & q^2[2] \\ \hline
\lambda_4^+ & 0 & 0 & 0 & 0 & 0 & 0 & q^2[2] & 0 & E(q) & 0 & 0 & q^2 \\ \hdashline[2pt/2pt]
\lambda_4^- & 0 & 0 & 0 & 0 & 0 & 0 & 0 & q^2[2] & 0 & E(q) & 0 & q^2 \\ \hline
\lambda_5 & 0 & 0 & q^2 & q^2 & q^2[2] & q^2[2] & q^2 & q^2 & 0 & 0 & E(q) & q^2[2] \\ \hline
\lambda_6 & q^2 & q^2 & q^2[2] & q^2[2] & q^2 & q^2 & q^2[2] & q^2[2] & q^2 & q^2 & q^2[2] & E(q) \\ \hline
\end{array}
\end{equation*}
\abovecaptionskip0cm
\belowcaptionskip0cm
\caption{Hilbert-Poincar\'e polynomials for graded homomorphism spaces}
\label{bigtable}
\end{figure}}
\smallskip
By pairing the cup diagrams from \eqref{cupdiag}in all possible ways to obtain circle diagrams and checking the possible orientations and their degrees, one directly deduces the table in Figure~\ref{bigtable} for the Hilbert-Poincar\'e polynomials of the morphism spaces where we abbreviate $E(q)=q^2[2]^2$ and $[2]=q^{-1}+q$.  Note that the Hilbert-Poincar\'e polynomials of the endomorphism spaces are constant, namely equal to $E(q)$. This is a general phenomenon. By Lemma~\ref{lem:defect}, each block $\mathcal{B}$ has a well-defined defect $\op{def}(\mathcal{B})$ that is the number of cups in each cup diagram. Then by Theorem~\ref{main} and the definition of the diagram algebra, see \cite[Theorem 6.2 and Corollary 8.8]{ESperv}, we always have an isomorphism of algebras $\op{End}_\cB(P(\la))\cong\mC[X]/(X^2)^{\otimes \op{def}}$ with $\op{deg}(X)=2$.

{\intextsep0.4cm
\begin{figure}[!ht]
\abovedisplayskip0cm
\belowdisplayskip0.25cm
\begin{equation*} \arraycolsep=1pt\def\arraystretch{2}
\begin{array}{c||c|c||c|c}
&\multicolumn{2}{c}{\OSP(7|4)} & \multicolumn{2}{c}{\OSP(6|4)}\\ 
\hline
\pla & \cS(\pla) & { \arraycolsep=1pt\def\arraystretch{1}
\begin{array}{c}
(\pla,+) \\  (\pla,-) \end{array}}& \cS(\pla) & { \arraycolsep=1pt\def\arraystretch{1}
\begin{array}{c}
(\pla,+) \\  (\pla,-) \end{array}} \text{ resp. } \pla\\ \hline
\varnothing&
{\scriptstyle (\frac{3}{2},\frac{5}{2},\frac{7}{2},\frac{9}{2},\frac{11}{2},\ldots)}&
{ \arraycolsep=1pt\def\arraystretch{1}
\begin{array}{c}
\begin{tikzpicture}[thick,anchorbase,scale=0.8]
\node at (0,.1) {$\scriptstyle \mathbf{\circ}$};
\node at (.5,.1) {$\scriptstyle \mathbf{\up}$};
\node at (1,.1) {$\scriptstyle \mathbf{\up}$};
\node at (1.5,.1) {$\scriptstyle \mathbf{\up}$};
\node at (2,.1) {$\scriptstyle \mathbf{\up}$};
\node at (2.5,.1) {$\scriptstyle \mathbf{\up}$};
\node at (3,.1) {$\mathbf{\scriptstyle{\cdots}}$};
\draw (.5,0) .. controls +(0,-.5) and +(0,-.5) .. +(.5,0);
\fill (.75,-.365) circle(2pt);
\draw (1.5,0) .. controls +(0,-.5) and +(0,-.5) .. +(.5,0);
\fill (1.75,-.365) circle(2pt);
\draw (2.5,0) -- +(0,-.5);
\fill (2.5,-.3) circle(2pt);
\end{tikzpicture}\\ 
\begin{tikzpicture}[thick,anchorbase,scale=0.8]
\node at (0,.1) {$\scriptstyle \mathbf{\circ}$};
\node at (.5,.1) {$\scriptstyle \mathbf{\down}$};
\node at (1,.1) {$\scriptstyle \mathbf{\up}$};
\node at (1.5,.1) {$\scriptstyle \mathbf{\up}$};
\node at (2,.1) {$\scriptstyle \mathbf{\up}$};
\node at (2.5,.1) {$\scriptstyle \mathbf{\down}$};
\node at (3,.1) {$\mathbf{\scriptstyle{\cdots}}$};
\draw (.5,0) .. controls +(0,-.5) and +(0,-.5) .. +(.5,0);
\fill (.75,-.365) circle(2pt);
\draw (1.5,0) .. controls +(0,-.5) and +(0,-.5) .. +(.5,0);
\fill (1.75,-.365) circle(2pt);
\draw (2.5,0) -- +(0,-.5);
\end{tikzpicture}
\end{array}}
&
{\scriptstyle (1,2,3,4,5,\scriptstyle{\ldots})}&
{ \arraycolsep=1pt\def\arraystretch{1}
\begin{array}{c}
\begin{tikzpicture}[thick,anchorbase,scale=0.8]
\node at (0,.1) {$\scriptstyle \mathbf{\circ}$};
\node at (.5,.1) {$\scriptstyle \mathbf{\up}$};
\node at (1,.1) {$\scriptstyle \mathbf{\up}$};
\node at (1.5,.1) {$\scriptstyle \mathbf{\up}$};
\node at (2,.1) {$\scriptstyle \mathbf{\up}$};
\node at (2.5,.1) {$\scriptstyle \mathbf{\up}$};
\node at (3,.1) {$\mathbf{\scriptstyle{\cdots}}$};
\draw (.5,0) .. controls +(0,-.5) and +(0,-.5) .. +(.5,0);
\fill (.75,-.365) circle(2pt);
\draw (1.5,0) .. controls +(0,-.5) and +(0,-.5) .. +(.5,0);
\fill (1.75,-.365) circle(2pt);
\draw (2.5,0) -- +(0,-.5);
\fill (2.5,-.3) circle(2pt);
\end{tikzpicture}\\ 
\begin{tikzpicture}[thick,anchorbase,scale=0.8]
\node at (0,.1) {$\scriptstyle \mathbf{\circ}$};
\node at (.5,.1) {$\scriptstyle \mathbf{\up}$};
\node at (1,.1) {$\scriptstyle \mathbf{\up}$};
\node at (1.5,.1) {$\scriptstyle \mathbf{\up}$};
\node at (2,.1) {$\scriptstyle \mathbf{\up}$};
\node at (2.5,.1) {$\scriptstyle \mathbf{\down}$};
\node at (3,.1) {$\mathbf{\scriptstyle{\cdots}}$};
\draw (.5,0) .. controls +(0,-.5) and +(0,-.5) .. +(.5,0);
\fill (.75,-.365) circle(2pt);
\draw (1.5,0) .. controls +(0,-.5) and +(0,-.5) .. +(.5,0);
\fill (1.75,-.365) circle(2pt);
\draw (2.5,0) -- +(0,-.5);
\end{tikzpicture}
\end{array}}
\\ 
\hline
\scalebox{.4}{\yng(1)}&
{\scriptstyle (\frac{1}{2},\frac{5}{2},\frac{7}{2},\frac{9}{2},\frac{11}{2},\scriptstyle{\ldots})}&
{ \arraycolsep=1pt\def\arraystretch{1}
\begin{array}{c}
\begin{tikzpicture}[thick,anchorbase,scale=0.8]
\node at (0,.1) {$\scriptstyle \mathbf{\up}$};
\node at (.5,.1) {$\scriptstyle \mathbf{\circ}$};
\node at (1,.1) {$\scriptstyle \mathbf{\up}$};
\node at (1.5,.1) {$\scriptstyle \mathbf{\up}$};
\node at (2,.1) {$\scriptstyle \mathbf{\up}$};
\node at (2.5,.1) {$\scriptstyle \mathbf{\down}$};
\node at (3,.1) {$\mathbf{\scriptstyle{\cdots}}$};
\draw (0,0) .. controls +(0,-.5) and +(0,-.5) .. +(1,0);
\fill (.5,-.365) circle(2pt);
\draw (1.5,0) .. controls +(0,-.5) and +(0,-.5) .. +(.5,0);
\fill (1.75,-.365) circle(2pt);
\draw (2.5,0) -- +(0,-.5);
\end{tikzpicture}\\ 
\begin{tikzpicture}[thick,anchorbase,scale=0.8]
\node at (0,.1) {$\scriptstyle \mathbf{\up}$};
\node at (.5,.1) {$\scriptstyle \mathbf{\circ}$};
\node at (1,.1) {$\scriptstyle \mathbf{\up}$};
\node at (1.5,.1) {$\scriptstyle \mathbf{\up}$};
\node at (2,.1) {$\scriptstyle \mathbf{\up}$};
\node at (2.5,.1) {$\scriptstyle \mathbf{\up}$};
\node at (3,.1) {$\mathbf{\scriptstyle{\cdots}}$};
\draw (0,0) .. controls +(0,-.5) and +(0,-.5) .. +(1,0);
\fill (.5,-.365) circle(2pt);
\draw (1.5,0) .. controls +(0,-.5) and +(0,-.5) .. +(.5,0);
\fill (1.75,-.365) circle(2pt);
\draw (2.5,0) -- +(0,-.5);
\fill (2.5,-.3) circle(2pt);
\end{tikzpicture}
\end{array}}
&
{\scriptstyle (0,2,3,4,5,\scriptstyle{\ldots})}&
{ \arraycolsep=1pt\def\arraystretch{1}
\begin{array}{c}
\begin{tikzpicture}[thick,anchorbase,scale=0.8]
\node at (0,.1) {$\scriptstyle \mathbf{\Diamond}$};
\node at (.5,.1) {$\scriptstyle \mathbf{\circ}$};
\node at (1,.1) {$\scriptstyle \mathbf{\up}$};
\node at (1.5,.1) {$\scriptstyle \mathbf{\up}$};
\node at (2,.1) {$\scriptstyle \mathbf{\up}$};
\node at (2.5,.1) {$\scriptstyle \mathbf{\up}$};
\node at (3,.1) {$\mathbf{\scriptstyle{\cdots}}$};
\draw (0,0) .. controls +(0,-.5) and +(0,-.5) .. +(1,0);
\fill (.5,-.365) circle(2pt);
\draw (1.5,0) .. controls +(0,-.5) and +(0,-.5) .. +(.5,0);
\fill (1.75,-.365) circle(2pt);
\draw (2.5,0) -- +(0,-.5);
\fill (2.5,-.3) circle(2pt);
\end{tikzpicture}\\ 
\begin{tikzpicture}[thick,anchorbase,scale=0.8]
\node at (0,.1) {$\scriptstyle \mathbf{\Diamond}$};
\node at (.5,.1) {$\scriptstyle \mathbf{\circ}$};
\node at (1,.1) {$\scriptstyle \mathbf{\up}$};
\node at (1.5,.1) {$\scriptstyle \mathbf{\up}$};
\node at (2,.1) {$\scriptstyle \mathbf{\up}$};
\node at (2.5,.1) {$\scriptstyle \mathbf{\down}$};
\node at (3,.1) {$\mathbf{\scriptstyle{\cdots}}$};
\draw (0,0) .. controls +(0,-.5) and +(0,-.5) .. +(1,0);
\fill (.5,-.365) circle(2pt);
\draw (1.5,0) .. controls +(0,-.5) and +(0,-.5) .. +(.5,0);
\fill (1.75,-.365) circle(2pt);
\draw (2.5,0) -- +(0,-.5);
\end{tikzpicture}
\end{array}}
\\ 
\hline
\scalebox{.4}{\yng(2)}&
{\scriptstyle (-\frac{1}{2},\frac{5}{2},\frac{7}{2},\frac{9}{2},\frac{11}{2},\scriptstyle{\ldots})}&
{ \arraycolsep=1pt\def\arraystretch{1}
\begin{array}{c}
\begin{tikzpicture}[thick,anchorbase,scale=0.8]
\node at (0,.1) {$\scriptstyle \mathbf{\down}$};
\node at (.5,.1) {$\scriptstyle \mathbf{\circ}$};
\node at (1,.1) {$\scriptstyle \mathbf{\up}$};
\node at (1.5,.1) {$\scriptstyle \mathbf{\up}$};
\node at (2,.1) {$\scriptstyle \mathbf{\up}$};
\node at (2.5,.1) {$\scriptstyle \mathbf{\up}$};
\node at (3,.1) {$\mathbf{\scriptstyle{\cdots}}$};
\draw (0,0) .. controls +(0,-.5) and +(0,-.5) .. +(1,0);
\draw (1.5,0) .. controls +(0,-.5) and +(0,-.5) .. +(.5,0);
\fill (1.75,-.365) circle(2pt);
\draw (2.5,0) -- +(0,-.5);
\fill (2.5,-.3) circle(2pt);
\end{tikzpicture}\\ 
\begin{tikzpicture}[thick,anchorbase,scale=0.8]
\node at (0,.1) {$\scriptstyle \mathbf{\down}$};
\node at (.5,.1) {$\scriptstyle \mathbf{\circ}$};
\node at (1,.1) {$\scriptstyle \mathbf{\up}$};
\node at (1.5,.1) {$\scriptstyle \mathbf{\up}$};
\node at (2,.1) {$\scriptstyle \mathbf{\up}$};
\node at (2.5,.1) {$\scriptstyle \mathbf{\down}$};
\node at (3,.1) {$\mathbf{\scriptstyle{\cdots}}$};
\draw (0,0) .. controls +(0,-.5) and +(0,-.5) .. +(1,0);
\draw (1.5,0) .. controls +(0,-.5) and +(0,-.5) .. +(.5,0);
\fill (1.75,-.365) circle(2pt);
\draw (2.5,0) -- +(0,-.5);
\end{tikzpicture}
\end{array}}
&
{\scriptstyle (-1,2,3,4,5,\scriptstyle{\ldots})}&
{ \arraycolsep=1pt\def\arraystretch{1}
\begin{array}{c}
\begin{tikzpicture}[thick,anchorbase,scale=0.8]
\node at (0,.1) {$\scriptstyle \mathbf{\circ}$};
\node at (.5,.1) {$\scriptstyle \mathbf{\down}$};
\node at (1,.1) {$\scriptstyle \mathbf{\up}$};
\node at (1.5,.1) {$\scriptstyle \mathbf{\up}$};
\node at (2,.1) {$\scriptstyle \mathbf{\up}$};
\node at (2.5,.1) {$\scriptstyle \mathbf{\down}$};
\node at (3,.1) {$\mathbf{\scriptstyle{\cdots}}$};
\draw (.5,0) .. controls +(0,-.5) and +(0,-.5) .. +(.5,0);
\draw (1.5,0) .. controls +(0,-.5) and +(0,-.5) .. +(.5,0);
\fill (1.75,-.365) circle(2pt);
\draw (2.5,0) -- +(0,-.5);
\end{tikzpicture}\\ 
\begin{tikzpicture}[thick,anchorbase,scale=0.8]
\node at (0,.1) {$\scriptstyle \mathbf{\circ}$};
\node at (.5,.1) {$\scriptstyle \mathbf{\down}$};
\node at (1,.1) {$\scriptstyle \mathbf{\up}$};
\node at (1.5,.1) {$\scriptstyle \mathbf{\up}$};
\node at (2,.1) {$\scriptstyle \mathbf{\up}$};
\node at (2.5,.1) {$\scriptstyle \mathbf{\up}$};
\node at (3,.1) {$\mathbf{\scriptstyle{\cdots}}$};
\draw (.5,0) .. controls +(0,-.5) and +(0,-.5) .. +(.5,0);
\draw (1.5,0) .. controls +(0,-.5) and +(0,-.5) .. +(.5,0);
\fill (1.75,-.365) circle(2pt);
\draw (2.5,0) -- +(0,-.5);
\fill (2.5,-.3) circle(2pt);
\end{tikzpicture}
\end{array}}
\\  
\hline
\scalebox{.4}{\yng(2,1)}&
{\scriptstyle (-\frac{1}{2},\frac{3}{2},\frac{7}{2},\frac{9}{2},\frac{11}{2},\scriptstyle{\ldots})}&
{ \arraycolsep=1pt\def\arraystretch{1}
\begin{array}{c}
\begin{tikzpicture}[thick,anchorbase,scale=0.8]
\node at (0,.1) {$\scriptstyle \mathbf{\down}$};
\node at (.5,.1) {$\scriptstyle \mathbf{\up}$};
\node at (1,.1) {$\scriptstyle \mathbf{\circ}$};
\node at (1.5,.1) {$\scriptstyle \mathbf{\up}$};
\node at (2,.1) {$\scriptstyle \mathbf{\up}$};
\node at (2.5,.1) {$\scriptstyle \mathbf{\down}$};
\node at (3,.1) {$\mathbf{\scriptstyle{\cdots}}$};
\draw (0,0) .. controls +(0,-.5) and +(0,-.5) .. +(.5,0);
\draw (1.5,0) .. controls +(0,-.5) and +(0,-.5) .. +(.5,0);
\fill (1.75,-.365) circle(2pt);
\draw (2.5,0) -- +(0,-.5);
\end{tikzpicture}\\ 
\begin{tikzpicture}[thick,anchorbase,scale=0.8]
\node at (0,.1) {$\scriptstyle \mathbf{\down}$};
\node at (.5,.1) {$\scriptstyle \mathbf{\up}$};
\node at (1,.1) {$\scriptstyle \mathbf{\circ}$};
\node at (1.5,.1) {$\scriptstyle \mathbf{\up}$};
\node at (2,.1) {$\scriptstyle \mathbf{\up}$};
\node at (2.5,.1) {$\scriptstyle \mathbf{\up}$};
\node at (3,.1) {$\mathbf{\scriptstyle{\cdots}}$};
\draw (0,0) .. controls +(0,-.5) and +(0,-.5) .. +(.5,0);
\draw (1.5,0) .. controls +(0,-.5) and +(0,-.5) .. +(.5,0);
\fill (1.75,-.365) circle(2pt);
\draw (2.5,0) -- +(0,-.5);
\fill (2.5,-.3) circle(2pt);
\end{tikzpicture}
\end{array}}
&
{\scriptstyle (-1,1,3,4,5,\scriptstyle{\ldots})}&
{ \arraycolsep=1pt\def\arraystretch{1}
\begin{array}{c}
\begin{tikzpicture}[thick,anchorbase,scale=0.8]
\node at (0,.1) {$\scriptstyle \mathbf{\circ}$};
\node at (.5,.1) {$\scriptstyle \mathbf{\times}$};
\node at (1,.1) {$\scriptstyle \mathbf{\circ}$};
\node at (1.5,.1) {$\scriptstyle \mathbf{\up}$};
\node at (2,.1) {$\scriptstyle \mathbf{\up}$};
\node at (2.5,.1) {$\scriptstyle \mathbf{\down}$};
\node at (3,.1) {$\mathbf{\scriptstyle{\cdots}}$};
\draw (1.5,0) .. controls +(0,-.5) and +(0,-.5) .. +(.5,0);
\fill (1.75,-.365) circle(2pt);
\draw (2.5,0) -- +(0,-.5);
\end{tikzpicture}\\ 
\begin{tikzpicture}[thick,anchorbase,scale=0.8]
\node at (0,.1) {$\scriptstyle \mathbf{\circ}$};
\node at (.5,.1) {$\scriptstyle \mathbf{\times}$};
\node at (1,.1) {$\scriptstyle \mathbf{\circ}$};
\node at (1.5,.1) {$\scriptstyle \mathbf{\up}$};
\node at (2,.1) {$\scriptstyle \mathbf{\up}$};
\node at (2.5,.1) {$\scriptstyle \mathbf{\up}$};
\node at (3,.1) {$\mathbf{\scriptstyle{\cdots}}$};
\draw (1.5,0) .. controls +(0,-.5) and +(0,-.5) .. +(.5,0);
\fill (1.75,-.365) circle(2pt);
\draw (2.5,0) -- +(0,-.5);
\fill (2.5,-.3) circle(2pt);
\end{tikzpicture}
\end{array}}
\\ 
\hline
\scalebox{.4}{\yng(2,2)}&
{\scriptstyle (-\frac{1}{2},\frac{1}{2},\frac{7}{2},\frac{9}{2},\frac{11}{2},\scriptstyle{\ldots})}&
{ \arraycolsep=1pt\def\arraystretch{1}
\begin{array}{c}
\begin{tikzpicture}[thick,anchorbase,scale=0.8]
\node at (0,.1) {$\scriptstyle \mathbf{\times}$};
\node at (.5,.1) {$\scriptstyle \mathbf{\circ}$};
\node at (1,.1) {$\scriptstyle \mathbf{\circ}$};
\node at (1.5,.1) {$\scriptstyle \mathbf{\up}$};
\node at (2,.1) {$\scriptstyle \mathbf{\up}$};
\node at (2.5,.1) {$\scriptstyle \mathbf{\up}$};
\node at (3,.1) {$\mathbf{\scriptstyle{\cdots}}$};
\draw (1.5,0) .. controls +(0,-.5) and +(0,-.5) .. +(.5,0);
\fill (1.75,-.365) circle(2pt);
\draw (2.5,0) -- +(0,-.5);
\fill (2.5,-.3) circle(2pt);
\end{tikzpicture}\\ 
\begin{tikzpicture}[thick,anchorbase,scale=0.8]
\node at (0,.1) {$\scriptstyle \mathbf{\times}$};
\node at (.5,.1) {$\scriptstyle \mathbf{\circ}$};
\node at (1,.1) {$\scriptstyle \mathbf{\circ}$};
\node at (1.5,.1) {$\scriptstyle \mathbf{\up}$};
\node at (2,.1) {$\scriptstyle \mathbf{\up}$};
\node at (2.5,.1) {$\scriptstyle \mathbf{\down}$};
\node at (3,.1) {$\mathbf{\scriptstyle{\cdots}}$};
\draw (1.5,0) .. controls +(0,-.5) and +(0,-.5) .. +(.5,0);
\fill (1.75,-.365) circle(2pt);
\draw (2.5,0) -- +(0,-.5);
\end{tikzpicture}
\end{array}}
&
{\scriptstyle (-1,0,3,4,5,\scriptstyle{\ldots})}&
{ \arraycolsep=1pt\def\arraystretch{1}
\begin{array}{c}
\begin{tikzpicture}[thick,anchorbase,scale=0.8]
\node at (0,.1) {$\scriptstyle \mathbf{\Diamond}$};
\node at (.5,.1) {$\scriptstyle \mathbf{\down}$};
\node at (1,.1) {$\scriptstyle \mathbf{\circ}$};
\node at (1.5,.1) {$\scriptstyle \mathbf{\up}$};
\node at (2,.1) {$\scriptstyle \mathbf{\up}$};
\node at (2.5,.1) {$\scriptstyle \mathbf{\down}$};
\node at (3,.1) {$\mathbf{\scriptstyle{\cdots}}$};
\draw (0,0) .. controls +(0,-.75) and +(0,-.75) .. +(2,0);
\fill (1,-.565) circle(2pt);
\draw (0.5,0) .. controls +(0,-.5) and +(0,-.5) .. +(1,0);
\draw (2.5,0) -- +(0,-.5);
\end{tikzpicture}\\ 
\begin{tikzpicture}[thick,anchorbase,scale=0.8]
\node at (0,.1) {$\scriptstyle \mathbf{\Diamond}$};
\node at (.5,.1) {$\scriptstyle \mathbf{\down}$};
\node at (1,.1) {$\scriptstyle \mathbf{\circ}$};
\node at (1.5,.1) {$\scriptstyle \mathbf{\up}$};
\node at (2,.1) {$\scriptstyle \mathbf{\up}$};
\node at (2.5,.1) {$\scriptstyle \mathbf{\up}$};
\node at (3,.1) {$\mathbf{\scriptstyle{\cdots}}$};
\draw (0,0) .. controls +(0,-.75) and +(0,-.75) .. +(2,0);
\fill (1,-.565) circle(2pt);
\draw (0.5,0) .. controls +(0,-.5) and +(0,-.5) .. +(1,0);
\draw (2.5,0) -- +(0,-.5);
\fill (2.5,-.3) circle(2pt);
\end{tikzpicture}
\end{array}}
\\ 
\hline
\scalebox{.4}{\yng(3,3)}&
{\scriptstyle (-\frac{3}{2},-\frac{1}{2},\frac{7}{2},\frac{9}{2},\frac{11}{2},\scriptstyle{\ldots})}&
{ \arraycolsep=1pt\def\arraystretch{1}
\begin{array}{c}
\begin{tikzpicture}[thick,anchorbase,scale=0.8]
\node at (0,.1) {$\scriptstyle \mathbf{\down}$};
\node at (.5,.1) {$\scriptstyle \mathbf{\down}$};
\node at (1,.1) {$\scriptstyle \mathbf{\circ}$};
\node at (1.5,.1) {$\scriptstyle \mathbf{\up}$};
\node at (2,.1) {$\scriptstyle \mathbf{\up}$};
\node at (2.5,.1) {$\scriptstyle \mathbf{\up}$};
\node at (3,.1) {$\mathbf{\scriptstyle{\cdots}}$};
\draw (0,0) .. controls +(0,-.75) and +(0,-.75) .. +(2,0);
\draw (0.5,0) .. controls +(0,-.5) and +(0,-.5) .. +(1,0);
\draw (2.5,0) -- +(0,-.5);
\fill (2.5,-.3) circle(2pt);
\end{tikzpicture}\\ 
\begin{tikzpicture}[thick,anchorbase,scale=0.8]
\node at (0,.1) {$\scriptstyle \mathbf{\down}$};
\node at (.5,.1) {$\scriptstyle \mathbf{\down}$};
\node at (1,.1) {$\scriptstyle \mathbf{\circ}$};
\node at (1.5,.1) {$\scriptstyle \mathbf{\up}$};
\node at (2,.1) {$\scriptstyle \mathbf{\up}$};
\node at (2.5,.1) {$\scriptstyle \mathbf{\down}$};
\node at (3,.1) {$\mathbf{\scriptstyle{\cdots}}$};
\draw (0,0) .. controls +(0,-.75) and +(0,-.75) .. +(2,0);
\draw (0.5,0) .. controls +(0,-.5) and +(0,-.5) .. +(1,0);
\draw (2.5,0) -- +(0,-.5);
\end{tikzpicture}
\end{array}}
&
{\scriptstyle (-2,-1,3,4,5,\scriptstyle{\ldots})}&
{ \arraycolsep=1pt\def\arraystretch{1}
\begin{array}{c}
\begin{tikzpicture}[thick,anchorbase,scale=0.8]
\node at (0,.1) {$\scriptstyle \mathbf{\circ}$};
\node at (.5,.1) {$\scriptstyle \mathbf{\down}$};
\node at (1,.1) {$\scriptstyle \mathbf{\down}$};
\node at (1.5,.1) {$\scriptstyle \mathbf{\up}$};
\node at (2,.1) {$\scriptstyle \mathbf{\up}$};
\node at (2.5,.1) {$\scriptstyle \mathbf{\up}$};
\node at (3,.1) {$\mathbf{\scriptstyle{\cdots}}$};
\draw (.5,0) .. controls +(0,-.75) and +(0,-.75) .. +(1.5,0);
\draw (1,0) .. controls +(0,-.5) and +(0,-.5) .. +(.5,0);
\draw (2.5,0) -- +(0,-.5);
\fill (2.5,-.3) circle(2pt);
\end{tikzpicture}\\ 
\begin{tikzpicture}[thick,anchorbase,scale=0.8]
\node at (0,.1) {$\scriptstyle \mathbf{\circ}$};
\node at (.5,.1) {$\scriptstyle \mathbf{\down}$};
\node at (1,.1) {$\scriptstyle \mathbf{\down}$};
\node at (1.5,.1) {$\scriptstyle \mathbf{\up}$};
\node at (2,.1) {$\scriptstyle \mathbf{\up}$};
\node at (2.5,.1) {$\scriptstyle \mathbf{\down}$};
\node at (3,.1) {$\mathbf{\scriptstyle{\cdots}}$};
\draw (.5,0) .. controls +(0,-.75) and +(0,-.75) .. +(1.5,0);
\draw (1,0) .. controls +(0,-.5) and +(0,-.5) .. +(.5,0);
\draw (2.5,0) -- +(0,-.5);
\end{tikzpicture}
\end{array}}
\\ 
\hline
\scalebox{.4}{\yng(2,2,1)}&
{\scriptstyle (-\frac{1}{2},\frac{1}{2},\frac{5}{2},\frac{9}{2},\frac{11}{2},\scriptstyle{\ldots})}&
{ \arraycolsep=1pt\def\arraystretch{1}
\begin{array}{c}
\begin{tikzpicture}[thick,anchorbase,scale=0.8]
\node at (0,.1) {$\scriptstyle \mathbf{\times}$};
\node at (.5,.1) {$\scriptstyle \mathbf{\circ}$};
\node at (1,.1) {$\scriptstyle \mathbf{\up}$};
\node at (1.5,.1) {$\scriptstyle \mathbf{\circ}$};
\node at (2,.1) {$\scriptstyle \mathbf{\up}$};
\node at (2.5,.1) {$\scriptstyle \mathbf{\down}$};
\node at (3,.1) {$\mathbf{\scriptstyle{\cdots}}$};
\draw (1,0) .. controls +(0,-.5) and +(0,-.5) .. +(1,0);
\fill (1.5,-.365) circle(2pt);
\draw (2.5,0) -- +(0,-.5);
\end{tikzpicture}\\ 
\begin{tikzpicture}[thick,anchorbase,scale=0.8]
\node at (0,.1) {$\scriptstyle \mathbf{\times}$};
\node at (.5,.1) {$\scriptstyle \mathbf{\circ}$};
\node at (1,.1) {$\scriptstyle \mathbf{\up}$};
\node at (1.5,.1) {$\scriptstyle \mathbf{\circ}$};
\node at (2,.1) {$\scriptstyle \mathbf{\up}$};
\node at (2.5,.1) {$\scriptstyle \mathbf{\up}$};
\node at (3,.1) {$\mathbf{\scriptstyle{\cdots}}$};
\draw (1,0) .. controls +(0,-.5) and +(0,-.5) .. +(1,0);
\fill (1.5,-.365) circle(2pt);
\draw (2.5,0) -- +(0,-.5);
\fill (2.5,-.3) circle(2pt);
\end{tikzpicture}
\end{array}}
&
{\scriptstyle (-1,0,2,4,5,\scriptstyle{\ldots})}&
{ \arraycolsep=1pt\def\arraystretch{1}
\begin{array}{c}
\begin{tikzpicture}[thick,anchorbase,scale=0.8]
\node at (0,.1) {$\scriptstyle \mathbf{\Diamond}$};
\node at (.5,.1) {$\scriptstyle \mathbf{\down}$};
\node at (1,.1) {$\scriptstyle \mathbf{\up}$};
\node at (1.5,.1) {$\scriptstyle \mathbf{\circ}$};
\node at (2,.1) {$\scriptstyle \mathbf{\up}$};
\node at (2.5,.1) {$\scriptstyle \mathbf{\down}$};
\node at (3,.1) {$\mathbf{\scriptstyle{\cdots}}$};
\draw (0,0) .. controls +(0,-.75) and +(0,-.75) .. +(2,0);
\fill (1,-.565) circle(2pt);
\draw (.5,0) .. controls +(0,-.5) and +(0,-.5) .. +(.5,0);
\draw (2.5,0) -- +(0,-.5);
\end{tikzpicture}\\ 
\begin{tikzpicture}[thick,anchorbase,scale=0.8]
\node at (0,.1) {$\scriptstyle \mathbf{\Diamond}$};
\node at (.5,.1) {$\scriptstyle \mathbf{\down}$};
\node at (1,.1) {$\scriptstyle \mathbf{\up}$};
\node at (1.5,.1) {$\scriptstyle \mathbf{\circ}$};
\node at (2,.1) {$\scriptstyle \mathbf{\up}$};
\node at (2.5,.1) {$\scriptstyle \mathbf{\up}$};
\node at (3,.1) {$\mathbf{\scriptstyle{\cdots}}$};
\draw (0,0) .. controls +(0,-.75) and +(0,-.75) .. +(2,0);
\fill (1,-.565) circle(2pt);
\draw (.5,0) .. controls +(0,-.5) and +(0,-.5) .. +(.5,0);
\draw (2.5,0) -- +(0,-.5);
\fill (2.5,-.3) circle(2pt);
\end{tikzpicture}
\end{array}}
\\ 
\hline
\scalebox{.4}{\yng(3,3,3)} \;\;&
{\scriptstyle (-\frac{3}{2},-\frac{1}{2},\frac{1}{2},\frac{9}{2},\frac{11}{2},\scriptstyle{\ldots})}&
{ \arraycolsep=1pt\def\arraystretch{1}
\begin{array}{c}
\begin{tikzpicture}[thick,anchorbase,scale=0.8]
\node at (0,.1) {$\scriptstyle \mathbf{\times}$};
\node at (.5,.1) {$\scriptstyle \mathbf{\down}$};
\node at (1,.1) {$\scriptstyle \mathbf{\circ}$};
\node at (1.5,.1) {$\scriptstyle \mathbf{\circ}$};
\node at (2,.1) {$\scriptstyle \mathbf{\up}$};
\node at (2.5,.1) {$\scriptstyle \mathbf{\up}$};
\node at (3,.1) {$\mathbf{\scriptstyle{\cdots}}$};
\draw (0.5,0) .. controls +(0,-.5) and +(0,-.5) .. +(1.5,0);
\draw (2.5,0) -- +(0,-.5);
\fill (2.5,-.3) circle(2pt);
\end{tikzpicture}\\ 
\begin{tikzpicture}[thick,anchorbase,scale=0.8]
\node at (0,.1) {$\scriptstyle \mathbf{\times}$};
\node at (.5,.1) {$\scriptstyle \mathbf{\down}$};
\node at (1,.1) {$\scriptstyle \mathbf{\circ}$};
\node at (1.5,.1) {$\scriptstyle \mathbf{\circ}$};
\node at (2,.1) {$\scriptstyle \mathbf{\up}$};
\node at (2.5,.1) {$\scriptstyle \mathbf{\down}$};
\node at (3,.1) {$\mathbf{\scriptstyle{\cdots}}$};
\draw (0.5,0) .. controls +(0,-.5) and +(0,-.5) .. +(1.5,0);
\draw (2.5,0) -- +(0,-.5);
\end{tikzpicture}
\end{array}}
&
{\scriptstyle (-2,-1,0,4,5,\scriptstyle{\ldots})}&
\begin{tikzpicture}[thick,anchorbase,scale=0.8]
\node at (0,.1) {$\scriptstyle \mathbf{\Diamond}$};
\node at (.5,.1) {$\scriptstyle \mathbf{\down}$};
\node at (1,.1) {$\scriptstyle \mathbf{\down}$};
\node at (1.5,.1) {$\scriptstyle \mathbf{\circ}$};
\node at (2,.1) {$\scriptstyle \mathbf{\up}$};
\node at (2.5,.1) {$\scriptstyle \mathbf{\up}$};
\node at (3,.1) {$\mathbf{\scriptstyle{\cdots}}$};
\draw (.5,0) .. controls +(0,-.75) and +(0,-.75) .. +(2,0);
\draw (1,0) .. controls +(0,-.5) and +(0,-.5) .. +(1,0);
\draw (0,0) -- +(0,-.5);
\end{tikzpicture}
\end{array}
\end{equation*}
\caption{$\OSP(7|4)$ and $\OSP(6|4)$}
\label{hugeexample}
\end{figure}}

\subsection{Some higher rank examples: \texorpdfstring{$\OSP(7|4)$}{OSp(7,4)} and \texorpdfstring{$\OSP(6|4)$}{OSp(6,4)}} \label{sec:higher}
Finally we calculate the weight and cup diagrams for the two special cases of $\OSP(7|4)$, with $\frac{\de}{2}=\frac{3}{2}$, and $\OSP(6|4)$, with $\frac{\de}{2}=1$.

Here the first column in Figure~\ref{hugeexample} lists the $(3,2)$-hook partition, follows by two columns showing first the sequence $\cS(\pla)$ and then the associated weight and cup diagrams (two if it includes a sign). One can see that in these examples again defects greater than $1$ occur. We leave it to the reader to check that the blocks are all equivalent to blocks which we have seen already (namely to those with the same atypicality).

\bibliographystyle{alpha}
\bibliography{references}
\end{document}